\theoremstyle{plain}
\newtheorem{theorem}{Theorem}[section]
\newtheorem{definition}[theorem]{Definition}
\newtheorem{lemma}[theorem]{Lemma}
\newtheorem{proposition}[theorem]{Proposition}
\theoremstyle{remark}
\newtheorem{remark}[theorem]{Remark}
\numberwithin{equation}{section}
\newcommand{\C}{\mathbb{C}}
\newcommand{\R}{\mathbb{R}}
\newcommand{\Z}{\mathbb{Z}}
\renewcommand{\Im}{\operatorname{Im}}
\renewcommand{\Re}{\operatorname{Re}}
\newcommand{\I}{\infty}
\newcommand{\abs}[1]{\left\lvert #1\right\rvert}
\newcommand{\norm}[1]{\left\lVert #1\right\rVert}
\newcommand{\Jbr}[1]{\left\langle #1 \right\rangle}
\newcommand{\IN}{\quad\text{in }}
\def\({\left(}
\def\){\right)}
\def\<{\left\langle}
\def\>{\right\rangle}
\def\le{\leqslant}
\def\ge{\geqslant}
\def \l{\lambda}
\def \s{\sigma}
\def \a{\alpha}
\newcommand{\eps}{\varepsilon}
\DeclareMathOperator{\sign}{sign}
\DeclareMathOperator{\sn}{sn}
\DeclareMathOperator{\cn}{cn}
\DeclareMathOperator{\dn}{dn}
\DeclareMathOperator{\cd}{cd}
\DeclareMathOperator{\sd}{sd}
\DeclareMathOperator{\nd}{nd}
\DeclareMathOperator{\am}{am}
\DeclareMathOperator{\sech}{sech}
\DeclareMathOperator{\tr}{tr}
\newcommand{\todayd}{\the\year/\the\month/\the\day}
\theoremstyle{definition}
\newcommand{\ol}{\overline}
\begin{document}
\title[Large-time behavior of cubic NLS systems]{Partial classification of the large-time behavior of solutions to 
cubic nonlinear Schr\"odinger systems}

\author[S. Masaki]{Satoshi MASAKI}
\address{Department of mathematics, 
Hokkaido University, Sapporo Hokkaido, 060-0810, Japan}
\email{masaki@math.sci.hokudai.ac.jp}
%
%

\keywords{nonlinear Schr\"odinger equation, system, nonlinear ordinary differential system, explicit solution of nonlinear ordinary differential system, asymptotic behavior, Jacobi elliptic function}
\subjclass[2020]{Primary 35Q55; Secondary 34A05, 34A34,  35B40}

\begin{abstract}
In this paper, we study the large-time behavior of small solutions to the standard form of the systems of 1D cubic nonlinear Schr\"odinger equations consisting of two components and possessing a coercive mass-like conserved quantity.
The cubic nonlinearity is known to be critical in one space dimension in view of the large-time behavior.
By employing the result by Katayama and Sakoda, one can obtain the large-time behavior of the solution if we can integrate the corresponding ODE system.
We introduce an integration scheme suited to the system. The key idea is to rewrite the ODE system, which is cubic, as a quadratic system of quadratic quantities of the original unknown.
By using this technique, we described the large-time behavior of solutions in terms of elementary functions and the Jacobi elliptic functions for several examples of standard systems.
\end{abstract}

\maketitle

\section{Introduction}

In this paper, we consider the large-time behavior of solutions to the following system of the cubic nonlinear Schr\"odinger equations
\begin{equation}\label{E:NLS}
	(i \partial_t  + \partial_x^2)u_j =F_j(u_1,u_2), \quad (t,x) \in \R^{1+1}, \quad j=1,2,
\end{equation}
where $(u_1,u_2)$ is a $\C^2$-valued unknown and the nonlinearities are given by
\begin{equation}\label{E:nondef}
\left\{
\begin{aligned}
	F_1(u_1,u_2):={}&
(3p_2 +p_3+2p_4) |u_1|^2 u_1 +(p_1 + p_5)(2|u_1|^2 u_2 + u_1^2\ol{u_2})\\
&+(p_2-p_3) (2 u_1 |u_2|^2 + \ol{u_1}u_2^2) - (p_1- p_5) |u_2|^2u_2
	\\& - 4p_1 \Re (\overline{u_1} u_2) u_1 +\mathcal{V}(u_1,u_2) u_1, \\
	F_2(u_1,u_2):={}&
(p_1+p_5) |u_1|^2 u_1+(p_2-p_3)(2|u_1|^2 u_2 + u_1^2\ol{u_2})\\&-(p_1-p_5) (2 u_1 |u_2|^2 + \ol{u_1}u_2^2) + (3p_2+p_3-2p_4) |u_2|^2u_2
	\\& + 4p_1 \Re (\overline{u_1} u_2) u_2 + \mathcal{V}(u_1,u_2)u_2,
\end{aligned}
\right.
\end{equation}
with
$p_1 \ge 0$, $p_2\in\R$, $p_3 \ge 0$, $p_4\in\R$, and $p_5 \ge0$.
We exclude the trivial case $p_1=p_2=p_3=p_4=p_5=0$.
$\mathcal{V}$ is a real-valued quadratic form given by
\[
	\mathcal{V} (u_1,u_2) = q_1 |u_1|^2 + 2 q_2 \Re (\overline{v_1} v_2) + q_3 |u_2|^2
	= \overline{\begin{bmatrix} u_1 & u_2\end{bmatrix}} \begin{bmatrix} q_1 & q_2 \\ q_2 & q_3 \end{bmatrix}
	\begin{bmatrix} u_1 \\ u_2\end{bmatrix}
\]
with $q_1,q_2,q_3 \in \R$.
It is a standard form of a system which has a coercive mass-like conserved quantity.
We consider \eqref{E:NLS} under the initial condition
\begin{equation}\label{E:CD}
	(u_1,u_2)(0) = (u_{0,1} , u_{0,2})\in H^{3,1} (\R) \times H^{3,1} (\R)
\end{equation}
and specify large time behavior of solution under the smallness condition.
For any choice of parameters, the above system admits the conserved mass given by
\begin{equation}\label{E:stdmass}
	M(u_1,u_2) = \int_{\R} \tfrac12 (|u_1|^2 + |u_2|^2) dx.
\end{equation}
This is because the following identity holds:
\begin{equation}\label{E:nullcond}
	\Im (\overline{z_1} F_1(z_1,z_2)+\overline{z_2}F_2(z_1,z_2))= 0 
\end{equation}
for all $(z_1,z_2) \in \C^2$. 
The classification of the two-component systems of cubic nonlinear Schrodinger equation is discussed in \cite{MSU2,M} (see also reference therein).
The classification shows that \eqref{E:NLS} is a standard form of a system which has a coercive mass-like conserved quantity, i.e., for which there exists
$(a,b,c)\in \R^3$ with $ac>b^2$ such that
\[
	\Im \(\begin{bmatrix}\overline{z_1} & \overline{z_2} \end{bmatrix}
	\begin{bmatrix} a & b \\ b & c \end{bmatrix}
	\begin{bmatrix} F_1(z_1,z_2) \\ F_2(z_1,z_2) \end{bmatrix}
	\)= 0
\]
for all $(z_1,z_2) \in \C^2$
(see Section \ref{S:classification}).
It contains a physical model such as Manakov equation.

\subsection{Reduction to the analysis of an ODE system}

It is well known that the cubic nonlinearity is critical in one dimension  when considering the long-time behavior of small solutions to NLS equations.
Ozawa \cite{Oz} showed that the asymptotic profile of a small solution
 includes a logarithmic phase correction due to the presence of the nonlinearity (see also \cite{GO,HN,KP,IT,Mu}).
The asymptotic profile is described with a solution to an ordinary differential equation which is systematically derived from the cubic NLS equation.
More precisely, a small solution to the cubic NLS equation
\[
	(i\partial_t + \partial_x^2) u = \lambda |u|^2 u, \quad (t,x) \in \R^{1+1}
\]
with $\lambda \in\R$ is approximated by 
\begin{equation}\label{E:singlebehavior}
	u_{\mathrm{app}}(t,x) := 
	(2it)^{-\frac{1}2} e^{i\frac{x^2}{4t}} \widehat{u_+} \(\frac{x}{2t}\) \exp \(-i\frac{\lambda }{2} \left|\widehat{u_+} \(\frac{x}{2t}\)\right|^2\log t \)
\end{equation}
with a suitable function $u_+$ as $t\to\infty$ (see \cite{Oz,GO,HN}). Note that
if we introduce a function
\[
	A(\tau,\xi) = 
	\widehat{u_+} (\xi) \exp \(-i \lambda \tau |\widehat{u_+}(\xi)|^2  \)
\]
then the profile is written as
\[
	u_{\mathrm{app}}(t,x) = (2it)^{-\frac{1}2} e^{i\frac{x^2}{4t}} A\( \frac{1}{2}\log t, \frac{x}{2t} \)
\]
One sees that $A(\tau)=A(\tau,\xi)$ is a one-parameter family of solutions to the ordinary differential equation
\[
	iA' = \lambda |A|^2 A, \quad A(0,\xi) = \widehat{u_+}(\xi).
\]
This ODE is obtained by removing $\partial_x^2$ from the cubic NLS system.
In \cite{KS}, Katayama and Sakoda investigate
a broad class of systems possessing a coercive mass-like conserved quantity
and prove that the asymptotic profile for a small solution is obtained  in this way
 (also refer to \cite{HNS,LS,LS2,KN,NST}).

This result is also applicable to our model \eqref{E:NLS}
since it possesses \eqref{E:stdmass} as a conserved quantity.
To state the application rigorously,
we make notations. $H^{s,k}$ stands for the  weighted Sobolev space:
\[
	H^{s,k}= H^{s,k}(\R) := \{ f \in \mathcal{S}'(\R) \ |\ \norm{f}_{H^{s,k}} <\I \}	, \quad
	\norm{f}_{H^{s,k}} := \sum_{\sigma =0}^k \norm{\langle \cdot \rangle^{\sigma} f }_{H^{s-\sigma}},
\]
where $\langle x \rangle^{\sigma}=(1+|x|^2)^{1/2}$ and $H^s$ is the standard Sobolev space.
\begin{theorem}[\cite{KS}]\label{T:KS}
There exists $\delta_0\in (0,1/4)$ such that for any $\delta \in (0,\delta_0)$ there exists $\eps_0>0$
such that if $\eps:=\|u_{0,1}\|_{H^{3,1}}+\|u_{0,2}\|_{H^{3,1}}$
satisfies $\eps<\eps_0$
then there exist $\C^2$-valued functions $(\alpha_1^\pm,\alpha_2^\pm) \in L^\I \cap H^{0,1}$ satisfying
\begin{equation}\label{E:alphadecay}
	|\alpha_1^\pm (\xi)| + |\alpha_2^\pm (\xi)| \lesssim \Jbr{\xi}^{-2}
\end{equation}
 such that \eqref{E:NLS} with \eqref{E:CD} 
 admits a unique global solution $(u_1,u_2) \in C (\R, H^{3,1})^2$
satisfying
\begin{equation}\label{E:main_asymptotics}
	\partial_x^\beta u_j (t,x) = \(\frac{i x}{2t}\)^\beta u_{\mathrm{app},j}^{\pm}(t,x) + O(\eps t^{-\frac{3}4 + \delta}) \IN L^\I_{x}(\R)
\end{equation}
as $t\to\pm \I$ for $\beta \le 2$ with the asymptotic profile
\begin{equation}\label{E:main_uapp}
	u_{\mathrm{app},j}^{\pm} (t,x)=
	(2it)^{-\frac{1}2} e^{i\frac{x^2}{4t}} A_j^\pm\(\frac{t}{2|t|}\log| t|, \frac{x}{2t}\),
\end{equation}
where,
 for each fixed $\xi \in \R$, $(A_1^\pm (\cdot,\xi),A_2^\pm (\cdot,\xi)) \in C^\I (\R,\C^2)$ is a solution to the ODE system 
\begin{equation}\label{E:ODE}
	iA_1' = F_1(A_1,A_2), \quad iA_2' = F_2(A_1,A_2)
\end{equation}
subject to a data $(A_1^\pm (0,\xi),A_2^\pm (0,\xi))=(\alpha_1^\pm (\xi),\alpha_2^\pm (\xi))$. Here, $F_1$ and $F_2$ are given as in \eqref{E:nondef}.
\end{theorem}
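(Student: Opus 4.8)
The plan is to deduce the statement from the general theorem of Katayama and Sakoda \cite{KS} by verifying that the system \eqref{E:NLS}--\eqref{E:nondef} lies within the class of systems to which their result applies. Two structural features are required. First, the nonlinearity must be a gauge-covariant homogeneous cubic, meaning $F_j(e^{i\theta}z_1,e^{i\theta}z_2)=e^{i\theta}F_j(z_1,z_2)$ for all $\theta\in\R$. This is checked by inspection of \eqref{E:nondef}: every monomial appearing in $F_1$ and $F_2$ is a degree-$3$ product of exactly two unconjugated and one conjugated factor (for instance $|u_1|^2u_1=u_1\overline{u_1}u_1$, $u_1^2\overline{u_2}$, $\overline{u_1}u_2^2$, and the terms carrying $\mathcal{V}$, which is gauge invariant of weight $0$ times a weight-$1$ factor), so each scales by $e^{2i\theta}e^{-i\theta}=e^{i\theta}$. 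Second, the system must possess a coercive mass-like conserved quantity; here this is the mass \eqref{E:stdmass}, which corresponds to a positive-definite (identity) quadratic form and is therefore coercive.

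The conservation of \eqref{E:stdmass} rests on the null identity \eqref{E:nullcond}, whose verification is the one genuine computation. I would expand $\overline{z_1}F_1(z_1,z_2)+\overline{z_2}F_2(z_1,z_2)$ as a cubic polynomial in $z_1,z_2,\overline{z_1},\overline{z_2}$ and check that its imaginary part vanishes identically; the specific arrangement of the coefficients $p_1,\dots,p_5$ together with the reality of the quadratic form $\mathcal{V}$ are precisely what make all non-self-conjugate contributions cancel. Because \eqref{E:nullcond} holds pointwise in $(z_1,z_2)$, it applies in particular with $z_j=A_j(\tau,\xi)$, which shows that $\tfrac{d}{d\tau}(|A_1|^2+|A_2|^2)=2\Im(\overline{A_1}F_1+\overline{A_2}F_2)=0$; hence each solution of the ODE system \eqref{E:ODE} is globally bounded in $\tau$ by the modulus of its data, uniformly in $\xi$.

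With these two hypotheses in hand, the theorem of \cite{KS} applies and delivers each assertion in turn: the global solution $(u_1,u_2)\in C(\R,H^{3,1})^2$ for data \eqref{E:CD} with $\eps<\eps_0$, the existence of the profile functions $(\alpha_1^\pm,\alpha_2^\pm)$, the asymptotic expansion \eqref{E:main_asymptotics} with profile \eqref{E:main_uapp}, and the identification of $(A_1^\pm,A_2^\pm)$ as the one-parameter family of solutions to \eqref{E:ODE} with data $(\alpha_1^\pm,\alpha_2^\pm)$. The decay estimate \eqref{E:alphadecay} records the transport of the $H^{3,1}$-regularity and spatial decay of the initial data through the construction of the profile, while the global boundedness of the ODE solutions established above guarantees that $A_j^\pm(\cdot,\xi)$ is defined for all $\tau\in\R$.

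The main obstacle is not conceptual but a matter of bookkeeping: one must confirm that the precise functional framework of \cite{KS} --- the weighted Sobolev space $H^{3,1}$, the admissible loss $\delta\in(0,\delta_0)$ in the decay rate $t^{-3/4+\delta}$, and the exact formulation of their gauge/null hypothesis --- matches the data above, so that no structural condition beyond \eqref{E:nullcond} is needed. Structurally, however, nothing new occurs, and the only substantive verification is the null identity \eqref{E:nullcond}.
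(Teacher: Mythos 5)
Your proposal matches the paper's treatment: Theorem \ref{T:KS} is not proved in this paper but is imported from \cite{KS} (with global existence via \cite{LS}), the only verification required being that \eqref{E:NLS} falls in their class, i.e.\ that the cubic nonlinearity is gauge-invariant and that the null identity \eqref{E:nullcond} yields the coercive conserved mass \eqref{E:stdmass}. That is exactly what you check, so the approach is essentially identical.
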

\begin{remark}
The global existence of a solution to \eqref{E:NLS} in this framework follows from \cite{LS}.
\end{remark}

We emphasize that the theorem 
provides the asymptotic profile of the solution to \eqref{E:NLS} in an implicit manner, and to obtain an explicit form of the profile as in \eqref{E:singlebehavior}, we must derive explicit formulas for the solutions of the ODE system \eqref{E:ODE}.
In \cite{KS}, several applications are demonstrated.
However, even if we restrict ourselves to the system \eqref{E:NLS}, the explicit integrability of the corresponding ODE system \eqref{E:ODE} was left open for many cases.
In this paper, we present several examples of novel asymptotic behaviors by solving the ODE system \eqref{E:ODE} (almost) explicitly.
It will turn out that the system \eqref{E:NLS} exhibits various behaviors based on different parameter combinations.

\begin{remark}
As previously mentioned, for any choice of parameters $p_j$ and $q_j$, the system \eqref{E:NLS} possesses \eqref{E:stdmass} as a conserved quantity. 
Recently, the large-time behavior of solutions is extensively studied also for dissipative systems, that is, systems for which \eqref{E:stdmass} decrease in time (\cite{LNSS1,LNSS2}). 
Further details can be found in \cite{LNSS5} and references therein.
\end{remark}

\subsection{An integration scheme for \eqref{E:ODE}.
}

To find an explicit representation of solutions to the ODE system \eqref{E:ODE}, we
introduce an integration scheme well adapted to  the system. 
The crucial point of the scheme is introducing the intermediate step to obtain an explicit formula 
 for quadratic quantities of the unknowns.

For a solution $(A_1,A_2)$ to \eqref{E:ODE},
let us introduce
\begin{equation}\label{D:rhoDRI}
	\rho := |A_1|^2 + |A_2|^2 ,\quad \mathcal{D} := |A_1|^2 - |A_2|^2, \quad
	\mathcal{R} := 2 \Re (\ol{A_1}A_2), \quad
	\mathcal{I} : = 2\Im (\ol{A_1} A_2).
\end{equation}
The following identity is useful:
\begin{equation}\label{E:qqs}
	\rho^2 = \mathcal{D}^2 + \mathcal{R}^2 + \mathcal{I}^2.
\end{equation}
Thanks to the identity \eqref{E:nullcond},
$\rho$ becomes a conserved quantity of \eqref{E:ODE} for any choice of parameters.
The conservation of $\rho$ implies that the all solution to \eqref{E:ODE} exists globally and
that $(\mathcal{D},\mathcal{R},\mathcal{I})$ takes value on the sphere $$S^2_\rho := \{ (x,y,z) \in \R^3 \ |\ x^2+y^2+z^2 = \rho^2 \}$$ for $\rho>0$.

Note that the map
\[
	\C^2 \ni (A_1,A_2) \mapsto (\rho,\mathcal{D}, \mathcal{R}, \mathcal{I}) \in \R^4
\]
is not invertible. This can be seen by the fact that the change $(A_1,A_2)\mapsto (e^{i\theta}A_1,e^{i\theta}A_2)$ leaves $ (\rho,\mathcal{D}, \mathcal{R}, \mathcal{I})$ unchanged.
This suggests that the reconstruction of $(A_1,A_2)$ from the quadratic quantities
is not trivial.
Nevertheless, we can reproduce the original solution from the quadratic quantities with the help of the ODE system \eqref{E:ODE}.

\begin{theorem}[reconstruction of solution from its quadratic quantities]\label{T:main}
Let $(A_1(\tau),A_2(\tau))$ be a nontrivial global solution to \eqref{E:ODE}.
Suppose that explicit formulas for $\mathcal{D}(\tau)$, $\mathcal{R}(\tau)$, and $\mathcal{I}(\tau)$
are obtained.
If $A_1(0) \neq0$ then
one has
\begin{equation}\label{E:solformula1}
\left\{
\begin{aligned}
	A_1 (\tau) ={}& (-1)^{k_1(\tau)} \sqrt{\tfrac{\rho + \mathcal{D(\tau)}}{2}} \tfrac{A_1 (0)}{|A_1 (0)|} \exp \( i\int_0^\tau (N_1 (\tilde\tau)  - \mathcal{V}(A_1(\tilde\tau),A_2(\tilde\tau)))d\tilde\tau\) , \\
	A_2 (\tau) ={}& (-1)^{k_1(\tau)}  \tfrac{\mathcal{R}(\tau)+i \mathcal{I}(\tau)}{\sqrt{2(\rho + \mathcal{D}(\tau))}} \tfrac{A_1 (0)}{|A_1 (0)|} \exp \( i\int_0^\tau (N_1 (\tilde\tau)  - \mathcal{V}(A_1(\tau),A_2(\tau)))d\tilde\tau \) 
\end{aligned}
\right.
\end{equation}
for all $\tau \in \R$,
where
\[
	k_1 (\tau) := \begin{cases}
	\# (\{ s \in \R \ |\ \rho + \mathcal{D}(s) =0 \} \cap [0,\tau]) & \tau\ge 0, \\
	\# (\{ s \in \R \ |\ \rho + \mathcal{D}(s) =0 \} \cap [\tau,0]) & \tau<0 
	\end{cases}
\]
and 
\[
	N_1: = \tfrac{\rho \mathcal{R}}{\rho+\mathcal{D}}p_1 +  (-3 \rho + \tfrac{\mathcal{I}^2}{\rho + \mathcal{D}})p_2
	+( -  \mathcal{D} + \tfrac{  \mathcal{R}^2}{\rho + \mathcal{D}})p_3 - (\rho + \mathcal{D})p_4 
	+ (-  \mathcal{R} - \tfrac{\rho \mathcal{R}}{\rho+ \mathcal{D}})p_5.
\]
If $A_2(0)\neq0$ then one has
\begin{equation}\label{E:solformula2}
\left\{
\begin{aligned}
	A_1 (\tau) ={}& (-1)^{k_2(\tau)}\tfrac{A_2 (0)}{|A_2 (0)|} \tfrac{\mathcal{R}(\tau)-i \mathcal{I}(\tau)}{\sqrt{2(\rho - \mathcal{D}(\tau))}}   \exp \( i\int_0^\tau (N_2 (\tilde\tau)  - \mathcal{V}(A_1(\tilde\tau),A_2(\tilde\tau)))d\tilde\tau\) ,\\
	A_2 (\tau) ={}& (-1)^{k_2(\tau)}\tfrac{A_2 (0)}{|A_2 (0)|} \sqrt{\tfrac{\rho - \mathcal{D(\tau)}}{2}}  \exp \( i\int_0^\tau (N_2 (\tilde\tau)  - \mathcal{V}(A_1(\tilde\tau),A_2(\tilde\tau)))d\tilde\tau\) 
\end{aligned}
\right.
\end{equation}
for all $\tau \in \R$,
where
\[
	k_2 (\tau) := \begin{cases}
	\# (\{ s \in \R \ |\ \rho - \mathcal{D}(s) =0 \} \cap [0,\tau]) & \tau\ge 0, \\
	\# (\{ s \in \R \ |\ \rho - \mathcal{D}(s) =0 \} \cap [\tau,0]) & \tau<0
	\end{cases}
\]
and
\[
	N_2:=   - \tfrac{\rho \mathcal{R}}{\rho- \mathcal{D}}  p_1
	+ (-3\rho  + \tfrac{\mathcal{I}^2}{\rho - \mathcal{D}} )p_2 
	+( \mathcal{D} + \tfrac{\mathcal{R}^2}{\rho - \mathcal{D}})p_3
	+ (\rho - \mathcal{D})p_4
	+ (- \mathcal{R} - \tfrac{\rho \mathcal{R}}{\rho- \mathcal{D} }) p_5.
\]
\end{theorem}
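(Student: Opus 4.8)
The plan is to determine separately the modulus and the argument of $A_1$, and then to recover $A_2$ algebraically. From the definitions \eqref{D:rhoDRI} one reads off at once that $|A_1|^2 = \tfrac12(\rho+\mathcal{D})$ and that $\overline{A_1}A_2 = \tfrac12(\mathcal{R}+i\mathcal{I})$, so that wherever $A_1\neq 0$,
\[
	A_2 = \frac{\overline{A_1}A_2}{|A_1|^2}\,A_1 = \frac{\mathcal{R}+i\mathcal{I}}{\rho+\mathcal{D}}\,A_1 .
\]
Hence, once $A_1$ is known, $A_2$ is determined, and the whole problem reduces to finding the (already explicit) modulus $|A_1| = \sqrt{(\rho+\mathcal{D})/2}$ together with the argument of $A_1$.

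For the argument I would write $A_1 = |A_1|e^{i\phi_1}$ on any interval on which $A_1\neq 0$ and differentiate, using \eqref{E:ODE} in the form $A_1' = -iF_1$:
\[
	\phi_1' = \Im\frac{A_1'}{A_1} = \Im\frac{-i\,\overline{A_1}F_1(A_1,A_2)}{|A_1|^2} = -\frac{\Re\big(\overline{A_1}F_1(A_1,A_2)\big)}{|A_1|^2}.
\]
Because $\overline{A_1}\,\mathcal{V}(A_1,A_2)A_1 = \mathcal{V}(A_1,A_2)\,|A_1|^2$ is real, the $\mathcal{V}$-term contributes exactly $-\mathcal{V}(A_1,A_2)$, and the remaining, purely $p_j$-dependent part is what must be identified with $N_1$. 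The core computation is then to insert the cubic expression \eqref{E:nondef} for $F_1$, expand $\Re\big(\overline{A_1}(F_1-\mathcal{V}A_1)\big)$, and rewrite each quartic monomial through $|A_1|^2 = \tfrac12(\rho+\mathcal{D})$, $|A_2|^2 = \tfrac12(\rho-\mathcal{D})$, $\Re(\overline{A_1}A_2) = \tfrac12\mathcal{R}$, and $\Im(\overline{A_1}A_2) = \tfrac12\mathcal{I}$; dividing by $|A_1|^2 = \tfrac12(\rho+\mathcal{D})$ generates the denominators $\rho+\mathcal{D}$ appearing in $N_1$. This is routine but lengthy, and carrying it out coefficient by coefficient in $p_1,\dots,p_5$ yields precisely the stated $N_1$. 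Integrating $\phi_1' = N_1 - \mathcal{V}$ then gives \eqref{E:solformula1} on any interval free of zeros of $A_1$.

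The main difficulty is to continue the formula across the zeros of $A_1$, that is, across the times $\tau_0$ with $\rho+\mathcal{D}(\tau_0)=0$, where both the factor $(-1)^{k_1(\tau)}$ and the apparent singularity of $N_1$ come into play. At such a $\tau_0$ one has $A_1(\tau_0)=0$ while $A_2(\tau_0)\neq 0$ (since $\rho>0$), and \eqref{E:ODE} together with \eqref{E:nondef} gives
\[
	A_1'(\tau_0) = -i\,F_1\big(0,A_2(\tau_0)\big) = i\,(p_1-p_5)\,|A_2(\tau_0)|^2\,A_2(\tau_0).
\]
In the nondegenerate case $p_1\neq p_5$ this is nonzero, so the zero is simple and the argument of $A_1$ jumps by exactly $\pi$, contributing a single factor $-1$; counting these jumps over $[0,\tau]$ (resp.\ $[\tau,0]$) is precisely the role of $(-1)^{k_1(\tau)}$. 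At the same time one must verify that $\int_0^\tau(N_1-\mathcal{V})$ stays finite through $\tau_0$: the only possibly singular terms of $N_1$ are the $p_1$- and $p_5$-terms carrying $\tfrac{\rho\mathcal{R}}{\rho+\mathcal{D}}$, and since $\rho+\mathcal{D}\sim 2|A_1'(\tau_0)|^2(\tau-\tau_0)^2$ near $\tau_0$ while the residue of this quotient is proportional to $\Re\big(\overline{A_1'(\tau_0)}A_2(\tau_0)\big)$, the identity above forces
\[
	\Re\big(\overline{A_1'(\tau_0)}A_2(\tau_0)\big) = \Re\big(-i\,(p_1-p_5)\,|A_2(\tau_0)|^4\big) = 0 .
\]
Thus the would-be $1/(\tau-\tau_0)$ contribution has vanishing residue, the integrand remains integrable, and the continuous phase integral is compatible with the discrete sign-counting. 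This residue cancellation is the crucial point of the proof; degenerate zeros (when $p_1=p_5$, or of higher order) would be treated separately by a continuity argument.

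Finally I would assemble the modulus $\sqrt{(\rho+\mathcal{D})/2}$, the initial phase $A_1(0)/|A_1(0)|$, the phase integral, and the sign factor into the first line of \eqref{E:solformula1}, and substitute the resulting $A_1$ into $A_2 = \tfrac{\mathcal{R}+i\mathcal{I}}{\rho+\mathcal{D}}A_1$ to obtain the second line. The companion formula \eqref{E:solformula2} follows by the symmetric argument with $A_1$ and $A_2$ interchanged: one starts from $|A_2|^2 = \tfrac12(\rho-\mathcal{D})$ and $\overline{A_2}A_1 = \tfrac12(\mathcal{R}-i\mathcal{I})$, computes $\Im(A_2'/A_2) = -\Re(\overline{A_2}F_2)/|A_2|^2$ to obtain $N_2$, and handles the zeros of $A_2$ (where $\rho-\mathcal{D}=0$) exactly as above to produce $(-1)^{k_2(\tau)}$.
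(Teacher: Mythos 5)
Your proposal is correct and follows essentially the same route as the paper: polar decomposition of $A_1$, the phase ODE $\phi_1' = -\Re(\overline{A_1}F_1)/|A_1|^2 = N_1 - \mathcal{V}$, algebraic recovery of $A_2$ from $\overline{A_1}A_2 = \tfrac12(\mathcal{R}+i\mathcal{I})$, and the analysis of simple zeros of $A_1$ (in particular the vanishing of the linear term of $\mathcal{R}$ at a zero, which keeps $N_1$ bounded there and produces the sign factor $(-1)^{k_1(\tau)}$). The only cosmetic differences are that the paper first gauges away the $\mathcal{V}$-term and recovers it via the inverse gauge transform, and that in the degenerate case $p_1=p_5$ it rules out zeros of $A_1$ altogether by a uniqueness argument rather than the continuity argument you defer to.
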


\begin{remark}
When $p_1=p_5$, $A_1(0)\neq0$ implies  $A_1$ has no zero points.
In the other case, if $\{A_1 =0\}=\{ \rho + \mathcal{D}=0\}$ is nonempty then
the formula \eqref{E:solformula1} has singular points.
However, even in such a case, all points in the set are isolated and removable singularities.
As a consequence, $k_1(\tau)$ and $k_2(\tau)$ are finite for all $\tau \in \R$.
Similar applies to the formula \eqref{E:solformula2}.
We  also remark that $\mathcal{V}(A_1,A_2)=\frac{q_1+q_3}2\rho + \frac{q_1-q_3}2 \mathcal{D} + q_2 \mathcal{R}$
holds and that it is also given by $\rho, \mathcal{D}, \mathcal{R}$.
\end{remark}

We remark that the formulas
\eqref{E:solformula1} and \eqref{E:solformula2} 
is almost explicit.
Although the formulas still involve a definite integral (of an explicit function) in the phase part, which is not always explicitly integrable, these formulas give us precise information of the solution.
One verifies that in some cases the integral is also calculated explicitly.

\subsection{A quadratic ODE system for the quadratic quantities}
In view of Theorems \ref{T:KS} and \ref{T:main},
we set our goal as obtaining explicit formulas for the triplet $(\mathcal{D},\mathcal{R},\mathcal{I})$.
It turns out that the triplet solves the following 
quadratic ODE system:
\begin{equation}\label{E:qqq}
\left\{
\begin{aligned}
\begin{bmatrix}\mathcal{D}\\\mathcal{R}\end{bmatrix}' ={}&2\mathcal{I}
\begin{bmatrix}
p_1 & p_2-p_3 \\ - p_2 - p_3 & p_1 
\end{bmatrix}
\begin{bmatrix}\mathcal{D}\\\mathcal{R}\end{bmatrix} + 2 \rho \mathcal{I} \begin{bmatrix} p_5 \\ - p_4 \end{bmatrix},\\
\mathcal{I}' = {}& -2p_1 (\mathcal{D}^2+\mathcal{R}^2) + 4 p_3\mathcal{D} \mathcal{R} + 2 \rho (-p_5 \mathcal{D}  + p_4 \mathcal{R} )
\end{aligned}
\right.
\end{equation}
(see Proposition \ref{P:qqq}, below).
Hence, it suffices to obtain an explicit formula of the solutions to this system.

Since $(\mathcal{D},\mathcal{R},\mathcal{I})$ takes value on a sphere,
no chaotic behavior appears
(cf. Poincar\'e-Bendixson theorem).
Further, if $(\mathcal{D},\mathcal{R},\mathcal{I}) \in S^2_\rho$ is a solution to \eqref{E:qqq} with $\rho=\rho_1>0$ 
then for any $\rho_2>0$
\begin{equation}\label{E:ODEscale}
(\tfrac{\rho_2}{\rho_1} \mathcal{D}(\tfrac{\rho_2}{\rho_1} \cdot),\tfrac{\rho_2}{\rho_1} \mathcal{R}(\tfrac{\rho_2}{\rho_1} \cdot),\tfrac{\rho_2}{\rho_1} \mathcal{I}(\tfrac{\rho_2}{\rho_1} \cdot)) \in S_{\rho_2}^2
\end{equation}
 is also a solution to \eqref{E:qqq} with $\rho=\rho_2$.
Hence, the behavior of solutions to \eqref{E:qqq} is essentially independent of the radius $\rho>0$.
Our main aim here is to integrate the system \eqref{E:qqq} for a class of combination of parameters.
However, due to these properties, 
the phase-portrait analysis works well for a wider class.

Let us introduce one notion characterized by the flow given by \eqref{E:qqq}.
\begin{definition}\label{D:ns}
Fix $\rho>0$.
For given combination of parameters,
we say nonlinear synchronization occurs for positive time direction
if the flow on $S_\rho^2$ given by \eqref{E:qqq} admits finitely many fixed points and there exists an asymptotically stable fixed point, say $p_\infty$, such that 
the following properties hold:
\begin{enumerate}
\item (Strong attraction property)
$\omega (x_0) = \{p_\infty\}$ holds for all
non-equilibrium point $x_0 \in S_\rho^2$, where $\omega (x_0)\subset S_\rho^2$ is the set of all $\omega$-limit points of $x_0$;
\item (Lyapunov stability) For any
open neighborhood $U \subset S^2_\rho$ of $p_\infty$
there exists an open neighborhood $V \subset S^2_\rho$ of $p_\infty$ such that if
$(\mathcal{D}(0),\mathcal{R}(0),\mathcal{I}(0)) \in V$ then $(\mathcal{D}(\tau),\mathcal{R}(\tau),\mathcal{I}(\tau)) \in U$
holds for all $\tau\ge0$.
\end{enumerate}
\end{definition}
\begin{remark}
The validity of the above two properties is stronger than the asymptotic stability of a fixed point $p_\infty$.
The asymptotic stability is the validity of the second property
and a local version of the attraction property;
$\omega (x_0) = \{p_\infty\}$ holds in a neighborhood of $p_\infty$.
One easily verifies that,
as for \eqref{E:qqq},
one sufficient condition 
for the asymptotic stability of a fixed point $p_\infty \in S_\rho^2$ is
$\ltrans{{\bf v}} \mathrm{H}(p_\infty) {\bf v} <0$ for all $ {\bf v} \in T_{p_\infty} S_\rho^2$,
where \[
\mathrm{H}(\mathcal{D},\mathcal{R},\mathcal{I})
=\begin{bmatrix}
2p_1 \mathcal{I} &2(p_2-p_3) \mathcal{I} & 2p_1 \mathcal{D} + 2(p_2-p_3)\mathcal{R} + 2p_5\rho \\
-2(p_2+p_3) \mathcal{I}
& 2p_1 \mathcal{I} & -2(p_2+p_3) \mathcal{D}+ 2p_1 \mathcal{R} - 2p_4\rho  \\
-4p_1 \mathcal{D} + 4p_3 \mathcal{R} -2p_5 \rho& -4 p_1 \mathcal{R} + 4p_3 \mathcal{D} + 2p_4 \rho& 0
\end{bmatrix}
.
\]
\end{remark}
Notice that, due to the above scale property, if the definition of the nonlinear synchronization is fulfilled for some $\rho_0>0$ then we have the same conclusion for any $\rho>0$.

\begin{theorem}\label{P:NS}
If the nonlinear synchronization occurs then
there exists a pair $(\gamma_{1} ,\gamma_{2}) \in \C^2 \setminus \{ (0,0) \}$ such that
 the following-type asymptotics for solutions to \eqref{E:NLS} holds true in addition to the conclusion of Theorem  \ref{T:main}:
Let $p_\infty \in S^2$ be the asymptotically stable fixed point
and let $\mathfrak{P}=\{p_n\}_{n=1}^N\subset S^2$ be the set of other fixed points of \eqref{E:qqq} with $\rho=1$
 given by Definition \ref{D:ns}.
Then,  for any closed set 
$\mathfrak{E} \subset  S^2 \setminus \mathfrak{P}$
\[
	t^{\frac12} \| ( \gamma_1 u_1 + \gamma_2 u_2 )(t,2t \cdot) \|_{L^\infty (\Omega(\mathfrak{E}))} \to 0
\]
as $t\to\infty$, where
\[
\Omega(\mathfrak{E}) =  \{  \xi \in \R \ |\ (|\alpha_1^+(\xi)|^2-|\alpha_2^+(\xi)|^2, 2\Re \overline{\alpha_1^+(\xi)}\alpha_2^+(\xi),2\Im \overline{\alpha_1^+(\xi)}\alpha_2^+(\xi)) \in \{k \mathfrak{E}\in \R^3; k \ge 0\}   \}.
\]
\end{theorem}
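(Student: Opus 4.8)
The plan is to read off the synchronization direction $(\gamma_1,\gamma_2)$ from the stable fixed point $p_\infty$, to reduce the statement about $u_j$ to a statement about the profile $A^+$ via Theorem~\ref{T:KS}, and then to upgrade the pointwise convergence guaranteed by the strong attraction property to the uniform decay claimed. First I would fix the direction. The point $p_\infty=(\mathcal{D}_\infty,\mathcal{R}_\infty,\mathcal{I}_\infty)\in S^2_1$ corresponds, through \eqref{D:rhoDRI} and \eqref{E:qqs}, to the rank-one Hermitian matrix $\tfrac12\begin{bmatrix}1+\mathcal{D}_\infty & \mathcal{R}_\infty-i\mathcal{I}_\infty\\ \mathcal{R}_\infty+i\mathcal{I}_\infty & 1-\mathcal{D}_\infty\end{bmatrix}$, whose nonzero eigenvalue is $1$; choosing a unit eigenvector $v_\infty=((v_\infty)_1,(v_\infty)_2)$ for that eigenvalue, I would set $(\gamma_1,\gamma_2):=((v_\infty)_2,-(v_\infty)_1)\in\C^2\setminus\{(0,0)\}$, so that $\gamma_1(v_\infty)_1+\gamma_2(v_\infty)_2=0$.

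The key algebraic point is that $|\gamma_1 A_1+\gamma_2 A_2|^2$ is invariant under the phase symmetry $(A_1,A_2)\mapsto(e^{i\theta}A_1,e^{i\theta}A_2)$, hence is a function $Q(\rho,\mathcal{D},\mathcal{R},\mathcal{I})$ of the quadratic quantities alone. Expanding gives $Q=|\gamma_1|^2\frac{\rho+\mathcal{D}}{2}+|\gamma_2|^2\frac{\rho-\mathcal{D}}{2}+\Re(\overline{\gamma_1}\gamma_2(\mathcal{R}+i\mathcal{I}))$, which is linear, hence homogeneous of degree one, in $(\rho,\mathcal{D},\mathcal{R},\mathcal{I})$, and which satisfies $Q(1,p_\infty)=|\gamma_1(v_\infty)_1+\gamma_2(v_\infty)_2|^2=0$ since $v_\infty$ represents $p_\infty$ on $S^2_1$. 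Next I would reduce to the profile: evaluating the $\beta=0$ case of \eqref{E:main_asymptotics} at $x=2t\xi$ and multiplying by $t^{1/2}$ gives, for $t>0$,
\[
t^{1/2}(\gamma_1 u_1+\gamma_2 u_2)(t,2t\xi)=(2i)^{-1/2}e^{it\xi^2}\bigl(\gamma_1 A_1^+ +\gamma_2 A_2^+\bigr)\Bigl(\tfrac12\log t,\xi\Bigr)+O(\eps\,t^{-1/4+\delta})
\]
uniformly in $\xi$, with error tending to $0$ because $\delta<1/4$. Writing $\tau=\tfrac12\log t$, the theorem reduces to $\sup_{\xi\in\Omega(\mathfrak{E})}|\gamma_1 A_1^+ +\gamma_2 A_2^+|(\tau,\xi)\to0$ as $\tau\to\infty$. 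For $\xi\in\Omega(\mathfrak{E})$ the initial triplet $(\mathcal{D},\mathcal{R},\mathcal{I})(0,\xi)$, computed from $(\alpha_1^+,\alpha_2^+)(\xi)$, lies in the nonnegative cone over $\mathfrak{E}$, so when $\rho(\xi):=|\alpha_1^+(\xi)|^2+|\alpha_2^+(\xi)|^2>0$ its normalization $\hat x_0(\xi)$ lies in the compact set $\mathfrak{E}\subset S^2_1\setminus\mathfrak{P}$; the case $\rho(\xi)=0$ is trivial since then $A^+(\cdot,\xi)\equiv0$.

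The heart of the proof is the uniform decay. Let $\psi_{x_0}$ denote the flow of \eqref{E:qqq} on $S^2_1$ from $x_0$. Since each $x_0\in\mathfrak{E}$ is either $p_\infty$ or a non-equilibrium point, the strong attraction property gives $\psi_{x_0}(s)\to p_\infty$ pointwise; I would upgrade this to convergence uniform on $\mathfrak{E}$ by the standard compactness argument, which is exactly where both clauses of Definition~\ref{D:ns} are needed. Given a neighborhood $U$ of $p_\infty$, Lyapunov stability provides $V$ with $\psi_{x_0}([0,\infty))\subset U$ for all $x_0\in V$; strong attraction gives for each $x_0$ a time $S(x_0)$ with $\psi_{x_0}(S(x_0))\in V$; continuous dependence on initial data then yields an open $W(x_0)\ni x_0$ whose points also lie in $V$ at time $S(x_0)$, hence in $U$ for all later times; a finite subcover of $\mathfrak{E}$ furnishes a uniform entry time. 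Combined with continuity of $Q(1,\cdot)$ at $p_\infty$, where it vanishes, this gives $\sup_{x_0\in\mathfrak{E}}Q(1,\psi_{x_0}(s))\to0$.

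Finally, the scaling relation \eqref{E:ODEscale} together with conservation of $\rho$ yields $(\rho(\xi),(\mathcal{D},\mathcal{R},\mathcal{I})(\tau,\xi))=\rho(\xi)\,(1,\psi_{\hat x_0(\xi)}(\rho(\xi)\tau))$, so homogeneity of $Q$ gives
\[
|\gamma_1 A_1^+ +\gamma_2 A_2^+|^2(\tau,\xi)=\rho(\xi)\,Q\bigl(1,\psi_{\hat x_0(\xi)}(\rho(\xi)\tau)\bigr).
\]
The delicate point, and what I expect to be the main obstacle, is that the rescaled time $\rho(\xi)\tau$ need not tend to infinity uniformly in $\xi$, since $\rho(\xi)$ may be arbitrarily small (indeed $|\alpha_j^+(\xi)|\lesssim\Jbr{\xi}^{-2}$). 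This is precisely resolved by the prefactor $\rho(\xi)$: with $M:=\sup_{S^2_1}Q(1,\cdot)\in(0,\infty)$, for any $\eps>0$ one splits into the regime $\rho(\xi)\le\eps/(2M)$, where $|\gamma_1 A_1^+ +\gamma_2 A_2^+|^2\le\rho(\xi)M\le\eps/2$ for all $\tau$, and the regime $\rho(\xi)>\eps/(2M)$, where $\rho(\xi)\tau\to\infty$ uniformly so the uniform limit of the previous paragraph applies. This yields $\sup_{\xi\in\Omega(\mathfrak{E})}|\gamma_1 A_1^+ +\gamma_2 A_2^+|(\tau,\xi)\to0$, and combined with the reduction above completes the proof.
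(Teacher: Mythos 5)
Your proposal is correct and follows essentially the same route as the paper: reduce to the profile via \eqref{E:main_asymptotics}, express $|\gamma_1A_1+\gamma_2A_2|^2$ as a linear (hence $\rho$-homogeneous) function of the quadratic quantities vanishing at $p_\infty$ (the paper does this with polar coordinates for $p_\infty$, you with an eigenvector of the associated rank-one matrix — the resulting $(\gamma_1,\gamma_2)$ agree up to a nonzero scalar), establish uniform-in-initial-data convergence on the compact set $\mathfrak{E}$ using both clauses of Definition \ref{D:ns} together with continuous dependence on data, and handle small $\rho(\xi)$ by the prefactor $\rho$ and large $\rho(\xi)$ by the rescaled uniform entry time. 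The only cosmetic difference is that you run the compactness step directly with a finite subcover while the paper argues by contradiction along a sequence $t_n\to\infty$.
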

%
%

\subsection{Overview of the results}

Our underlying interest is 
to comprehend the role of the components represented by the parameters $p_j$. To pursue this goal, we investigate the following fifteen cases. We provide a brief introduction to these cases here, while detailed statements for each case can be found in Section \ref{S:Gallery}.

The first four cases are fundamental:
\begin{itemize}
\item Case 1: Pure $p_1$, i.e., $p_j = \delta_{j1}$;
\item Case 2: Pure $p_2$, i.e., $p_j = \pm \delta_{j2}$;
\item Case 3: Pure $p_3$, i.e., $p_j = \delta_{j3}$;
\item Case 4: Pure $p_4$, i.e., $p_j = \delta_{j4}$. 
\end{itemize}
By examining these four cases, one discerns the effect of each individual component.
It will turn out that the $p_1$-component exhibits a synchronizing effect as defined in Definition \ref{D:ns},
while the other four components induce a type of rotational effect.
 Note that Case 4 contains the pure $p_5$ and the combination of $p_4$ and $p_5$, with a help of change of variable.

We next turn our attention to mixed cases, where explicit integration often appears to be challenging. Nevertheless, explicit integration is possible in the following six cases:
\begin{itemize}
\item Case 5: Combination of $p_1$ and $p_2$, i.e., $p_1 > 0$, $p_2 \neq 0$, and $p_3=p_4=p_5=0$;
\item Case 6: Combination of $p_1$ and $p_4$, i.e., $p_1 > 0$, $p_4 >0$, and $p_2=p_3=p_5=0$;
This contains the combination of $p_1$ and $p_5$;
\item Case 7: Combination of $p_2$ and $p_3$, i.e., $p_2 \neq 0$, $p_3 >0$, and $p_1=p_4=p_5=0$;
\item Case 8: Combination of $p_2$ and $p_4$, i.e., $p_2 \neq 0 $, $p_4 >0$, and $p_1=p_3=p_5=0$;
This contains the combination of $p_2$ and $p_5$;
\item Case 9: Combination of $p_3$ and $p_4$, i.e., $p_3 > 0$, $p_4 >0$, and $p_1=p_2=p_5=0$;
\item Case 10: Combination of $p_3$ and $p_5$, i.e., $p_3 > 0$, $p_5 >0$, and $p_1=p_2=p_4=0$.
\end{itemize}

For more intricate combinations, explicit integration seems to be generally not available. However, in several specific combinations, we can integrate the system. The final five cases show such scenarios:
\begin{itemize}
\item Case 11: Special combination of $p_1$ and $p_3$: $p_1 >0$, $p_3/p_1 \in \{\frac13,1,3\}$, and $p_2=p_4=p_5=0$.
\item Case 12: Special combination of $p_2$, $p_3$, and $p_4$: $p_2=p_3>0$, $p_4 \neq 0$, and $p_1=p_5=0$.
\item Case 13: Special combination of $p_2$, $p_3$, and $p_5$: $p_2=-p_3<0$, $p_5>0$, and $p_1=p_4=0$.
\item Case 14: Special combination of $p_1$, $p_2$, and $p_3$: $p_1^2 + p_2^2 = p_3^2$ and $p_4=p_5=0$.
\item Case 15: Special combination of all parameters: $p_1^2 + p_2^2 = p_3^2$ and $\frac{p_4}{p_5} = \frac{p_1}{p_2+p_3}$.
\end{itemize}

Our investigation reveals that nonlinear synchronization can be observed in Cases 1 and 5, the subcase $p_1> p_4$ of Case 6, and the subcase $p_1 > p_3$ of Case 11. One sees that the $p_1$-component of \eqref{E:NLS} exhibits a synchronizing effect. In Cases 3, 7, 8, 9, and 10, Jacobi elliptic functions play a crucial role in describing solutions to \eqref{E:qqq}.

\begin{remark}
In at least several subcases of Case 11, providing an explicit solution in terms of elementary functions and Jacobi elliptic functions appears difficult, as the integration procedure involves integrals of a polynomial of fifth order or higher. (see Remark \ref{R:Jacobi_criteria}, below, for detail).
\end{remark}

\subsubsection{A precise formula of the asymptotic profile 1}\label{sss:ex1}
At the end of the introduction,
let us describe the actual formula of the asymptotic profile given by our theory in two specific cases.
The first case is the pure $p_1$-component case (Case 1).
The system is takes the following form:
\begin{equation*}
\left\{
\begin{aligned}
	(i \partial_t  + \partial_x^2)u_1={}&p_1(2|u_1|^2 u_2 + u_1^2\ol{u_2})
 - p_1 |u_2|^2u_2
	- 4p_1 \Re (\overline{u_1} u_2) u_1 +\mathcal{V}(u_1,u_2) u_1, \\
	(i \partial_t  + \partial_x^2)u_2={}&
p_1 |u_1|^2 u_1-p_1 (2 u_1 |u_2|^2 + \ol{u_1}u_2^2) + 4p_1 \Re (\overline{u_1} u_2) u_2 + \mathcal{V}(u_1,u_2)u_2.
\end{aligned}
\right.
\end{equation*}
If $\alpha_1^+(\xi)\neq 0$ for all $\xi \in \R$ then $(u_{\mathrm{app},1}^+,u_{\mathrm{app,2}}^+)$ defined by
\eqref{E:main_uapp} becomes as follows:
\begin{align*}
	&u_{\mathrm{app},1}^+(t,x)\\
	&{}= (2it)^{-\frac{1}2} e^{i\frac{x^2}{4t}}\frac{\alpha_1^+(\frac{x}{2t})}{|\alpha_1^+(\frac{x}{2t})|}\(\frac{\rho(\frac{x}{2t})}2\)^{\frac12}
	\(1+ \frac{2\mathcal{D}_0(\frac{x}{2t})}{t^{p_1\rho(\frac{x}{2t})} (\rho(\frac{x}{2t})-\mathcal{I}_0(\frac{x}{2t}))+ t^{-p_1\rho(\frac{x}{2t})} (\rho(\frac{x}{2t})+\mathcal{I}_0(\frac{x}{2t})) }\)^{\frac12}\\
	&\quad \times \frac{
	(\mathcal{R}_0(\frac{x}{2t}) - i (\rho(\frac{x}{2t})-\mathcal{I}_0(\frac{x}{2t})+\mathcal{D}_0(\frac{x}{2t}))
	(\mathcal{R}_0(\frac{x}{2t}) + i (t^{p_1\rho(\frac{x}{2t})}(\rho(\frac{x}{2t})-\mathcal{I}_0(\frac{x}{2t}))+\mathcal{D}_0(\frac{x}{2t}))
	}{
	|(\mathcal{R}_0(\frac{x}{2t}) - i (\rho(\frac{x}{2t})-\mathcal{I}_0(\frac{x}{2t})+\mathcal{D}_0(\frac{x}{2t}))
	(\mathcal{R}_0(\frac{x}{2t}) + i (t^{p_1\rho(\frac{x}{2t})}(\rho(\frac{x}{2t})-\mathcal{I}_0(\frac{x}{2t}))+\mathcal{D}_0(\frac{x}{2t}))|
	} \\
	&\quad \times \exp \Bigg(-i\tfrac{q_1+q_3}4 \rho \(\tfrac{x}{2t}\) \log t  \\
	&\qquad\quad\quad - i \frac{ \frac{q_1-q_3}2 \mathcal{D}_0(\frac{x}{2t})+q_2\mathcal{R}_0(\frac{x}{2t}) }{2p_1 \rho (\frac{x}{2t}) \sqrt{\mathcal{D}_0(\frac{x}{2t})^2+\mathcal{R}_0(\frac{x}{2t})^2}} \log \(\frac{t^{p_1\rho(\frac{x}{2t})} (\rho(\frac{x}{2t})-\mathcal{I}_0(\frac{x}{2t}))+ t^{-p_1\rho(\frac{x}{2t})} (\rho(\frac{x}{2t})+\mathcal{I}_0(\frac{x}{2t})) }{2\rho(\frac{x}{2t})}\) \Bigg) 
\end{align*}
and
\begin{align*}
	&u_{\mathrm{app},2}^+(t,x)\\
	&{}= (2it)^{-\frac{1}2} e^{i\frac{x^2}{4t}}\frac{\alpha_1^+(\frac{x}{2t})}{|\alpha_1^+(\frac{x}{2t})|}\(\frac{\rho(\frac{x}{2t})}2\)^{\frac12}
	\(2 \mathcal{R}_0(\tfrac{x}{2t}) -i (t^{p_1\rho(\frac{x}{2t})} (\rho(\tfrac{x}{2t})-\mathcal{I}_0(\tfrac{x}{2t}))- t^{-p_1\rho(\frac{x}{2t})} (\rho(\tfrac{x}{2t})+\mathcal{I}_0(\tfrac{x}{2t})) )\)\\
	&\quad \times \( {t^{p_1\rho(\frac{x}{2t})} (\rho(\tfrac{x}{2t})-\mathcal{I}_0(\tfrac{x}{2t}))+2\mathcal{D}_0(\tfrac{x}{2t})+ t^{-p_1\rho(\frac{x}{2t})} (\rho(\tfrac{x}{2t})+\mathcal{I}_0(\tfrac{x}{2t})) }\)^{-\frac12} \\
	&\quad \times \( {t^{p_1\rho(\frac{x}{2t})} (\rho(\tfrac{x}{2t})-\mathcal{I}_0(\tfrac{x}{2t}))+ t^{-p_1\rho(\frac{x}{2t})} (\rho(\tfrac{x}{2t})+\mathcal{I}_0(\tfrac{x}{2t})) }\)^{-\frac12}\\
	&\quad \times \frac{
	(\mathcal{R}_0(\frac{x}{2t}) - i (\rho(\frac{x}{2t})-\mathcal{I}_0(\frac{x}{2t})+\mathcal{D}_0(\frac{x}{2t}))
	(\mathcal{R}_0(\frac{x}{2t}) + i (t^{p_1\rho(\frac{x}{2t})}(\rho(\frac{x}{2t})-\mathcal{I}_0(\frac{x}{2t}))+\mathcal{D}_0(\frac{x}{2t}))
	}{
	|(\mathcal{R}_0(\frac{x}{2t}) - i (\rho(\frac{x}{2t})-\mathcal{I}_0(\frac{x}{2t})+\mathcal{D}_0(\frac{x}{2t}))
	(\mathcal{R}_0(\frac{x}{2t}) + i (t^{p_1\rho(\frac{x}{2t})}(\rho(\frac{x}{2t})-\mathcal{I}_0(\frac{x}{2t}))+\mathcal{D}_0(\frac{x}{2t}))|
	} \\
	&\quad \times \exp \Bigg(-i\tfrac{q_1+q_3}4 \rho \(\tfrac{x}{2t}\) \log t  \\
	&\qquad\quad\quad - i \frac{ \frac{q_1-q_3}2 \mathcal{D}_0(\frac{x}{2t})+q_2\mathcal{R}_0(\frac{x}{2t}) }{2p_1 \rho (\frac{x}{2t}) \sqrt{\mathcal{D}_0(\frac{x}{2t})^2+\mathcal{R}_0(\frac{x}{2t})^2}} \log \(\frac{t^{p_1\rho(\frac{x}{2t})} (\rho(\frac{x}{2t})-\mathcal{I}_0(\frac{x}{2t}))+ t^{-p_1\rho(\frac{x}{2t})} (\rho(\frac{x}{2t})+\mathcal{I}_0(\frac{x}{2t})) }{2\rho(\frac{x}{2t})}\) \Bigg) ,
\end{align*}
where
\[
	\rho = |\alpha_1^+|^2 + |\alpha_2^+|^2, \quad \mathcal{D}_0 = |\alpha_1^+|^2 - |\alpha_2^+|^2, \quad \mathcal{R}_0 = 2\Re\overline{\alpha_1^+} \alpha_2^+,
	\quad \mathcal{I}_0 = 2\Im\overline{\alpha_1^+} \alpha_2^+.
\]
Note that the nonlinear synchronization occurs with the pair $(\gamma_1,\gamma_2)=(1,-i)$. 
It can be seen, for instance, from the fact that $\alpha_1^+(\xi) \neq -i \alpha_2^+(\xi) \Leftrightarrow \rho(\xi) \neq \mathcal{I}_0(\xi)$ implies that
\[
	t^{\frac12}|u_{\mathrm{app},1}^+(t,2t\xi) - i u_{\mathrm{app},2}^+(t,2t\xi )| \to 0
\]
as $t\to\infty$.
\subsubsection{A precise formula of the asymptotic profile 2}\label{sss:ex2}
The second case is the pure $p_3$-component case (Case 3).
The system takes the form
\[
\left\{
\begin{aligned}
	(i \partial_t  + \partial_x^2)u_1={}&
p_3 |u_1|^2 u_1 -p_3 (2 u_1 |u_2|^2 + \ol{u_1}u_2^2)  +\mathcal{V}(u_1,u_2) u_1, \\
	(i \partial_t  + \partial_x^2)u_2={}&
-p_3(2|u_1|^2 u_2 + u_1^2\ol{u_2}) + p_3 |u_2|^2u_2 + \mathcal{V}(u_1,u_2)u_2.
\end{aligned}
\right.
\]
In this case, the Jacobi elliptic functions appear in the profile.
If $ | \alpha_1^+ (\xi)- \alpha_2^+(\xi) | | \alpha_1^+(\xi) + \alpha_2^+(\xi) | >2 |\alpha_1^+(\xi)| |\alpha_2^+(\xi)|>0$ for all $\xi \in \R$ then
$(u_{\mathrm{app},1}^+,u_{\mathrm{app,2}}^+)$ defined by
\eqref{E:main_uapp} becomes as follows:
\begin{align*}
	&u_{\mathrm{app},1}^+(t,x)\\
	&{}= (2it)^{-\frac{1}2} e^{i\frac{x^2}{4t}}\frac{\alpha_1^+(\frac{x}{2t})}{|\alpha_1^+(\frac{x}{2t})|}\(\frac{\rho(\frac{x}{2t})}2\)^{\frac12}
	\(1+ \omega_1(\tfrac{x}{2t})  \dn \( \sqrt2 p_3 \rho(\tfrac{x}{2t})\omega_1(\tfrac{x}{2t}) \log t  + t_0 , m(\tfrac{x}{2t}) \)\)^{\frac12}\\
	&\quad \times 
	\exp \Bigg( i \frac{\omega_2(\tfrac{x}{2t})^2}{\sqrt8 \omega_1(\tfrac{x}{2t})} \int_0^{\sqrt2 p_3 \rho(\tfrac{x}{2t})\omega_1(\tfrac{x}{2t}) \log t } \frac{\cn^2(\sigma+t_0,m(\frac{x}{2t}))}{1+ \omega_1 \dn (\sigma + t_0,m(\frac{x}{2t}))} d\sigma 
-i\tfrac{q_1+q_3}4 \rho \(\tfrac{x}{2t}\) \log t  \\
	&\qquad\qquad- \tfrac{i}{\sqrt8} 
	(1+\tfrac{q_1-q_3}{2p_3} )\(\am \(\sqrt2 p_3 \rho(\tfrac{x}{2t})\omega_1(\tfrac{x}{2t}) \log t  + t_0 , m(\tfrac{x}{2t})\) - \am \(   t_0 , m(\tfrac{x}{2t})\)\)
	  \\
	 &\qquad\qquad - \tfrac{iq_2}{\sqrt8 p_3} 
	\arcsin \( \frac{ \omega_2(\tfrac{x}{2t}) }{ \omega_1(\tfrac{x}{2t}) }\sn \(\sqrt2 p_3 \rho(\tfrac{x}{2t})\omega_1(\tfrac{x}{2t}) \log t  + t_0 , m(\tfrac{x}{2t})\)\) \\
	 &\qquad\qquad +\tfrac{iq_2}{\sqrt8 p_3} 
	\arcsin \( \frac{ \mathcal{D}_0(\tfrac{x}{2t}) \mathcal{I}_0(\tfrac{x}{2t}) }{ |\mathcal{D}_0(\tfrac{x}{2t})| ( \mathcal{I}_0(\tfrac{x}{2t})+ 2\mathcal{D}_0(\tfrac{x}{2t})^2 )^{1/2}}\) \Bigg)
\end{align*}
and
\begin{align*}
	&u_{\mathrm{app},2}^+(t,x)\\
	&{}= (2it)^{-\frac{1}2} e^{i\frac{x^2}{4t}}\frac{\alpha_1^+(\frac{x}{2t})}{|\alpha_1^+(\frac{x}{2t})|}\(\frac{\rho(\frac{x}{2t})}2\)^{\frac12} \omega_2(\tfrac{x}{2t})
	\(1+ \omega_1(\tfrac{x}{2t})  \dn \( \sqrt2 p_3 \rho(\tfrac{x}{2t})\omega_1(\tfrac{x}{2t}) \log t  + t_0 , m(\tfrac{x}{2t}) \)\)^{-\frac12}\\
	&\quad \times  \( \cn \( \sqrt2 p_3 \rho(\tfrac{x}{2t})\omega_1(\tfrac{x}{2t}) \log t  + t_0 , m(\tfrac{x}{2t}) \) + \sqrt2 i \sn \( \sqrt2 p_3 \rho(\tfrac{x}{2t})\omega_1(\tfrac{x}{2t}) \log t  + t_0 , m(\tfrac{x}{2t}) \)\)\\
	&\quad \times 
	\exp \Bigg( i \frac{\omega_2(\tfrac{x}{2t})^2}{\sqrt8 \omega_1(\tfrac{x}{2t})} \int_0^{\sqrt2 p_3 \rho(\tfrac{x}{2t})\omega_1(\tfrac{x}{2t}) \log t } \frac{\cn^2(\sigma+t_0,m(\frac{x}{2t}))}{1+ \omega_1 \dn (\sigma + t_0,m(\frac{x}{2t}))} d\sigma 
-i\tfrac{q_1+q_3}4 \rho \(\tfrac{x}{2t}\) \log t  \\
	&\qquad\qquad- \tfrac{i}{\sqrt8} 
	(1+\tfrac{q_1-q_3}{2p_3} )\(\am \(\sqrt2 p_3 \rho(\tfrac{x}{2t})\omega_1(\tfrac{x}{2t}) \log t  + t_0 , m(\tfrac{x}{2t})\) - \am \(   t_0 , m(\tfrac{x}{2t})\)\)
	  \\
	 &\qquad\qquad - \tfrac{iq_2}{\sqrt8 p_3} 
	\arcsin \( \frac{ \omega_2(\tfrac{x}{2t}) }{ \omega_1(\tfrac{x}{2t}) }\sn \(\sqrt2 p_3 \rho(\tfrac{x}{2t})\omega_1(\tfrac{x}{2t}) \log t  + t_0 , m(\tfrac{x}{2t})\)\) \\
	 &\qquad\qquad +\tfrac{iq_2}{\sqrt8 p_3} 
	\arcsin \( \frac{ \mathcal{D}_0(\tfrac{x}{2t}) \mathcal{I}_0(\tfrac{x}{2t}) }{ |\mathcal{D}_0(\tfrac{x}{2t})| ( \mathcal{I}_0(\tfrac{x}{2t})+ 2\mathcal{D}_0(\tfrac{x}{2t})^2 )^{1/2}}\) \Bigg),
\end{align*}
where
\[
	\rho = |\alpha_1^+|^2 + |\alpha_2^+|^2, \quad \mathcal{D}_0 = |\alpha_1^+|^2 - |\alpha_2^+|^2, \quad \mathcal{R}_0 = 2\Re\overline{\alpha_1^+} \alpha_2^+,
	\quad \mathcal{I}_0 = 2\Im\overline{\alpha_1^+} \alpha_2^+,
\]
\[
		\omega_1 =  \tfrac{ \mathcal{D}_0  }{|\mathcal{D}_0 | } \( \tfrac{\mathcal{I}_0^2 + 2 \mathcal{D}_0^2}{2\rho^2} \)^\frac12 , \qquad
\omega_2 =   \( \tfrac{\mathcal{I}_0^2 + 2 \mathcal{R}_0^2}{2\rho^2} \)^\frac12, \qquad
m = \tfrac{\omega_2^2}{\omega_1^2},
\]
and $t_0=t_0(\xi)$ is given by 
\[
(\sn( t_0, m), \cn (t_0, m)) = (\tfrac{\mathcal{I}_0}{(\mathcal{I}_0^2+2\mathcal{R}_0^2)},\tfrac{\sqrt2 \mathcal{R}_0}{(\mathcal{I}_0^2+2\mathcal{R}_0^2)}).
\]
Here, $\sn$, $\cn$, and $\dn$ are the Jacobi elliptic functions and $\am$ is the amplitude function
(see
Appendix \ref{S:elliptic}).
Note that the above condition on $(\alpha_1^+,\alpha_2^+)$ is equivalent to $0< \omega_2 < |\omega_1|<1$.
The validity of these inequalities is assumed for simplicity, i.e., to eliminate the need for case divisions in the representation of the asymptotic profiles.
We also note that the formula involves a definite integral of $\frac{\cn^2(t,m)}{1+ \omega \dn (t.m)}$ ($|\omega|<1)$.
A computer-aided calculation suggests that a primitive of this function is also explicitly expressed in terms of the Jacobi elliptic functions and the elliptic integrals.
We do not pursue it here.

\bigskip

The rest of the paper is organized as follows.
In Section \ref{S:Gallery}, we collect the explicit formulas for the solution to \eqref{E:qqq} in the above fifteen cases. Let us recall again that these formula together with Theorems \ref{T:KS} and \ref{T:main} give the large
time asymptotics of the solutions to the NLS system \eqref{E:NLS}.
In Section \ref{S:classification},
we briefly recall the classification argument in \cites{MSU2,M} and prove that \eqref{E:NLS} is a standard form of system which has a coercive mass-like conserved quantity.
Then, we turn to the proof of the main results.
Section \ref{S:pfmain} is devoted to the proof of Theorem \ref{T:main}.
We discuss the integration of \eqref{E:qqq} in Section \ref{S:qqq}.
Finally, we prove Theorem \ref{P:NS} in Section \ref{S:NS}.

\section{Gallery} \label{S:Gallery}
In this section, we collect explicit solutions to the quadratic ODE system \eqref{E:qqq} in Cases 1 to 15.
In these cases, one can obtain the explicit representation of a solution for arbitrary data, in terms of elementary functions and  Jacobi elliptic functions.
The notation and basic facts on the Jacobi elliptic functions is summarized in Appendix \ref{S:elliptic}.
Since the trivial data gives a trivial solution to \eqref{E:qqq}, we consider nontrivial solutions, that is,
we take data from $S^2_\rho$ for some $\rho>0$ unless otherwise stated.
\subsection{Room 1 -- pure cases}
In the first part, we study the four pure cases.
They all have different characters.
\subsubsection{Case 1}
The NLS system takes the form
\begin{equation}\label{E:NLS1}
\left\{
\begin{aligned}
	(i \partial_t  + \partial_x^2)u_1={}&p_1(2|u_1|^2 u_2 + u_1^2\ol{u_2})
 - p_1 |u_2|^2u_2
	- 4p_1 \Re (\overline{u_1} u_2) u_1 +\mathcal{V}(u_1,u_2) u_1, \\
	(i \partial_t  + \partial_x^2)u_2={}&
p_1 |u_1|^2 u_1-p_1 (2 u_1 |u_2|^2 + \ol{u_1}u_2^2) + 4p_1 \Re (\overline{u_1} u_2) u_2 + \mathcal{V}(u_1,u_2)u_2
\end{aligned}
\right.
\end{equation}
with $p_1 = 1$.
The quadratic system \eqref{E:qqq} becomes
\begin{equation}\label{E:qqq_1}
	\mathcal{D}' = 2p_1 \mathcal{I} \mathcal{D}, \quad
	\mathcal{R}' = 2p_1 \mathcal{I} \mathcal{R}, \quad
	\mathcal{I}' = -2p_1 ( \mathcal{D}^2 + \mathcal{R}^2 ) .
\end{equation}
We have the following

\begin{proposition}
The two points $(\mathcal{D},\mathcal{R}, \mathcal{I})=\pm(0,0,\rho)$ are fixed points of \eqref{E:qqq_1}. 
If $(\mathcal{D}(0),\mathcal{R}(0),\mathcal{I}(0))\neq \pm (0,0,\rho)$ then the solution to \eqref{E:qqq_1} is given by
\[
	\mathcal{D}(\tau) = \tfrac{\rho \mathcal{D}(0) }{\sqrt{\mathcal{D}(0)^2+\mathcal{R}(0)^2}} (\cosh (2 p_1 \rho \tau -\tanh^{-1} \tfrac{\mathcal{I}(0)}\rho))^{-1}, 
\]
\[
	\mathcal{R}(\tau) = \tfrac{\rho \mathcal{R}(0) }{\sqrt{\mathcal{D}(0)^2+\mathcal{R}(0)^2}} (\cosh (2 p_1 \rho \tau -\tanh^{-1} \tfrac{\mathcal{I}(0)}\rho))^{-1}, 
\]
and
\[
	\mathcal{I}(\tau) = - \rho \tanh \(2 p_1 \rho \tau - \tanh^{-1} \tfrac{\mathcal{I}(0)}\rho \).
\]
\end{proposition}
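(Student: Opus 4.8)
The plan is to decouple the system \eqref{E:qqq_1} by exploiting the fact that $\mathcal{D}$ and $\mathcal{R}$ obey the \emph{same} scalar linear equation $y' = 2p_1\mathcal{I}\,y$, together with the conservation law \eqref{E:qqs}. The fixed points come for free: since $\mathcal{I}' = -2p_1(\mathcal{D}^2+\mathcal{R}^2)$ vanishes precisely when $\mathcal{D}=\mathcal{R}=0$, and on $S^2_\rho$ this forces $\mathcal{I}=\pm\rho$, the only equilibria are $\pm(0,0,\rho)$, where $\mathcal{D}'=\mathcal{R}'=0$ hold trivially as well.

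Assume now $(\mathcal{D}(0),\mathcal{R}(0),\mathcal{I}(0)) \neq \pm(0,0,\rho)$. Because the solution lies on $S^2_\rho$, this is equivalent to $\mathcal{D}(0)^2+\mathcal{R}(0)^2 > 0$, hence to $|\mathcal{I}(0)|<\rho$. My first step is to obtain a closed equation for $\mathcal{I}$ alone: substituting $\mathcal{D}^2+\mathcal{R}^2 = \rho^2 - \mathcal{I}^2$ from \eqref{E:qqs} into the third equation of \eqref{E:qqq_1} produces the scalar Riccati equation
\[
	\mathcal{I}' = -2p_1(\rho^2 - \mathcal{I}^2).
\]
Separating variables and using the invariance of the interval $(-\rho,\rho)$ (guaranteed by $|\mathcal{I}(0)|<\rho$) I expect $\tanh^{-1}(\mathcal{I}(\tau)/\rho) = \tanh^{-1}(\mathcal{I}(0)/\rho) - 2p_1\rho\tau$ for all $\tau\in\R$, which is exactly the asserted formula for $\mathcal{I}$.

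The second step recovers $\mathcal{D}$ and $\mathcal{R}$. Treating $\mathcal{I}(\cdot)$ as a now-known function, the first two equations of \eqref{E:qqq_1} are linear with a common integrating factor, so
\[
	\mathcal{D}(\tau) = \mathcal{D}(0)\,e^{2p_1\int_0^\tau \mathcal{I}(s)\,ds}, \qquad \mathcal{R}(\tau) = \mathcal{R}(0)\,e^{2p_1\int_0^\tau \mathcal{I}(s)\,ds}.
\]
Inserting the explicit $\mathcal{I}$, I will compute $2p_1\int_0^\tau \mathcal{I}(s)\,ds = \log\bigl(\cosh c_0 / \cosh(2p_1\rho\tau - c_0)\bigr)$ with $c_0 := \tanh^{-1}(\mathcal{I}(0)/\rho)$, and the identity $\cosh(\tanh^{-1}x) = (1-x^2)^{-1/2}$ gives $\cosh c_0 = \rho/\sqrt{\mathcal{D}(0)^2+\mathcal{R}(0)^2}$; substituting yields the claimed closed forms for $\mathcal{D}(\tau)$ and $\mathcal{R}(\tau)$.

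I do not anticipate a serious obstacle here: the integration is routine once the decoupling is seen. The only points needing care are book-keeping ones — confirming that non-equilibrium data is characterized by $|\mathcal{I}(0)|<\rho$, so that the $\tanh^{-1}$ is well defined and $(-\rho,\rho)$ is both forward- and backward-invariant (consistent with global existence and the conservation of $\rho$), and tracking the correct branch of the square root through the $\cosh\circ\tanh^{-1}$ identity. As an a posteriori cross-check, one verifies directly that the three formulas satisfy $\mathcal{D}^2+\mathcal{R}^2+\mathcal{I}^2 = \rho^2$ and solve \eqref{E:qqq_1}, which simultaneously reconfirms that these solutions are global.
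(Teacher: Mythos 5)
Your proof is correct. It differs mildly but genuinely from the paper's: the paper treats Cases 1, 2, and 5 uniformly via its ``standard strategy,'' introducing the new time variable $s(\tau)=\int_0^\tau 2\mathcal{I}$, solving the linear system \eqref{E:qqq2} to get $(\mathcal{D},\mathcal{R})=e^{p_1 s}(\mathcal{D}(0),\mathcal{R}(0))$, and then recovering $s(\tau)$ from the integral relation \eqref{E:finds} with $\mathcal{Q}(s)=\sqrt{\rho^2-(\mathcal{D}(0)^2+\mathcal{R}(0)^2)e^{2p_1 s}}$, which requires tracking the sign of $\mathcal{I}$ on each component of $\{\mathcal{I}\neq 0\}$. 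You instead close the system on $\mathcal{I}$ alone by substituting $\mathcal{D}^2+\mathcal{R}^2=\rho^2-\mathcal{I}^2$ from \eqref{E:qqs} into the third equation, obtaining the autonomous equation $\mathcal{I}'=-2p_1(\rho^2-\mathcal{I}^2)$, and then integrate the common linear equation for $\mathcal{D}$ and $\mathcal{R}$; the identity $\cosh(\tanh^{-1}x)=(1-x^2)^{-1/2}$ converts $\cosh c_0$ into $\rho/\sqrt{\mathcal{D}(0)^2+\mathcal{R}(0)^2}$ exactly as needed. Your route is cleaner for this case --- it sidesteps the sign ambiguity entirely and makes global existence and forward/backward invariance of $|\mathcal{I}|<\rho$ transparent --- but it exploits the special feature that $\mathcal{I}'$ depends only on $\mathcal{D}^2+\mathcal{R}^2$, whereas the paper's parametrization by $s$ is the one that extends to Case 5 (where the matrix $\Omega$ additionally rotates $(\mathcal{D},\mathcal{R})$) and to the remaining cases. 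All the computations you outline check out, and your characterization of the non-equilibrium data by $\mathcal{D}(0)^2+\mathcal{R}(0)^2>0$, i.e.\ $|\mathcal{I}(0)|<\rho$, is exactly what makes the formulas well defined.
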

Combining the proposition with Theorems \ref{T:KS} and \ref{T:main}, we obtain the asymptotic profile of  solutions to \eqref{E:NLS1}, as seen in Section \ref{sss:ex1}.
One sees that the nonlinear synchronization occurs in the sense of Definition \ref{D:ns}; all solution, other than $(0,0,\rho)$, converges to the same fixed point $(0,0,-\rho)$ as $\tau \to \infty$.

\begin{remark}
Katayama-Matoba-Sunagawa \cite{KMS} studies a system of semilinear  nonlinear wave equations for which
an energy-transfer type phenomena occurs.
In there result, one key of this kind of one-take-all-type behavior was the ODE system
\[
	X' = XY, \quad Y' = Y^2 -C.
\]
If we set
 $X=\mathcal{D}^2+\mathcal{R}^2-(\mathcal{D}(0)^2+\mathcal{R}(0)^2)$ and $Y= \mathcal{I}$ for a solution to \eqref{E:qqq_1}, we obtain essentially the same ODE system.
\end{remark}

\subsubsection{Case 2}
We move to the Case 2. 
The NLS system takes the form
\begin{equation}\label{E:NLS2}
\left\{
\begin{aligned}
	(i \partial_t  + \partial_x^2)u_1={}&
3p_2  |u_1|^2 u_1 +p_2 (2 u_1 |u_2|^2 + \ol{u_1}u_2^2) 
  +\mathcal{V}(u_1,u_2) u_1, \\
	(i \partial_t  + \partial_x^2)u_2={}&
p_2(2|u_1|^2 u_2 + u_1^2\ol{u_2}) + 3p_2 |u_2|^2u_2
 + \mathcal{V}(u_1,u_2)u_2.
\end{aligned}
\right.
\end{equation}
The corresponding quadratic system \eqref{E:qqq} becomes
\begin{equation}\label{E:qqq_2}
	\mathcal{D}' = 2p_2 \mathcal{I} \mathcal{R}, \quad
	\mathcal{R}' = -2p_2 \mathcal{I} \mathcal{D}, \quad
	\mathcal{I}' = 0.
\end{equation}
The system is studied in \cites{MS,S} in the context of nonlinear Klein-Gordon system.
We have the following result for the solution to \eqref{E:qqq_2}.
\begin{proposition}
$\{ (\rho\cos \theta, \rho \sin \theta,0)\ |\ \theta\in \R/2\pi\Z \}\cup\{(0,0,\pm\rho)\}$ is the set of all fixed points of \eqref{E:qqq_2}.
The solution to \eqref{E:qqq_2} is given by
\[
	\mathcal{D}(\tau) = \mathcal{D}(0) \cos (2p_2 \tau \mathcal{I}(0) ) + \mathcal{R}(0) \sin (2p_2 \tau \mathcal{I}(0) ), 
\]
\[
	\mathcal{R}(\tau) = -\mathcal{D}(0) \sin (2p_2 \tau \mathcal{I}(0) ) + \mathcal{R}(0) \cos (2p_2 \tau \mathcal{I}(0) ), 
\]
and
\[
	\mathcal{I}(\tau) = \mathcal{I}(0).
\]
\end{proposition}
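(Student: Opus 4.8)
The plan is to exploit the fact that \eqref{E:qqq_2} is essentially a planar rotation with a frozen vertical coordinate. First I would read off from the third equation $\mathcal{I}'=0$ that $\mathcal{I}(\tau)=\mathcal{I}(0)$ is constant along any solution. Feeding this constant back into the first two equations turns them into the linear, constant-coefficient system $\mathcal{D}'=\omega\mathcal{R}$, $\mathcal{R}'=-\omega\mathcal{D}$ with $\omega:=2p_2\mathcal{I}(0)$, completely decoupled from the (now trivial) $\mathcal{I}$-equation.

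To integrate the planar part, I would introduce the complex combination $w:=\mathcal{D}+i\mathcal{R}$, for which the two real equations collapse to the single scalar ODE $w'=-i\omega w$. Its unique solution with data $w(0)=\mathcal{D}(0)+i\mathcal{R}(0)$ is $w(\tau)=w(0)e^{-i\omega\tau}$; separating real and imaginary parts and using $\omega=2p_2\mathcal{I}(0)$ reproduces exactly the claimed formulas for $\mathcal{D}(\tau)$ and $\mathcal{R}(\tau)$. Alternatively one may simply differentiate the stated expressions and verify directly that they satisfy \eqref{E:qqq_2} with the correct initial values, uniqueness for the linear initial value problem then finishing this part. As a sanity check, $|w(\tau)|^2=\mathcal{D}(\tau)^2+\mathcal{R}(\tau)^2$ is constant and $\mathcal{I}$ is constant, so the solution stays on $S^2_\rho$, consistent with the conservation of $\rho$ noted after \eqref{E:qqs}.

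For the fixed-point set I would set all three right-hand sides of \eqref{E:qqq_2} to zero. The $\mathcal{I}$-equation is automatic, while $\mathcal{D}'=\mathcal{R}'=0$ read $p_2\mathcal{I}\mathcal{R}=0$ and $p_2\mathcal{I}\mathcal{D}=0$. Since $p_2\neq0$ in Case 2, these amount to $\mathcal{I}\mathcal{R}=\mathcal{I}\mathcal{D}=0$, which split into two alternatives: if $\mathcal{I}=0$ then the constraint $(\mathcal{D},\mathcal{R},\mathcal{I})\in S^2_\rho$ forces $\mathcal{D}^2+\mathcal{R}^2=\rho^2$, giving the equatorial circle $\{(\rho\cos\theta,\rho\sin\theta,0)\}$; if $\mathcal{I}\neq0$ then $\mathcal{D}=\mathcal{R}=0$ and hence $\mathcal{I}=\pm\rho$, the two poles $(0,0,\pm\rho)$. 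This yields precisely the asserted fixed-point set.

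I do not anticipate any genuine obstacle here: once $\mathcal{I}$ is frozen, the dynamics are a uniform rotation of the $(\mathcal{D},\mathcal{R})$-plane at angular speed $\omega=2p_2\mathcal{I}(0)$. The only point needing a word is the hypothesis $p_2\neq0$, which is in force throughout Case 2 and is exactly what rules out spurious fixed points away from the equatorial circle and the two poles.
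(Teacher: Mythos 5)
Your proof is correct. It does, however, take a more elementary route than the paper's. The paper treats Cases 1, 2, and 5 uniformly by its ``standard strategy'': it introduces the reparametrized time $s=\int_0^t 2\mathcal{I}(\tilde\tau)\,d\tilde\tau$, writes $(\mathcal{D},\mathcal{R})=e^{s\Omega}(\mathcal{D}(0),\mathcal{R}(0))$ as in \eqref{E:DRform}, computes $\mathcal{Q}(s)=\sqrt{\rho^2-(\mathcal{D}(0)^2+\mathcal{R}(0)^2)e^{2p_1 s}}$, and then recovers $s(t)$ from the integral identity \eqref{E:finds}; specializing to $p_1=0$ gives $s(t)=2t\,\mathcal{I}(0)$ and hence the stated rotation. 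You instead observe directly that $\mathcal{I}'=0$ freezes $\mathcal{I}$, so the $(\mathcal{D},\mathcal{R})$-equations become a linear constant-coefficient system solved by $w=\mathcal{D}+i\mathcal{R}$, $w'=-i\omega w$ with $\omega=2p_2\mathcal{I}(0)$. Your argument is shorter and avoids the change of time variable entirely, but it exploits the special feature $\mathcal{I}'=0$ of Case 2 and does not extend to Cases 1 and 5, which is precisely what the paper's heavier machinery buys. Your fixed-point analysis (using $p_2\neq0$ to reduce to $\mathcal{I}\mathcal{R}=\mathcal{I}\mathcal{D}=0$ and then splitting on whether $\mathcal{I}=0$, together with the constraint of lying on $S^2_\rho$) is complete and matches the claimed set; the paper states this without spelling out the computation.
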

Combining the proposition with Theorems \ref{T:KS} and \ref{T:main}, we obtain the asymptotic profile of  solutions to \eqref{E:NLS2}.

\subsubsection{Case 3}
In the pure $p_3$ case,  the NLS system is
\begin{equation}\label{E:NLS3}
\left\{
\begin{aligned}
	(i \partial_t  + \partial_x^2)u_1={}&
p_3 |u_1|^2 u_1 -p_3 (2 u_1 |u_2|^2 + \ol{u_1}u_2^2)  +\mathcal{V}(u_1,u_2) u_1, \\
	(i \partial_t  + \partial_x^2)u_2={}&
-p_3(2|u_1|^2 u_2 + u_1^2\ol{u_2}) + p_3 |u_2|^2u_2 + \mathcal{V}(u_1,u_2)u_2
\end{aligned}
\right.
\end{equation}
and the quadratic system \eqref{E:qqq} takes the form
\begin{equation}\label{E:qqq_3}
	\mathcal{D}' = -2p_3 \mathcal{I} \mathcal{R}, \quad
	\mathcal{R}' = -2p_3 \mathcal{I} \mathcal{D}, \quad
	\mathcal{I}' = 4p_3 \mathcal{D} \mathcal{R}
\end{equation}
with $p_3=1$.
This is the typical ODE system which the Jacobi elliptic functions solve:
\begin{lemma}\label{L:ellipticODE1}
A solution to the quadratic ODE system 
\begin{equation}\label{E:ellipticODE1}
	f' = gh, \quad g' = - fh , \quad h' = -fg
\end{equation}
with a data
\[
	(f,g,h)(0) = (f_0,g_0,h_0) \in \R^3
\]
is given as follows: Let
$R_{fg} =\sqrt{f_0^2 + g_0^2}$ and $R_{fh} =\sqrt{ f_0^2 + h_0^2} $.
Suppose that $R_{fh} \ge R_{fg}$.
\begin{itemize}
\item If $R_{fg}=0$ then
$f(t)= 0$, $g(t) = 0$, and $h(t) = h_0$.
\item
If $R_{fh} > R_{fg}>0$ then $h_0\neq0$ follows and one has
\begin{align*}
	 f(t) ={}& R_{fg} \sn \( (\sign h_0) R_{fh} t  + t_0,  \tfrac{R_{fg}^2}{R_{fh}^2} \), \\
	  g(t) ={}& R_{fg} \cn \( (\sign h_0) R_{fh} t + t_0, \tfrac{R_{fg}^2}{R_{fh}^2} \), \\
		h(t) ={}& (\sign h_0) R_{fh}  \dn \( (\sign h_0) R_{fh} t + t_0,  \tfrac{R_{fg}^2}{R_{fh}^2} \),
\end{align*}
where $t_0$ is given by $(\sn( t_0, R_{fg}^2/R_{fh}^2), \cn (t_0, R_{fg}^2/R_{fh}^2)) = (f_0/R_{fg},g_0/R_{fg})$.
\item If $R_{fh} = R_{fg}>0$ and $g_0=h_0=0$ then $f(t)=f_0$, $g(t)=0$, and $h(t)=0$. 
\item If $R_{fh} = R_{fg}>0$ and $g_0\neq 0 $ then $|h_0|= |g_0|>0$ follows and one has
\begin{align*}
	 f(t) ={}& R_{fh} \tanh ( (\sign (g_0h_0))R_{fh} t + t_0), \\ g(t) ={}& (\sign g_0) R_{fh} \sech ( (\sign (g_0h_0))R_{fh} t + t_0), \\
		h(t) ={}& (\sign h_0)R_{fh} \sech ( (\sign (g_0h_0))R_{fh} t + t_0), 
\end{align*}
where $t_0 = \tanh^{-1} (f_0/R_{fh})$.
\end{itemize}
In all cases, $f(t)^2 + g(t)^2$ and $f(t)^2 + h(t)^2$ are conserved and equal to $R_{fg}^2$ and $R_{fh}^2$, respectively.
The explicit formula of a solution in the case $R_{fh}<R_{fg}$ is obtained by swapping $g$ and $h$.
\end{lemma}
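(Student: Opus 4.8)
The plan is to verify the claimed formulas directly and then close the argument with uniqueness. I would begin by recording the two conserved quantities: differentiating along the flow \eqref{E:ellipticODE1} gives $\frac{d}{dt}(f^2+g^2)=2f(gh)+2g(-fh)=0$ and $\frac{d}{dt}(f^2+h^2)=2f(gh)+2h(-fg)=0$, so that $f^2+g^2\equiv R_{fg}^2$ and $f^2+h^2\equiv R_{fh}^2$ throughout the evolution. This already establishes the final assertion of the lemma and, since it confines every orbit to a bounded set while the right-hand side is polynomial hence locally Lipschitz, it guarantees a unique global solution for each datum. It therefore suffices to exhibit, in each case, a globally defined triple that satisfies both \eqref{E:ellipticODE1} and the prescribed initial value, after which uniqueness identifies it with the solution.

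The heart of the matter is the generic case $R_{fh}>R_{fg}>0$. Here I would use the differentiation rules $\sn'=\cn\dn$, $\cn'=-\sn\dn$, $\dn'=-m\,\sn\cn$ (recalled in Appendix \ref{S:elliptic}) to test the ansatz $f=R_{fg}\sn(\lambda t+t_0,m)$, $g=R_{fg}\cn(\lambda t+t_0,m)$, $h=B\,\dn(\lambda t+t_0,m)$. Substituting into the three equations forces $\lambda=B$ and $B^2 m=R_{fg}^2$; choosing $B=(\sign h_0)R_{fh}$ and $m=R_{fg}^2/R_{fh}^2\in(0,1)$ makes all three identities hold simultaneously. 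For the initial data, since $(f_0/R_{fg})^2+(g_0/R_{fg})^2=1$ and $(\sn,\cn)$ sweeps the unit circle over a period, a phase $t_0$ with $(\sn(t_0,m),\cn(t_0,m))=(f_0/R_{fg},g_0/R_{fg})$ exists; the relation $\dn^2=1-m\,\sn^2$ then yields $R_{fh}^2\,\dn^2(t_0,m)=R_{fh}^2-f_0^2=h_0^2$, and since $\dn>0$ while $R_{fh}>R_{fg}$ forces $h_0\neq0$, the factor $\sign h_0$ is exactly what recovers $h_0=(\sign h_0)R_{fh}\,\dn(t_0,m)$.

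The degenerate cases I would treat separately. If $R_{fg}=0$ then $f_0=g_0=0$ and the constant triple $(0,0,h_0)$ solves \eqref{E:ellipticODE1}. If $R_{fh}=R_{fg}>0$, the conserved relation $g^2-h^2\equiv R_{fg}^2-R_{fh}^2=0$ gives $|g_0|=|h_0|$; when $g_0=h_0=0$ the equilibrium $(f_0,0,0)$ works, while for $g_0\neq0$ I would pass to the $m=1$ degeneration of the previous ansatz, where $\sn(\cdot,1)=\tanh$ and $\cn(\cdot,1)=\dn(\cdot,1)=\sech$, and verify by direct substitution (using $\frac{d}{dt}\tanh=\sech^2$ and $\frac{d}{dt}\sech=-\sech\tanh$) that the proposed $\tanh$/$\sech$ triple solves the system. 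The sign choice $\mu=(\sign(g_0h_0))R_{fh}$ for the argument, together with the independent signs $\sign g_0$ and $\sign h_0$ on $g$ and $h$, is precisely what reproduces $(f_0,g_0,h_0)$ via $t_0=\tanh^{-1}(f_0/R_{fh})$. Finally, the case $R_{fh}<R_{fg}$ follows with no extra work: the system \eqref{E:ellipticODE1} is invariant under the interchange $g\leftrightarrow h$ (the first equation is symmetric and the last two exchange), and this interchange swaps the roles of $R_{fg}$ and $R_{fh}$, reducing matters to the already-treated regime.

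I expect the only genuine obstacle to be the sign and phase bookkeeping in matching the data — pinning down $\sign h_0$ and the existence of $t_0$ in the generic case, and tracking the three signs in the hyperbolic ($m=1$) case — rather than any analytic difficulty, since the substitutions themselves are forced by the elliptic-function derivative identities and the conservation laws supply both global existence and uniqueness.
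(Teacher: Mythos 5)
Your proposal is correct and follows essentially the same route as the paper, which likewise verifies that the stated formulas solve \eqref{E:ellipticODE1} via the derivative identities \eqref{E:Jacobi_d} and then invokes uniqueness for the ODE system. Your additional bookkeeping (the conserved quantities $f^2+g^2$, $f^2+h^2$ yielding global existence, the matching of $t_0$ and the sign factors, and the $g\leftrightarrow h$ symmetry) simply fills in details the paper leaves to the reader.
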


\begin{proposition}
The six points $(\mathcal{D},\mathcal{R}, \mathcal{I})=\pm(\rho,0,0),\pm(0,\rho,0),\pm(0,0,\rho)$ are fixed points
of \eqref{E:qqq_3}.
Further,
the triplet  $(2 p_3\mathcal{I}, \sqrt{8}p_3 \mathcal{R}, \sqrt{8}p_3\mathcal{D} )$ solves \eqref{E:ellipticODE1}
and hence the solution $(\mathcal{D},\mathcal{R},\mathcal{I})$ to \eqref{E:qqq_3} is written explicitly in terms of 
the elementary functions and
the Jacobi elliptic functions as in Lemma \ref{L:ellipticODE1}.
\end{proposition}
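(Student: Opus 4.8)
The plan is to establish the three assertions of the proposition in turn, all by direct computation. For the fixed points, I would simply substitute each of the six candidate points into the right-hand side of \eqref{E:qqq_3}. Every term on the right is a product of two \emph{distinct} coordinates among $\mathcal{D},\mathcal{R},\mathcal{I}$, and each of the six points has two vanishing coordinates; hence at, say, $\pm(\rho,0,0)$ the factors $\mathcal{R}$ and $\mathcal{I}$ both vanish, forcing $\mathcal{D}'=\mathcal{R}'=\mathcal{I}'=0$, and the identical mechanism disposes of $\pm(0,\rho,0)$ and $\pm(0,0,\rho)$. This step is purely a matter of reading off which coordinate multiplies which.

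For the reduction to \eqref{E:ellipticODE1}, I would set $f:=2p_3\mathcal{I}$, $g:=\sqrt8\,p_3\mathcal{R}$, and $h:=\sqrt8\,p_3\mathcal{D}$, and differentiate using \eqref{E:qqq_3}. One gets $f'=2p_3\mathcal{I}'=8p_3^2\mathcal{D}\mathcal{R}$, which equals $gh=8p_3^2\mathcal{R}\mathcal{D}$; likewise $g'=\sqrt8\,p_3\mathcal{R}'=-2\sqrt8\,p_3^2\mathcal{I}\mathcal{D}=-fh$ and $h'=\sqrt8\,p_3\mathcal{D}'=-2\sqrt8\,p_3^2\mathcal{I}\mathcal{R}=-fg$, so $(f,g,h)$ solves \eqref{E:ellipticODE1} exactly. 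The only non-mechanical point is the choice of the scaling constants $2p_3$ and $\sqrt8\,p_3$: they are forced by the requirement that the coefficients $-2p_3,-2p_3,4p_3$ appearing in \eqref{E:qqq_3} all normalize to the $\pm1$ of \eqref{E:ellipticODE1}. One checks that $gh$ carries the factor $(\sqrt8\,p_3)^2=8p_3^2$ while $f'$ carries $2p_3\cdot 4p_3=8p_3^2$, and that the analogous balances hold for the remaining two equations; the matching of \emph{signs} is automatic because $f$ plays the role of the coordinate with the positive coefficient and $g,h$ the roles with the negative coefficients.

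Finally, I would invoke Lemma \ref{L:ellipticODE1} for $(f,g,h)$ with data $(f_0,g_0,h_0)=(2p_3\mathcal{I}(0),\sqrt8\,p_3\mathcal{R}(0),\sqrt8\,p_3\mathcal{D}(0))$ and then divide back by the scaling constants to recover $(\mathcal{D},\mathcal{R},\mathcal{I})$ in terms of $\sn,\cn,\dn$ (or the degenerate $\tanh,\sech$ branches). I anticipate no genuine obstacle here; the single consistency point worth verifying is that the quantities the lemma holds constant, namely $R_{fg}^2=f^2+g^2=4p_3^2(\mathcal{I}^2+2\mathcal{R}^2)$ and $R_{fh}^2=f^2+h^2=4p_3^2(\mathcal{I}^2+2\mathcal{D}^2)$, are indeed conserved along \eqref{E:qqq_3}. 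This is the one-line check $\frac{d}{d\tau}(\mathcal{I}^2+2\mathcal{R}^2)=8p_3\mathcal{I}\mathcal{D}\mathcal{R}-8p_3\mathcal{I}\mathcal{D}\mathcal{R}=0$ together with the same computation for $\mathcal{I}^2+2\mathcal{D}^2$, which also confirms compatibility with the sphere constraint $\rho^2=\mathcal{D}^2+\mathcal{R}^2+\mathcal{I}^2$. Since the lemma already supplies the explicit modulus $R_{fg}^2/R_{fh}^2$ and the phase shift $t_0$, the explicit representation of $(\mathcal{D},\mathcal{R},\mathcal{I})$ follows at once after undoing the rescaling.
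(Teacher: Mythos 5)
Your proposal is correct and follows the same route as the paper: a direct substitution check for the six fixed points, a term-by-term verification that $(2p_3\mathcal{I},\sqrt8\,p_3\mathcal{R},\sqrt8\,p_3\mathcal{D})$ satisfies $f'=gh$, $g'=-fh$, $h'=-fg$, and then an appeal to Lemma \ref{L:ellipticODE1} followed by undoing the rescaling. Your extra check that $\mathcal{I}^2+2\mathcal{R}^2$ and $\mathcal{I}^2+2\mathcal{D}^2$ are conserved correctly matches the lemma's invariants $R_{fg}^2$, $R_{fh}^2$ and is consistent with the remark the paper makes after the proposition.
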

Combining the proposition with Theorems \ref{T:KS} and \ref{T:main}, we obtain the asymptotic profile of  solutions to \eqref{E:NLS3}, as seen in Section \ref{sss:ex2}.

\begin{remark}
Although the explicit formula of a solution $(\mathcal{D}, \mathcal{R}, \mathcal{I})$ is somewhat complicated, the orbit of the solution is easily understood:
Since $2\mathcal{D}^2 + \mathcal{I}^2$ 
is a conserved quantity,
the orbit is a subset of the intersection of the 
$S_\rho^2$
and the boundary of the elliptical cylinder $\{2 x^2 + z^2 =
2\mathcal{D}(0)^2 + \mathcal{I}(0)^2 \}$. Note that there are two more conserved quantities;
$2\mathcal{R}^2 + \mathcal{I}^2$ and $\mathcal{D}^2 - \mathcal{R}^2$.
One obtains similar characterizations of the orbit with these quantities.
\end{remark}

\subsubsection{Case 4}
This case is notably simple since the system is essentially decoupled. (If $\mathcal{V}\equiv0$ in addition then it is completely decoupled.)  Indeed, one has
\begin{equation}\label{E:NLS4}
\left\{
\begin{aligned}
	(i \partial_t  + \partial_x^2)u_1={}&
2p_4 |u_1|^2 u_1  +\mathcal{V}(u_1,u_2) u_1, \\
	(i \partial_t  + \partial_x^2)u_2={}&
-2p_4 |u_2|^2u_2 + \mathcal{V}(u_1,u_2)u_2.
\end{aligned}
\right.
\end{equation}
Hence, the analysis for the single equation applies. 
Here, for completeness, let us record the explicit solution to the quadratic system \eqref{E:qqq}, which is now of the form
\begin{equation}\label{E:qqq_4}
	\mathcal{D}' = 0, \quad
	\mathcal{R}' = -2p_4 \rho \mathcal{I} , \quad
	\mathcal{I}' = 2p_4 \rho \mathcal{R}.
\end{equation}
\begin{proposition}
The two points $(\mathcal{D},\mathcal{R}, \mathcal{I})=\pm(\rho, 0,0)$ are fixed points of \eqref{E:qqq_4}.
Moreover, the solution to \eqref{E:qqq_4} is given by
\[
	\mathcal{D}(\tau) = \mathcal{D}(0), 
\]
\[
	\mathcal{R}(\tau) = \mathcal{R}(0) \cos (2p_4\rho \tau  ) - \mathcal{I}(0) \sin (2p_4 \rho \tau ), 
\]
and
\[
	\mathcal{I}(\tau) = \mathcal{R}(0) \sin (2p_4\rho \tau  ) + \mathcal{I}(0) \cos (2p_4 \rho \tau ).
\]
\end{proposition}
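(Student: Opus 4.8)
The plan is to exploit the fact that \eqref{E:qqq_4} is, along any solution, a \emph{linear} system with constant coefficients. First I would recall that $\rho$ is a conserved quantity of \eqref{E:qqq} for every choice of parameters, so $\rho$ may be treated as a fixed constant throughout the integration. The first equation $\mathcal{D}'=0$ then integrates immediately to give $\mathcal{D}(\tau)=\mathcal{D}(0)$. For the fixed points, I would set all three right-hand sides to zero: since $p_4>0$ and $\rho>0$, the relations $\mathcal{R}'=-2p_4\rho\,\mathcal{I}=0$ and $\mathcal{I}'=2p_4\rho\,\mathcal{R}=0$ force $\mathcal{R}=\mathcal{I}=0$, and combining this with the sphere constraint $\mathcal{D}^2+\mathcal{R}^2+\mathcal{I}^2=\rho^2$ from \eqref{E:qqs} yields $\mathcal{D}=\pm\rho$, giving exactly the two fixed points $\pm(\rho,0,0)$.

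For the general solution, the remaining pair $(\mathcal{R},\mathcal{I})$ obeys the planar linear system
\[
	\begin{bmatrix}\mathcal{R}\\\mathcal{I}\end{bmatrix}'
	= 2p_4\rho\begin{bmatrix}0&-1\\1&0\end{bmatrix}\begin{bmatrix}\mathcal{R}\\\mathcal{I}\end{bmatrix}.
\]
I would integrate this via the matrix exponential of $2p_4\rho\tau\,J$ with $J=\begin{bmatrix}0&-1\\1&0\end{bmatrix}$; since $J^2=-\mathrm{Id}$, one has the standard rotation $\exp(\theta J)=\cos\theta\,\mathrm{Id}+\sin\theta\,J$, which with $\theta=2p_4\rho\tau$ reproduces the claimed formulas for $\mathcal{R}(\tau)$ and $\mathcal{I}(\tau)$. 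Alternatively, one verifies the formulas directly: differentiating the proposed expressions recovers \eqref{E:qqq_4}, and at $\tau=0$ they agree with the initial data, so uniqueness for the linear ODE closes the argument.

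There is no substantive obstacle here, precisely because $\rho$ is conserved: the system then has constant coefficients, the $\mathcal{D}$-component is frozen, and the $(\mathcal{R},\mathcal{I})$-block is a pure rotation. The only point meriting a word of care is that the fixed-point computation must invoke the sphere constraint \eqref{E:qqs} to pin down $\mathcal{D}=\pm\rho$, rather than leaving $\mathcal{D}$ undetermined.
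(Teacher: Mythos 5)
Your proposal is correct and follows essentially the same route as the paper, which disposes of Case 4 simply by observing that (with $\rho$ conserved) \eqref{E:qqq_4} is a constant-coefficient linear system whose $(\mathcal{R},\mathcal{I})$-block is a rotation. Your fixed-point computation and the matrix-exponential integration are exactly the intended argument, just written out in more detail than the paper bothers to.
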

Combining the proposition with Theorems \ref{T:KS} and \ref{T:main}, we obtain the asymptotic profile of  solutions to \eqref{E:NLS4}.
Note that, in this case, one has $N_1 = p_4(\rho + \mathcal{D}(0)) = 2p_4|A_1(0)|^2$
and $N_2=p_4(\rho -\mathcal{D}(0))= 2 p_4 |A_2(0)|^2$.
These yield the standard asymptotic profile.
\begin{remark}
The pure $p_5$-component case is studied in \cite{KS}*{Example 6.2}.
As mentioned above, the case is reduced to this pure $p_4$-component case by a change of variable.
\end{remark}

\subsection{Room 2 -- mixed cases}

We turn to the mixed cases.
One will see that the characteristic properties seen in the pure cases are sometimes simply superposed 
and sometimes compete with each other.

\subsubsection{Case 5}
Let us begin with the mixture of  the
$p_1$-component  and the $p_2$-component. 
We see that the resulting behavior possesses the both properties appear in Cases 1 and 2.
The system is
\begin{equation}\label{E:NLS5}
\left\{
\begin{aligned}
	(i \partial_t  + \partial_x^2)u_1={}&
3p_2  |u_1|^2 u_1 +p_1 (2|u_1|^2 u_2 + u_1^2\ol{u_2})
+p_2 (2 u_1 |u_2|^2 + \ol{u_1}u_2^2) - p_1 |u_2|^2u_2
	\\& - 4p_1 \Re (\overline{u_1} u_2) u_1 +\mathcal{V}(u_1,u_2) u_1, \\
	(i \partial_t  + \partial_x^2)u_2={}&
p_1 |u_1|^2 u_1+p_2(2|u_1|^2 u_2 + u_1^2\ol{u_2})-p_1 (2 u_1 |u_2|^2 + \ol{u_1}u_2^2) + 3p_2 |u_2|^2u_2
	\\& + 4p_1 \Re (\overline{u_1} u_2) u_2 + \mathcal{V}(u_1,u_2)u_2.
\end{aligned}
\right.
\end{equation}
The quadratic system \eqref{E:qqq} is
\begin{equation}\label{E:qqq_5}
\begin{bmatrix}\mathcal{D}\\\mathcal{R}\end{bmatrix}' =2\mathcal{I}
\begin{bmatrix}
p_1 & p_2 \\ - p_2 & p_1 
\end{bmatrix}
\begin{bmatrix}\mathcal{D}\\\mathcal{R}\end{bmatrix} ,\quad
\mathcal{I}' =  -2p_1 (\mathcal{D}^2+\mathcal{R}^2) 
\end{equation}
in this case.
\begin{proposition}
The two points $(\mathcal{D},\mathcal{R}, \mathcal{I})=\pm(0,0,\rho)$ are fixed points of \eqref{E:qqq_5}.
If $(\mathcal{D}(0),\mathcal{R}(0),\mathcal{I}(0))\neq \pm (0,0,\rho)$ then the solution to \eqref{E:qqq_5} is given by
\[
	\mathcal{D}(\tau) = \rho \frac{\cos (\tau_0 + \frac{p_2}{p_1} \log ((\rho-\mathcal{I}(0))e^{2p_1\rho \tau}+(\rho+\mathcal{I}(0))e^{-2p_1\rho \tau}) )}{\cosh (2 p_1 \rho \tau - \tanh^{-1} \tfrac{\mathcal{I}(0)}\rho)},
\]
\[
	\mathcal{R}(\tau) = \rho \frac{\sin (\tau_0 + \frac{p_2}{p_1} \log ((\rho-\mathcal{I}(0))e^{2p_1\rho \tau}+(\rho+\mathcal{I}(0))e^{-2p_1\rho \tau}) )}{\cosh (2 p_1 \rho \tau - \tanh^{-1} \tfrac{\mathcal{I}(0)}\rho)},
\]
and
\[
	\mathcal{I}(\tau) = - \rho \tanh \(2 p_1 \rho \tau - \tanh^{-1} \tfrac{\mathcal{I}(0)}\rho\),
\]
where $\tau_0$ is given by the relation
\[
	(\cos (\tau_0 + \log (2\rho)^{p_2/p_1}) , \sin (\tau_0 + \log (2\rho)^{p_2/p_1}) )
	= (\mathcal{D}(0)^2+\mathcal{R}(0)^2)^{-\frac12} (\mathcal{D}(0),\mathcal{R}(0)).
\]
\end{proposition}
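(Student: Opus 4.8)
The plan is to collapse the three–dimensional system \eqref{E:qqq_5} onto two decoupled scalar problems by exploiting the fact that the coefficient matrix is $\begin{bmatrix} p_1 & p_2 \\ -p_2 & p_1\end{bmatrix} = p_1 I + p_2 J$, where $J=\begin{bmatrix}0&1\\-1&0\end{bmatrix}$ generates planar rotations. First I would settle the fixed points: on $S^2_\rho$ an equilibrium must make the right-hand side of \eqref{E:qqq_5} vanish, and since $p_1>0$ the third equation forces $\mathcal{D}^2+\mathcal{R}^2=0$, i.e.\ $\mathcal{D}=\mathcal{R}=0$; the constraint \eqref{E:qqs} then gives $\mathcal{I}=\pm\rho$. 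Both points are visibly equilibria, so the fixed-point set is exactly $\pm(0,0,\rho)$.

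Next I would introduce the complex unknown $W:=\mathcal{D}+i\mathcal{R}$. Since the matrix acts on $(\mathcal{D},\mathcal{R})$ as multiplication by $p_1-ip_2$ on $W$ (because $\mathcal{R}-i\mathcal{D}=-iW$), the first two equations combine into the single linear equation $W'=2(p_1-ip_2)\mathcal{I}\,W$. Simultaneously, $|W|^2=\mathcal{D}^2+\mathcal{R}^2=\rho^2-\mathcal{I}^2$ by \eqref{E:qqs} together with the conservation of $\rho$, so the $\mathcal{I}$–equation closes: $\mathcal{I}'=-2p_1(\rho^2-\mathcal{I}^2)$. This scalar equation is separable (a constant-coefficient Riccati equation); the hypothesis $(\mathcal{D}(0),\mathcal{R}(0),\mathcal{I}(0))\neq\pm(0,0,\rho)$ means $|\mathcal{I}(0)|<\rho$, so $\tanh^{-1}(\mathcal{I}(0)/\rho)$ is defined and I obtain $\mathcal{I}(\tau)=-\rho\tanh(2p_1\rho\tau-\tanh^{-1}(\mathcal{I}(0)/\rho))$, exactly the expression produced in Case 1.

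To recover $W$, I would write $W=re^{i\theta}$ and split $W'/W=(\log r)'+i\theta'=2(p_1-ip_2)\mathcal{I}$ into its real and imaginary parts, giving $(\log r)'=2p_1\mathcal{I}$ and $\theta'=-2p_2\mathcal{I}$. Both integrate against the single primitive $\int_0^\tau\mathcal{I}\,ds=-\tfrac{1}{2p_1}\log\bigl(\cosh(2p_1\rho\tau-\tanh^{-1}\tfrac{\mathcal{I}(0)}{\rho})/\cosh(\tanh^{-1}\tfrac{\mathcal{I}(0)}{\rho})\bigr)$, computed from $\int\tanh=\log\cosh$. This yields $r(\tau)=\rho/\cosh(2p_1\rho\tau-\tanh^{-1}\tfrac{\mathcal{I}(0)}{\rho})$ after checking the normalization $r(0)\cosh(\tanh^{-1}\tfrac{\mathcal{I}(0)}{\rho})=\sqrt{\rho^2-\mathcal{I}(0)^2}\cdot\tfrac{\rho}{\sqrt{\rho^2-\mathcal{I}(0)^2}}=\rho$, and $\theta(\tau)=\theta(0)+\tfrac{p_2}{p_1}\log\bigl(\cosh(\cdots)/\cosh(\tanh^{-1}\tfrac{\mathcal{I}(0)}{\rho})\bigr)$.

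The main obstacle is purely cosmetic: converting $\cosh(2p_1\rho\tau-\tanh^{-1}\tfrac{\mathcal{I}(0)}{\rho})$ into the exponential form appearing in the statement. Using $\tanh^{-1}\tfrac{\mathcal{I}(0)}{\rho}=\tfrac12\log\tfrac{\rho+\mathcal{I}(0)}{\rho-\mathcal{I}(0)}$ to expand the hyperbolic cosine produces $\tfrac{1}{2\sqrt{\rho^2-\mathcal{I}(0)^2}}\bigl((\rho-\mathcal{I}(0))e^{2p_1\rho\tau}+(\rho+\mathcal{I}(0))e^{-2p_1\rho\tau}\bigr)$, and the constant prefactor inside the logarithm is absorbed into the phase constant $\tau_0$. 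Setting $\mathcal{D}=r\cos\theta$ and $\mathcal{R}=r\sin\theta$ and matching at $\tau=0$ (where the argument reduces to $\tau_0+\tfrac{p_2}{p_1}\log(2\rho)$) then pins down $\tau_0$ through the stated relation and completes the proof. This last step requires careful bookkeeping with the $\tanh^{-1}$/$\cosh$ identities but is otherwise routine.
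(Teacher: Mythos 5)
Your argument is correct, and it reaches the stated formulas by a route that differs in mechanism from the paper's. The paper runs its ``standard strategy'': it reparametrizes time by $s(t)=\int_0^t 2\mathcal{I}$, solves the resulting linear system for $(\mathcal{D},\mathcal{R})$ via the matrix exponential $e^{s\Omega}$ (which for $p_3=p_4=p_5=0$ is $e^{p_1 s}$ times a rotation by $p_2 s$), substitutes into $\mathcal{I}=\pm\mathcal{Q}(s)$ with $\mathcal{Q}(s)=\sqrt{\rho^2-(\mathcal{D}(0)^2+\mathcal{R}(0)^2)e^{2p_1 s}}$, and finally inverts the separable integral $\int_0^{s(t)}d\tau/\mathcal{Q}(\tau)=2t\,\sign\mathcal{I}(0)$ to obtain $s(t)$. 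You instead close the $\mathcal{I}$-equation directly in the original time variable via \eqref{E:qqs}, getting the autonomous scalar equation $\mathcal{I}'=-2p_1(\rho^2-\mathcal{I}^2)$, solve it, and then integrate the complexified linear equation $W'=2(p_1-ip_2)\mathcal{I}W$ for $W=\mathcal{D}+i\mathcal{R}$ against the explicit primitive $\int_0^\tau\mathcal{I}$. The two separable ODEs are related by the substitution $e^{2p_1 s}(\mathcal{D}(0)^2+\mathcal{R}(0)^2)=\rho^2-\mathcal{I}^2$, so the computations are essentially dual to one another; what your ordering buys is that you never need the hypothesis $\mathcal{I}(0)\neq0$, the sign factor $\sigma$, or a continuation argument across zeros of $\mathcal{I}$ (the paper's \eqref{E:finds} is only asserted ``at least around $t=0$''), since your $\mathcal{I}$-equation is solved globally for every datum with $|\mathcal{I}(0)|<\rho$ and $W$ never vanishes by linearity. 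The price is that the closure $\mathcal{I}'=-2p_1(\mathcal{D}^2+\mathcal{R}^2)=-2p_1(\rho^2-\mathcal{I}^2)$ is special to the case $p_3=p_4=p_5=0$, whereas the paper's scheme is designed to cover all fifteen cases uniformly. Your fixed-point argument and the bookkeeping identifying $\cosh(2p_1\rho\tau-\tanh^{-1}\tfrac{\mathcal{I}(0)}{\rho})$ with the exponential expression, and $\tau_0$ with $\theta(0)-\tfrac{p_2}{p_1}\log(2\rho)$, are all consistent with the statement.
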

Combining the proposition with Theorems \ref{T:KS} and \ref{T:main}, we obtain the asymptotic profile of  solutions to \eqref{E:NLS5}.
One sees that the nonlinear synchronization occurs as in Case 1, all solution, other than $(0,0,\rho)$, converges to the same fixed point $(0,0,-\rho)$ as $\tau \to \infty$.

\subsubsection{Case 6}

We next consider the case where $p_1$-component and $p_4$-component are present.
It will turn out that the two characteristic behavior seen in the pure cases compete with each other.
The system is
\begin{equation}\label{E:NLS6}
\left\{
\begin{aligned}
	(i \partial_t  + \partial_x^2)u_1={}&
2p_4 |u_1|^2 u_1 + p_1 (2|u_1|^2 u_2 + u_1^2\ol{u_2}) - p_1 |u_2|^2u_2
	 - 4p_1 \Re (\overline{u_1} u_2) u_1 +\mathcal{V}(u_1,u_2) u_1, \\
	(i \partial_t  + \partial_x^2)u_2={}&
p_1 |u_1|^2 u_1-p_1 (2 u_1 |u_2|^2 + \ol{u_1}u_2^2) -2p_4 |u_2|^2u_2
	+ 4p_1 \Re (\overline{u_1} u_2) u_2 + \mathcal{V}(u_1,u_2)u_2.
\end{aligned}
\right.
\end{equation}
The quadratic system is
\begin{equation}\label{E:qqq_6}
	\mathcal{D}' = 2p_1 \mathcal{I} \mathcal{D}, \quad
	\mathcal{R}' = 2p_1 \mathcal{I} \mathcal{R}-2p_4 \rho \mathcal{I}, \quad
	\mathcal{I}' = -2p_1 ( \mathcal{D}^2 + \mathcal{R}^2 )+2p_4 \rho \mathcal{R} .
\end{equation}

\begin{proposition}
\begin{itemize}
\item If $p_1>p_4$ then
the two points $(\mathcal{D},\mathcal{R}, \mathcal{I})=(0,\tfrac{p_4}{p_1} \rho, \pm \rho \sqrt{1-(\tfrac{p_4}{p_1})^2})$ are fixed points of \eqref{E:qqq_6}.
If $(\mathcal{D}(0),\mathcal{R}(0),\mathcal{I}(0))$ is not equal to the fixed point then the solution to \eqref{E:qqq_6} is given by
\[
	\mathcal{D}(\tau) = \tfrac{(1-(\frac{p_4}{p_1})^2) \rho \mathcal{D}(0) }{ \sqrt{ (\frac{p_4}{p_1} \rho-\mathcal{R}(0))^2 + (1-(\frac{p_4}{p_1})^2 )\mathcal{D}(0)^2 } \cosh \(2p_1 \rho \sqrt{1-(\frac{p_4}{p_1})^2} \tau - \tau_0\) - \frac{p_4}{p_1}(\frac{p_4}{p_1} \rho-\mathcal{R}(0)) },
\]
\[
	\mathcal{R}(\tau) =\tfrac{p_4}{p_1} \rho + \tfrac{(1-(\frac{p_4}{p_1})^2) \rho (\mathcal{R}(0) - \frac{p_4}{p_1} \rho)}{ \sqrt{ (\frac{p_4}{p_1} \rho-\mathcal{R}(0))^2 + (1-(\frac{p_4}{p_1})^2 )\mathcal{D}(0)^2 } \cosh \(2p_1 \rho \sqrt{1-(\frac{p_4}{p_1})^2} \tau - \tau_0\) - \frac{p_4}{p_1}(\frac{p_4}{p_1} \rho-\mathcal{R}(0)) } 
	 ,
\]
and
\[
	\mathcal{I}(\tau) = -\tfrac{\rho\sqrt{1-(\frac{p_4}{p_1})^2}  \sqrt{ (\frac{p_4}{p_1} \rho-\mathcal{R}(0))^2 + (1-(\frac{p_4}{p_1})^2 )\mathcal{D}(0)^2 } \sinh \(2p_1 \rho \sqrt{1-(\frac{p_4}{p_1})^2} \tau - \tau_0\) }{ \sqrt{ (\frac{p_4}{p_1} \rho-\mathcal{R}(0))^2 + (1-(\frac{p_4}{p_1})^2 )\mathcal{D}(0)^2 } \cosh \(2p_1 \rho \sqrt{1-(\frac{p_4}{p_1})^2} \tau - \tau_0\) - \frac{p_4}{p_1}(\frac{p_4}{p_1} \rho-\mathcal{R}(0)) },
\]
where $\tau_0=\tau_0(\xi) \in \R$ is such that $\tau_0 \mathcal{I}(0) \ge 0$ and
\[
	\tfrac{(1-(\frac{p_4}{p_1})^2) \rho }{ \sqrt{ (\frac{p_4}{p_1} \rho-\mathcal{R}(0))^2 + (1-(\frac{p_4}{p_1})^2 )\mathcal{D}(0)^2 } \cosh ( \tau_0) - \frac{p_4}{p_1}(\frac{p_4}{p_1} \rho-\mathcal{R}(0)) } =1.
\]
Any non-equilibrium solution satisfies
\[
	(\mathcal{D}(\tau),\mathcal{R}(\tau),\mathcal{I}(\tau)) \to (0,\tfrac{p_4}{p_1} \rho, \mp \rho \sqrt{1-(\tfrac{p_4}{p_1})^2})
\]
as $\tau \to \pm \infty$. 
\item If $p_1=p_4$ then
the $(\mathcal{D},\mathcal{R}, \mathcal{I})=(0,\rho, 0)$ is the unique fixed point of \eqref{E:qqq_6}.
If $(\mathcal{D}(0),\mathcal{R}(0),\mathcal{I}(0))$ is not equal to the fixed point then the solution to \eqref{E:qqq_6} is given by
\[
	\mathcal{D}(\tau) = \tfrac{ 2 \rho (\rho-\mathcal{R}(0)) \mathcal{D}(0)}{ (2p_1 \rho (\rho-\mathcal{R}(0)) \tau - \mathcal{I}(0))^2 +  2 \rho (\rho-\mathcal{R}(0)) - \mathcal{I}(0)^2 },
\]
\[
	\mathcal{R}(\tau) =  \rho + \tfrac{ -2 \rho (\rho-\mathcal{R}(0))^2}{ (2p_1 \rho (\rho-\mathcal{R}(0)) \tau - \mathcal{I}(0))^2 +  2 \rho (\rho-\mathcal{R}(0)) - \mathcal{I}(0)^2 } 
	,
\]
and
\[
	\mathcal{I}(\tau) = - \tfrac{ 2 \rho (\rho-\mathcal{R}(0))(2p_1 \rho (\rho-\mathcal{R}(0)) \tau - \mathcal{I}(0))}{ (2p_1 \rho (\rho-\mathcal{R}(0)) \tau- \mathcal{I}(0))^2 +  2 \rho (\rho-\mathcal{R}(0)) - \mathcal{I}(0)^2 }.
\]
Any solution satisfies
\[
	(\mathcal{D}(\tau),\mathcal{R}(\tau),\mathcal{I}(\tau)) \to (0,\rho,0)
\]
as $\tau \to \pm \infty$. 
\item If $p_1<p_4$ then
the two points $(\mathcal{D},\mathcal{R}, \mathcal{I})=( \pm \rho \sqrt{1-(\tfrac{p_1}{p_4})^2},\tfrac{p_1}{p_4} \rho,0)$ are fixed points of \eqref{E:qqq_6}.
If $(\mathcal{D}(0),\mathcal{R}(0),\mathcal{I}(0))$ is not equal to the fixed point then the solution to \eqref{E:qqq_6} is given by
\[
	\mathcal{D}(\tau) = \tfrac{((\frac{p_4}{p_1})^2-1) \rho \mathcal{D}(0) }{\frac{p_4}{p_1}(\frac{p_4}{p_1} \rho-\mathcal{R}(0)) -  \sqrt{ (\frac{p_4}{p_1} \rho-\mathcal{R}(0))^2 - ((\frac{p_4}{p_1})^2-1 )\mathcal{D}(0)^2 } \cos \(2p_1 \rho \sqrt{(\frac{p_4}{p_1})^2-1} \tau - \tau_0\) },
\]
\[
	\mathcal{R}(\tau) =\tfrac{p_4}{p_1} \rho + \tfrac{((\frac{p_4}{p_1})^2-1) \rho (\mathcal{R}(0)-\frac{p_4}{p_1} \rho) }{\frac{p_4}{p_1}(\frac{p_4}{p_1} \rho-\mathcal{R}(0)) -  \sqrt{ (\frac{p_4}{p_1} \rho-\mathcal{R}(0))^2 - ((\frac{p_4}{p_1})^2-1 )\mathcal{D}(0)^2 } \cos \(2p_1 \rho \sqrt{(\frac{p_4}{p_1})^2-1} \tau - \tau_0\) } 
	 ,
\]
and
\[
	\mathcal{I}(\tau) = - \tfrac{\rho\sqrt{(\frac{p_4}{p_1}-1)^2}  \sqrt{ (\frac{p_4}{p_1} \rho-\mathcal{R}(0))^2 - ((\frac{p_4}{p_1})^2-1 )\mathcal{D}(0)^2 } \sin \(2p_1 \rho \sqrt{1-(\frac{p_4}{p_1})^2} \tau - \tau_0\) }{\frac{p_4}{p_1}(\frac{p_4}{p_1} \rho-\mathcal{R}(0)) -  \sqrt{ (\frac{p_4}{p_1} \rho-\mathcal{R}(0))^2 - ((\frac{p_4}{p_1})^2-1 )\mathcal{D}(0)^2 } \cos \(2p_1 \rho \sqrt{(\frac{p_4}{p_1})^2-1} \tau - \tau_0\) } ,
\]
where $\tau_0=\tau_0(\xi) \in (-\pi,\pi]$ is given by $\tau_0\mathcal{I}(0) \ge 0$ and
\[
\tfrac{((\frac{p_4}{p_1})^2-1) \rho }{\frac{p_4}{p_1}(\frac{p_4}{p_1} \rho-\mathcal{R}(0)) -  \sqrt{ (\frac{p_4}{p_1} \rho-\mathcal{R}(0))^2 - ((\frac{p_4}{p_1})^2-1 )\mathcal{D}(0)^2 } \cos  \tau_0 }=1.
\]
Any non-equilibrium solution is periodic in $\tau$.
\end{itemize}
\end{proposition}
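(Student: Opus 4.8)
The plan is to exploit two conserved quantities to collapse \eqref{E:qqq_6} to a single scalar ODE that integrates by elementary means, with the three regimes $p_1\gtrless p_4$ emerging from the sign of one constant. Writing $c:=p_4/p_1$ and $\tilde{\mathcal{R}}:=\mathcal{R}-c\rho$, the first two equations of \eqref{E:qqq_6} become $\mathcal{D}'=2p_1\mathcal{I}\mathcal{D}$ and $\tilde{\mathcal{R}}'=2p_1\mathcal{I}\tilde{\mathcal{R}}$; in particular, with $a:=\mathcal{D}(0)$ and $b:=\tilde{\mathcal{R}}(0)$, the quantity $b\mathcal{D}-a\tilde{\mathcal{R}}$ obeys the homogeneous linear equation $(b\mathcal{D}-a\tilde{\mathcal{R}})'=2p_1\mathcal{I}(b\mathcal{D}-a\tilde{\mathcal{R}})$ with vanishing initial value, hence is identically zero. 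Thus every orbit lies in the fixed vertical plane $\{b\mathcal{D}=a\tilde{\mathcal{R}}\}$, whose intersection with the invariant sphere $S^2_\rho$ is a circle. First I would record this planar invariant and, writing $\mathcal{D}=as$, $\tilde{\mathcal{R}}=bs$ for a scalar $s$ with $s(0)=1$, note that $s'=2p_1\mathcal{I}s$ forces $s(\tau)=\exp(\int_0^\tau 2p_1\mathcal{I}\,d\tilde\tau)>0$ for all $\tau$, so no sign ambiguity in $s$ ever arises.

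Second, I would rewrite the sphere constraint $\mathcal{D}^2+\mathcal{R}^2+\mathcal{I}^2=\rho^2$ as $\mathcal{I}^2=q(s)$ with $q(s):=\rho^2(1-c^2)-(a^2+b^2)s^2-2bc\rho s$, a quadratic in $s$. Combined with $s'=2p_1\mathcal{I}s$ this yields the separated equation $\frac{ds}{s\sqrt{q(s)}}=\pm 2p_1\,d\tau$, so the whole problem reduces to the standard integral $\int \frac{ds}{s\sqrt{q(s)}}$. The key observation is that $q(0)=\rho^2(1-c^2)=\rho^2\bigl(1-(p_4/p_1)^2\bigr)$, and the elementary antiderivative of $1/(s\sqrt{q(s)})$ changes form exactly according to the sign of this constant term: it is of inverse-hyperbolic (logarithmic) type when $q(0)>0$, i.e. $p_1>p_4$; of arcsine/arccosine type when $q(0)<0$, i.e. $p_1<p_4$; and purely algebraic when $q(0)=0$, i.e. $p_1=p_4$, where $q(s)=-s[(a^2+b^2)s+2b\rho]$ and the integrand becomes $s^{-3/2}(\cdots)^{-1/2}$. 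This is precisely the origin of the $\cosh/\sinh$, the $\cos/\sin$, and the rational formulas in the three bullets.

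Third, I would invert in each regime to obtain $s(\tau)$, then recover $\mathcal{D}=as$, $\mathcal{R}=bs+c\rho$, and $\mathcal{I}=\pm\sqrt{q(s)}$ with the branch dictated by the sign of $s'$; the normalization $s(0)=1$ fixes the phase constant $\tau_0$, which the auxiliary conditions $\tau_0\mathcal{I}(0)\ge0$ together with the stated algebraic relation single out. The fixed points are obtained independently by setting all three right-hand sides of \eqref{E:qqq_6} to zero: either $\mathcal{I}=0$, forcing $\mathcal{R}=\rho/c$ and $\mathcal{D}^2=\rho^2(1-c^{-2})$ (real iff $p_4\ge p_1$), or $\mathcal{D}=0,\ \mathcal{R}=c\rho$, forcing $\mathcal{I}^2=\rho^2(1-c^2)$ (real iff $p_4\le p_1$); this reproduces the three listed equilibrium configurations and shows they coalesce to $(0,\rho,0)$ when $p_1=p_4$. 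The long-time behavior then follows by inspection: for $p_1\ge p_4$ the positive function $s$ is a pulse with $s\to0$ as $\tau\to\pm\infty$, giving heteroclinic (resp.\ homoclinic) convergence of $(\mathcal{D},\mathcal{R},\mathcal{I})$ to the equilibria, while for $p_1<p_4$ the trigonometric form is manifestly periodic.

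The main obstacle is bookkeeping rather than conceptual: carrying out the three elementary integrals and rearranging the inverse functions into the exact closed forms displayed, including the correct determination of $\tau_0$ and the tracking of the sign of $\mathcal{I}$ across the instant $\mathcal{I}=0$ in the $p_1\ge p_4$ regimes. To keep the write-up clean and rigorous I would, rather than presenting the inversion as a derivation, verify \emph{a posteriori} that the stated formulas solve \eqref{E:qqq_6} with the prescribed data and then invoke uniqueness for the initial value problem, which is globally solvable since $\rho$ is conserved. The degenerate alignments $a=0$ or $b=0$, where the orbit lies in a coordinate plane, are checked separately and agree with the general formulas by continuity.
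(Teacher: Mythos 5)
Your proposal is correct and is essentially the paper's own argument: your scalar variable $s$ is exactly $e^{p_1\int_0^\tau 2\mathcal{I}}$, i.e.\ the substitution $x=e^{p_1 s}$ that the paper applies to its phase integral, your quadratic $q(s)$ is the paper's $-C_1x^2+C_2x+C_3$, and the trichotomy by the sign of $q(0)=\rho^2\bigl(1-(p_4/p_1)^2\bigr)$ is the paper's case split on $C_3$. The only cosmetic differences are your planar-invariant remark and the suggestion to verify the closed forms a posteriori via uniqueness rather than carrying out the inversion explicitly.
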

Combining the proposition with Theorems \ref{T:KS} and \ref{T:main}, we obtain the asymptotic profile of  solutions to \eqref{E:NLS6}.
One sees that the nonlinear synchronization occurs if $p_1>p_4$.

\subsubsection{Case 7}

Let us move to Case 7, the combination of $p_2$ and $p_3$. The system is
\begin{equation}\label{E:NLS7}
	\left\{
\begin{aligned}
	(i \partial_t  + \partial_x^2)u_1={}&
(3p_2 +p_3) |u_1|^2 u_1 +(p_2-p_3) (2 u_1 |u_2|^2 + \ol{u_1}u_2^2)  +\mathcal{V}(u_1,u_2) u_1, \\
	(i \partial_t  + \partial_x^2)u_2={}&
(p_2-p_3)(2|u_1|^2 u_2 + u_1^2\ol{u_2}) + (3p_2+p_3) |u_2|^2u_2
 + \mathcal{V}(u_1,u_2)u_2.
\end{aligned}
\right.
\end{equation}
We make an additional condition $|p_2|\neq p_3$ with the following reason.
The case $p_2 =  p_3$ is a degenerate case. The system is almost decoupled.
The case $p_2 = -p_3$ is also a degenerate case. The system \eqref{E:NLS} becomes essentially decoupled
by introducing new unknowns $v_1=u_1+u_2$ and $v_2 = u_1 - u_2$.
Hence, we consider the other case, i.e., $|p_2| \neq p_3$.

The quadratic system is
\begin{equation}\label{E:qqq_7}
	\mathcal{D}' = 2(p_2-p_3) \mathcal{R}\mathcal{I}, \quad
	\mathcal{R}' = -2(p_2+p_3) \mathcal{D}\mathcal{I}, \quad
	\mathcal{I}' = 4p_3 \mathcal{D}\mathcal{R}, \quad
\end{equation}
This is again the typical system for Jacobi elliptic functions.
\begin{proposition}
The six points $(\mathcal{D},\mathcal{R}, \mathcal{I})=\pm(\rho,0,0),\pm(0,\rho,0),\pm(0,0,\rho)$ are fixed points.
In other case,
the triplet  
\[
	(f,g,h) =
	\begin{cases}
	(- \sqrt{8p_3|p_2+p_3|} \mathcal{D}, \sqrt{8p_3(p_3-p_2)} \mathcal{R}, 2 \sqrt{p_2^2-p_3^2} \mathcal{I}) & p_2<-p_3,\\
	(-2 \sqrt{p_3^2-p_2^2} \mathcal{I},  \sqrt{8p_3(p_3-p_2)} \mathcal{R}, \sqrt{8p_3(p_2+p_3)} \mathcal{D}) & -p_3<p_2<p_3, \\
	(-\sqrt{8p_3(p_2-p_3)} \mathcal{R}, \sqrt{8p_3(p_2+p_3)} \mathcal{D},  2 \sqrt{p_2^2-p_3^2} \mathcal{I}) & p_3<p_2
	\end{cases}
\]
 solves \eqref{E:ellipticODE1}
and hence the solution $(\mathcal{D},\mathcal{R},\mathcal{I})$ to \eqref{E:ODE} is written explicitly in terms of the Jacobi elliptic functions as in Lemma \ref{L:ellipticODE1}.
\end{proposition}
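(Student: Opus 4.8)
My plan is to handle the equilibria and the generic orbits separately, and for the latter to reduce \eqref{E:qqq_7} to the canonical system \eqref{E:ellipticODE1} by a diagonal linear change of variables. For the first assertion I would substitute each candidate point into the right-hand side of \eqref{E:qqq_7}: at any of $\pm(\rho,0,0)$, $\pm(0,\rho,0)$, $\pm(0,0,\rho)$ at least two of $\mathcal{D},\mathcal{R},\mathcal{I}$ vanish, so each of the products $\mathcal{R}\mathcal{I}$, $\mathcal{D}\mathcal{I}$, $\mathcal{D}\mathcal{R}$ carries a zero factor and all three right-hand sides vanish; hence these six points are equilibria. Conversely, since $p_3>0$ and $|p_2|\neq p_3$ the three coefficients $2(p_2-p_3)$, $-2(p_2+p_3)$, $4p_3$ are nonzero, so an equilibrium must satisfy $\mathcal{R}\mathcal{I}=\mathcal{D}\mathcal{I}=\mathcal{D}\mathcal{R}=0$, i.e. at most one coordinate is nonzero. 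On $S^2_\rho$ this gives exactly the listed six points, so ``in other case'' indeed refers to a genuinely non-equilibrium orbit, which is what remains to be treated.

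For the second assertion, note that \eqref{E:qqq_7} has the Euler-top form $X_1'=c_1X_2X_3$, $X_2'=c_2X_1X_3$, $X_3'=c_3X_1X_2$, where $(X_1,X_2,X_3)$ is a permutation of $(\mathcal{D},\mathcal{R},\mathcal{I})$ and $(c_1,c_2,c_3)$ the corresponding permutation of the coefficients $\{2(p_2-p_3),-2(p_2+p_3),4p_3\}$; indeed each right-hand side is the product of the two variables other than the one differentiated, so this form is preserved under any permutation. I would seek a diagonal change $f=\lambda_1X_1$, $g=\lambda_2X_2$, $h=\lambda_3X_3$ carrying this into \eqref{E:ellipticODE1}. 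Substituting $X_i'=c_iX_jX_k$, the three identities $f'=gh$, $g'=-fh$, $h'=-fg$ become $\lambda_1c_1=\lambda_2\lambda_3$, $\lambda_2c_2=-\lambda_1\lambda_3$, $\lambda_3c_3=-\lambda_1\lambda_2$; multiplying these in pairs yields $\lambda_1^2=c_2c_3$, $\lambda_2^2=-c_1c_3$, $\lambda_3^2=-c_1c_2$, while their product forces the compatibility $\lambda_1\lambda_2\lambda_3=c_1c_2c_3$. Thus real $\lambda_i$ exist precisely when $c_2c_3\ge0$, $c_1c_3\le0$, $c_1c_2\le0$, and this sign requirement is what selects the permutation, i.e. which of $\mathcal{D},\mathcal{R},\mathcal{I}$ plays the distinguished role of $f$.

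The heart of the argument is then the sign bookkeeping. Because $p_3>0$ always, only the signs of $p_2-p_3$ and $p_2+p_3$ are free, and the trichotomy $p_2<-p_3$, $-p_3<p_2<p_3$, $p_3<p_2$ is exactly the set of regimes in which a valid permutation exists; in each regime one reads off $\lambda_i=\pm\sqrt{|c_jc_k|}$, whose magnitudes reproduce the square-root factors $\sqrt{8p_3|p_2\pm p_3|}$ and $2\sqrt{|p_2^2-p_3^2|}$ appearing in the statement, and the overall signs are then pinned down by $\lambda_1c_1=\lambda_2\lambda_3$ and its companions so as to match the signature $(+,-,-)$ of \eqref{E:ellipticODE1}. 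I expect this last step, making the assignment and every sign simultaneously consistent across all three regimes, to be the only delicate point; there is no analytic difficulty, since \eqref{E:qqs} confines the orbit to $S^2_\rho$ and the Euler-top structure supplies conserved quadratics such as $(p_2+p_3)\mathcal{D}^2+(p_2-p_3)\mathcal{R}^2$, whose invariance is a one-line check from \eqref{E:qqq_7}.

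Finally, once $(f,g,h)$ is known to solve \eqref{E:ellipticODE1}, I would quote Lemma \ref{L:ellipticODE1} directly: it expresses $(f,g,h)$ through $\sn,\cn,\dn$, or through $\tanh,\sech$ in the degenerate case where the two conserved radii coincide, and inverting the diagonal change returns $(\mathcal{D},\mathcal{R},\mathcal{I})$ in closed form. This describes every non-equilibrium orbit and, together with the six equilibria, exhausts all solutions on $S^2_\rho$.
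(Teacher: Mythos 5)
Your approach is correct and is essentially the paper's: the paper disposes of Cases 3 and 7 in one line, saying it suffices to prove Lemma \ref{L:ellipticODE1} and that the rescaled triplet can be checked to solve \eqref{E:ellipticODE1}, so the entire content of the proposition is precisely the verification you organize through the ansatz $f=\lambda_1X_1$, $g=\lambda_2X_2$, $h=\lambda_3X_3$. Your relations $\lambda_1^2=c_2c_3$, $\lambda_2^2=-c_1c_3$, $\lambda_3^2=-c_1c_2$ together with the compatibility $\lambda_1\lambda_2\lambda_3=c_1c_2c_3$ are a clean way to perform that check, and they additionally explain why the distinguished variable is the one whose coefficient has the odd sign and why the square-root factors take the stated form.

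One concrete remark: when you carry out the sign step you call delicate, your own compatibility condition will flag the middle regime. There $c_1c_2c_3=4p_3\cdot\bigl(-2(p_2+p_3)\bigr)\cdot 2(p_2-p_3)=16p_3(p_3^2-p_2^2)>0$, whereas the printed triplet $(-2\sqrt{p_3^2-p_2^2}\,\mathcal{I},\ \sqrt{8p_3(p_3-p_2)}\,\mathcal{R},\ \sqrt{8p_3(p_2+p_3)}\,\mathcal{D})$ has sign product $-1$; direct substitution confirms it satisfies $f'=-gh$, $g'=fh$, $h'=fg$ rather than \eqref{E:ellipticODE1}. Flipping any single one of the three signs (e.g.\ dropping the minus on the $\mathcal{I}$-component) repairs it, and the two outer regimes, where $c_1c_2c_3<0$, are consistent as printed. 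Also note that a valid permutation exists in all three regimes, not only in some: since $4p_3>0$ the three coefficients can never all be negative, and all positive would force $p_2>p_3$ and $p_2<-p_3$ simultaneously; so the trichotomy selects \emph{which} permutation works, not \emph{whether} one exists --- a harmless imprecision in your write-up.
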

Combining the proposition with Theorems \ref{T:KS} and \ref{T:main}, we obtain the asymptotic profile of  solutions to \eqref{E:NLS7}.

\begin{remark}
The large-time behavior of special solutions of the system
\begin{equation}\label{E:ellipticU}
	\left\{
	\begin{aligned}
	&(i \partial_t  + \partial_x^2) v_1 =  v_2^2 \ol{v_1},  \\
	&(i \partial_t  + \partial_x^2) v_2 = v_1^2 \ol{v_2}
	\end{aligned}
	\right.
\end{equation}
is studied in Uriya \cite{U}. 
Note that if we apply a change of variable  $u_1=2^{-\frac12} (v_1-v_2)$,
$u_2=2^{-\frac12} (v_1+v_2)$ then \eqref{E:ellipticU} turns into \eqref{E:NLS}
with $p_2=3/4$, $p_3=1/4$, $p_1=p_4=p_5=0$, and $(q_1,q_2,q_3)=(-2,0,-2)$, that is, into \eqref{E:NLS7}.
In this sense, \eqref{E:ellipticU} is classified in this case.
It is shown in \cite{U} that there exists a one-take-all-type solution to 
 \eqref{E:ellipticU}  such that
 $\lim_{t\to\I} \norm{v_2(t)}_{L^2}=0$.
In view of the above change of variable, 
the solution $(v_1,v_2)$ corresponds to the solution $(u_1,u_2)$ of \eqref{E:NLS7} for which
\[
	\mathcal{D}(\tau;\xi) \to 0,
	\quad
	\mathcal{R}(\tau;\xi) \to \rho,
	\quad
	\mathcal{I}(\tau;\xi) \to 0
\]
hold for all $\xi \in \R$ as $\tau\to\I$. 
In view of Theorems \ref{T:KS} and \ref{T:main},
this solution $(u_1,u_2)$ is so special that the final data
$\alpha_1^+$ and $\alpha_2^+$ satisfies
$ \mathcal{D}(0) \equiv - \mathcal{I}(0)$ everywhere.
A modified-scattering-type solution $(v_1,v_2)$ to \eqref{E:ellipticU} is also constructed,
The solution corresponds to that of \eqref{E:NLS7} such that
$\alpha_1^+$ and $\alpha_2^+$ satisfies
$\mathcal{R}(0) \equiv \mathcal{I}(0) \equiv 0$.
These special solutions are given by solving the final value problem.
It seems difficult to characterize these solutions to \eqref{E:NLS7} in the language of
the initial data since we do not have a control on the functions
$\alpha_1^+$ and $\alpha_2^+$ good enough to assure $ \mathcal{D}(0) \equiv - \mathcal{I}(0)$ or $\mathcal{R}(0) \equiv \mathcal{I}(0) \equiv 0$ holds for all $\xi$.
\end{remark}

\subsubsection{Case 8}

Let us next consider the combination of $p_2$ and $p_4$.
\begin{equation}\label{E:NLS8}
\left\{
\begin{aligned}
	(i \partial_t  + \partial_x^2)u_1={}&
(3p_2 +2p_4) |u_1|^2 u_1 +p_2 (2 u_1 |u_2|^2 + \ol{u_1}u_2^2)  +\mathcal{V}(u_1,u_2) u_1, \\
	(i \partial_t  + \partial_x^2)u_2={}&
p_2(2|u_1|^2 u_2 + u_1^2\ol{u_2}) + (3p_2-2p_4) |u_2|^2u_2 + \mathcal{V}(u_1,u_2)u_2.
\end{aligned}
\right.
\end{equation}
In this case, the Jacobi elliptic function appears.
\begin{equation}\label{E:qqq_8}
	\mathcal{D}' = 2p_2 \mathcal{I} \mathcal{R}, \quad
	\mathcal{R}' = -2p_2 \mathcal{I} \mathcal{D}-2p_4 \rho \mathcal{I}, \quad
	\mathcal{I}' = 2p_4 \rho \mathcal{R} .
\end{equation}
We remark that, as seen in Cases 2 and 4, the $p_2$-component and the $p_4$-component do not give behavior described by the Jacobi elliptic functions.

\begin{lemma}\label{L:ellipticODE2}
A solution to the quadratic ODE system 
\begin{equation}\label{E:ellipticODE2}
	f' = gh, \quad g' = -fh, \quad h'=-f
\end{equation}
 with a data
\[
	(f,g,h)(0) = (f_0,g_0,h_0) \in \R^3
\]
is given as follows:
If $(f_0,h_0)=(0,0)$ then $(f(t),g(t),h(t))=(0,g_0,0)$.
If $(f_0,g_0)=(0,0)$ then $(f(t),g(t),h(t))=(0,0,h_0)$.
In the other case, $f(t)$ and $g(t)$ are given in terms of $h(t)$ as
\[
		f(t) = -  h'(t),
 \qquad g(t)= \tfrac{1}{2} (h(t)^2 - h_0^2) + g_0,
\]
respectively.
Further, $h(t)$ is given as follows:
Let $P$, $R_{fg}$ be positive constants such that
\[
	P^2 = \tfrac{(f_0^2+g_0^2)^{\frac12} - g_0}2 + \tfrac{h_0^2}4, \quad
	R_{fg}^2 =  (f_0^2 + g_0^2)^{\frac12}.
\]
\begin{itemize}
\item
If $h_0^2 < 2 (\sqrt{f_0^2+g_0^2} + g_0 )$ then
\[
	h(t) =  2 P  \cn \(R_{fg} t + t_0, \tfrac{P^2}{R_{fg}^2}  \)
\]
where $t_0$ is given by
$\cn (t_0, P^2/R_{fg}^2) = h_0/2P$ and  $\sign (\sn (t_0, P^2/R_{fg}^2)) = - \sign f_0$.
\item
If $h_0^2 = 2 (\sqrt{f_0^2+g_0^2} + g_0 )$ then
$h_0\neq0$ holds and $h(t)$ is given by
\[
	h(t) =  (\sign h_0) 2 P \sech (P t+t_0),
\]
where
$t_0$ is given by $ t_0 = \sign (f_0 h_0) \cosh^{-1} (2P/|h_0|)$.
\item
If $h_0^2 > 2 (\sqrt{f_0^2+g_0^2} + g_0 )$ then
\[
	h(t) = (\sign h_0 ) 2 P  \dn \( (\sign h_0 ) P t + t_0 , \tfrac{R_{fg}^2}{P^2} \),
\]
where $t_0$ is given by 
$ \dn (t_0, R_{fg}^2/P^2) =|h_0|/2P$ and $ \sign \dn' (t_0) = - \sign f_0$.
\end{itemize}
\end{lemma}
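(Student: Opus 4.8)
The plan is to reduce the whole system to a single first-order ODE for $h$, which will be of elliptic type, and then recover $f$ and $g$ algebraically. First I would record the conserved quantity: since $ff' + gg' = fgh - gfh = 0$, the quantity $f^2 + g^2$ is constant along the flow and equals $f_0^2 + g_0^2$. (Note the unusual normalization in the statement, $R_{fg}^2 = \sqrt{f_0^2+g_0^2}$, so this conserved value is $R_{fg}^4$; keeping track of this is what makes the frequencies and moduli come out as stated.) The two degenerate cases are then disposed of immediately by uniqueness for the polynomial, hence locally Lipschitz, vector field: if $(f_0,h_0)=(0,0)$ then $(0,g_0,0)$ is a rest point, and if $(f_0,g_0)=(0,0)$ then $(0,0,h_0)$ is a rest point, giving the two stated constant solutions.

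Next, from the third equation I read off $f = -h'$ directly, and substituting into $g' = -fh = h'h = \tfrac12(h^2)'$ and integrating gives $g = \tfrac12(h^2 - h_0^2) + g_0$, which is exactly the claimed expression for $g$ in terms of $h$. Thus the entire solution is determined once $h$ is known, and it remains only to integrate $h$. To obtain a closed equation for $h$, I would feed $f = -h'$ and the formula for $g$ into the conserved quantity: $h'^2 = f^2 = (f_0^2+g_0^2) - g^2 = (f_0^2+g_0^2) - \left(\tfrac12 h^2 + g_0 - \tfrac12 h_0^2\right)^2$. This is a separable first-order ODE whose right-hand side is a quartic in $h$.

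Writing that quartic as $h'^2 = -\tfrac14(h^2 - 4P^2)(h^2 - u_-)$, the turning points in the variable $h^2$ are $4P^2$, with $P$ as in the statement, and $u_- = h_0^2 - 2g_0 - 2\sqrt{f_0^2+g_0^2}$. The sign of $u_-$ is governed precisely by the trichotomy of the lemma, since $u_- \lessgtr 0 \iff h_0^2 \lessgtr 2(\sqrt{f_0^2+g_0^2}+g_0)$. When $u_- < 0$ the admissible region is $|h|\le 2P$ and $h$ oscillates through zero (the bounded, sign-changing regime); when $u_-=0$ the two turning points collide at the origin (the separatrix); and when $u_->0$ the motion is trapped in $\sqrt{u_-}\le |h|\le 2P$, away from zero, so $h$ keeps the sign of $h_0$, which explains the $\sign h_0$ factors.

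Finally, in each regime I would integrate the separable equation by the standard reduction of $\int dh/\sqrt{\text{quartic}}$ to a Jacobi elliptic function (Appendix \ref{S:elliptic}). Substituting the trial solutions $h = 2P\cn$, $h = (\sign h_0)\,2P\sech$, and $h = (\sign h_0)\,2P\dn$, and using $\cn' = -\sn\dn$, $\dn' = -m\sn\cn$, $\sech' = -\sech\tanh$ together with the Pythagorean relations, reduces the matching to comparing two coefficients; this pins the frequency to $R_{fg}$ (resp. $P$) and the modulus to $P^2/R_{fg}^2$ (resp. $R_{fg}^2/P^2$), and a short computation confirms that these moduli lie in $(0,1)$ exactly under the respective strict inequalities, with modulus $1$ on the separatrix. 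The phase $t_0$ and the various sign choices are then fixed by the two initial conditions $h(0)=h_0$ and $h'(0)=-f_0$: the value condition gives $\cn(t_0)$ or $\dn(t_0)$, while the velocity condition $h'(0)=-f_0$ selects the sign of $\sn(t_0)$ (resp. of $\dn'(t_0)$). I expect the main obstacle to be exactly this last bookkeeping — choosing the correct elliptic function in each well, verifying that the modulus lands in the correct range, and disentangling the signs and phase from the initial data — rather than any conceptual difficulty, since the reduction to the quartic is forced and elementary.
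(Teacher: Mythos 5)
Your proposal is correct, but it integrates the system by a genuinely different route from the paper. You derive the conservation of $f^2+g^2$, eliminate $f=-h'$ and $g=\tfrac12(h^2-h_0^2)+g_0$, and arrive at the separable quartic equation $h'^2=-\tfrac14(h^2-4P^2)(h^2-u_-)$ with $u_-=h_0^2-2g_0-2\sqrt{f_0^2+g_0^2}$, which you then solve by matching Jacobi trial functions; the trichotomy of the lemma is exactly the sign of $u_-$, and your coefficient matching does pin down the frequencies $R_{fg}$, $P$ and the moduli $P^2/R_{fg}^2$, $R_{fg}^2/P^2$ as stated. The paper instead observes that if $(F,G,H)$ solves the already-integrated system $F'=GH$, $G'=-FH$, $H'=-FG$ of Lemma \ref{L:ellipticODE1}, then $(f,g,h)=(2FG,\,G^2-F^2,\,2H)$ solves \eqref{E:ellipticODE2}; choosing $(F_0,G_0)=(R_{fg}\sin\theta_0,R_{fg}\cos\theta_0)$ with $(f_0,g_0)=(R_{fg}^2\sin2\theta_0,R_{fg}^2\cos2\theta_0)$ and $H_0=h_0/2$ gives $(F_0^2+G_0^2)^{1/2}=R_{fg}$ and $(F_0^2+H_0^2)^{1/2}=P$, so the three cases of the lemma are read off directly from the case division $R_{fg}\lessgtr P$ of the earlier lemma with no new integration. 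The paper's reduction buys economy (it reuses Lemma \ref{L:ellipticODE1} and automatically inherits the correct phase and sign conventions), while your direct quartic integration is self-contained and makes the phase-plane structure (turning points, separatrix at $u_-=0$, sign preservation of $h$ when $u_->0$) more transparent. One small point worth making explicit in your write-up: verifying the squared first-order equation $h'^2=Q(h)$ alone does not immediately give uniqueness through the turning points $h'=0$; you should either differentiate to recover $h''=-gh$ (which follows from $2h'h''=-2gg'=-2ghh'$ off the turning set and then by continuity) and invoke uniqueness for the original locally Lipschitz system with the matched data $h(0)=h_0$, $h'(0)=-f_0$, or verify the full triple $(f,g,h)$ against \eqref{E:ellipticODE2} directly as the paper does.
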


\begin{proposition}
The two points $(\mathcal{D},\mathcal{R}, \mathcal{I})=\pm(\rho,0,0)$ are fixed points of \eqref{E:qqq_8}.
Further, if $|p_2| \ge p_4$ then there exist more fixed points $(\mathcal{D},\mathcal{R}, \mathcal{I})=(-\frac{p_4}{p_2}\rho,0,\pm \sqrt{1-(\frac{p_4}{p_2})^2}\rho)$.
In other case,
the triplet 
\[
	(f,g,h) :=
	(4p_2p_4 \rho \mathcal{R}, 4p_4 \rho (p_2 \mathcal{D} + p_4\rho), -2p_2 \mathcal{I} )
\]
 solves \eqref{E:ellipticODE2}
and hence the solution $(\mathcal{D},\mathcal{R},\mathcal{I})$ to \eqref{E:qqq_8} is written explicitly in terms of the Jacobi elliptic functions as in Lemma \ref{L:ellipticODE2}.
\end{proposition}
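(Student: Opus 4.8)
The plan is to prove the proposition by direct verification, exploiting the affine change of variables supplied in the statement, together with the conservation of $\rho$ that was established before \eqref{E:qqq}. The argument splits naturally into the equilibrium analysis and the reduction to the Jacobi system \eqref{E:ellipticODE2}.

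First I would dispose of the fixed points by substitution into the right-hand side of \eqref{E:qqq_8}. At $(\mathcal{D},\mathcal{R},\mathcal{I})=\pm(\rho,0,0)$ every product $\mathcal{I}\mathcal{R}$, $\mathcal{I}\mathcal{D}$, and $\rho\mathcal{R}$ vanishes, so all three derivatives are zero. For the candidate points $(-\tfrac{p_4}{p_2}\rho,0,\pm\sqrt{1-(p_4/p_2)^2}\,\rho)$, which are real precisely when $|p_2|\ge p_4$, the relations $\mathcal{D}'=0$ and $\mathcal{I}'=0$ hold because $\mathcal{R}=0$, while $\mathcal{R}'$ vanishes because the choice $\mathcal{D}=-\tfrac{p_4}{p_2}\rho$ makes the two contributions cancel: $-2p_2\mathcal{I}\mathcal{D}=2p_4\rho\mathcal{I}$ exactly balances $-2p_4\rho\mathcal{I}$. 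This part is a one-line substitution.

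The core of the argument is the change of variables. Setting $f=4p_2p_4\rho\mathcal{R}$, $g=4p_4\rho(p_2\mathcal{D}+p_4\rho)$, $h=-2p_2\mathcal{I}$ and differentiating along a solution of \eqref{E:qqq_8}, I would use $\rho'=0$ (so that $g'=4p_2p_4\rho\,\mathcal{D}'$) to compute $f'$, $g'$, $h'$ and check directly that they equal $gh$, $-fh$, and $-f$, respectively. Rather than presenting the transformation as a guess, I would motivate it: the relation $\mathcal{I}'=2p_4\rho\mathcal{R}$ is the only linear one, so it must correspond to the linear relation $h'=-f$ of \eqref{E:ellipticODE2}, which forces $h\propto\mathcal{I}$ and $f\propto\mathcal{R}$; then $\mathcal{R}'=-2\mathcal{I}(p_2\mathcal{D}+p_4\rho)$ forces $g\propto p_2\mathcal{D}+p_4\rho$, and matching the scalar prefactors fixes the three constants. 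Each of the three identities then collapses to a single cancellation; the only care needed is with signs and with the constant factor $\rho$.

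Finally I would record invertibility: the map $(\mathcal{D},\mathcal{R},\mathcal{I})\mapsto(f,g,h)$ is affine, and since $p_2\neq0$ and $p_4>0$ in Case 8 it is a bijection, with $\mathcal{R}=f/(4p_2p_4\rho)$, $\mathcal{D}=g/(4p_2p_4\rho)-p_4\rho/p_2$, and $\mathcal{I}=-h/(2p_2)$. Hence the explicit $(f,g,h)$ furnished by Lemma \ref{L:ellipticODE2} can be pulled back to give $(\mathcal{D},\mathcal{R},\mathcal{I})$ in closed form. I expect the only real obstacle to be bookkeeping: aligning the three subcases of Lemma \ref{L:ellipticODE2}, which are governed by the sign of $h_0^2-2(\sqrt{f_0^2+g_0^2}+g_0)$, with the dichotomy $|p_2|\ge p_4$ versus $|p_2|<p_4$ on the original side. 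In particular the degenerate (non-elliptic) subcases of the lemma should match exactly the extra equilibria exhibited in the first part, which provides a convenient consistency check on the constants.
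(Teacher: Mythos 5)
Your proposal is correct and follows essentially the same route as the paper: the fixed points are checked by substitution, the affine change of variables $(f,g,h)=(4p_2p_4\rho\mathcal{R},\,4p_4\rho(p_2\mathcal{D}+p_4\rho),\,-2p_2\mathcal{I})$ is verified by direct differentiation using $\rho'=0$, and the explicit solution is then read off from Lemma \ref{L:ellipticODE2} via the (invertible, since $p_2\neq0$, $p_4>0$, $\rho>0$) inverse map. The added motivation for the ansatz and the consistency check between the degenerate subcases of the lemma and the extra equilibria are sensible refinements but do not change the argument.
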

Combining the proposition with Theorems \ref{T:KS} and \ref{T:main}, we obtain the asymptotic profile of  solutions to \eqref{E:NLS8}.

\begin{remark}
The orbit of the solution  $(\mathcal{D}, \mathcal{R}, \mathcal{I})$ is easily understood:
Since $(\mathcal{D}+\frac{p_4}{p_2} \rho)^2 + \mathcal{R}^2$ and $p_2\mathcal{I}^2 - 2p_4 \rho \mathcal{D}$
are conserved quantities,
the orbit is a subset of the intersection of $S_\rho^2$ and the boundary of the cylinder $\{ (x+\frac{p_4}{p_2}\rho)^2 + y^2 =
(\mathcal{D}(0)+\frac{p_4}{p_2} \rho)^2 + \mathcal{R}(0)^2 \}$ or of the
parabolic cylinder $\{ p_2 z^2 - 2 p_4 \rho x  =
p_2\mathcal{I}(0)^2 - 2p_4 \rho \mathcal{D}(0) \}$
. 
\end{remark}

\subsubsection{Case 9}

We turn to the study of the combination of $p_3$ and $p_4$. The NLS system is
\begin{equation}\label{E:NLS9}
	\left\{
\begin{aligned}
	(i \partial_t  + \partial_x^2)u_1={}&
(p_3+2p_4) |u_1|^2 u_1 
-p_3 (2 u_1 |u_2|^2 + \ol{u_1}u_2^2)  +\mathcal{V}(u_1,u_2) u_1, \\
	(i \partial_t  + \partial_x^2)u_2={}&
-p_3(2|u_1|^2 u_2 + u_1^2\ol{u_2}) + (p_3-2p_4) |u_2|^2u_2
 + \mathcal{V}(u_1,u_2)u_2.
\end{aligned}
\right.
\end{equation}
The corresponding quadratic system \eqref{E:qqq} takes the form
\begin{equation}\label{E:qqq_9}
	\mathcal{D}' = -2p_3 \mathcal{I} \mathcal{R}, \quad
	\mathcal{R}' = -2p_3 \mathcal{I} \mathcal{D} - 2p_4 \rho \mathcal{I}, \quad
	\mathcal{I}' = 4p_3 \mathcal{D} \mathcal{R} + 2p_4 \rho \mathcal{R}.
\end{equation}
This is a variant of the ODE systems for the Jacobi elliptic functions.
It is reduced to 
\begin{equation}\label{E:ellipticODE3}
	f' = -gh, \quad g'= fh, \quad h' = - (f + \eta )g,
\end{equation}
where $\eta>0$ is a constant. 
Since the complete description of the solution to this system is lengthy, we state
the result for \eqref{E:qqq_9}  beforehand.
\begin{proposition}
The two points $(\mathcal{D},\mathcal{R}, \mathcal{I})=\pm(\rho,0,0)$ are fixed points of \eqref{E:qqq_9}.
Moreover, if $p_4 \le 2p_3$ then 
\[
	(\mathcal{D},\mathcal{R}, \mathcal{I})=(-\tfrac{p_4}{2p_3}\rho,\pm \sqrt{1-(\tfrac{p_4}{2p_3})^2}\rho,0)
\]
are fixed points.
Furthermore, 
if $p_4 \le p_3$ then 
\[
	(\mathcal{D},\mathcal{R}, \mathcal{I})= ( -\tfrac{p_4}{p_3}\rho , 0 , \pm \sqrt{1- (\tfrac{ p_4 }{  p_3 })^2  }\rho)
\]
are also fixed points.
The triplet  
\[
	(f,g,h) =
	( 2\sqrt2 (p_3 \mathcal{D} + \tfrac{p_4 \rho }2 ) ,  2p_3 \mathcal{I}  ,  2\sqrt{2} p_3\mathcal{R})
\]
 solves \eqref{E:ellipticODE3} with $\eta =\sqrt2 p_4 \rho$
and hence the solution $(\mathcal{D},\mathcal{R},\mathcal{I})$ to \eqref{E:ODE} is given explicitly by means of Lemma \ref{L:ellipticODE3} below.
\end{proposition}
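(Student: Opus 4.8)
The plan is to establish the proposition in two independent parts: first locating all fixed points of \eqref{E:qqq_9} on the sphere $S^2_\rho$, and then verifying that the stated affine change of variables reduces \eqref{E:qqq_9} to the model system \eqref{E:ellipticODE3}. Both parts are direct computations; the concluding assertion about the explicit form of $(\mathcal{D},\mathcal{R},\mathcal{I})$ then follows at once by invoking Lemma \ref{L:ellipticODE3}.

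For the fixed points, I would set the right-hand sides of \eqref{E:qqq_9} to zero, obtaining the three conditions $\mathcal{I}\mathcal{R}=0$, $\mathcal{I}(p_3\mathcal{D}+p_4\rho)=0$, and $\mathcal{R}(2p_3\mathcal{D}+p_4\rho)=0$, to be solved together with the constraint $\mathcal{D}^2+\mathcal{R}^2+\mathcal{I}^2=\rho^2$ coming from \eqref{E:qqs}. Since $p_3>0$, the first condition forces $\mathcal{I}=0$ or $\mathcal{R}=0$, so the subcase $\mathcal{I}\neq0$, $\mathcal{R}\neq0$ is impossible. The case $\mathcal{I}=\mathcal{R}=0$ gives $\mathcal{D}=\pm\rho$, i.e.\ the points $\pm(\rho,0,0)$. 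The case $\mathcal{I}=0$, $\mathcal{R}\neq0$ forces $\mathcal{D}=-\tfrac{p_4}{2p_3}\rho$ and then $\mathcal{R}^2=\rho^2(1-(\tfrac{p_4}{2p_3})^2)$, which admits a real solution precisely when $p_4\le 2p_3$; similarly the case $\mathcal{R}=0$, $\mathcal{I}\neq0$ forces $\mathcal{D}=-\tfrac{p_4}{p_3}\rho$ with $\mathcal{I}^2=\rho^2(1-(\tfrac{p_4}{p_3})^2)$, requiring $p_4\le p_3$. This recovers exactly the three families listed.

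For the reduction, I would set $f=2\sqrt2\,(p_3\mathcal{D}+\tfrac{p_4\rho}{2})$, $g=2p_3\mathcal{I}$, $h=2\sqrt2\,p_3\mathcal{R}$, and $\eta=\sqrt2\,p_4\rho$, and compute $f'$, $g'$, $h'$ from \eqref{E:qqq_9} by the chain rule. The decisive algebraic observation is that $f+\eta=2\sqrt2\,(p_3\mathcal{D}+p_4\rho)$, so that the inhomogeneous terms proportional to $p_4\rho$ appearing in the $\mathcal{R}'$- and $\mathcal{I}'$-equations are absorbed exactly into the shifted factor $f+\eta$; matching the three derivatives then yields $f'=-gh$, $g'=fh$, and $h'=-(f+\eta)g$. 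The shift $\tfrac{p_4\rho}{2}$ inside $f$ and the value $\eta=\sqrt2\,p_4\rho$ are forced by this cancellation, which is the only point requiring genuine care.

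The computations are entirely routine, so I do not anticipate a serious obstacle; the only delicate issue is pinning down the precise scaling constants (the factors $2\sqrt2$ and $2p_3$, together with the value of $\eta$) so that the transformed system is \emph{exactly} \eqref{E:ellipticODE3} rather than a merely rescaled version of it. Once the reduction is in place, Lemma \ref{L:ellipticODE3} supplies the explicit representation of $(f,g,h)$, and inverting the linear change of variables recovers $(\mathcal{D},\mathcal{R},\mathcal{I})$ explicitly, completing the proof.
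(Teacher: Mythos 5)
Your proposal is correct and matches the paper's (largely implicit) treatment: the paper states the proposition without proof, relying on exactly the direct verifications you describe — solving the fixed-point equations on $S^2_\rho$ and checking by the chain rule that the affine substitution with $f+\eta=2\sqrt2(p_3\mathcal{D}+p_4\rho)$ turns \eqref{E:qqq_9} into \eqref{E:ellipticODE3} — and then devotes its effort to proving Lemma \ref{L:ellipticODE3} separately. Your computations of $f'=-gh$, $g'=fh$, $h'=-(f+\eta)g$ and the three families of equilibria all check out.
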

The ODE system \eqref{E:ellipticODE3} is integrable. Hence,
combining the proposition with Theorems \ref{T:KS} and \ref{T:main}, we obtain the asymptotic profile of  solutions to \eqref{E:NLS9}.

Let us give the formula for the solution to \eqref{E:ellipticODE3}.
\begin{lemma}\label{L:ellipticODE3}
Let $\eta>0$.
Let
$\ell_1 := \{(x,0,0) \ | \ x \in \R \}$, 
$\ell_2 := \{(0,0,z) \ | \ z \in \R \}$,
and $\ell_3 := \{(-\eta,y,0) \ | \ y \in \R \}$
and let $\mathcal{P}:= \ell_1 \cup \ell_2 \cup \ell_3$.
For $R_0 \ge 0$ and $K_0 \in \R$, let
$\Upsilon=\Upsilon(R_0):=\{(x,y,z) \in \R^3 \ |\ x^2 + y^2 = R_0^2 \}$ and $\Sigma=\Sigma(K_0):=\{(x,y,z) \in \R^3 \ |\ z^2 - (x+\eta)^2 = K_0 \}$.
Then the following holds:
\begin{enumerate}
\item
The set of stationary points of \eqref{E:ellipticODE3}
is $\mathcal{P}$.
\item For any solution $(f,g,h)(t)$ to \eqref{E:ellipticODE3},
$R=R(f,g,h):= \sqrt{f(t)^2+g(t)^2}$ and $K=K(f,g,h):= h(t)^2 - (f(t) + \eta)^2$ are conserved.
\item
Given $R_0$ and $K_0$, the orbit of a solution $(f,g,h)$ such that $R(f,g,h)=R_0$ and $K(f,g,h)=K_0$
is a subset of $\Upsilon \cap \Sigma$.
\item If $(f,g,h) (t)$ is a solution then $(f,-g,-h)(t)$ is also a solution.
Further, $R(f,g,h)=R(f,-g,-h)$ and $K(f,g,h)=K(f,-g,-h)$.
\item A solution $(f,g,h)(t)$ such that $R(f,g,h)=R_0>0$, $K(f,g,h)=K_0$, $(f,g,h)(0) \not\in \mathcal{P}$, and $h(0) \ge 0$
is described as follows:
\begin{enumerate}
\item If $K_0>0$ then
\[
	f = R_0 \tfrac{ -\xi +  \cn \(\theta t + t_0 , m_0 \) }{ 1 - \xi \cn \(\theta t + t_0, m_0  \)}, \quad
	g  
	= R_0 \tfrac{  \sqrt{1-\xi^2} \sn \(\theta t + t_0 , m_0 \)}{ 1 - \xi  \cn \(\theta t + t_0 , m_0 \) },
\]
and
\[
	h =   \theta \tfrac{  \sqrt{1-\xi^2} \dn \(\theta t + t_0 , m_0 \)}{ 1 - \xi   \cn \(\theta t + t_0 , m_0 \) }
\]
for some $t_0 \in \R$, where
\[
	\theta =  ((R_0+\eta)^2 + K_0)^{\frac14} ((R_0-\eta)^2 + K_0)^{\frac14},
\]
and
\[
	\xi = \tfrac{ 2 \eta R_0 } { K_0 + \eta^2+ R_0^2 + \theta^2 } \in (0,1), \quad
	m_0 =  \tfrac{ \theta^2+ R_0^2  - K_0 - \eta^2  }{ 2\theta^2 }.
\]
\item If $K_0=0$ then we have three subcases:
\begin{enumerate}
\item If $R_0< \eta$ then
\begin{equation*}
	f(t)+ \eta = h(t) =  \tfrac{\eta^2 - R_0^2 }{ \eta - R_0 \cos \( \sqrt{\eta^2-R_0^2} t+ t_0 \) }
,\qquad
	g(t)= \tfrac{R_0 \sqrt{\eta^2 - R_0^2 } \sin \( \sqrt{\eta^2-R_0^2} t+ t_0 \) }{ \eta - R_0 \cos \( \sqrt{\eta^2-R_0^2} t+ t_0 \) }
\end{equation*}	
for some $t_0 \in \R$;
\item If $R_0= \eta$ then
\begin{equation*}
	f(t) + \eta = h(t) = \tfrac{ 2\eta }{ 1+ (\eta t + t_0)^2},\qquad
	g(t) = \tfrac{2(\eta t + t_0) }{ 1+ (\eta t + t_0)^2}
\end{equation*}	
for some $t_0 \in \R$;
\item If $R_0> \eta$ then
\begin{equation*}
	f(t) + \eta = h(t) = \tfrac{R_0^2-\eta^2}{ R_0 \cosh (t \sqrt{R_0^2-\eta^2} + t_0) - \eta }
,\qquad
	g(t) = \tfrac{ R_0 \sqrt{R_0^2 -\eta^2} \sinh (t \sqrt{R_0^2-\eta^2} + t_0) }{ R_0 \cosh (t \sqrt{R_0^2-\eta^2} + t_0) - \eta }
\end{equation*}	
for some $t_0 \in \R$;
\end{enumerate}
\item If $K_0<0$ then we have six subcases.
In this case, $\Sigma= \Sigma_+ \cup \Sigma_-$, where
$\Sigma_\pm := \{ (x,y,z) \in \Sigma \ |\ \pm x >0 \}$.
\begin{enumerate}
\item If $R_0 > \eta + \sqrt{-K_0}$ and $(f,g,h) \in \Sigma_-$ then
\begin{equation*}
	f(t)=R_0 \tfrac{ -(R_0 + \eta + \sqrt{-K_0}) +(R_0 - \eta - \sqrt{-K_0}) \sn^2\( \theta t+t_0 , m_0 \) }{(R_0 + \eta + \sqrt{-K_0}) +(R_0 - \eta - \sqrt{-K_0}) \sn^2\( \theta t+t_0 , m_0 \)},
\end{equation*}
\begin{equation*}
	g (t)=  
	\tfrac{ 2R_0 \sqrt{R_0^2 - (\eta + \sqrt{-K_0})^2} \sn\( \theta t+t_0 , m_0 \) }{{(R_0 + \eta + \sqrt{-K_0}) +(R_0 - \eta - \sqrt{-K_0}) \sn^2\( \theta t+t_0 , m_0 \)}},
\end{equation*}
and
\begin{equation*}
	h(t)=
	\tfrac{  (R_0 + \eta + \sqrt{-K_0})\sqrt{(R_0-\eta)^2 + K_0}   \cn\( \theta t+t_0 , m_0 \) \dn\( \theta t+t_0 , m_0 \)}
	{{(R_0 + \eta + \sqrt{-K_0}) +(R_0 - \eta - \sqrt{-K_0}) \sn^2\( \theta t+t_0 , m_0 \)}} 
\end{equation*}
for some $t_0 \in \R$,
where
\[
	\theta = \tfrac{ \sqrt{ (R_0+\sqrt{-K_0})^2 - \eta^2 }}{2}, \quad 
	m_0 = {\tfrac{(R_0-\sqrt{-K_0})^2 - \eta^2}{(R_0+\sqrt{-K_0})^2 - \eta^2}} ;
\]
\item If $R_0 > \eta + \sqrt{-K_0}$ and $(f,g,h) \in \Sigma_+$ then
\begin{equation*}
	f(t)=-\eta  + \tfrac{ \sqrt{-K_0}( (R_0 + \eta + \sqrt{-K_0})+(R_0 + \eta -\sqrt{-K_0}) \sn^2\( \theta t+t_0 , m_0 \) }{(R_0 + \eta + \sqrt{-K_0}) -(R_0 + \eta -\sqrt{-K_0}) \sn^2\( \theta t+t_0 , m_0 \)},
\end{equation*}
\begin{equation*}
	g(t) = - \tfrac{ R_0 (R_0 + \eta + \sqrt{-K_0}) \sqrt{R_0^2 -(\eta - \sqrt{-K_0})^2} \cn\( \theta t+t_0 , m_0 \)\dn\( \theta t+t_0 ; k_0 \) }{(R_0 + \eta + \sqrt{-K_0}) -(R_0 + \eta -\sqrt{-K_0}) \sn^2\( \theta t+t_0 , m_0 \)},
\end{equation*}
and
\begin{equation*}
	h(t)= \tfrac{ 2 \sqrt{-K_0} \sqrt{(R_0 + \eta)^2 + K_0} \sn\( \theta t+t_0 , m_0 \)}
	{(R_0 + \eta + \sqrt{-K_0}) -(R_0 + \eta -\sqrt{-K_0}) \sn^2\( \theta t+t_0 , m_0 \)}
\end{equation*}
for some $t_0 \in \R$, where
\[
	\theta = \tfrac{ \sqrt{ (R_0+\sqrt{-K_0})^2 - \eta^2 }}{2}, \quad
	m_0 = {\tfrac{(R_0-\sqrt{-K_0})^2 - \eta^2}{(R_0+\sqrt{-K_0})^2 - \eta^2}};
\]
\item If $R_0 = \eta + \sqrt{-K_0}$ then
\begin{equation*}
	f(t)= R_0 -  \tfrac{   2 R_0\eta \sin^2  ( t \sqrt{ R_0 (R_0-\eta)}+t_0) }{ R_0 -\eta  \cos^2 ( t \sqrt{ R_0 (R_0-\eta)}+t_0)  } ,
\quad
	g(t)= \tfrac{ 2R_0 \sqrt{\eta (R_0 - \eta )}  \sin  ( t \sqrt{ R_0 (R_0-\eta)}+t_0) }{ R_0 - \eta   \cos^2 ( t \sqrt{ R_0 (R_0-\eta)}+t_0)  },
\end{equation*}
and
\begin{equation*}
	h(t) =   \tfrac{ 2(R_0 - \eta )\sqrt{R_0\eta}  \cos  ( t \sqrt{ R_0 (R_0-\eta)}+t_0) }{ R_0  - \eta  \cos^2 (t  \sqrt{ R_0 (R_0-\eta)}+t_0) }
\end{equation*}
for some $t_0 \in \R$;
\item If $\eta - \sqrt{-K_0} <R_0 < \eta + \sqrt{-K_0}$ then
\begin{equation*}
	f (t)=-R_0  + \tfrac{ 2 R_0 (R_0 - \eta + \sqrt{-K_0}) }{2R_0 -(R_0 + \eta - \sqrt{-K_0}) \sn^2\( \theta t+t_0 , m_0 \)},
\quad
	g(t)= \tfrac{ 2 R_0 \sqrt{R_0^2 - (\eta - \sqrt{-K_0})^2} \cn\( \theta t+t_0 , m_0 \)}{2R_0 -(R_0 + \eta - \sqrt{-K_0}) \sn^2\( \theta t+t_0 , m_0 \)},
\end{equation*}
and
\begin{equation*}
	h(t) = -\tfrac{ 2 R_0^{1/2} (-K_0)^{1/4}\sqrt{R_0^2 - (\eta - \sqrt{-K_0})^2} \sn\( \theta t+t_0 , m_0 \)\dn\( \theta t+t_0 , m_0 \)  }{2R_0 -(R_0 + \eta - \sqrt{-K_0}) \sn^2\( \theta t+t_0 , m_0 \)} 
\end{equation*}
for some $t_0 \in \R$,
where
\[
	\theta =R_0^{1/2} (-K_0)^{1/4} , \quad
	m_0 = \tfrac{{\eta^2 - (R_0-\sqrt{-K_0})^2}}{4 R_0 \sqrt{-K_0}} ;
\]
\item If $R_0 = \eta - \sqrt{-K_0}$ then
\begin{equation*}
	f(t)= -R_0 + \tfrac{ 2 R_0 (\eta-R_0) }{\eta \cosh^2 ( t \sqrt{R_0(\eta-R_0)}  + t_0) -R_0},
\quad
	g(t)=  \tfrac{2 R_0  \sqrt{\eta (\eta-R_0)} \sinh (t \sqrt{R_0 (\eta-R_0)}  + t_0) }{\eta \cosh^2 (t \sqrt{R_0 (\eta-R_0)}  + t_0) - R_0},
\end{equation*}
and
\begin{equation*}
	h(t) =  \tfrac{ R_0\sqrt{2\eta (\eta-R_0)} \cosh (t \sqrt{R_0 (\eta-R_0)}  + t_0) }{\eta \cosh^2 (t \sqrt{R_0 (\eta-R_0)}  + t_0) - R_0}
\end{equation*}
for some $t_0 \in \R$;
\item If $R_0 < \eta - \sqrt{-K_0}$ then
\begin{equation*}
	f(t) =-R_0 + \tfrac{  2R_0(-R_0+\eta- \sqrt{-K_0}) \sn^2\( \theta t+t_0 , m_0 \)}{(R_0+\eta - \sqrt{-K_0}) -2R_0 \sn^2\( \theta t+t_0 , m_0 \)},
\quad
	g(t)= \tfrac{ - 2R_0 \sqrt{(\eta- \sqrt{-K_0})^2 -R_0^2} \sn\( \theta t+t_0 , m_0 \) \cn\( \theta t+t_0 , m_0 \) }{(R_0+\eta - \sqrt{-K_0}) -2R_0 \sn^2\( \theta t+t_0 ,m_0 \)},
\end{equation*}
and
\begin{equation*}
	h(t) = \tfrac{  (R_0 + \eta- \sqrt{-K_0})\sqrt{(R_0-\eta)^2 + K_0} \dn\( \theta t+t_0 , m_0 \)}{(R_0+\eta - \sqrt{-K_0}) -2R_0 \sn^2\( \theta t+t_0 , m_0 \)}
\end{equation*}
for some $t_0 \in \R$, where
\[
	\theta = \tfrac{ \sqrt{ \eta^2 - (R_0 - \sqrt{-K_0})^2 }}{2} , \quad
	m_0 = {\tfrac{4 R_0 \sqrt{-K_0}}{\eta^2 - (R_0 - \sqrt{-K_0})^2}} .
\]
\end{enumerate}
\end{enumerate}
\end{enumerate}
\end{lemma}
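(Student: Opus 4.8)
The plan is to dispose of the qualitative statements (1)--(4) by direct computation and then reduce the integration problem in (5) to a single first-order scalar equation whose right-hand side is a quartic polynomial. For (2), differentiating $R^2=f^2+g^2$ along \eqref{E:ellipticODE3} gives $2ff'+2gg'=-2fgh+2fgh=0$, and differentiating $K=h^2-(f+\eta)^2$ gives $2hh'-2(f+\eta)f'=-2(f+\eta)gh+2(f+\eta)gh=0$; hence both are conserved, and (3) is immediate since the orbit must lie on the common level set $\Upsilon(R_0)\cap\Sigma(K_0)$. For (1), the stationary condition $gh=fh=(f+\eta)g=0$ splits into the case $h=0$ (forcing $(f+\eta)g=0$, i.e.\ $g=0$ giving $\ell_1$ or $f=-\eta$ giving $\ell_3$) and the case $g=0$ (forcing $fh=0$, i.e.\ $f=0$ giving $\ell_2$, or $h=0$, already covered); this exhausts $\mathcal{P}$. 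For (4), substituting $(f,-g,-h)$ into \eqref{E:ellipticODE3} reproduces the three equations verbatim, and both conserved quantities are manifestly even in $(g,h)$.

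The heart of the matter is (5). Using $f'=-gh$ together with $g^2=R_0^2-f^2$ and $h^2=(f+\eta)^2+K_0$ furnished by the conserved quantities, I obtain the autonomous scalar equation
\begin{equation}\label{E:fquartic}
	(f')^2 = (R_0^2-f^2)\big((f+\eta)^2+K_0\big)=:P(f).
\end{equation}
The real roots of the quartic $P$ are $\pm R_0$, together with (when $K_0\le 0$) the roots $-\eta\pm\sqrt{-K_0}$ of the second factor, and the entire case division in the statement is dictated by the configuration of these roots on the line. Since $\lvert f\rvert\le R_0$ always and the leading coefficient of $P$ is $-1$, the motion of $f$ is confined to the component of $\{P\ge 0\}\cap[-R_0,R_0]$ containing $f(0)$: the value $K_0>0$ leaves only $\pm R_0$ real and $f$ sweeps all of $[-R_0,R_0]$; $K_0=0$ produces a double root at $-\eta$, whose position relative to $[-R_0,R_0]$ (equivalently $R_0\lessgtr\eta$) yields the three subcases of (b); and $K_0<0$ produces four simple real roots, the ordering of $R_0$ against $\eta\pm\sqrt{-K_0}$ (with the sheet $\Sigma_\pm$ distinguishing the two components when both interior roots lie in $[-R_0,R_0]$) yielding the six subcases of (c).

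On each admissible interval, \eqref{E:fquartic} is integrated by the classical reduction of $\int df/\sqrt{P(f)}$ to Legendre normal form. When $P$ has two simple real and two complex-conjugate roots (the $K_0>0$ case), a Möbius substitution sends the integral to a $\cn$-type integral, which is precisely why $f$ appears as the Möbius image $R_0(-\xi+\cn)/(1-\xi\cn)$ of a Jacobi function; when all four roots are real (the generic $K_0<0$ case) the standard $\sn$-substitution applies; and a repeated root ($K_0=0$, or the boundary cases $R_0=\eta\pm\sqrt{-K_0}$) degenerates the elliptic function to the elementary $\cos$, $\cosh$, $\sech$ expressions recorded above. The modulus $m_0$, frequency $\theta$, and auxiliary constant $\xi$ are then read off as the usual symmetric combinations of the four roots, and the phase $t_0$ is fixed by $f(0)=f_0$. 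Finally $h$ is recovered from $h=\sqrt{(f+\eta)^2+K_0}$ under the normalization $h(0)\ge 0$ (its sign changing only across zeros of $(f+\eta)^2+K_0$, which are exactly the branch points already encoded in the formulas), $g$ from $g=-f'/h$ where $h\ne0$ and by continuity elsewhere, and the symmetry $(f,g,h)\mapsto(f,-g,-h)$ of (4) removes the restriction $h(0)\ge0$.

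I expect the main obstacle to be bookkeeping rather than conceptual: matching each root configuration to the correct entry of the classical reduction tables for $\int df/\sqrt{P(f)}$ and verifying that the resulting modulus, frequency, and phase assemble into the exact closed forms stated---in particular keeping the sign conventions for $g$ and $h$ consistent through the turning points of $f$ across all six $K_0<0$ subcases. Because the right-hand side of \eqref{E:fquartic} is at most quartic, no fifth-or-higher-degree obstruction arises (cf.\ Remark \ref{R:Jacobi_criteria}), so every case closes in terms of Jacobi elliptic and elementary functions.
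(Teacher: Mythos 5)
Your proposal is correct and is essentially the paper's argument: the paper parametrizes $(f,g)=(R_0\cos\tau,R_0\sin\tau)$, derives $(\tau')^2=K_0+(R_0\cos\tau+\eta)^2$ from the conservation laws, and substitutes $t=\cos y$, which lands on exactly the quartic integral $\int dt/\sqrt{(1-t)(1+t)(t-\gamma_+)(t-\gamma_-)}$ that your scalar equation $(f')^2=(R_0^2-f^2)\bigl((f+\eta)^2+K_0\bigr)$ produces directly, followed by the same case split on the root configuration and the same appeal to the classical Legendre reductions (Byrd--Friedman). The only cosmetic difference is that the angular variable makes the recovery of $g=R_0\sin\tau$ and $h=\tau'$ with correct signs automatic, whereas your route via $h=\pm\sqrt{(f+\eta)^2+K_0}$ and $g=-f'/h$ needs the sign bookkeeping you already flag.
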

Note that the explicit formula of
a solution $(f,g,h)(t)$ to \eqref{E:ellipticODE3}
such that $R(f,g,h)=R_0>0$, $K(f,g,h)=K_0$, $(f,g,h)(0) \not\in \mathcal{P}$, and $h(0) \le 0$
is obtained by combining properties (4) and (5).

\subsubsection{Case 10}

The next case is the combination of $p_3$ and $p_5$.
The NLS system takes the form
\begin{equation}\label{E:NLS10}
\left\{
\begin{aligned}
	(i \partial_t  + \partial_x^2)u_1={}&
p_3 |u_1|^2 u_1 + p_5(2|u_1|^2 u_2 + u_1^2\ol{u_2})-p_3 (2 u_1 |u_2|^2 + \ol{u_1}u_2^2) + p_5 |u_2|^2u_2
+\mathcal{V}(u_1,u_2) u_1, \\
	(i \partial_t  + \partial_x^2)u_2={}&
p_5 |u_1|^2 u_1-p_3(2|u_1|^2 u_2 + u_1^2\ol{u_2})+p_5 (2 u_1 |u_2|^2 + \ol{u_1}u_2^2) + p_3 |u_2|^2u_2
 + \mathcal{V}(u_1,u_2)u_2
\end{aligned}
\right.
\end{equation}
and the quadratic system \eqref{E:qqq} takes the form
\begin{equation}\label{E:qqq_10}
	\mathcal{D}' = -2p_3 \mathcal{I} \mathcal{R} +  2p_5 \rho \mathcal{I}, \quad
	\mathcal{R}' = -2p_3 \mathcal{I} \mathcal{D} , \quad
	\mathcal{I}' = 4p_3 \mathcal{D} \mathcal{R} - 2p_5 \rho \mathcal{D}.
\end{equation}
This ODE system is also reduced to \eqref{E:ellipticODE3}.
\begin{proposition}
The two points $(\mathcal{D},\mathcal{R}, \mathcal{I})=\pm(0,\rho,0)$ are fixed points of \eqref{E:qqq_10}.
Moreover, if $p_5 \le 2p_3$ then 
\[
	(\mathcal{D},\mathcal{R}, \mathcal{I})=(\pm \sqrt{1-(\tfrac{p_5}{2p_3})^2}\rho,\tfrac{p_5}{2p_3}\rho,0)
\]
are fixed points.
Furthermore, if $p_5 \le p_3$ then 
\[
	(\mathcal{D},\mathcal{R}, \mathcal{I})= ( 0,\tfrac{p_5}{p_3}\rho  , \pm \sqrt{1- (\tfrac{ p_5 }{ p_3})^2  }\rho)
\]
are also fixed points.
The triplet  
\[
	(f,g,h) =
	( -2\sqrt2 (p_3 \mathcal{R} - \tfrac{p_5}2 \rho  ) ,  -2p_3 \mathcal{I}  ,  2\sqrt{2} p_3\mathcal{D})
\]
 solves \eqref{E:ellipticODE3} with $\eta = \sqrt2 p_5 \rho$
and hence the solution $(\mathcal{D},\mathcal{R},\mathcal{I})$ to \eqref{E:qqq_10} is given explicitly by means of Lemma \ref{L:ellipticODE3}.
\end{proposition}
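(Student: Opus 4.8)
The plan is to handle the two parts of the statement separately: first the enumeration of the equilibria, then the change of variables reducing \eqref{E:qqq_10} to the integrable system \eqref{E:ellipticODE3}. Both are verification-type arguments once set up correctly, so I would organize them to minimize case-checking.

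For the fixed points, I would first write the right-hand sides of \eqref{E:qqq_10} in factored form, $\mathcal{D}'=2\mathcal{I}(p_5\rho-p_3\mathcal{R})$, $\mathcal{R}'=-2p_3\mathcal{I}\mathcal{D}$, and $\mathcal{I}'=2\mathcal{D}(2p_3\mathcal{R}-p_5\rho)$, and then impose that all three vanish, using that the point lies on $S_\rho^2$. Splitting on whether $\mathcal{I}=0$ organizes everything. If $\mathcal{I}=0$, the only surviving condition is $\mathcal{D}(2p_3\mathcal{R}-p_5\rho)=0$: either $\mathcal{D}=0$, forcing $\mathcal{R}=\pm\rho$ and giving $\pm(0,\rho,0)$, or $\mathcal{R}=\frac{p_5}{2p_3}\rho$, forcing $\mathcal{D}^2=\rho^2(1-(\frac{p_5}{2p_3})^2)$, which is solvable precisely when $p_5\le2p_3$. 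If $\mathcal{I}\neq0$, then $\mathcal{R}'=0$ forces $\mathcal{D}=0$ (here $p_3>0$ is used) and $\mathcal{D}'=0$ forces $\mathcal{R}=\frac{p_5}{p_3}\rho$, so that $\mathcal{I}^2=\rho^2(1-(\frac{p_5}{p_3})^2)$ gives a genuinely new (nonzero-$\mathcal{I}$) equilibrium exactly when $p_5\le p_3$. This reproduces the three families in the statement.

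For the reduction, I would substitute the proposed triplet $(f,g,h)=(-2\sqrt2(p_3\mathcal{R}-\frac{p_5}2\rho),\,-2p_3\mathcal{I},\,2\sqrt2p_3\mathcal{D})$ together with $\eta=\sqrt2p_5\rho$ into \eqref{E:ellipticODE3} and check each equation against \eqref{E:qqq_10}. Concretely one computes $f'=-2\sqrt2p_3\mathcal{R}'$ and matches it to $-gh$, then $g'=-2p_3\mathcal{I}'$ against $fh$, and $h'=2\sqrt2p_3\mathcal{D}'$ against $-(f+\eta)g$; the matching amounts to comparing the quadratic monomials $\mathcal{I}\mathcal{D}$, $\mathcal{D}\mathcal{R}$, $\mathcal{I}\mathcal{R}$ and the inhomogeneous ones, with numerical coefficients $4\sqrt2p_3^2$, $8p_3^2$, and $4\sqrt2p_3p_5\rho$. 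The point making the third identity work is that the inhomogeneous term $2p_5\rho\mathcal{I}$ in $\mathcal{D}'$ is absorbed exactly by the shift $f+\eta=-2\sqrt2p_3\mathcal{R}+2\sqrt2p_5\rho$. Since $p_3>0$ and $p_5>0$ give $\eta=\sqrt2p_5\rho>0$, the hypothesis of Lemma \ref{L:ellipticODE3} is satisfied, and inverting the linear change of variables yields $(\mathcal{D},\mathcal{R},\mathcal{I})$ through the explicit formulas there.

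Neither step is a real obstacle, as the proposition is of verification type. The only non-mechanical ingredient is the choice of the affine rescaling itself; I would motivate it by the close parallel with the reduction already carried out for Case 9 (the two quadratic systems being related by interchanging $\mathcal{D}$ and $\mathcal{R}$ up to a sign adjustment), emphasizing that the signs in the triplet are fixed precisely so that $\eta=\sqrt2p_5\rho>0$, as required by Lemma \ref{L:ellipticODE3}. I would then present the change of variables as stated, confirm the three identities by the computation above, and invoke the lemma to conclude.
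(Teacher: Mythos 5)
Your proposal is correct and follows essentially the same route as the paper: the paper handles Case 10 by reducing \eqref{E:qqq_10} to the integrable system \eqref{E:ellipticODE3} via exactly the stated affine change of variables (leaving the substitution check and the fixed-point enumeration as direct verifications) and then devotes its effort to proving Lemma \ref{L:ellipticODE3}. Your coefficient checks for the three equations and your case split on $\mathcal{I}=0$ versus $\mathcal{I}\neq 0$ for the equilibria are accurate, and the observation that $\eta=\sqrt2\,p_5\rho>0$ is the correct hypothesis needed to invoke the lemma.
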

Combining the proposition with Theorems \ref{T:KS} and \ref{T:main}, we obtain the asymptotic profile of  solutions to \eqref{E:NLS10}.

\subsection{Room 3 -- specific integrable combinations}

Let us finally collect the cases where we can integrate the ODE system under a specific relation between the parameters.

\subsubsection{Case 11}

Let us consider the combination of $p_1$ and $p_3$.
The NLS system is
\begin{equation}\label{E:NLS11}
\left\{
\begin{aligned}
	(i \partial_t  + \partial_x^2)u_1={}&
p_3 |u_1|^2 u_1 +p_1 (2|u_1|^2 u_2 + u_1^2\ol{u_2})
-p_3 (2 u_1 |u_2|^2 + \ol{u_1}u_2^2) - p_1 |u_2|^2u_2
	\\& - 4p_1 \Re (\overline{u_1} u_2) u_1 +\mathcal{V}(u_1,u_2) u_1, \\
	(i \partial_t  + \partial_x^2)u_2={}&
p_1 |u_1|^2 u_1-p_3(2|u_1|^2 u_2 + u_1^2\ol{u_2})-p_1 (2 u_1 |u_2|^2 + \ol{u_1}u_2^2) + p_3 |u_2|^2u_2
	\\& + 4p_1 \Re (\overline{u_1} u_2) u_2 + \mathcal{V}(u_1,u_2)u_2.
\end{aligned}
\right.
\end{equation}
We have the ODE system
\begin{equation}\label{E:qqq_11}
	\mathcal{D}' = 2 \mathcal{I}(p_1 \mathcal{D}-p_3  \mathcal{R}), \quad
	\mathcal{R}' = 2 \mathcal{I} (p_1\mathcal{R}-p_3  \mathcal{D}) , \quad
	\mathcal{I}' = -2p_1 ( \mathcal{D}^2 + \mathcal{R}^2 ) +4p_3 \mathcal{D} \mathcal{R}.
\end{equation}
In this case, the effect by $p_1$ and $p_3$ compete with each other.
One can integrate the system at least when $p_1/p_3$ takes specific values.

The first case is the balanced case $p_1=p_3$, which turns out to be the threshold case.
\begin{proposition}
Suppose $p_1=p_3$.
Then $\{(2^{-\frac12}\rho \cos \theta, 2^{-\frac12}\rho \cos \theta , \rho \sin \theta)| \theta \in \R/2\pi \Z \}$ is the set of fixed points of \eqref{E:qqq_11}.
If $\mathcal{D}(0)\neq \mathcal{R}(0)$ then the solution of \eqref{E:qqq_11} is given a follows:
\begin{align*}
	\mathcal{D}(t) = \tfrac{\mathcal{D}(0)+\mathcal{R}(0)}{2}+ \sqrt{\tfrac{(\mathcal{D}(0)-\mathcal{R}(0))^2}{(\mathcal{D}(0)-\mathcal{R}(0))^2+2 \mathcal{I}(0)^2} } \tfrac{\mathcal{D}(0)-\mathcal{R}(0)}{2}
	\cosh \( 4p_1 t \sqrt{\tfrac{2 \mathcal{I}(0)^2+ (\mathcal{D}(0)-\mathcal{R}(0))^2}{2} } + \tau_0 \),
\end{align*}
\begin{align*}
	\mathcal{R}(t) = \tfrac{\mathcal{D}(0)+\mathcal{R}(0)}{2} - \sqrt{\tfrac{(\mathcal{D}(0)-\mathcal{R}(0))^2}{(\mathcal{D}(0)-\mathcal{R}(0))^2+2 \mathcal{I}(0)^2} } \tfrac{\mathcal{D}(0)-\mathcal{R}(0)}{2}
	\cosh \( 4p_1 t \sqrt{\tfrac{2 \mathcal{I}(0)^2+ (\mathcal{D}(0)-\mathcal{R}(0))^2}{2} } + \tau_0 \),
\end{align*}
and
\[
	\mathcal{I}(t)= - \sqrt{ \tfrac{2 \mathcal{I}(0)^2+(\mathcal{D}(0)-\mathcal{R}(0))^2}{2} }
	\tanh \( 4p_1 t \sqrt{\tfrac{2 \mathcal{I}(0)^2+ (\mathcal{D}(0)-\mathcal{R}(0))^2}{2} } + \tau_0 \),
\]
where $\tau_0 = -(\sign \mathcal{I}(0)) \cosh^{-1} \sqrt{1+\tfrac{2 \mathcal{I}(0)^2}{(\mathcal{D}(0)-\mathcal{R}(0))^2} }$.
\end{proposition}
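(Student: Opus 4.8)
The plan is to use the balance $p_1=p_3$ to collapse \eqref{E:qqq_11} to a single scalar equation. Writing $p:=p_1=p_3$, the system reads $\mathcal{D}'=2p\mathcal{I}(\mathcal{D}-\mathcal{R})$, $\mathcal{R}'=-2p\mathcal{I}(\mathcal{D}-\mathcal{R})$, and $\mathcal{I}'=-2p(\mathcal{D}-\mathcal{R})^2$. The first two equations sum to zero, so $\mathcal{D}+\mathcal{R}$ is a conserved quantity; I set $S:=\mathcal{D}(0)+\mathcal{R}(0)$ and introduce the difference $W:=\mathcal{D}-\mathcal{R}$. In these variables the dynamics closes to the planar system $W'=4p\mathcal{I}W$, $\mathcal{I}'=-2pW^2$, and the original triplet is recovered through $\mathcal{D}=(S+W)/2$ and $\mathcal{R}=(S-W)/2$.

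Next I would locate a second invariant. From $W'=4p\mathcal{I}W$ and $\mathcal{I}'=-2pW^2$ one gets $(W^2)'=8p\mathcal{I}W^2=-2(\mathcal{I}^2)'$, so $W^2+2\mathcal{I}^2$ is conserved; its value is $C^2:=(\mathcal{D}(0)-\mathcal{R}(0))^2+2\mathcal{I}(0)^2$, which is consistent with the conservation of $\rho$ on $S^2_\rho$. In particular the pair $(W,\mathcal{I})$ is trapped on an ellipse, whence $|\mathcal{I}|\le a$ with $a:=C/\sqrt2=\sqrt{((\mathcal{D}(0)-\mathcal{R}(0))^2+2\mathcal{I}(0)^2)/2}$. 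Substituting $W^2=C^2-2\mathcal{I}^2$ into the $\mathcal{I}$-equation produces the separable (Riccati-type) scalar ODE $\mathcal{I}'=4p(\mathcal{I}^2-a^2)$, whose solution subject to $|\mathcal{I}|\le a$ is $\mathcal{I}=-a\tanh(4pat+\tau_0)$. The monotonicity $\mathcal{I}'\le0$ together with the value $\mathcal{I}(0)$ then pins down $\tau_0$, yielding $\tau_0=-(\sign\mathcal{I}(0))\cosh^{-1}\sqrt{1+2\mathcal{I}(0)^2/(\mathcal{D}(0)-\mathcal{R}(0))^2}$.

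It remains to reconstruct $W$. Because $W'=4p\mathcal{I}W$ is linear in $W$, the sign of $W$ is preserved along the flow, so $W$ keeps the sign of $\mathcal{D}(0)-\mathcal{R}(0)$ and equals that sign times $\sqrt{C^2-2\mathcal{I}^2}$; feeding in the $\tanh$-formula and the relations $\mathcal{D}=(S+W)/2$, $\mathcal{R}=(S-W)/2$ produces the claimed closed forms for $\mathcal{D}$ and $\mathcal{R}$. The stationary set is then read off directly: $\mathcal{I}'=0$ forces $W=0$, i.e.\ $\mathcal{D}=\mathcal{R}$, and intersecting this locus with $S^2_\rho$ gives exactly the circle $\{(2^{-1/2}\rho\cos\theta,\,2^{-1/2}\rho\cos\theta,\,\rho\sin\theta)\mid\theta\in\R/2\pi\Z\}$.

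I expect the main obstacle to lie in the bookkeeping of the reconstruction step: extracting $W$ from $W^2$ with the correct branch, and choosing $\tau_0$ together with its sign so that the explicit expressions reproduce the data at $t=0$ and remain valid for all $t\in\R$ rather than only locally. By contrast, the reduction to the planar system and the scalar integration are routine once the two conserved quantities $\mathcal{D}+\mathcal{R}$ and $W^2+2\mathcal{I}^2$ have been identified.
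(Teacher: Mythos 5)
Your argument is correct and reproduces exactly the formulas the paper derives in Section \ref{S:qqq} for the subcase $p_1=p_3$ of Case 11. The route differs from the paper's in a minor but genuine way: the paper runs its uniform ``standard strategy,'' first solving the linear system \eqref{E:qqq2} in the rescaled time $s=\int_0^t 2\mathcal{I}\,d\tau$ to express $\mathcal{D},\mathcal{R}$ through $e^{2p_1 s}$, then determining $s(t)$ from the integral identity \eqref{E:finds} via the substitution $w=e^{4p_1\tau}$, and finally recovering $\mathcal{I}=\tfrac12 s'$. You instead exploit the two invariants $\mathcal{D}+\mathcal{R}$ and $W^2+2\mathcal{I}^2$ (with $W=\mathcal{D}-\mathcal{R}$) to collapse the dynamics to the single separable equation $\mathcal{I}'=4p_1(\mathcal{I}^2-a^2)$, and you reconstruct $W$ from the linear equation $W'=4p_1\mathcal{I}W$, which preserves the sign of $W$ and so fixes the branch of $\sqrt{C^2-2\mathcal{I}^2}$. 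Your reduction is cleaner for this balanced case; the paper's pays off in uniformity, since the same change of variables handles all the subcases $p_1/p_3\in\{\tfrac13,1,3\}$. Your identification of $\tau_0$ via $\tanh\tau_0=-\mathcal{I}(0)/a$ and $\cosh^2\tau_0=1+2\mathcal{I}(0)^2/(\mathcal{D}(0)-\mathcal{R}(0))^2$ is also correct, including the sign.

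One remark: your conclusion $W(t)=\sign(\mathcal{D}(0)-\mathcal{R}(0))\,C\,\sech\bigl(4p_1 a t+\tau_0\bigr)$ agrees with the $\sech$ formulas obtained in the paper's own computation (where the amplitude is $\sqrt{(\rho^2-c_-)/c_+}=C/\lvert\mathcal{D}(0)-\mathcal{R}(0)\rvert$), not with the displayed statement, which prints $\cosh$ together with the reciprocal amplitude $\sqrt{(\mathcal{D}(0)-\mathcal{R}(0))^2/C^2}$. The printed version matches the data at $t=0$ but grows without bound, which is impossible for an orbit confined to $S^2_\rho$; this is a typographical slip in the statement, and your bounded version is the correct one.
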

The next case is in the $p_1$-dominant region, i.e., $p_1>p_3$.
\begin{proposition}
Suppose $p_1=3p_3$.
There are two fixed points $\pm(0,0,\rho)$ of \eqref{E:qqq_11}.
If the initial point is not the fixed point then the solution of \eqref{E:qqq_11} is given a follows:
\begin{align*}
	\mathcal{D}(t) ={}& \tfrac{\mathcal{D}(0)+\mathcal{R}(0)}{2}\tfrac{ 2\rho }{( {\sqrt{ 8\rho^2(\mathcal{D}(0)-\mathcal{R}(0))^2+ (\mathcal{D}(0)+\mathcal{R}(0))^4 }}\cosh (8p_3 \rho \tau + \tau_0) + {(\mathcal{D}(0)+\mathcal{R}(0))^2} )^{1/2} }\\
	&+  \tfrac{\mathcal{D}(0)-\mathcal{R}(0)}{2}
	\tfrac{ 4\rho^2 }{ {\sqrt{ 8\rho^2(\mathcal{D}(0)-\mathcal{R}(0))^2+ (\mathcal{D}(0)+\mathcal{R}(0))^4 }}\cosh (8p_3 \rho \tau + \tau_0) + {(\mathcal{D}(0)+\mathcal{R}(0))^2}  } ,
\end{align*}
\begin{align*}
	\mathcal{R}(t) ={}& \tfrac{\mathcal{D}(0)+\mathcal{R}(0)}{2}\tfrac{ 2\rho }{( {\sqrt{ 8\rho^2(\mathcal{D}(0)-\mathcal{R}(0))^2+ (\mathcal{D}(0)+\mathcal{R}(0))^4 }}\cosh (8p_3 \rho \tau + \tau_0) + {(\mathcal{D}(0)+\mathcal{R}(0))^2} )^{1/2} }\\
	&-  \tfrac{\mathcal{D}(0)-\mathcal{R}(0)}{2}
	\tfrac{ 4\rho^2 }{ {\sqrt{ 8\rho^2(\mathcal{D}(0)-\mathcal{R}(0))^2+ (\mathcal{D}(0)+\mathcal{R}(0))^4 }}\cosh (8p_3 \rho \tau + \tau_0) + {(\mathcal{D}(0)+\mathcal{R}(0))^2}  } ,
\end{align*}
and
\[
	\mathcal{I}(\tau)= - \tfrac{ \rho {\sqrt{ 8\rho^2(\mathcal{D}(0)-\mathcal{R}(0))^2+ (\mathcal{D}(0)+\mathcal{R}(0))^4 }} \sinh (8p_3 \rho \tau + \tau_0)}{ {\sqrt{ 8\rho^2(\mathcal{D}(0)-\mathcal{R}(0))^2+ (\mathcal{D}(0)+\mathcal{R}(0))^4 }}\cosh (8p_3 \rho \tau + \tau_0) + {(\mathcal{D}(0)+\mathcal{R}(0))^2}  }  ,
\]
where $\tau_0 = -(\sign \mathcal{I}(0)) \cosh^{-1} \frac{4\rho^2 - (\mathcal{D}(0)+\mathcal{R}(0))^2}{\sqrt{ 8\rho^2(\mathcal{D}(0)-\mathcal{R}(0))^2+ (\mathcal{D}(0)+\mathcal{R}(0))^4 }} =-\tanh^{-1} \frac{\rho \mathcal{I}(0)}{4\rho^2 -(\mathcal{D}(0)+\mathcal{R}(0))^2}$.
\end{proposition}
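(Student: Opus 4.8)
The plan is to reduce \eqref{E:qqq_11} with $p_1=3p_3$ to a single separable first–order ODE whose integral is elementary, after diagonalizing the linear part by passing to $\mathcal{D}\pm\mathcal{R}$.

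First I would settle the fixed points. Setting the right–hand sides of \eqref{E:qqq_11} to zero: if $\mathcal{I}=0$ the first two equations hold automatically and the third forces $3(\mathcal{D}^2+\mathcal{R}^2)-2\mathcal{D}\mathcal{R}=0$, a positive–definite form, so $\mathcal{D}=\mathcal{R}=0$, which is impossible on $S^2_\rho$ with $\rho>0$. If $\mathcal{I}\neq0$, the first two equations read $3\mathcal{D}=\mathcal{R}$ and $3\mathcal{R}=\mathcal{D}$, again forcing $\mathcal{D}=\mathcal{R}=0$, whence $\mathcal{I}=\pm\rho$. Thus the only fixed points on $S^2_\rho$ are $\pm(0,0,\rho)$.

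Next I introduce $X:=\mathcal{D}+\mathcal{R}$ and $Y:=\mathcal{D}-\mathcal{R}$. A direct computation using $p_1=3p_3$ gives
\[
	X'=4p_3\mathcal{I}X,\qquad Y'=8p_3\mathcal{I}Y,\qquad \mathcal{I}'=-2p_3(X^2+2Y^2).
\]
The first two relations yield $Y'/Y=2X'/X$, so that $k:=Y/X^2$ is conserved on $\{X\neq0\}$; combined with the conserved $\rho^2=\tfrac12(X^2+Y^2)+\mathcal{I}^2$ inherited from \eqref{E:qqs} and the conservation of $\rho$, this supplies two independent integrals. Setting $P:=X^2$, the relation $P'=8p_3\mathcal{I}P$ together with $\mathcal{I}^2=\rho^2-\tfrac12 P-\tfrac{k^2}{2}P^2$ combines into
\[
	(P')^2=64p_3^2\,P^2\Bigl(\rho^2-\tfrac12 P-\tfrac{k^2}{2}P^2\Bigr).
\]
Because of the factor $P^2$, extracting the square root and separating variables leaves the \emph{elementary} integral $\int \frac{dP}{P\sqrt{\rho^2-P/2-k^2P^2/2}}$ (a standard reduction, the quadratic under the root having positive constant term $\rho^2$), which evaluates to an inverse hyperbolic function. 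Inverting expresses $1/P$ as an affine function of $\cosh(8p_3\rho\tau+\tau_0)$, i.e.
\[
	P=\frac{4\rho^2 X(0)^2}{\Theta\cosh(8p_3\rho\tau+\tau_0)+X(0)^2},\qquad \Theta:=\sqrt{8\rho^2 Y(0)^2+X(0)^4},
\]
and one recovers $X=\sqrt P$, $Y=kP$, $\mathcal{I}=P'/(8p_3P)$, and finally $\mathcal{D}=(X+Y)/2$, $\mathcal{R}=(X-Y)/2$, matching the stated formulas.

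Finally I would fix $\tau_0$ by imposing $P(0)=X(0)^2$ and reading off the sign of $\mathcal{I}(0)$; this gives $\cosh\tau_0=(4\rho^2-X(0)^2)/\Theta$ and the sign $-(\sign\mathcal{I}(0))$, and reconciling both announced forms of $\tau_0$ rests on the identity $2\rho^2-X(0)^2-2\mathcal{I}(0)^2=Y(0)^2$, which guarantees $\cosh^2\tau_0-\sinh^2\tau_0=1$ and $\cosh\tau_0\ge1$. The degenerate loci $X(0)=0$ and $Y(0)=0$ are not covered by the integral $k=Y/X^2$; there the system collapses to a planar system solved directly in hyperbolic functions, and the outcome agrees with the $X(0)\to0$, resp. $Y(0)\to0$, limit of the general formula (the latter via $\cosh 2x+1=2\cosh^2 x$, which halves the rate to $4p_3\rho$). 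I expect the main obstacle to be the sign bookkeeping across the turning point $\mathcal{I}=0$—where $P$ attains its maximum, the positive root of the quadratic—when passing from $(P')^2$ to $P'$, together with verifying that the elementary integral produces precisely the single–$\cosh$ expression and the exact constant $\tau_0$ rather than merely ``some'' elementary primitive.
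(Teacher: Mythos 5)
Your argument is correct and is essentially the paper's own: the diagonalization $X=\mathcal{D}+\mathcal{R}$, $Y=\mathcal{D}-\mathcal{R}$ reproduces the paper's solution $\mathcal{D}\pm\mathcal{R}=(\mathcal{D}(0)\pm\mathcal{R}(0))e^{(p_1\mp p_3)s}$ of the reparametrized linear system, and your separated equation for $P=X^2$ is exactly \eqref{E:finds} after the paper's substitution $w=e^{4p_3\tau}$ (namely $P=(\mathcal{D}(0)+\mathcal{R}(0))^2w$), leading to the same elementary integral $\int\frac{dw}{w\sqrt{\rho^2-c_-w-c_+w^2}}$. One incidental remark: carrying out your normalization of $\tau_0$ gives $\sinh\tau_0=-4\rho\,\mathcal{I}(0)\big/\sqrt{8\rho^2(\mathcal{D}(0)-\mathcal{R}(0))^2+(\mathcal{D}(0)+\mathcal{R}(0))^4}$, so the second expression for $\tau_0$ in the statement should read $-\tanh^{-1}\tfrac{4\rho\mathcal{I}(0)}{4\rho^2-(\mathcal{D}(0)+\mathcal{R}(0))^2}$ --- the printed version is missing the factor $4$ in the numerator.
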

The last case is in the $p_3$-dominant region, i.e., $p_1<p_3$.
\begin{proposition}
Suppose $p_1=\frac13 p_3$.
Let $c_\pm := \frac12 (\mathcal{D}(0)\mp \mathcal{R}(0))^2$.
\begin{enumerate}
\item If $c_+=0= c_-$ then the solution of \eqref{E:qqq_11} is
$(\mathcal{D}(\tau),\mathcal{R}(\tau),\mathcal{I}(\tau))=(0,0,\pm \rho)$.
\item If $c_+=0< c_-$ then the solution of \eqref{E:qqq_11} is given as
\[
	\mathcal{D}(\tau) = \mathcal{R}(\tau) = (\sign \mathcal{D}(0) )\tfrac{\rho}{ \sqrt2 \cosh (\frac43 p_3 \rho \tau + \tau_0)},
\quad
	\mathcal{I}(\tau) = \rho \tanh (\tfrac43 p_3 \rho \tau + \tau_0),
\]
where $\tau_0 = \tanh^{-1} \frac{\mathcal{I}(0)}\rho=( \sign \mathcal{I}(0)) \cosh^{-1} \frac{\rho}{ \sqrt{ 2|\mathcal{D}(0)|}}$.
\item If $c_+> 0= c_-$ then the solution of \eqref{E:qqq_11} is given as
\[
	\mathcal{D}(\tau) =- \mathcal{R}(\tau) = (\sign \mathcal{D}(0) )\tfrac{\rho}{ \sqrt2 \cosh (\frac83 p_3 \rho \tau - \tau_0)},
\quad
	\mathcal{I}(\tau) = - \rho \tanh (\tfrac83 p_3 \rho \tau - \tau_0),
\]
where $\tau_0 = \tanh^{-1} \frac{\mathcal{I}(0)}\rho=( \sign \mathcal{I}(0)) \cosh^{-1} \frac{\rho}{ \sqrt{2 |\mathcal{D}(0)|}}$.
\item If $c_+ , c_->0$ and $\mathcal{I}(0)=0$ then we have three subcases:
\begin{enumerate}
\item If $2c_+=c_-$ then $(\mathcal{D}(\tau),\mathcal{R}(\tau),\mathcal{I}(\tau))$ is constant solution of \eqref{E:qqq_11}. The constant is either one of
 $
\pm (\tfrac{\sqrt2-1}{\sqrt6}\rho,\tfrac{\sqrt2+1}{\sqrt6}\rho,0)$, $\pm (\tfrac{\sqrt2+1}{\sqrt6}\rho,\tfrac{\sqrt2-1}{\sqrt6}\rho,0).
$
\item If $2c_+>c_-$ then the solution of \eqref{E:qqq_11} is given as
\begin{align*}
	\mathcal{D} (\tau) ={}& \tfrac{\mathcal{D}(0)+\mathcal{R}(0)}{2} \( \tfrac{ \beta }{ 1- (1-\beta) \cd^2(\frac43 p_3 \sqrt{c_+(\beta-\alpha)} t, \frac{ -\alpha(1-\beta)}{ \beta-\alpha }  ) }
 \)^{-\frac12} \\
	&+ \tfrac{\mathcal{D}(0)-\mathcal{R}(0)}{2}
	\(\tfrac{ \beta }{ 1- (1-\beta) \cd^2(\frac43 p_3 \sqrt{c_+(\beta-\alpha)} t, \frac{ -\alpha(1-\beta)}{ \beta-\alpha }  ) }
\), \\
	\mathcal{R} (\tau) ={}& \tfrac{\mathcal{D}(0)+\mathcal{R}(0)}{2} \( \tfrac{ \beta }{ 1- (1-\beta) \cd^2(\frac43 p_3 \sqrt{c_+(\beta-\alpha)} t, \frac{ -\alpha(1-\beta)}{ \beta-\alpha }  ) }
 \)^{-\frac12} \\
	&- \tfrac{\mathcal{D}(0)-\mathcal{R}(0)}{2}
	\( \tfrac{ \beta }{ 1- (1-\beta) \cd^2(\frac43 p_3 \sqrt{c_+(\beta-\alpha)} t, \frac{ -\alpha(1-\beta)}{ \beta-\alpha }  ) }
 \),
\end{align*}
and
\begin{align*}
	\mathcal{I}(\tau) ={}& - \sqrt{\tfrac{c_+}{\beta-\alpha}} \beta(1-\alpha)(1-\beta) \\
	&\times \tfrac{ \cd(\frac43 p_3 \sqrt{c_+(\beta-\alpha)} t, \frac{ -\alpha(1-\beta)}{ \beta-\alpha }  )\sd(\frac43 p_3 \sqrt{c_+(\beta-\alpha)} t, \frac{ -\alpha(1-\beta)}{ \beta-\alpha }  )\nd(\frac43 p_3 \sqrt{c_+(\beta-\alpha)} t, \frac{ -\alpha(1-\beta)}{ \beta-\alpha }  ) }{ 1- (1-\beta) \cd^2(\frac43 p_3 \sqrt{c_+(\beta-\alpha)} t, \frac{ -\alpha(1-\beta)}{ \beta-\alpha }  ) },
\end{align*}
where $\alpha = -\frac12 - \frac{ \sqrt{c_+(c_+ + 4c_-)}}{2c_+} \in (-2,-1)$ and $\beta= -\frac12 + \frac{ \sqrt{c_+(c_+ + 4c_-)}}{2c_+} \in (0,1)$ are two roots of the quadratic equation $c_+ w^2 + c_+ w - c_-=0$.

\item If $2c_+<c_-$ then the solution of \eqref{E:qqq_11} is given as 
\begin{align*}
	\mathcal{D} (\tau) ={}& \tfrac{\mathcal{D}(0)+\mathcal{R}(0)}{2} \(\tfrac{ \gamma }{ \gamma -  (\gamma-1) \sn^2(\frac43 p_3 \sqrt{c_+\gamma(1-\alpha)} \tau, \frac{ (\gamma-1)(-\alpha) }{ \gamma(1-\alpha) } ) }\)^{-\frac12} \\
	&+ \tfrac{\mathcal{D}(0)-\mathcal{R}(0)}{2}
	\( \tfrac{ \gamma }{ \gamma -  (\gamma-1) \sn^2(\frac43 p_3 \sqrt{c_+\gamma(1-\alpha)} \tau, \frac{ (\gamma-1)(-\alpha) }{ \gamma(1-\alpha) } ) } \), \\
	\mathcal{R} (\tau) ={}& \tfrac{\mathcal{D}(0)+\mathcal{R}(0)}{2} \( \tfrac{ \gamma }{ \gamma -  (\gamma-1) \sn^2(\frac43 p_3 \sqrt{c_+\gamma(1-\alpha)} \tau, \frac{ (\gamma-1)(-\alpha) }{ \gamma(1-\alpha) } ) } \)^{-\frac12} \\
	&- \tfrac{\mathcal{D}(0)-\mathcal{R}(0)}{2}
	\( \tfrac{ \gamma }{ \gamma -  (\gamma-1) \sn^2(\frac43 p_3 \sqrt{c_+\gamma(1-\alpha)} \tau, \frac{ (\gamma-1)(-\alpha) }{ \gamma(1-\alpha) } ) } \),
\end{align*}
and
\begin{align*}
	\mathcal{I}(\tau) ={}& \sqrt{c_+\gamma(1-\alpha)} (\gamma-1) \\
	&\times \tfrac{ \sn(\frac43 p_3 \sqrt{c_+\gamma(1-\alpha)} \tau, \frac{ (\gamma-1)(-\alpha) }{ \gamma(1-\alpha) } ) \cn(\frac43 p_3 \sqrt{c_+\gamma(1-\alpha)} \tau, \frac{ (\gamma-1)(-\alpha) }{ \gamma(1-\alpha) } ) \dn(\frac43 p_3 \sqrt{c_+\gamma(1-\alpha)} \tau, \frac{ (\gamma-1)(-\alpha) }{ \gamma(1-\alpha) } ) }{ \gamma -  (\gamma-1) \sn^2(\frac43 p_3 \sqrt{c_+\gamma(1-\alpha)} \tau, \frac{ (\gamma-1)(-\alpha) }{ \gamma(1-\alpha) } ) },
\end{align*}
where $\alpha = -\frac12 - \frac{ \sqrt{c_+(c_+ + 4c_-)}}{2c_+} <-2)$ and $\gamma= -\frac12 + \frac{ \sqrt{c_+(c_+ + 4c_-)}}{2c_+} >1$ are two roots of the quadratic equation $c_+ w^2 + c_+ w - c_-=0$.
\end{enumerate}
\item If $c_+,c_->0$ and $\mathcal{I}(0)\neq0$ then the solution of \eqref{E:qqq_11} is given as
\begin{align*}
	\mathcal{D} (\tau) ={}& \tfrac{\mathcal{D}(0)+\mathcal{R}(0)}{2} \(\tfrac{ \beta \gamma }{ \gamma -  (\gamma-\beta) \sn^2(\frac43 p_3 \sqrt{c_+\gamma(\beta-\alpha)} \tau + \tau_0, \frac{ (\gamma-\beta)(-\alpha) }{ \gamma(\beta-\alpha) } ) }\)^{-\frac12} \\
	&+ \tfrac{\mathcal{D}(0)-\mathcal{R}(0)}{2}
	\( \tfrac{ \beta \gamma }{ \gamma -  (\gamma-\beta) \sn^2(\frac43 p_3 \sqrt{c_+\gamma(\beta-\alpha)} \tau + \tau_0, \frac{ (\gamma-\beta)(-\alpha) }{ \gamma(\beta-\alpha) } ) } \), \\
	\mathcal{R} (\tau) ={}& \tfrac{\mathcal{D}(0)+\mathcal{R}(0)}{2} \( \tfrac{ \beta \gamma }{ \gamma -  (\gamma-\beta) \sn^2(\frac43 p_3 \sqrt{c_+\gamma(\beta-\alpha)} \tau + \tau_0, \frac{ (\gamma-\beta)(-\alpha) }{ \gamma(\beta-\alpha) } ) } \)^{-\frac12} \\
	&- \tfrac{\mathcal{D}(0)-\mathcal{R}(0)}{2}
	\( \tfrac{ \beta \gamma }{ \gamma -  (\gamma-\beta) \sn^2(\frac43 p_3 \sqrt{c_+\gamma(\beta-\alpha)} \tau + \tau_0, \frac{ (\gamma-\beta)(-\alpha) }{ \gamma(\beta-\alpha) } ) } \),
\end{align*}
and
\begin{align*}
	\mathcal{I}(\tau) ={}& \sqrt{c_+\gamma(\beta-\alpha)} (\gamma-\beta) \\
	&\times \tfrac{ \sn(\frac43 p_3 \sqrt{c_+\gamma(\beta-\alpha)} \tau + \tau_0, \frac{ (\gamma-\beta)(-\alpha) }{ \gamma(\beta-\alpha) } )
	\cn(\frac43 p_3 \sqrt{c_+\gamma(\beta-\alpha)} \tau + \tau_0, \frac{ (\gamma-\beta)(-\alpha) }{ \gamma(\beta-\alpha) } )
	\dn(\frac43 p_3 \sqrt{c_+\gamma(\beta-\alpha)} \tau + \tau_0, \frac{ (\gamma-\beta)(-\alpha) }{ \gamma(\beta-\alpha) } )
	 }{ \gamma -  (\gamma-1) \sn^2(\frac43 p_3 \sqrt{c_+\gamma(\beta-\alpha)} \tau + \tau_0, \frac{ (\gamma-\beta)(-\alpha) }{ \gamma(\beta-\alpha) } ) },
\end{align*}
where $\alpha \in (-\infty,0)$, $\beta \in (0,1)$, and $\gamma>1$ are three roots of the cubic equation
$-c_+ w^3 + \rho^2  w - c_-=0$ and $\tau_0= - \sign(I(0)) \sn^{-1} (\sqrt{\frac{\gamma(1-\beta)}{ \gamma-\beta } })$.
\end{enumerate}
\end{proposition}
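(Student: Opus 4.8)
The plan is to linearize the $(\mathcal{D},\mathcal{R})$-block and to extract a second conserved quantity besides $\rho$. I would introduce $u:=\mathcal{D}+\mathcal{R}$ and $v:=\mathcal{D}-\mathcal{R}$. Using $p_1=\tfrac13 p_3$, a direct computation from \eqref{E:qqq_11} gives the decoupled linear equations
\[
	u' = -\tfrac43 p_3\,\mathcal{I}\,u, \qquad v' = \tfrac83 p_3\,\mathcal{I}\,v .
\]
Hence $u$ and $v$ never change sign, and since $\tfrac{v'}{v}+2\tfrac{u'}{u}=0$ the combination $J:=v\,u^2$ is conserved. Together with $\rho^2=\tfrac12(u^2+v^2)+\mathcal{I}^2$ this provides two independent invariants, and in terms of the data one has $c_-=\tfrac12 u(0)^2$ and $c_+=\tfrac12 v(0)^2$, so that $(c_+,c_-,\rho)$ encodes the conserved quantities.

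Next I would dispose of the degenerate configurations. If $c_-=0$ (resp. $c_+=0$) then $u\equiv0$ (resp. $v\equiv0$), the system collapses to a planar system in $(v,\mathcal{I})$ (resp. $(u,\mathcal{I})$) whose integration produces the $\sech/\tanh$ profiles of items~(2)--(3); the case $c_+=c_-=0$ forces $\mathcal{D}\equiv\mathcal{R}\equiv0$ and yields the equilibria of item~(1).

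For the generic case $c_+,c_->0$ I would set $w:=v/v(0)>0$. The two conservation laws give $u^2=u(0)^2/w$ and $\mathcal{I}^2=\rho^2-c_-/w-c_+w^2$, and substituting these into $v'=\tfrac83 p_3\mathcal{I}v$ yields the autonomous first-order equation
\[
	(w')^2 = \tfrac{64}{9}p_3^2\, w\,\bigl(-c_+ w^3 + \rho^2 w - c_-\bigr).
\]
The right-hand side is a quartic in $w$ whose roots are $w=0$ together with the three roots $\alpha,\beta,\gamma$ of the cubic $-c_+ w^3+\rho^2 w-c_-=0$ appearing in the statement; this is exactly the regime of item~(5), where $\mathcal{I}(0)\neq0$. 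When instead $\mathcal{I}(0)=0$ one has $\rho^2=c_++c_-$, so $w=1$ is a root and the cubic factors as $-c_+(w-1)\bigl(w^2+w-c_-/c_+\bigr)$, reducing the relevant roots to those of $c_+w^2+c_+w-c_-=0$ and explaining item~(4); the coincidence $2c_+=c_-$ makes $w=1$ a double root, i.e. an equilibrium, giving item~(4a).

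Finally I would integrate $(w')^2=\text{(quartic)}$ by the classical reduction of $\int dw/\sqrt{\text{quartic}}$ to Jacobi elliptic functions, choosing $\sn$, $\cn$, $\cd$, or the product $\sd\cdot\nd$ according to the bounded interval between consecutive roots in which the orbit of $w$ lies, and then fixing the phase $\tau_0$ and the sign of $\mathcal{I}$ via $\sign\mathcal{I}=\sign w'$ together with the initial values. Back-substitution $v=v(0)w$, $u^2=u(0)^2/w$, $\mathcal{D}=\tfrac12(u+v)$, $\mathcal{R}=\tfrac12(u-v)$ then reproduces the stated formulas. I expect the main obstacle to be precisely this last step: correctly matching each root configuration to the appropriate elliptic standard form and carrying out the algebraic simplifications that land on the exact closed forms, since the ordering of the quartic's roots---and hence the modulus $m_0$ and the functional type---changes across the sub-cases according to the sign of $2c_+-c_-$ and whether $\mathcal{I}(0)$ vanishes.
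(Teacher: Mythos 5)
Your proposal is correct and follows essentially the same route as the paper: the paper's ``standard strategy'' with the auxiliary time $s=\int_0^t 2\mathcal{I}$ gives $\mathcal{D}\pm\mathcal{R}$ proportional to $e^{(p_1\mp p_3)s}$, which is exactly your diagonalization into $u,v$ (your invariant $vu^2$ is equivalent to the relation $(p_1+p_3)+2(p_1-p_3)=0$), and both reduce to the same separated equation $(w')^2=\tfrac{64}{9}p_3^2\,w\,(-c_+w^3+\rho^2w-c_-)$ for $w=e^{\frac43 p_3 s}=v/v(0)$, with the same sub-case analysis ($w=1$ a root when $\mathcal{I}(0)=0$, a double root when $2c_+=c_-$) and the same final appeal to the Byrd--Friedman tables for $\int dw/\sqrt{w(-c_+w^3+\rho^2w-c_-)}$. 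The only difference is cosmetic packaging of the second invariant, so there is nothing further to compare.
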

Combining the above propositions with Theorems \ref{T:KS} and \ref{T:main}, we obtain the asymptotic profile of  solutions to the corresponding cases of \eqref{E:NLS11}.
A phase portrait analysis shows that the nonlinear synchronization occurs if $p_1>p_3$. 

\begin{remark}
Theoretically, we can obtain an explicit formula of solutions to \eqref{E:qqq_11}
if $p_1/p_3 \in \{ \frac13, 1, \frac53,  2, \frac73, 3,4, 5, 7, 9, 11 \}$ (see Remark \ref{R:Jacobi_criteria}).
However, the formula is more complicated than the above cases, in general, and hence we do not pursue them here.
\end{remark}

\subsubsection{Case 12} The next case is
$p_2=p_3>0$, $p_4 \neq 0$, and $p_1=p_5=0$.
Although it involves three nonzero parameters, the system itself is considerably simple:
\begin{equation}\label{E:NLS12}
\left\{
\begin{aligned}
	(i \partial_t  + \partial_x^2)u_1={}&
(3p_2 +p_3+2p_4) |u_1|^2 u_1  +\mathcal{V}(u_1,u_2) u_1, \\
	(i \partial_t  + \partial_x^2)u_2={}&
(3p_2+p_3-2p_4) |u_2|^2u_2 + \mathcal{V}(u_1,u_2)u_2.
\end{aligned}
\right.
\end{equation}
It is almost decoupled as in Case 4. The present case is covered by \cite{KS}*{Example 6.4}.
For completeness, we record the result.
The quadratic system \eqref{E:qqq} takes the form
\begin{equation}\label{E:qqq_12}
	\mathcal{D}'=0, \quad \mathcal{R}'=- 4p_3 \mathcal{D} \mathcal{I} -2p_4 \rho \mathcal{I},
	\quad \mathcal{I}' = 4p_3 \mathcal{D} \mathcal{R} + 2p_4 \rho \mathcal{R}.
\end{equation}
\begin{proposition}
The two points $(\mathcal{D},\mathcal{R}, \mathcal{I})=\pm(\rho, 0,0)$ are fixed points of \eqref{E:qqq_12}.
If $2p_3>|p_4|$ then all points in the set $\{ (-\frac{p_4}{2p_3} \rho, \sqrt{1-(\frac{p_4}{2p_3} )^2} \rho \cos \eta,\sqrt{1-(\frac{p_4}{2p_3} )^2} \rho\sin \eta) \ |\ \eta \in \R/2\pi \Z \}$ are fixed points.
Any solution to \eqref{E:qqq_12} is given by
\[
	\mathcal{D}(\tau) = \mathcal{D}(0), 
\]
\[
	\mathcal{R}(\tau) = \mathcal{R}(0) \cos (2(2p_3\mathcal{D}(0) +p_4 \rho )\tau  ) - \mathcal{I}(0) \sin (2(2p_3\mathcal{D}(0) +p_4 \rho )\tau  ), 
\]
and
\[
	\mathcal{I}(\tau) = \mathcal{R}(0) \sin (2(2p_3\mathcal{D}(0) +p_4 \rho )\tau  ) + \mathcal{I}(0) \cos (2(2p_3\mathcal{D}(0) +p_4 \rho )\tau ).
\]
\end{proposition}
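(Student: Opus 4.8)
The plan is to exploit the complete decoupling of the first equation in \eqref{E:qqq_12}. First I would observe that $\mathcal{D}' = 0$ forces $\mathcal{D}(\tau)\equiv\mathcal{D}(0)$, so that $\mathcal{D}$ is conserved along every solution. Substituting this constant value into the remaining two equations reduces the dynamics to the linear constant-coefficient planar system
\[
	\begin{bmatrix}\mathcal{R}\\\mathcal{I}\end{bmatrix}' = \omega \begin{bmatrix} 0 & -1 \\ 1 & 0 \end{bmatrix}\begin{bmatrix}\mathcal{R}\\\mathcal{I}\end{bmatrix}, \qquad \omega := 2\bigl(2p_3\mathcal{D}(0) + p_4\rho\bigr),
\]
which is nothing but a rotation in the $(\mathcal{R},\mathcal{I})$-plane with constant angular velocity $\omega$.

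Second, I would integrate this rotation. The coefficient matrix has eigenvalues $\pm i\omega$, so the flow acts by rotation through angle $\omega\tau$; explicitly, the solution issuing from $(\mathcal{R}(0),\mathcal{I}(0))$ is exactly the pair of formulas in the statement. Rather than re-derive them, I would simply differentiate the claimed expressions and check that they satisfy $\mathcal{R}'=-\omega\mathcal{I}$, $\mathcal{I}'=\omega\mathcal{R}$ together with the correct data at $\tau=0$; by uniqueness for linear ODEs this is enough. Note that $\mathcal{R}(\tau)^2+\mathcal{I}(\tau)^2=\mathcal{R}(0)^2+\mathcal{I}(0)^2$ is preserved, consistent with $\mathcal{D}$ being constant and with the solution remaining on $S^2_\rho$.

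Third, I would classify the equilibria. Since $\mathcal{D}$ is frozen and $(\mathcal{R},\mathcal{I})$ merely rotates, a point is stationary iff either $\omega\neq0$ and $(\mathcal{R},\mathcal{I})=(0,0)$, which by the constraint $\mathcal{D}^2+\mathcal{R}^2+\mathcal{I}^2=\rho^2$ forces $(\mathcal{D},\mathcal{R},\mathcal{I})=\pm(\rho,0,0)$, or else $\omega=0$, i.e.\ $\mathcal{D}(0)=-\tfrac{p_4}{2p_3}\rho$. In the latter case the whole circle $\{\mathcal{D}=-\tfrac{p_4}{2p_3}\rho\}\cap S^2_\rho$ consists of fixed points, and this circle is nondegenerate precisely when $|\mathcal{D}|<\rho$, that is $2p_3>|p_4|$; using $\mathcal{R}^2+\mathcal{I}^2=\bigl(1-(\tfrac{p_4}{2p_3})^2\bigr)\rho^2$ one parametrizes it by $\eta$ as stated.

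I do not anticipate a genuine obstacle in this case: the system is essentially decoupled, exactly as in Cases~2 and~4, and the only care required is the bookkeeping of the sphere constraint to confirm that the two isolated equilibria $\pm(\rho,0,0)$ and, when $2p_3>|p_4|$, the circle of equilibria together exhaust all fixed points and genuinely lie on $S^2_\rho$.
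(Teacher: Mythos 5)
Your proposal is correct and follows essentially the same route as the paper: the paper likewise observes that $\mathcal{D}'=0$ freezes $\mathcal{D}$, so the remaining equations for $(\mathcal{R},\mathcal{I})$ become a linear constant-coefficient rotation that is integrated directly. Your additional bookkeeping of the equilibria via the sphere constraint is consistent with the statement and fills in details the paper leaves implicit.
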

Combining the proposition with Theorems \ref{T:KS} and \ref{T:main}, we obtain the asymptotic profile of  solutions to \eqref{E:NLS12}.

\subsubsection{Case 13}
The next case is $p_2=-p_3<0$, $p_5>0$, and $p_1=p_4=0$.
It is, in a sense, paired with Case 12.  The NLS system takes the form
\begin{equation}\label{E:NLS13}
\left\{
\begin{aligned}
	(i \partial_t  + \partial_x^2)u_1={}&
-2 p_3 |u_1|^2 u_1 + p_5(2|u_1|^2 u_2 + u_1^2\ol{u_2})
-2p_3 (2 u_1 |u_2|^2 + \ol{u_1}u_2^2) + p_5 |u_2|^2u_2
	\\&  +\mathcal{V}(u_1,u_2) u_1, \\
	(i \partial_t  + \partial_x^2)u_2={}&
p_5 |u_1|^2 u_1-2p_3(2|u_1|^2 u_2 + u_1^2\ol{u_2})+p_5 (2 u_1 |u_2|^2 + \ol{u_1}u_2^2) -2p_3 |u_2|^2u_2
	\\&  + \mathcal{V}(u_1,u_2)u_2
\end{aligned}
\right.
\end{equation}
and the quadratic system \eqref{E:qqq} takes the form
\begin{equation}\label{E:qqq_13}
	\mathcal{D}'=- 4p_3 \mathcal{R} \mathcal{I} +2p_5 \rho \mathcal{I}, \quad \mathcal{R}'=0,
	\quad \mathcal{I}' = 4p_3 \mathcal{D} \mathcal{R} - 2p_5 \rho \mathcal{D}.
\end{equation}
\begin{proposition}
The two points $(\mathcal{D},\mathcal{R}, \mathcal{I})=\pm(0,\rho,0)$ are fixed points of \eqref{E:qqq_13}.
If $2p_3>p_5$ then all points in the set $\{ (\sqrt{1-(\frac{p_5}{2p_3} )^2} \rho \cos \eta, \frac{p_5}{2p_3} \rho, \sqrt{1-(\frac{p_5}{2p_3} )^2} \rho\sin \eta) \ |\ \eta \in \R/2\pi \Z \}$ are fixed points.
Any solution to \eqref{E:qqq_13} is given by
\[
	\mathcal{D}(\tau) = \mathcal{D}(0) \cos (2(2p_3\mathcal{R}(0) -p_5 \rho )\tau  ) - \mathcal{I}(0) \sin (2(2p_3\mathcal{R}(0) -p_5 \rho )\tau  ), 
\]
\[
	\mathcal{R}(\tau) = \mathcal{R}(0), 
\]
and
\[
	\mathcal{I}(\tau) = \mathcal{D}(0) \sin (2(2p_3\mathcal{R}(0) -p_5 \rho )\tau  ) + \mathcal{I}(0) \cos (2(2p_3\mathcal{R}(0) -p_5 \rho )\tau ).
\]
\end{proposition}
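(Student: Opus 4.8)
The plan is to exploit the fact that the second equation of \eqref{E:qqq_13} reads $\mathcal{R}'=0$, so that $\mathcal{R}$ is conserved: $\mathcal{R}(\tau)\equiv\mathcal{R}(0)$ for all $\tau$. This immediately decouples the system and reduces it to a planar linear problem for the pair $(\mathcal{D},\mathcal{I})$. This is the single observation on which everything rests, and it is the exact mirror of Case 12, where instead $\mathcal{D}$ was the conserved coordinate.

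First I would substitute $\mathcal{R}\equiv\mathcal{R}(0)$ into the first and third equations of \eqref{E:qqq_13}. Setting $\Omega:=2(2p_3\mathcal{R}(0)-p_5\rho)$, the remaining equations collapse to the constant-coefficient linear system
\[
	\mathcal{D}'=-\Omega\,\mathcal{I},\qquad \mathcal{I}'=\Omega\,\mathcal{D},
\]
which is a pure rotation in the $(\mathcal{D},\mathcal{I})$-plane. Introducing $z:=\mathcal{D}+i\mathcal{I}$ turns this into $z'=i\Omega z$, whence $z(\tau)=z(0)e^{i\Omega\tau}$ and therefore
\[
	\mathcal{D}(\tau)=\mathcal{D}(0)\cos(\Omega\tau)-\mathcal{I}(0)\sin(\Omega\tau),\qquad
	\mathcal{I}(\tau)=\mathcal{D}(0)\sin(\Omega\tau)+\mathcal{I}(0)\cos(\Omega\tau).
\]
Writing $\Omega$ out recovers precisely the stated formula. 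Consistency with the sphere is automatic: $\mathcal{D}(\tau)^2+\mathcal{I}(\tau)^2$ is invariant under a rotation, $\mathcal{R}$ is constant, and so $(\mathcal{D},\mathcal{R},\mathcal{I})(\tau)$ stays on $S_\rho^2$.

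For the fixed points I would set all three derivatives to zero. Since $\mathcal{R}'\equiv0$ is automatic, the conditions reduce to $\mathcal{I}\,(p_5\rho-2p_3\mathcal{R})=0$ and $\mathcal{D}\,(2p_3\mathcal{R}-p_5\rho)=0$. Either $\mathcal{R}=\tfrac{p_5}{2p_3}\rho$, in which case both hold for arbitrary $\mathcal{D},\mathcal{I}$, and intersecting with $S_\rho^2$ forces $\mathcal{D}^2+\mathcal{I}^2=(1-(\tfrac{p_5}{2p_3})^2)\rho^2$; this circle is nonempty precisely when $2p_3>p_5$ and is parametrized by $\eta$ as in the statement. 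Or $\mathcal{R}\neq\tfrac{p_5}{2p_3}\rho$, which forces $\mathcal{D}=\mathcal{I}=0$ and hence $\mathcal{R}=\pm\rho$ on the sphere, yielding the two poles $\pm(0,\rho,0)$.

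There is essentially no hard step: once the conservation of $\mathcal{R}$ linearizes \eqref{E:qqq_13}, the rest is solving a $2\times2$ rotation and a short case analysis. The only bookkeeping worth flagging is confirming that the angular velocity equals $2(2p_3\mathcal{R}(0)-p_5\rho)$ and noting the degenerate overlap at $p_5=2p_3$, where the boundary circle of case (a) shrinks to the single pole $(0,\rho,0)$; for the genuine (nondegenerate) family one needs the strict inequality $2p_3>p_5$, exactly as stated.
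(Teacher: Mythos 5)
Your proposal is correct and follows exactly the route the paper takes: the paper's entire argument for Case 13 is the one-line observation that $\mathcal{R}'=0$ makes $\mathcal{R}$ constant, reducing \eqref{E:qqq_13} to a linear (rotation) system for $(\mathcal{D},\mathcal{I})$, just as you carry out in detail. Your explicit fixed-point case analysis and the remark on the degenerate threshold $p_5=2p_3$ are consistent with the statement and fill in details the paper leaves implicit.
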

Combining the proposition with Theorems \ref{T:KS} and \ref{T:main}, we obtain the asymptotic profile of  solutions to \eqref{E:NLS13}.

\subsubsection{Case 14}
We turn to the case
$p_1^2 + p_2^2 = p_3^2$ and $p_4=p_5=0$.
We further assume $p_1>0$ and $p_2 \neq0$ otherwise this case is reduced to Cases 5 or 8.
Note that $p_1 < p_3$ and $|p_2|<p_3$ follow by assumption.
Let us introduce $\Theta = \tan^{-1} \frac{p_1}{p_2+p_3} \in (0,\pi/2)$.
The NLS system becomes
\begin{equation}\label{E:NLS14}
\left\{
\begin{aligned}
	(i \partial_t  + \partial_x^2)u_1={}&
(3p_2 +p_3) |u_1|^2 u_1 +p_1(2|u_1|^2 u_2 + u_1^2\ol{u_2})\\
&+(p_2-p_3) (2 u_1 |u_2|^2 + \ol{u_1}u_2^2) - p_1 |u_2|^2u_2
	\\& - 4p_1 \Re (\overline{u_1} u_2) u_1 +\mathcal{V}(u_1,u_2) u_1, \\
	(i \partial_t  + \partial_x^2)u_2={}&
p_1 |u_1|^2 u_1+(p_2-p_3)(2|u_1|^2 u_2 + u_1^2\ol{u_2})\\&-p_1 (2 u_1 |u_2|^2 + \ol{u_1}u_2^2) + (3p_2+p_3) |u_2|^2u_2
	\\& + 4p_1 \Re (\overline{u_1} u_2) u_2 + \mathcal{V}(u_1,u_2)u_2.
\end{aligned}
\right.
\end{equation}
The quadratic system is  
\begin{equation}\label{E:qqq_14}
\left\{
\begin{aligned}
\begin{bmatrix}\mathcal{D}\\\mathcal{R}\end{bmatrix}' ={}&2\mathcal{I}
\begin{bmatrix}
p_1 & p_2-p_3 \\ - p_2 - p_3 & p_1 
\end{bmatrix}
\begin{bmatrix}\mathcal{D}\\\mathcal{R}\end{bmatrix} ,\\
\mathcal{I}' = {}& -2p_1 (\mathcal{D}^2+\mathcal{R}^2) + 4 p_3\mathcal{D} \mathcal{R} .
\end{aligned}
\right.
\end{equation}
We note that if $p_1^2 + p_2^2 = p_3^2$ then the matrix in the first equation of \eqref{E:qqq_14} has zero as its eigenvalue.
As a result,
\[
	X(t):= \tfrac{1}{2 \sin \Theta} \mathcal{D}(t) + \tfrac{1}{2 \cos \Theta} \mathcal{R}(t)
\]
is a conserved quantity. This fact is useful to solve the quadratic system.
\begin{proposition}
Let
\[
	X = \tfrac{1}{2 \sin \Theta} \mathcal{D}(0) + \tfrac{1}{2 \cos \Theta} \mathcal{R}(0) , \quad
	Y = -\tfrac{1}{2 \sin \Theta} \mathcal{D}(0) + \tfrac{1}{2 \cos \Theta} \mathcal{R}(0) 
\]
and $r= \sqrt{ \left\lvert \rho^2-X^2\right\rvert}$.
\begin{enumerate}
\item If $|X| < \rho$ then the solution of \eqref{E:qqq_14} is given as
\begin{align*}
	\mathcal{D}(t) ={}& \sin \Theta (X   -  \tfrac{4r^2Y}{ ((r - \mathcal{I}(0))^2 +Y^2) e^{4p_1r t }  + ((r + \mathcal{I}(0))^2 +Y^2) e^{-4p_1r t} -2\mathcal{I}(0)^2 - 2Y^2 + 2r^2} ),
\end{align*}
\begin{align*}
	\mathcal{R}(t) ={}& \cos \Theta  ( X+ \tfrac{4r^2Y}{ ((r - \mathcal{I}(0))^2 +Y^2) e^{4p_1r t }  + ((r + \mathcal{I}(0))^2 +Y^2) e^{-4p_1r t} -2\mathcal{I}(0)^2 - 2Y^2 + 2r^2} ),
\end{align*}
and
\[
	\mathcal{I}(t) = -r\tfrac{((r - \mathcal{I}(0))^2 +Y^2) e^{4p_1r t }  - ((r + \mathcal{I}(0))^2 +Y^2) e^{-4p_1r t}  }{ ((r - \mathcal{I}(0))^2 +Y^2) e^{4p_1r t }  + ((r + \mathcal{I}(0))^2 +Y^2) e^{-4p_1r t} -2\mathcal{I}(0)^2 - 2Y^2 + 2r^2}.
\]
\item If $|X|=\rho$ then the solution of \eqref{E:qqq_14} is given as
\begin{align*}
	\mathcal{D}(t) ={}& \sin \Theta (X -\tfrac{(\mathcal{I}(0)^2 + Y^2)Y }{
	(2p_1 t (\mathcal{I}(0)^2 + Y^2)- \mathcal{I}(0))^2 + Y^2
	}),
\end{align*}
\begin{align*}
	\mathcal{R}(t) ={}& \cos \Theta (X +\tfrac{(\mathcal{I}(0)^2 + Y^2)Y }{
	(2p_1 t (\mathcal{I}(0)^2 + Y^2)- \mathcal{I}(0))^2 + Y^2} ),
\end{align*}
and
\[
	\mathcal{I}(t) = -\tfrac{(\mathcal{I}(0)^2 + Y^2)(2p_1 t (\mathcal{I}(0)^2 + Y^2)- \mathcal{I}(0)) }{
	(2p_1 t (\mathcal{I}(0)^2 + Y^2)- \mathcal{I}(0))^2 + Y^2
	}.
\]
\item If $|X|>\rho$ then the solution of \eqref{E:qqq_14} is given as
\begin{align*}
	\mathcal{D}(t) ={}& \sin \Theta (X -\tfrac{ 2r^2Y }{
	 (\mathcal{I}(0)^2 + Y^2+r^2)- 2r \mathcal{I}(0) \sin (4p_1 r t) - ( \mathcal{I}(0)^2 + Y^2-r^2) \cos (4p_1 r t)} ),
\end{align*}
\begin{align*}
	\mathcal{R}(t) ={}& \cos \Theta (X + \tfrac{ 2r^2Y }{
	 (\mathcal{I}(0)^2 + Y(0)^2+r^2)- 2r \mathcal{I}(0) \sin (4p_1 r t) - ( \mathcal{I}(0)^2 + Y^2-r^2) \cos (4p_1 r t)} ),
\end{align*}
and
\[
	\mathcal{I}(t) = r\tfrac{
	 2r \mathcal{I}(0) \cos (4p_1 r t) - ( \mathcal{I}(0)^2 + Y^2-r^2) \sin (4p_1 r t)
	  }{ 
	(\mathcal{I}(0)^2 + Y(0)^2+r^2)- 2r \mathcal{I}(0) \sin (4p_1 r t) - ( \mathcal{I}(0)^2 + Y^2-r^2) \cos (4p_1 r t)}.
\]
\end{enumerate}
\end{proposition}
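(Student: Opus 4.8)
The decisive structural fact is that the hypothesis $p_1^2+p_2^2=p_3^2$ makes the coefficient matrix in the first line of \eqref{E:qqq_14} singular: its characteristic polynomial is $(p_1-\lambda)^2+(p_2^2-p_3^2)=(p_1-\lambda)^2-p_1^2$, so its eigenvalues are $0$ and $2p_1$. The row vector $(\tfrac{1}{2\sin\Theta},\tfrac{1}{2\cos\Theta})$ is a left null vector — this is precisely where $\Theta=\tan^{-1}\tfrac{p_1}{p_2+p_3}$, equivalently $\cot\Theta=\tfrac{p_2+p_3}{p_1}$, enters — so $X=\tfrac{1}{2\sin\Theta}\mathcal{D}+\tfrac{1}{2\cos\Theta}\mathcal{R}$ is conserved, as already noted before the statement. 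The plan is therefore to pass to coordinates $(X,Y,\mathcal{I})$ with $Y=-\tfrac{1}{2\sin\Theta}\mathcal{D}+\tfrac{1}{2\cos\Theta}\mathcal{R}$, equivalently $\mathcal{D}=\sin\Theta\,(X-Y)$ and $\mathcal{R}=\cos\Theta\,(X+Y)$, so that one of the three variables is frozen.

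Substituting these expressions into \eqref{E:qqq_14} and using the identities $\cos 2\Theta=\tfrac{p_2}{p_3}$ and $\sin 2\Theta=\tfrac{p_1}{p_3}$ (both immediate from $\tan\Theta=\tfrac{p_1}{p_2+p_3}$ together with $p_3^2=p_1^2+p_2^2$), the cross terms collapse and I expect the reduced planar system
\[
Y'=4p_1\mathcal{I}\,Y,\qquad \mathcal{I}'=-4p_1 Y\Big(Y+\tfrac{p_2}{p_3}X\Big),
\]
in which $X$ enters only as a parameter. Since $\rho^2=\mathcal{D}^2+\mathcal{R}^2+\mathcal{I}^2$ is conserved and $\mathcal{D}^2+\mathcal{R}^2=X^2+Y^2+\tfrac{2p_2}{p_3}XY$, the quantity
\[
C:=\rho^2-X^2=Y^2+\tfrac{2p_2}{p_3}XY+\mathcal{I}^2
\]
is a first integral of the planar system, and $r=\sqrt{|C|}$ in the notation of the statement.

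To integrate, I would eliminate $\mathcal{I}$ through $\mathcal{I}=Y'/(4p_1Y)$ and the first integral to reach the separable equation
\[
(Y')^2=16p_1^2\,Y^2\Big(C-\tfrac{2p_2}{p_3}XY-Y^2\Big).
\]
Because the polynomial under the root factors as $Y^2$ times a quadratic, the integral $\int \tfrac{dY}{Y\sqrt{\,C-\frac{2p_2}{p_3}XY-Y^2\,}}$ is elementary, and inverting it gives $Y(t)$ in closed form. The sign of $C=\rho^2-X^2$ selects the three regimes of the statement: for $C>0$ ($|X|<\rho$) the behaviour is hyperbolic, yielding after inversion the stated ratios of $e^{\pm 4p_1 r t}$; for $C=0$ ($|X|=\rho$) the quadratic degenerates and one gets the rational-in-$t$ formulas; for $C<0$ ($|X|>\rho$) the motion is bounded and periodic, producing the $\cos(4p_1 r t)$, $\sin(4p_1 r t)$ expressions. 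In each regime $\mathcal{I}$ is recovered from $\mathcal{I}=Y'/(4p_1Y)$ or from $C$, and then $\mathcal{D}=\sin\Theta\,(X-Y)$, $\mathcal{R}=\cos\Theta\,(X+Y)$ return the three displayed components.

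The computational heart of the argument, and its main source of friction, is not the integration itself but the bookkeeping required to fix the constant of integration and the correct branch of the square root so that the solution matches the data $Y(0)=Y$ and the prescribed $\mathcal{I}(0)$ while reproducing the normalized forms — in particular the $|X|<\rho$ case, where the coefficients $(r\mp\mathcal{I}(0))^2+Y^2$ must be extracted with the right signs. I would pin these down by imposing the two initial conditions on the general solution of each regime and solving for the constant; the remaining identities reduce to routine algebraic checks that each displayed triple solves \eqref{E:qqq_14} and lies on $S^2_\rho$.
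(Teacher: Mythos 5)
Your argument is correct and follows essentially the same route as the paper: the left null eigenvector of the singular matrix gives the conserved quantity $X$, the complementary eigendirection $Y$ grows like $e^{2p_1 s}$, and the sphere constraint reduces everything to the elementary integral $\int \frac{du}{u\sqrt{Q(u)}}$ with $Q$ quadratic, split into the three regimes by the sign of $\rho^2-X^2$. The only cosmetic difference is that you separate variables directly in $Y$, whereas the paper first introduces $s=\int_0^t 2\mathcal{I}\,d\tau$ and substitutes $x=e^{2p_1 s}$; the two integrals coincide under $Y=Y(0)x$.
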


Note that $|(\frac1{2\sin \Theta},\frac1{2\cos \Theta})|\le1$ if and only if $\Theta = \pi/4$, in which case $|X| \le \rho$ holds for any data.
Conversely, if $\Theta \neq \pi/4$ then there exists a data such that $\rho < |X|$.

Combining the proposition with Theorems \ref{T:KS} and \ref{T:main}, we obtain the asymptotic profile of  solutions to \eqref{E:NLS14}.

\subsubsection{Case 15}
The last case is a special combination of all five parameters.
It is an extension of the previous case: $p_1^2 + p_2^2 = p_3^2$ and $\frac{p_4}{p_5} = \frac{p_1}{p_2+p_3}$.
We further assume $p_1>0$ and $p_2\neq0$.
Let us keep the notation $\Theta = \tan^{-1} \frac{p_1}{p_2+p_3} \in (0,\pi/2)$.
One has $p_5 = p_4 \tan \Theta$.
The system is of the form \eqref{E:NLS}--\eqref{E:nondef}. The quadratic system is \eqref{E:qqq}.
Thanks to the special ratio between $p_4$ and $p_5$, the quantity
\[
	\tfrac{1}{2 \sin \Theta} \mathcal{D}(t) + \tfrac{1}{2 \cos \Theta} \mathcal{R}(t) 
\]
again becomes a conserved quantity.
%
We modify the definition of $X$ by adding a constant to make the description of the case division slightly simple.

\begin{proposition}
Let
\[
	X = \tfrac{1}{2 \sin \Theta} \mathcal{D}(0) + \tfrac{1}{2 \cos \Theta} \mathcal{R}(0) + 
	\tfrac{p_2p_4}{2p_1p_3 \cos \Theta} \rho
\]
and
\[
	Y = -\tfrac{1}{2 \sin \Theta} \mathcal{D}(0) + \tfrac{1}{2 \cos \Theta} \mathcal{R}(0) - 
	\tfrac{p_4}{2p_1 \cos \Theta} \rho.
\]
Let
\[
	r= \sqrt{ \left\lvert(1-\tfrac{p_4^2}{4p_3^2 \cos^2 \Theta})\rho^2-X^2\right\rvert}.
\]

\begin{enumerate}
\item If  $X^2 < (1-\tfrac{p_4^2}{4p_3^2 \cos^2 \Theta})\rho^2$ then the solution of \eqref{E:qqq} is given as
\begin{align*}
	\mathcal{D}(t) ={}& \sin \Theta (X -\tfrac{p_2p_4}{2p_1p_3\cos \Theta}\rho) \\&- \sin \Theta (\tfrac{4r^2Y}{ ((r - \mathcal{I}(0))^2 +Y^2) e^{4p_1r t }  + ((r + \mathcal{I}(0))^2 +Y^2) e^{-4p_1r t} -2\mathcal{I}(0)^2 - 2Y^2 + 2r^2}) - \tfrac{p_5}{2p_1}\rho,
\end{align*}
\begin{align*}
	\mathcal{R}(t) ={}& \cos \Theta (X -\tfrac{p_2p_4}{2p_1p_3 \cos \Theta}\rho) \\&+ \cos \Theta (\tfrac{4r^2Y}{ ((r - \mathcal{I}(0))^2 +Y^2) e^{4p_1r t }  + ((r + \mathcal{I}(0))^2 +Y^2) e^{-4p_1r t} -2\mathcal{I}(0)^2 - 2Y^2 + 2r^2} )+ \tfrac{p_4}{2p_1}\rho,
\end{align*}
and
\[
	\mathcal{I}(t) = -r\tfrac{((r - \mathcal{I}(0))^2 +Y^2) e^{4p_1r t }  - ((r + \mathcal{I}(0))^2 +Y^2) e^{-4p_1r t}  }{ ((r - \mathcal{I}(0))^2 +Y^2) e^{4p_1r t }  + ((r + \mathcal{I}(0))^2 +Y^2) e^{-4p_1r t} -2\mathcal{I}(0)^2 - 2Y^2 + 2r^2}.
\]
\item If  $X^2 = (1-\tfrac{p_4^2}{4p_3^2 \cos^2 \Theta})\rho^2$ then the solution of \eqref{E:qqq} is given as
\begin{align*}
	\mathcal{D}(t) ={}& \sin \Theta (X -\tfrac{p_2p_4}{2p_1p_3 \cos \Theta}\rho) - \sin \Theta (\tfrac{\mathcal{I}(0)^2 + Y^2 }{
	(2p_1 t (\mathcal{I}(0)^2 + Y^2)- \mathcal{I}(0))^2 + Y^2
	}) - \tfrac{p_5}{2p_1}\rho,
\end{align*}
\begin{align*}
	\mathcal{R}(t) ={}& \cos \Theta (X -\tfrac{p_2p_4}{2p_1p_3\cos \Theta}\rho) + \cos \Theta (\tfrac{\mathcal{I}(0)^2 + Y^2 }{
	(2p_1 t (\mathcal{I}(0)^2 + Y^2)- \mathcal{I}(0))^2 + Y^2})
	 + \tfrac{p_4}{2p_1}\rho,
\end{align*}
and
\[
	\mathcal{I}(t) = -\tfrac{(\mathcal{I}(0)^2 + Y^2)(2p_1 t (\mathcal{I}(0)^2 + Y^2)- \mathcal{I}(0)) }{
	(2p_1 t (\mathcal{I}(0)^2 + Y^2)- \mathcal{I}(0))^2 + Y^2
	}.
\]
\item If  $X^2 > (1-\tfrac{p_4^2}{4p_3^2 \cos^2 \Theta})\rho^2$ then the solution of \eqref{E:qqq} is given as
\begin{align*}
	\mathcal{D}(t) ={}& \sin \Theta (X -\tfrac{p_2p_4}{2p_1p_3\cos \Theta }\rho) \\&- \sin \Theta (\tfrac{ 2r^2 }{
	 (\mathcal{I}(0)^2 + Y^2+r^2)- 2r \mathcal{I}(0) \sin (4p_1 r t) - ( \mathcal{I}(0)^2 + Y^2-r^2) \cos (4p_1 r t)}) - \tfrac{p_5}{2p_1}\rho,
\end{align*}
\begin{align*}
	\mathcal{R}(t) ={}& \cos \Theta (X -\tfrac{p_2p_4}{2p_1p_3\cos \Theta }\rho) \\&+ \cos \Theta (\tfrac{ 2r^2 }{
	 (\mathcal{I}(0)^2 + Y^2+r^2)- 2r \mathcal{I}(0) \sin (4p_1 r t) - ( \mathcal{I}(0)^2 + Y^2-r^2) \cos (4p_1 r t)}) + \tfrac{p_4}{2p_1}\rho,
\end{align*}
and
\[
	\mathcal{I}(t) = r\tfrac{ 2r \mathcal{I}(0) \cos (4p_1 r t) - ( \mathcal{I}(0)^2 + Y^2-r^2) \sin (4p_1 r t)  }{  (\mathcal{I}(0)^2 + Y^2+r^2)- 2r \mathcal{I}(0) \sin (4p_1 r t) - ( \mathcal{I}(0)^2 + Y^2-r^2) \cos (4p_1 r t)}.
\]

\end{enumerate}
\end{proposition}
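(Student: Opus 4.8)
The plan is to linearize the problem by a constant shift of the unknowns that turns \eqref{E:qqq} into the homogeneous Case 14 system \eqref{E:qqq_14}, to extract the resulting conserved quantity, and then to integrate a single separable first-order ODE whose analytic character switches across the three stated regimes.

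First I would record the two identities forced by the parameter relations. Since $\Theta=\tan^{-1}\frac{p_1}{p_2+p_3}$ and $p_1^2+p_2^2=p_3^2$, one computes $\cos 2\Theta=\frac{(p_2+p_3)^2-p_1^2}{(p_2+p_3)^2+p_1^2}=\frac{p_2}{p_3}$, and the ratio condition $\frac{p_4}{p_5}=\frac{p_1}{p_2+p_3}=\tan\Theta$ gives $p_5=p_4\tan\Theta$, i.e. $\frac{p_5}{\sin\Theta}=\frac{p_4}{\cos\Theta}$. I then introduce the shifted variables $\hat{\mathcal{D}}=\mathcal{D}+\tfrac{p_5}{2p_1}\rho$ and $\hat{\mathcal{R}}=\mathcal{R}-\tfrac{p_4}{2p_1}\rho$ (recall $\rho$ is constant). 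Substituting into \eqref{E:qqq}, every inhomogeneous constant in the first two equations cancels: the $\rho$-coefficient in the $\hat{\mathcal{D}}'$ equation is $\tfrac{p_4}{2}\big(\tfrac{p_1}{p_2+p_3}+\tfrac{p_2-p_3}{p_1}\big)\rho=\tfrac{p_4\rho}{2}\cdot\tfrac{p_1^2+p_2^2-p_3^2}{p_1(p_2+p_3)}=0$, and analogously for $\hat{\mathcal{R}}'$. Hence $(\hat{\mathcal{D}},\hat{\mathcal{R}},\mathcal{I})$ solves exactly the Case 14 system \eqref{E:qqq_14}. This cancellation is the heart of the matter and is precisely what the two parameter relations secure.

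With the homogeneous form available, I set $\hat X=\tfrac{\hat{\mathcal{D}}}{2\sin\Theta}+\tfrac{\hat{\mathcal{R}}}{2\cos\Theta}$ and $\hat Y=-\tfrac{\hat{\mathcal{D}}}{2\sin\Theta}+\tfrac{\hat{\mathcal{R}}}{2\cos\Theta}$. The Case 14 computation then yields that $\hat X$ is conserved and $\hat Y'=4p_1\mathcal{I}\hat Y$; using $\tfrac{p_5}{\sin\Theta}=\tfrac{p_4}{\cos\Theta}$ one checks $\hat X=X-\tfrac{p_2p_4}{2p_1p_3\cos\Theta}\rho$ and $\hat Y(0)=Y$, matching the quantities in the statement. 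Next I eliminate $\mathcal{D},\mathcal{R}$ from the mass constraint $\mathcal{D}^2+\mathcal{R}^2+\mathcal{I}^2=\rho^2$: expressing $\mathcal{D},\mathcal{R}$ through $\hat X,\hat Y$ and invoking $\cos 2\Theta=\tfrac{p_2}{p_3}$ together with $p_4^2+p_5^2=p_4^2\sec^2\Theta$, the constraint collapses to the orbit conic
\[
	\mathcal{I}^2+\hat Y^2+b\,\hat Y=c,\qquad c=\Big(1-\tfrac{p_4^2}{4p_3^2\cos^2\Theta}\Big)\rho^2-X^2,
\]
with $b$ an explicit constant. In particular $c=\pm r^2$, which identifies the three cases with $c>0$, $c=0$, and $c<0$ respectively.

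Finally I integrate. Substituting $\mathcal{I}=\hat Y'/(4p_1\hat Y)$ into the conic produces the separable equation $\tfrac{d\hat Y}{\hat Y\sqrt{c-b\hat Y-\hat Y^2}}=\pm 4p_1\,dt$, and the standard antiderivative of $\int\tfrac{d\hat Y}{\hat Y\sqrt{-\hat Y^2-b\hat Y+c}}$ is logarithmic when $c>0$, algebraic when $c=0$, and of $\arcsin$ type when $c<0$. Solving for $\hat Y(t)$ and recovering $\mathcal{I}=\hat Y'/(4p_1\hat Y)$ and
\[
	\mathcal{D}=\sin\Theta\,(\hat X-\hat Y)-\tfrac{p_5}{2p_1}\rho,\qquad
	\mathcal{R}=\cos\Theta\,(\hat X+\hat Y)+\tfrac{p_4}{2p_1}\rho
\]
gives the exponentials $e^{\pm 4p_1rt}$, the rational-in-$t$ expression, and the trigonometric $\sin,\cos(4p_1rt)$ in the three regimes. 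The integration constants are pinned by $\hat Y(0)=Y$ and $\mathcal{I}(0)$, which is what generates the combinations $(r\mp\mathcal{I}(0))^2+Y^2$ in the denominators. I expect the principal labor, rather than a genuine obstacle, to be the careful bookkeeping of these constants and the choice of branch of $\mathcal{I}$ at the instants where $\hat Y$ or $\mathcal{I}$ vanishes, so that the closed forms remain valid across the turning points.
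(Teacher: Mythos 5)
Your proposal is correct in substance and follows essentially the same route as the paper: the paper likewise builds the shifted quantities $X$ and $Y$, observes that $X$ is conserved while $Y$ satisfies $\tfrac{d}{ds}Y=2p_1Y$ in the rescaled time $s=\int_0^t 2\mathcal{I}$, rewrites $\rho^2-\mathcal{D}^2-\mathcal{R}^2$ as a quadratic in $e^{2p_1s}$ using $\cos 2\Theta=p_2/p_3$, and integrates $\int dx/(x\sqrt{\text{quadratic}})$ with the three regimes governed by the sign of $(1-\tfrac{p_4^2}{4p_3^2\cos^2\Theta})\rho^2-X^2$; your separable equation in $\hat Y$ is the same integral after the substitution $x=e^{2p_1s}\propto\hat Y$.

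One claim in your writeup is false as stated, though harmless to the argument: the shifted triple $(\hat{\mathcal{D}},\hat{\mathcal{R}},\mathcal{I})$ does \emph{not} solve the Case 14 system \eqref{E:qqq_14}. The shift does homogenize the first two equations (your cancellation computation is correct), but the equation for $\mathcal{I}'$ acquires residual linear terms: the coefficient of $\hat{\mathcal{D}}$ becomes $\tfrac{2p_3p_4}{p_1}\rho$ and that of $\hat{\mathcal{R}}$ becomes $-\tfrac{2p_3p_5}{p_1}\rho$, which vanish only when $p_4=p_5=0$. Since your derivation uses only the homogenized first two equations (to get $\hat X$ conserved and $\hat Y'=4p_1\mathcal{I}\hat Y$) together with the constraint $\mathcal{D}^2+\mathcal{R}^2+\mathcal{I}^2=\rho^2$ and the relation $\mathcal{I}=\hat Y'/(4p_1\hat Y)$, the proof survives; you should simply weaken the claim to say that the first two equations become those of Case 14.
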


Combining the proposition with Theorems \ref{T:KS} and \ref{T:main}, we obtain the asymptotic profile of  solutions to \eqref{E:NLS} under the present specific combination of parameters

\section{The derivation of \eqref{E:NLS}}\label{S:classification}\label{S:deriveNLS}


In this section, we will see that a cubic NLS system that possesses a coercive mass-like conserved quantity
is transformed into \eqref{E:NLS}.
We remark that a suitable restriction of the parameters in \eqref{E:NLS} is in need to assure the uniqueness of the transformed system.
See \cites{MSU2,M} for details.
Here, we do not pursue the uniqueness issue in order to simplify
the argument.

As a starting point,
let us consider the following NLS system of a general form:
\begin{equation}\label{E:generalNLS}
	\left\{
	\begin{aligned}
	& (i  \partial_t + \partial_x^2 )u_1 = \l_1 |u_1|^2 u_1 + \l_2 |u_1|^2 u_2 + \l_3 u_1^2 \ol{u_2} +
	\l_4 |u_2|^2 u_1 + \l_5 u_2^2 \ol{u_1} + \l_6 |u_2|^2 u_2,  \\
	& (i  \partial_t + \partial_x^2 )u_2 = \l_7 |u_1|^2 u_1 + \l_8 |u_1|^2 u_2 + \l_9 u_1^2 \ol{u_2} +
	\l_{10} |u_2|^2 u_1 + \l_{11} u_2^2 \ol{u_1} + \l_{12} |u_2|^2 u_2,
	\end{aligned}
	\right.
\end{equation}
where $(\lambda_1, \dots, \lambda_{12}) \in \R^{12}$.

\subsection{Matrix-Vector representation of the system}\label{SS:CV}
The system \eqref{E:generalNLS} can be identified with a pair consisting of a matrix $C \in M_3 (\R) \simeq \R^9$ and a vector $V \in \R^3$ as follows:
Given $(\lambda_1, \dots, \lambda_{12}) \in \R^{12}$, we define
\begin{equation}\label{E:matrixC}
C:=
\begin{bmatrix}
\lambda_2-\lambda_3 & -\lambda_1+\lambda_8-\lambda_9 & -\lambda_7 \\
\lambda_5 & -\lambda_3+\lambda_{11} & -\lambda_9 \\
\lambda_6 & -\lambda_4+\lambda_5+\lambda_{12} & -\lambda_{10}+\lambda_{11}
\end{bmatrix}  
\end{equation}
and
\begin{equation}\label{E:vectorV}
	V:=
	\begin{bmatrix} \lambda_8-2\lambda_9 \\ \tfrac12 (-\lambda_2+2\lambda_3-\lambda_{10}+ 2\lambda_{11}) \\  \lambda_4-2\lambda_5 \end{bmatrix}. 
\end{equation}
Conversely, for a given pair $(C=(c_{ij})_{1\le i,j \le 3}, (v_k)_{1\le k \le 3} )\in M_3(\R) \times \R^3$, one defines a system
by
\begin{equation}\label{E:CVNLS}
\left\{
\begin{aligned}
	(i  \partial_t + \partial_x^2 )u_1={}&
-(c_{12}+c_{23}) |u_1|^2 u_1 +c_{11}(2|u_1|^2 u_2 + u_1^2\ol{u_2})+c_{21} (2 u_1 |u_2|^2 + \ol{u_1}u_2^2) \\&+ c_{31} |u_2|^2u_2
	 - (\tr{C}) \Re (\overline{u_1} u_2) u_1 +\mathcal{V}(u_1,u_2) u_1, \\
	(i  \partial_t + \partial_x^2 )u_2={}&
-c_{13} |u_1|^2 u_1-c_{23}(2|u_1|^2 u_2 + u_1^2\ol{u_2})-c_{33} (2 u_1 |u_2|^2 + \ol{u_1}u_2^2) \\&+ (c_{21}+c_{32}) |u_2|^2u_2
	 + (\tr{C}) \Re (\overline{u_1} u_2) u_2 + \mathcal{V}(u_1,u_2)u_2,
\end{aligned}
\right.
\end{equation}
where $\mathcal{V}(u_1,u_2)=q_1 |u_1|^2 + 2q_2 \Re (\overline{u_1}u_2)  + q_3 |u_2|^2$ is a real-valued quadratic potential.

This matrix-vector representation is introduced in \cite{MSU2} (see also \cite{MSU1,M}).
The validity of a mass-like conservation law is well described by the representation.

\begin{proposition}[\cite{MSU2}*{Proposition A.8}]\label{P:masscond}
Let $a,b,c \in \R$. The quantity
\begin{equation}\label{E:genmass}
	\int (a |u_1|^2 + 2 b\Re \overline{u_1} u_2 + c |u_2|^2) dx
\end{equation}
becomes a conserved quantity of \eqref{E:generalNLS}, i.e., it holds that
\[
	\Im \(\begin{bmatrix}\overline{u_1} & \overline{u_2} \end{bmatrix}
	\begin{bmatrix} a & b \\ b & c \end{bmatrix}
	\begin{bmatrix} F_1(u_1,u_2) \\ F_2(u_1,u_2) \end{bmatrix}
	\)\equiv 0
\]
if and only if $\ltrans{(a,b,c)} \in \ker C$.
\end{proposition}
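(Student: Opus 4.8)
The plan is to prove the two stated equivalences in turn: that conservation of \eqref{E:genmass} is equivalent to the pointwise null condition, and that the null condition is in turn equivalent to $\ltrans{(a,b,c)} \in \ker C$ with $C$ as in \eqref{E:matrixC}.

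First I would reduce the conservation law to the pointwise condition. Set $A = \begin{bmatrix} a & b \\ b & c \end{bmatrix}$. Differentiating \eqref{E:genmass} in time and using $\partial_t u_j = i \partial_x^2 u_j - i F_j$, which follows from \eqref{E:generalNLS}, one obtains
\[
	\frac{d}{dt} \int \begin{bmatrix} \ol{u_1} & \ol{u_2} \end{bmatrix} A \begin{bmatrix} u_1 \\ u_2 \end{bmatrix} dx
	= 2 \int \Re \( i \begin{bmatrix} \ol{u_1} & \ol{u_2} \end{bmatrix} A \begin{bmatrix} \partial_x^2 u_1 - F_1 \\ \partial_x^2 u_2 - F_2 \end{bmatrix} \) dx .
\]
The Laplacian contribution vanishes after one integration by parts: since $A$ is real and symmetric, $\sum_{j,k} A_{jk} \partial_x \ol{u_j} \partial_x u_k$ is real-valued, whence $\Re(i \cdot (\text{real})) = 0$. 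Using $\Re(-iw) = \Im w$ on the remaining term, the derivative equals $2 \int \Im ( \ol{u_1}(aF_1 + bF_2) + \ol{u_2}(bF_1 + cF_2) )\, dx$. As the integrand is a fixed quartic polynomial in the pointwise values $(u_1, u_2, \ol{u_1}, \ol{u_2})$ and the Cauchy data may be prescribed arbitrarily, the integral vanishes along every solution if and only if the integrand vanishes identically on $\C^2$, which is exactly the displayed null condition.

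Next I would expand the null condition in a convenient basis. Because every monomial of $F_1$ and $F_2$ has degree two in $(u_1,u_2)$ and degree one in $(\ol{u_1},\ol{u_2})$, each of the three pieces $\Im(\ol{u_1} F_1)$, $\Im(\ol{u_1} F_2 + \ol{u_2} F_1)$, and $\Im(\ol{u_2} F_2)$ is, after discarding the manifestly real monomials killed by $\Im$, a real linear combination of the three functions
\[
	|u_1|^2 \Im (\ol{u_1} u_2), \qquad |u_2|^2 \Im (\ol{u_1} u_2), \qquad \Im \((\ol{u_1} u_2)^2\) .
\]
Collecting these, the null expression $a \Im(\ol{u_1} F_1) + b \Im(\ol{u_1} F_2 + \ol{u_2} F_1) + c \Im(\ol{u_2} F_2)$ is a combination of the same three functions, and a direct computation shows that the coefficients of $|u_1|^2 \Im(\ol{u_1}u_2)$, of $\Im((\ol{u_1}u_2)^2)$, and of $|u_2|^2 \Im(\ol{u_1}u_2)$ are precisely the first, second, and third rows of $C$ contracted with $(a,b,c)$.

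Finally, the three displayed functions are linearly independent on $\C^2$ — this is checked by evaluating at a few explicit points, e.g.\ $(u_1,u_2) = (1, i\eps)$ isolates the third function from the first two — so the null condition holds if and only if all three coefficients vanish, that is, $C \ltrans{(a,b,c)} = 0$. I expect the main obstacle to be purely bookkeeping: one must track the twelve parameters $\lambda_1, \dots, \lambda_{12}$ through the three imaginary parts and verify that the emerging coefficient vectors match the rows of $C$ in \eqref{E:matrixC} exactly, including signs and the composite entries such as $-\lambda_1 + \lambda_8 - \lambda_9$ and $-\lambda_4 + \lambda_5 + \lambda_{12}$.
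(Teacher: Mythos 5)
Your proof is correct. The paper does not actually prove this proposition --- it is imported verbatim from \cite{MSU2} --- so there is no in-paper argument to compare against, but yours is the natural direct verification: each of $\Im(\ol{u_1}F_1)$, $\Im(\ol{u_1}F_2+\ol{u_2}F_1)$, $\Im(\ol{u_2}F_2)$ expands over the three linearly independent quartics $|u_1|^2\Im(\ol{u_1}u_2)$, $\Im((\ol{u_1}u_2)^2)$, $|u_2|^2\Im(\ol{u_1}u_2)$ with coefficient vectors equal to the three columns of $C$ in \eqref{E:matrixC} (I checked the bookkeeping; the signs and composite entries all match), so the null expression vanishes identically iff $C$ annihilates $(a,b,c)$. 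Your computation is also consistent with Lemma \ref{L:quadODE} later in the paper, since your three basis quartics are exactly $\tfrac12\mathcal{I}$ times $(|u_1|^2,\mathcal{R},|u_2|^2)$.
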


\subsection{First reduction}

Suppose that \eqref{E:generalNLS} possesses a conserved quantity of the form \eqref{E:genmass}.
Then, by a simple quadratic completion, one may find $M \in GL_2(\R)$ such that
\[
	\int (a |u_1|^2 + 2 b\Re \overline{u_1} u_2 + c |u_2|^2) dx
	= \int ( |v_1|^2 + \sigma |v_2|^2) dx
\]
holds for
\[
	\begin{bmatrix} v_1 \\ v_2 \end{bmatrix} = M \begin{bmatrix} u_1 \\ u_2 \end{bmatrix},
\]
where $\sigma \in \{ 1,0,-1\}$ is determined by the sign of the quadratic form; $\sigma=1$ if $b^2 -ac < 0$,
$\sigma=0$ if $b^2-ac=0$, and $\sigma=-1$ if $b^2-ac>0$.
Now, we suppose that the original system has a coercive mass, i.e., $\sigma=1$.
Then, by means of Proposition \ref{P:masscond}, the matrix part of the system for $(v_1,v_2)$ is of the following form
\[
	\begin{bmatrix}
	c_{11} & c_{12} & - c_{11} \\
	c_{21} & c_{22} & - c_{21} \\
	c_{31} & c_{32} & - c_{31} 
	\end{bmatrix}.
\]

\subsection{Second reduction}

Let us introduce the following parametrix for the matrix part of the system obtained in the first reduction.
\begin{equation}\label{E:stdMatform}
	\begin{bmatrix}
	p_1+\tilde{p}_3+p_5 & -2p_2-2p_3-2p_4 & - p_1-\tilde{p}_3 - p_5 \\
	p_{2}-p_3 & 2p_1- 2\tilde{p}_3& - p_{2}+p_3 \\
	-p_1-\tilde{p}_3+p_5 & 2p_{2}+2p_3-2p_4 & p_1 + \tilde{p}_3 - p_5
	\end{bmatrix}.
\end{equation}
We remark that the 6-tuple $(p_1,p_2,p_3,\tilde{p}_3,p_4,p_5)$ is uniquely determined.
More specifically, we can define
\[
	p_1 = \tfrac14(c_{11}+c_{22} - c_{31}), \quad
	p_4 = - \tfrac14 (c_{12} + c_{32}), \quad
	p_5 = \tfrac12 (c_{11}+c_{31})
\]
and then 
\[
	p_2   = \tfrac18(c_{32}-c_{12}) + \tfrac12 c_{21}, \quad
	p_3   = \tfrac18(c_{32}-c_{12}) - \tfrac12 c_{21}, \quad
	\tilde{p}_3 = c_{11} - p_1 - p_5 .
\]
Note that \eqref{E:NLS} is obtained if we have $\tilde{p}_3=0$.

If $p_1<0$ then we apply a change of variable $(v_1,v_2) \mapsto (v_1,-v_2)$. This changes the sign of $p_1$.
Now, let us introduce a change of variable
\[
	\begin{bmatrix} w_1 \\ w_2 \end{bmatrix} = \begin{bmatrix} \cos \theta  & \sin \theta \\ - \sin \theta & \cos \theta \end{bmatrix}
	\begin{bmatrix} u_1 \\ u_2 \end{bmatrix}.
\]
Then, the new 6-tuple is given as
\[
	\begin{bmatrix} 1 & 0 & 0 & 0 & 0 & 0 \\ 0 & 1 & 0 & 0 & 0 & 0 \\
	0 & 0 &  \cos 4 \theta & -  \sin 4 \theta& 0 & 0\\ 	0 & 0 &  \sin 4\theta& \cos 4 \theta & 0 & 0\\
		0 & 0 & 0& 0&  \cos 2 \theta & -  \sin 2 \theta  \\
	0 & 0 & 0& 0&  \sin 2\theta & \cos 2 \theta 
	 \end{bmatrix} \begin{bmatrix} p_1\\ p_2 \\ p_3 \\ \tilde{p}_3\\ p_4\\ p_5 \end{bmatrix}
\]
(See \cite{M}*{Lemma 3.3}). Hence, by taking a suitable $\theta$, one may have $\tilde{p}_3=0$ and $p_3\ge0$ and $p_5\ge0$.
Similarly, if $p_3=\tilde{p}_3=0$ then one can choose $\theta$ so that $p_5=0$.

Thus, we obtain \eqref{E:NLS} by plugging \eqref{E:stdMatform} (with $\tilde{p}_3=0$) to \eqref{E:CVNLS}.

\section{Proof of Theorem \ref{T:main}}\label{S:pfmain}

In this section, we prove Theorem \ref{T:main}.
This theorem applies to a wider class of systems.
Hence, in this section,
let us consider the following form
\begin{equation}\label{E:originODE}
	\left\{
	\begin{aligned}
	& i  u_1' = \l_1 |u_1|^2 u_1 + \l_2 |u_1|^2 u_2 + \l_3 u_1^2 \ol{u_2} +
	\l_4 |u_2|^2 u_1 + \l_5 u_2^2 \ol{u_1} + \l_6 |u_2|^2 u_2,  \\
	& i u_2' = \l_7 |u_1|^2 u_1 + \l_8 |u_1|^2 u_2 + \l_9 u_1^2 \ol{u_2} +
	\l_{10} |u_2|^2 u_1 + \l_{11} u_2^2 \ol{u_1} + \l_{12} |u_2|^2 u_2,
	\end{aligned}
	\right.
\end{equation}
where $\lambda_j \in \R$.
We apply the matrix-vector representation discussed in Section \ref{SS:CV}.
By defining the matrix $C=(c_{ij})_{1\le i,j \le 3} \in M_3(\R)$ and the vector $V=(q_k)_{1\le k \le 3} \in \R^3$ as in \eqref{E:matrixC} and \eqref{E:vectorV}, respectively,
one obtains
\begin{equation}\label{E:CVODE}
\left\{
\begin{aligned}
	iu_1'={}&
-(c_{12}+c_{23}) |u_1|^2 u_1 +c_{11}(2|u_1|^2 u_2 + u_1^2\ol{u_2})+c_{21} (2 u_1 |u_2|^2 + \ol{u_1}u_2^2) + c_{31} |u_2|^2u_2
	\\& - (\tr{C}) \Re (\overline{u_1} u_2) u_1 +\mathcal{V}(u_1,u_2) u_1, \\
	iu_2'={}&
-c_{13} |u_1|^2 u_1-c_{23}(2|u_1|^2 u_2 + u_1^2\ol{u_2})-c_{33} (2 u_1 |u_2|^2 + \ol{u_1}u_2^2) + (c_{21}+c_{32}) |u_2|^2u_2
	\\& + (\tr{C}) \Re (\overline{u_1} u_2) u_2 + \mathcal{V}(u_1,u_2)u_2,
\end{aligned}
\right.
\end{equation}
where $\mathcal{V}(u_1,u_2)=q_1 |u_1|^2 + 2q_2 \Re (\overline{u_1}u_2)  + q_3 |u_2|^2$.

\subsection{Removal of the vector part by gauge transform}

One good point of the above representation with a pair $(C,V)$ is that the essence of the system lies only in the matrix part $C$.
This is because one may let $V=0$ by a gauge transform. Indeed, let us introduce a pair of new unknowns $(\alpha_1,\alpha_2)$ by
\[
	\alpha_j = u_j \exp({\textstyle  \int_0^t \mathcal{V}(u_1(\tau),u_2(\tau))}d\tau).
\]
Then, noticing that the nonlinearities are gauge-invariant, one sees that the ODE system \eqref{E:CVODE} 
turns into
\begin{equation}\label{E:gODE}
\left\{
\begin{aligned}
	i\a_1'={}&
-(c_{12}+c_{23}) |\a_1|^2 \a_1 +c_{11}(2|\a_1|^2 \a_2 + \a_1^2\ol{\a_2})+c_{21} (2 \a_1 |\a_2|^2 + \ol{\a_1}\a_2^2) + c_{31} |\a_2|^2\a_2
	\\& - (\tr{C}) \Re (\overline{\a_1} \a_2) \a_1 \\
	i\a_2'={}&
-c_{13} |\a_1|^2 \a_1-c_{23}(2|\a_1|^2 \a_2 + \a_1^2\ol{\a_2})-c_{33} (2 \a_1 |\a_2|^2 + \ol{\a_1}\a_2^2) + (c_{21}+c_{32}) |\a_2|^2\a_2
	\\& + (\tr{C}) \Re (\overline{\a_1} \a_2) \a_2
\end{aligned}
\right.
\end{equation}
with the same matrix $C=(c_{ij})_{1\le i,j \le 3}$. 

Thus, the problem boils down to solving the system \eqref{E:gODE}.
Once we obtain a representation of $(\alpha_1,\alpha_2)$, we obtain that of $(u_1,u_2)$ by 
\begin{equation}\label{E:ungauge}
	u_j = \alpha_j \exp(-{\textstyle  \int_0^t \mathcal{V}(\a_1(\tau),\a_2(\tau))}d\tau).
\end{equation}
Let us remark that $\mathcal{V}(\a_1,\a_2)=\mathcal{V}(u_1,u_2)$ holds since $\mathcal{V}$ is invariant under the gauge transform, which implies that \eqref{E:ungauge} is the inverse transform.

\subsection{Reduction to quadratic quantities}

Now, our purpose is to find an (almost) explicit representation of a solution to \eqref{E:gODE} in terms of the corresponding quadratic quantities.
The following, which is the generalization of Theorem \ref{T:main}, is the crucial step of the argument.
\begin{theorem}\label{T:maingen}
Let $\rho$, $\mathcal{D}$, $\mathcal{R}$, and $\mathcal{I}$ be quadratic quantities which correspond to a nontrivial solution $(\alpha_1,\alpha_2) \in C^\infty (I_{\max};\C^2)$ to \eqref{E:gODE} as in \eqref{D:rhoDRI},
where $I_{\max}$ is the lifespan of the solution.
Suppose that $0 \in I_{\max}$.
If $\alpha_1(0)\neq0$ then
\[
	\alpha_1 (t) = (-1)^{k_1(t)} \sqrt{\tfrac{\rho + \mathcal{D}(t)}{2}} \tfrac{\alpha_1(0)}{|\alpha_1(0)|} e^{ i \int_0^t N_1(\sigma) d\sigma }
\]
and
\[
	\alpha_2 (t) = (-1)^{k_1(t)} \tfrac{\mathcal{R}(t)+i\mathcal{I}(t)}{\sqrt{2(\rho + \mathcal{D}(t))}}
	\tfrac{\alpha_1(0)}{|\alpha_1(0)|}  e^{ i \int_0^t N_1(\sigma) d\sigma }
\]
hold for all $t\in I_{\max}$,
where
\[
	k_1 (t) = \begin{cases}
	\# (\{ s \in I_{\max} \ |\ \rho + \mathcal{D}(s) =0 \} \cap [0,t]) & t>0, \\
	\# (\{ s \in I_{\max} \ |\ \rho + \mathcal{D}(s) =0 \} \cap [t,0]) & t<0 
	\end{cases}
\]
is finite for all $t\in I_{\max}$ and
\[
	N_1:=  (c_{12}+c_{23}) \tfrac{\rho + \mathcal{D}}{2} - \tfrac32 c_{11} \mathcal{R} - c_{21} (\rho-\mathcal{D}+\tfrac{\mathcal{R}^2-\mathcal{I}^2}{2(\rho + \mathcal{D})})
	-c_{31} \tfrac{(\rho- \mathcal{D}) \mathcal{R}}{2(\rho+ \mathcal{D})} + \tfrac{\tr C}2 \mathcal{R}.
\]
If $\alpha_2(0)\neq0$ then
\[
	\alpha_1 (t) =   (-1)^{k_2(t)}
	\tfrac{\mathcal{R}(t)-i\mathcal{I}(t)}{\sqrt{2(\rho - \mathcal{D}(t))}}
	\tfrac{\alpha_2(0)}{|\alpha_2(0)|} e^{ i \int_0^t N_2(\sigma) d\sigma }
\]
and
\[
	\alpha_2 (t) =   (-1)^{k_2(t)}
	\sqrt{\tfrac{\rho - \mathcal{D}(t)}{2}}
	\tfrac{\alpha_2(0)}{|\alpha_2(0)|} e^{ i \int_0^t N_2(\sigma) d\sigma }
\]
hold for all $t\in I_{\max}$,
where
\[
	k_2 (t) = \begin{cases}
	\# (\{ s \in I_{\max} \ |\ \rho - \mathcal{D}(s) =0 \} \cap [0,t]) & t>0, \\
	\# (\{ s \in I_{\max} \ |\ \rho - \mathcal{D}(s) =0 \} \cap [t,0]) & t<0 
	\end{cases}
\]
is finite for all $t\in I_{\max}$ and 
\[
	N_2:= - (c_{21}+c_{32}) \tfrac{\rho - \mathcal{D}}{2} + \tfrac32 c_{33} \mathcal{R} + c_{23} (\rho+\mathcal{D}+\tfrac{\mathcal{R}^2-\mathcal{I}^2}{2(\rho - \mathcal{D})})
	+c_{13} \tfrac{(\rho+ \mathcal{D}) \mathcal{R}}{2(\rho- \mathcal{D})} - \tfrac{\tr C}2 \mathcal{R}.
\]
\end{theorem}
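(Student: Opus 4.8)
The plan is to reconstruct $\alpha_1$ first; the formula for $\alpha_2$ then follows algebraically. From \eqref{D:rhoDRI} one has the pointwise identities $|\alpha_1|^2 = \tfrac12(\rho+\mathcal{D})$ and $\overline{\alpha_1}\alpha_2 = \tfrac12(\mathcal{R}+i\mathcal{I})$, so on any subinterval where $\alpha_1\neq 0$ we may write $\alpha_2 = \tfrac{\mathcal{R}+i\mathcal{I}}{2\overline{\alpha_1}} = \tfrac{\mathcal{R}+i\mathcal{I}}{\rho+\mathcal{D}}\,\alpha_1$, which expresses $\alpha_2$ through $\alpha_1$ and the (assumed known) quadratic quantities. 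Thus everything reduces to the modulus and phase of $\alpha_1$. The modulus is already $|\alpha_1| = \sqrt{(\rho+\mathcal{D})/2}$, so the only genuine unknown is the phase.

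For the phase I would work on a maximal open subinterval $J\subset I_{\max}$ on which $\alpha_1$ does not vanish and set $\alpha_1 = |\alpha_1|e^{i\theta_1}$ with $\theta_1$ a smooth real phase. Dividing the first equation of \eqref{E:gODE} by $\alpha_1$ gives $\alpha_1'/\alpha_1 = -i\,N_{\alpha_1}/\alpha_1$, where $N_{\alpha_1}$ is its right-hand side; taking imaginary parts and using $\alpha_1'/\alpha_1 = (|\alpha_1|'/|\alpha_1|) + i\theta_1'$ yields $\theta_1' = -\Re(N_{\alpha_1}/\alpha_1)$. The core computation is then to substitute $\alpha_2 = \tfrac{\mathcal{R}+i\mathcal{I}}{\rho+\mathcal{D}}\alpha_1$ (equivalently, to expand in polar coordinates and use $r_1^2 = \tfrac12(\rho+\mathcal{D})$, $r_2^2 = \tfrac12(\rho-\mathcal{D})$, $r_1r_2\cos\phi = \tfrac12\mathcal{R}$, $r_1r_2\sin\phi = \tfrac12\mathcal{I}$ with $\phi = \arg\alpha_2 - \arg\alpha_1$, together with \eqref{E:qqs}) and to verify that $-\Re(N_{\alpha_1}/\alpha_1)$ collapses exactly to the stated $N_1$. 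This step is routine but is what pins down the precise coefficients; integrating $\theta_1' = N_1$ and inserting $e^{i\theta_1(0)} = \alpha_1(0)/|\alpha_1(0)|$ produces the claimed formula on $J$.

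The delicate point, and the main obstacle, is to globalize across the zeros of $\alpha_1$, i.e.\ across $\{\rho+\mathcal{D}=0\}$, and to justify both the finiteness of $k_1$ and the removability noted after Theorem \ref{T:main}. Here I would exploit the ODE itself: at a zero $t_*$ of $\alpha_1$ every term on the right-hand side of the first equation of \eqref{E:gODE} except the $c_{31}$ term carries a factor $\alpha_1$ or $\overline{\alpha_1}$, so $i\alpha_1'(t_*) = c_{31}\,\rho\,\alpha_2(t_*)$. If $c_{31}=0$ this makes $\{\alpha_1=0\}$ invariant, so $\alpha_1(0)\neq 0$ already precludes any zero; if $c_{31}\neq 0$ then $\alpha_1'(t_*)\neq 0$ (as $\rho>0$ and $\alpha_2(t_*)\neq 0$), hence every zero is simple and isolated and $k_1(t)$ is finite on each compact interval. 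Moreover $\overline{\alpha_1'(t_*)}\,\alpha_2(t_*) = ic_{31}\rho\,|\alpha_2(t_*)|^2 = ic_{31}\rho^2$ is purely imaginary, so the linear part of $\mathcal{R}=2\Re(\overline{\alpha_1}\alpha_2)$ at $t_*$ vanishes and $\mathcal{R} = O((t-t_*)^2)$; since $\rho+\mathcal{D}\sim 2|\alpha_1'(t_*)|^2(t-t_*)^2$, the only potentially singular term $c_{31}\tfrac{(\rho-\mathcal{D})\mathcal{R}}{2(\rho+\mathcal{D})}$ of $N_1$ stays bounded. Thus $N_1$ extends continuously across $t_*$ and $\int_0^t N_1$ is finite, while $\sqrt{(\rho+\mathcal{D})/2}\,e^{i\int_0^t N_1}$ is continuous and merely touches zero at $t_*$. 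Since the true $\alpha_1\sim\alpha_1'(t_*)(t-t_*)$ reverses direction there (its phase jumps by $\pi$) whereas the nonnegative square root does not, $\alpha_1$ and $\sqrt{(\rho+\mathcal{D})/2}\,\tfrac{\alpha_1(0)}{|\alpha_1(0)|}e^{i\int_0^t N_1}$ differ by $-1$ across each zero; counting these crossings gives precisely $(-1)^{k_1(t)}$ and completes the formula for $\alpha_1$, hence for $\alpha_2$ via the relation above. Finally, the statement for $\alpha_2(0)\neq 0$ follows from the same argument after interchanging the two components, which sends $\mathcal{D}\mapsto-\mathcal{D}$, $\mathcal{I}\mapsto-\mathcal{I}$ and permutes the entries of $C$ accordingly, producing $N_2$ and $k_2$.
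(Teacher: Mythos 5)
Your proposal is correct and follows essentially the same route as the paper: reduce to the phase of $\alpha_1$ via $|\alpha_1|^2=\tfrac12(\rho+\mathcal{D})$ and $\alpha_2=\tfrac{\mathcal{R}+i\mathcal{I}}{\rho+\mathcal{D}}\alpha_1$, integrate $\theta'=N_1$ on intervals where $\alpha_1\neq0$, and then use $i\alpha_1'(t_*)=c_{31}\rho\,\alpha_2(t_*)$ at a zero to show (for $c_{31}\neq0$) that zeros are simple and isolated, that $N_1$ extends continuously, and that the phase jumps by $\pi$ across each zero, producing $(-1)^{k_1}$; the $c_{31}=0$ case and the $\alpha_2(0)\neq0$ case are handled exactly as you describe. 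One small slip: the $c_{31}$ term is not the only potentially singular part of $N_1$ — the $c_{21}$ term also carries $\tfrac{\mathcal{R}^2-\mathcal{I}^2}{2(\rho+\mathcal{D})}$, and since $\mathcal{I}$ vanishes only to first order at $t_*$ while $\rho+\mathcal{D}$ vanishes to second order, the quotient $\tfrac{\mathcal{I}^2}{\rho+\mathcal{D}}$ needs its own argument; it is bounded and continuous either by the explicit Taylor expansions of $\mathcal{I}$ and $\rho+\mathcal{D}$ at $t_*$ or, more simply, from the identity \eqref{E:qqs}, which gives $\tfrac{\mathcal{I}^2}{\rho+\mathcal{D}}=(\rho-\mathcal{D})-\tfrac{\mathcal{R}^2}{\rho+\mathcal{D}}$.
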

\begin{proof}
By the uniqueness property, one sees that $(\alpha_1(t),\alpha_2(t))\neq (0,0)$ for all $t\in I_{\max}$.
%
We only consider the case $\alpha_1(0)\neq0$.
The other case is handled similarly.

\noindent{\bf Step 1}.
Pick an open interval $I \subset I_{\max}$ such that $0 \in I$ and
 $\alpha_1 \neq0$ on $I$.
We introduce new variable $\theta=\theta(t)$ on $I$ by
\[
	\alpha_1 = |\alpha_1| e^{i \theta}= \sqrt{\tfrac{\rho + \mathcal{D}}{2}} e^{i \theta}.
\]
We remark that $\alpha_2$ is given by
\[
	\alpha_2 = \tfrac12 \tfrac{\alpha_1}{|\alpha_1|^2} (2\ol{\alpha_1}\alpha_2) = \tfrac1{2|\alpha_1|}  e^{i \theta} (\mathcal{R}+i\mathcal{I})
	= \tfrac{\mathcal{R}+i\mathcal{I}}{\sqrt{2(\rho + \mathcal{D})}}  e^{i \theta} 
\]
on $I$.
Hence, it suffices to find an explicit formula of $\theta$.
To this end,
let us derive an ODE for $\theta$.  One has
\[
	 |\alpha_1|^2 \theta' = \Im \ol{\alpha_1} \alpha_1'=-\Re \ol{\alpha_1} (i\alpha_1')  .
\]
Plugging the first equation of \eqref{E:gODE} to the right hand side,
one  obtains
\begin{equation}\label{E:integration-thetaeq}
	\theta' =N_1:=  (c_{12}+c_{23}) |\alpha_1|^2 - \tfrac32 c_{11} \mathcal{R} - c_{21} (2|\alpha_2|^2+\tfrac{\mathcal{R}^2-\mathcal{I}^2}{4|\alpha_1|^2})
	-c_{31} \tfrac{|\alpha_2|^2 \mathcal{R}}{2|\alpha_1|^2} + \tfrac{\tr C}2 \mathcal{R}.
\end{equation}
Note that $|\alpha_1|^2 = (\rho + \mathcal{D})/2$ and $|\alpha_2|^2 = (\rho-\mathcal{D})/2$ are given functions and hence $N_1$ is written in terms of known functions.
Recall that $\alpha_1\neq 0$ on $I$.
Hence, by integration, one has
\[
	\theta(s) = \theta(0) + \int_0^s N_1(\s)d\s
\]
on $I$. Note that $\theta(0)$ is given in terms of $(\alpha_1(0),\alpha_2(0))$.
This shows that we have an explicit representation of $\theta$ as desired.

\noindent{\bf Step 2}.
If $c_{31}=0$ then $\alpha_1 (t_0)=0$ at some $t_0 \in I_{\max}$ implies $\alpha_1(t)=0$ on $I_{\max}$.
This is due to the uniqueness of the solution to \eqref{E:gODE} and the fact that the pair $(0, \alpha_2(t_0) e^{-i(c_{21}+c_{32})|\alpha_2(t_0)|^2(t-t_0)})$ becomes a solution in this case. Hence, $\alpha_1(0)\neq0$ implies that $\alpha_1 \neq 0$ on $I_{\max}$
and hence the formula obtained in the previous step is valid on whole $I_{\max}$.

Let us consider the case $c_{31}\neq0$. We shall show that, for any solution to \eqref{E:gODE} satisfying $\alpha_1(0)\neq0$, the zero points $\{t \in I_{\max} \ |\ \alpha_1(t)=0\}$ are all isolated removable singular points in the above formula. 
We only consider positive zero points.
Let $t_1 \in (0, \sup I_{\max})$ be the smallest positive zero point.  Define $I_0 :=[0,t_1)$. 
Note that $\alpha_2(t_1) \neq0$ holds since otherwise the solution $(\alpha_1,\alpha_2)$ becomes a trivial one.
We have
\begin{equation}\label{E:mainpf1-1}
	\alpha_1(t) = \sqrt{\tfrac{\rho + \mathcal{D}(t)}{2}} e^{i \theta(0) + i \int_0^t N_1(\sigma) d\sigma}
\end{equation}
for $t\in I_0$.
By the first equation of \eqref{E:gODE}, one has $\alpha_1' (t_1) = -i c_{31} |\alpha_2(t_1)|^2 \alpha_2(t_1)\neq0$.
Hence, one sees that 
\begin{equation}\label{E:mainpf1-2}
\alpha_1(t) = -i c_{31} |\alpha_2(t_1)|^2 \alpha_2(t_1) (t-t_1) + O(|t-t_1|^2)
\end{equation}
 around $t=t_1$.
This shows that $t_1$ is an isolated zero point of $\alpha_1$. It also
gives us
\begin{equation}\label{E:mainpf1-3}
	\lim_{t\uparrow t_1} \tfrac{\alpha_1(t)}{|\alpha_1(t)|} = - i \tfrac{\alpha_2(t_1)}{|\alpha_2(t_1)|} = - \lim_{t\downarrow t_1} \tfrac{\alpha_1(t)}{|\alpha_1(t)|}.
\end{equation}
Further, combining \eqref{E:mainpf1-2} and $\alpha_2 (t) = \alpha_2(t_1) + O(|t-t_1|)$ around $t=t_1$, one obtains
\[
	\rho + \mathcal{D}(t) = 2|\alpha_1(t)|^2=  2c_{31}^2 |\alpha_2(t_1)|^6 (t-t_1)^2 + O(|t-t_1|^3),
\]
\[
	\mathcal{R}(t) = 2 \Re \overline{\alpha_1(t)} \alpha_2(t) =  R_0  (t-t_1)^2 + O(|t-t_1|^3),
\]
and
\[
	\mathcal{I}(t) = 2 \Im \overline{\alpha_1(t)} \alpha_2(t) = c_{31} |\alpha_2(t_1)|^4(t-t_1) + O(|t-t_1|^2)
\]
around $t=t_1$,
where $R_0:= 2 \Re (\overline{\alpha_1''(t_1)} \alpha_2(t_1) + \overline{\alpha_1'(t_1)} \alpha_2'(t_1)) $.
We remark that $R_0$ depend only on the parameters of the ODE system and $\alpha_2(t_1)$. 
Recalling that $\alpha_2(t_1)\neq0$,
these asymptotics show that $N_1(t)$ given in \eqref{E:integration-thetaeq} is continuous at $t=t_1$.
Hence, the integral
$
	 \int_0^t N_1(\sigma) d\sigma
$
makes sense beyond $t_1$.
Further, combining \eqref{E:mainpf1-1} and \eqref{E:mainpf1-3}, one obtains
\begin{equation}\label{E:mainpf1-4}
	e^{i\theta(0)+ \int_0^{t_1} N_1 (\sigma) d\sigma } = - i \tfrac{\alpha_2(t_1)}{|\alpha_2(t_1)|}.
\end{equation}
Let $I_1$ be an interval of the form $ (t_1, t_2)$ such that $\alpha_1(t) \neq0$ holds on $I_1$.
Then, arguing as in Step 1, one has
\[
	\alpha_1(t) = \sqrt{\tfrac{\rho + \mathcal{D}(t)}{2}} e^{i \arg (\alpha_1(t_1+\varepsilon)) + i \int_{t_1+ \varepsilon}^t N_1(\sigma) d\sigma}
\]
for any $0<\varepsilon<t_2-t_1$ and $t \in I_1$. By letting $\varepsilon \downarrow 0$ for each fixed $t$, one deduces from the second equality of the \eqref{E:mainpf1-3} and \eqref{E:mainpf1-4} that
\[
	\alpha_1(t) = \sqrt{\tfrac{\rho + \mathcal{D}(t)}{2}} 
	(i \tfrac{\alpha_2(t_1)}{|\alpha_2(t_1)|})
	e^{ i \int_{t_1}^t N_1(\sigma) d\sigma}
	= (-1)^1 \sqrt{\tfrac{\rho + \mathcal{D}(t)}{2}} e^{i \theta(0) + i \int_0^t N_1(\sigma) d\sigma}.
\]
Repeating this argument, one sees that the sign factor $(-1)^{k_1(t)}$ appears.
Note that the finiteness
of $k_1(t)$ follows from the fact that all zero point of $\alpha_1$ is isolated.
Thus, we obtain the desired expression of the solution on $I_{\max}$.
\end{proof}

\section{Integration of quadratic system for quadratic quantities}\label{S:qqq}
In this section, we consider the integration of \eqref{E:qqq}.
In light of Theorem \ref{T:main}, the matter is reduced to find an explicit formulas for $\rho$, $\mathcal{D}$,
$\mathcal{R}$, and $\mathcal{I}$ given in \eqref{D:rhoDRI}.
Recall that $\rho$ is conserved since \eqref{E:nullcond} holds true.
Hence, we mainly consider the other three quantities $\mathcal{D}$,
$\mathcal{R}$, and $\mathcal{I}$.

\subsection{Quadratic system for quadratic quantities}
Let us first confirm that the triplet $(\mathcal{D},\mathcal{R},\mathcal{I})$ solves \eqref{E:qqq}.

\begin{proposition}\label{P:qqq}
If $(\alpha_1,\alpha_2) $ be a solution to the system of \eqref{E:ODE} then
the triplet of the quadratic quantities
$(\mathcal{D},\mathcal{R},\mathcal{I})$ given by \eqref{D:rhoDRI} solves \eqref{E:qqq}.
\end{proposition}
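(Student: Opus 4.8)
The plan is to establish \eqref{E:qqq} by direct differentiation of the quadratic quantities \eqref{D:rhoDRI} along a solution of \eqref{E:ODE}, converting every occurrence of $A_j'$ through the equation $A_j' = -iF_j(A_1,A_2)$. Concretely, I would first record the three elementary identities
\[
	\mathcal{D}' = 2\Im(\ol{A_1}F_1) - 2\Im(\ol{A_2}F_2), \qquad \mathcal{R}' + i\mathcal{I}' = 2i\bigl(\ol{F_1}A_2 - \ol{A_1}F_2\bigr),
\]
which follow from $(|A_j|^2)' = 2\Re(\ol{A_j}A_j') = 2\Im(\ol{A_j}F_j)$ and from $(\mathcal{R}+i\mathcal{I})' = (2\ol{A_1}A_2)' = 2(i\ol{F_1}A_2 - i\ol{A_1}F_2)$. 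Reading off $\mathcal{R}' = -2\Im(\ol{F_1}A_2 - \ol{A_1}F_2)$ and $\mathcal{I}' = 2\Re(\ol{F_1}A_2 - \ol{A_1}F_2)$ reduces the whole proposition to a computation of the single complex quantity $Z := \ol{F_1}A_2 - \ol{A_1}F_2$ together with the two real pairings $\Im(\ol{A_1}F_1)$ and $\Im(\ol{A_2}F_2)$.

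The first key simplification is that the real gauge term $\mathcal{V}(A_1,A_2)A_j$ never contributes: since $\mathcal{V}$ is real-valued, $\Im(\ol{A_j}\,\mathcal{V}A_j) = \mathcal{V}\Im|A_j|^2 = 0$, while the two $\mathcal{V}$-contributions to $Z$ are $\mathcal{V}\ol{A_1}A_2$ and $-\mathcal{V}\ol{A_1}A_2$, which cancel. The second, and central, simplification is the mechanism advertised in the introduction: after multiplying the cubic nonlinearities \eqref{E:nondef} by $\ol{A_1}$ or $\ol{A_2}$ one obtains quartic monomials, each of which is a product of two quadratic quantities. Setting $w := \ol{A_1}A_2 = (\mathcal{R}+i\mathcal{I})/2$, I would substitute $|A_1|^2 = (\rho+\mathcal{D})/2$, $|A_2|^2 = (\rho-\mathcal{D})/2$, $\Re w = \mathcal{R}/2$, $\Im w = \mathcal{I}/2$, and $\Im(w^2) = \mathcal{R}\mathcal{I}/2$ into the term-by-term expansion of $Z$ and of the two pairings, so that every expression becomes a polynomial in $\rho,\mathcal{D},\mathcal{R},\mathcal{I}$.

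Carrying this out, the $\mathcal{D}$-equation and the $\mathcal{R}$-equation fall out after collecting coefficients of the $p_j$ and using only $|A_1|^2\pm|A_2|^2 \in \{\rho,\mathcal{D}\}$; as a useful consistency check, the same bookkeeping gives $\Im(\ol{A_1}F_1) + \Im(\ol{A_2}F_2) = 0$, i.e.\ $\rho' = 0$, which is precisely the null condition \eqref{E:nullcond}. I expect the main obstacle to be the $p_1$-part of $\mathcal{I}' = 2\Re Z$: there the surviving terms assemble into $8p_1|A_1|^2|A_2|^2 + p_1(\mathcal{R}^2-\mathcal{I}^2) - 2p_1(|A_1|^4+|A_2|^4) - 4p_1\mathcal{R}^2$, and it is only after invoking the sphere identity \eqref{E:qqs}, in the forms $4|A_1|^2|A_2|^2 = \mathcal{R}^2+\mathcal{I}^2$ and $|A_1|^4+|A_2|^4 = \rho^2 - 2|A_1|^2|A_2|^2$ together with $\rho^2 = \mathcal{D}^2+\mathcal{R}^2+\mathcal{I}^2$, that this collapses to the claimed $-2p_1(\mathcal{D}^2+\mathcal{R}^2)$. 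The remaining $p_2,\dots,p_5$ contributions to $\mathcal{I}'$ combine linearly (the $p_2$ terms cancelling entirely) to produce $4p_3\mathcal{D}\mathcal{R} + 2\rho(p_4\mathcal{R} - p_5\mathcal{D})$, completing the verification.
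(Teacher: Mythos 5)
Your computation is correct, and the route works exactly as you lay it out: after writing $\mathcal{D}'=2\Im(\ol{A_1}F_1)-2\Im(\ol{A_2}F_2)$ and $(\mathcal{R}+i\mathcal{I})'=2i(\ol{F_1}A_2-\ol{A_1}F_2)$, the real potential $\mathcal{V}$ drops out of the diagonal pairings and cancels between the two halves of $Z$, every surviving quartic monomial is a product of two of $\rho,\mathcal{D},\mathcal{R},\mathcal{I}$, and the only place the sphere identity is genuinely needed is the $p_1$-part of $\mathcal{I}'$, where $4|A_1|^2|A_2|^2=\mathcal{R}^2+\mathcal{I}^2$ (equivalently \eqref{E:qqs}) collapses the leftover combination to $-2p_1(\mathcal{D}^2+\mathcal{R}^2)$; I verified the $\mathcal{D}'$ and $\mathcal{I}'$ equations term by term and they come out as in \eqref{E:qqq}, with $\rho'=0$ recovering \eqref{E:nullcond} as you note. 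The paper does not carry out this computation on the spot: it derives the proposition in one line from Lemma \ref{L:quadODE}, quoted from \cite{MSU2}, which packages precisely your bookkeeping into the matrix--vector representation of Section \ref{SS:CV} --- the derivative of any linear combination $a|A_1|^2+b\mathcal{R}+c|A_2|^2$ equals $\mathcal{I}$ times a pairing of $(|A_1|^2,\mathcal{R},|A_2|^2)$ against $C(a,b,c)^{T}$, while $\mathcal{I}'$ is a quadratic form $B(C)$ in $(|A_1|^2,\mathcal{R},|A_2|^2)$; specializing $C$ to the standard form \eqref{E:stdMatform} with $\tilde p_3=0$ then yields \eqref{E:qqq} immediately. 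What that formalism buys is reusability and structure: a single computation covers every parameter choice, the vector part (hence $\mathcal{V}$) visibly never enters, and the same lemma simultaneously explains which mass-like quantities are conserved (Proposition \ref{P:masscond}, via the kernel of $C$). What your direct argument buys is self-containedness, at the cost of redoing by hand the calculation that the cited lemma encodes. In substance the two proofs are the same calculation organized differently.
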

This is an immediate consequence of the following lemma.
\begin{lemma}[\cite{MSU2}]\label{L:quadODE}
Let $(\alpha_1,\alpha_2) $ be a solution to the system of \eqref{E:CVODE}.
For any $(a,b,c) \in \R^3$, one has
\[
	\frac{d}{dt} \ltrans{\begin{bmatrix} |\alpha_1|^2 \\ \mathcal{R} \\ |\alpha_2|^2  \end{bmatrix}}
	\begin{bmatrix} a \\ b \\ c \end{bmatrix}
	= \mathcal{I} \ltrans{ \begin{bmatrix} |\alpha_1|^2 \\ \mathcal{R} \\ |\alpha_2|^2  \end{bmatrix} }
	C \begin{bmatrix} a \\ b \\ c \end{bmatrix}.
\]
Further,
\[
\begin{aligned}
\mathcal{I}' = \tfrac12\ltrans{ \begin{bmatrix} |\alpha_1|^2 \\ \mathcal{R} \\ |\alpha_2|^2  \end{bmatrix} }
B
\begin{bmatrix} |\alpha_1|^2 \\ \mathcal{R} \\ |\alpha_2|^2  \end{bmatrix} ,
\end{aligned}
\]
where
\begin{equation}\label{eq:B_Cform}
	B =B(C):= \begin{bmatrix}
	4 c_{13}
	&
	-c_{12}+2c_{23} 
	&
	2c_{11}+2c_{33}  
	\\
	-c_{12}+2c_{23} 
	& 
	-2 c_{22} 
	&
	-c_{32} + 2c_{21} 
	\\
	2c_{11}+2c_{33} 
	& 
	-c_{32} + 2c_{21} 
	&
	4 c_{31} 
	\end{bmatrix}.
\end{equation}
\end{lemma}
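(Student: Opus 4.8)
The plan is to establish both identities by direct differentiation, systematically converting every cubic product into the quadratic quantities via $|\alpha_1|^2=(\rho+\mathcal{D})/2$, $|\alpha_2|^2=(\rho-\mathcal{D})/2$ and $\ol{\alpha_1}\alpha_2=(\mathcal{R}+i\mathcal{I})/2$. Writing \eqref{E:CVODE} as $i\alpha_1'=F_1$, $i\alpha_2'=F_2$, I first record
\[
	\tfrac{d}{dt}|\alpha_j|^2=2\Im(\ol{\alpha_j}F_j),\qquad
	\tfrac{d}{dt}(\ol{\alpha_1}\alpha_2)=i(\ol{F_1}\alpha_2-\ol{\alpha_1}F_2),
\]
whence $\mathcal{R}'=-2\Im(\ol{F_1}\alpha_2-\ol{\alpha_1}F_2)$ and $\mathcal{I}'=2\Re(\ol{F_1}\alpha_2-\ol{\alpha_1}F_2)$. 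Since the first assertion is linear in $(a,b,c)$, it is equivalent to the single vector identity $w'=\mathcal{I}\,\ltrans{C}w$ with $w=\ltrans{(|\alpha_1|^2,\mathcal{R},|\alpha_2|^2)}$, that is, to the three scalar formulas for $(|\alpha_1|^2)'$, $\mathcal{R}'$ and $(|\alpha_2|^2)'$ read off from the rows of $\ltrans{C}$.

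Before computing I would dispose of the potential: since $\mathcal{V}$ is real, its contribution to $\Im(\ol{\alpha_j}F_j)$ is $\Im(\mathcal{V}|\alpha_j|^2)=0$, and its contribution to $\ol{F_1}\alpha_2-\ol{\alpha_1}F_2$ is $\mathcal{V}(\ol{\alpha_1}\alpha_2-\ol{\alpha_1}\alpha_2)=0$; hence one may work with the gauge-reduced system \eqref{E:gODE}. The structural reason the claimed forms arise is gauge covariance: every monomial of $F_j$ has net phase weight $+1$, so each of $\ol{\alpha_1}F_1$, $\ol{\alpha_2}F_2$, $\ol{F_1}\alpha_2$, $\ol{\alpha_1}F_2$ is phase invariant and thus a degree-two polynomial in $|\alpha_1|^2,|\alpha_2|^2,\mathcal{R},\mathcal{I}$. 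Taking imaginary parts in the first three quantities forces an overall factor $\mathcal{I}$ --- for instance the $c_{11}$-monomial yields $\ol{\alpha_1}\,c_{11}(2|\alpha_1|^2\alpha_2+\alpha_1^2\ol{\alpha_2})=c_{11}|\alpha_1|^2\tfrac{3\mathcal{R}+i\mathcal{I}}{2}$, so its contribution to $(|\alpha_1|^2)'=2\Im(\ol{\alpha_1}F_1)$ is $c_{11}|\alpha_1|^2\mathcal{I}$, matching the $(1,1)$-entry of $\ltrans{C}$ --- whereas taking the real part for $\mathcal{I}'$ leaves a genuine quadratic form in $(|\alpha_1|^2,\mathcal{R},|\alpha_2|^2)$, which is exactly $\tfrac12\ltrans{w}Bw$.

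The remaining work is term-by-term substitution: expand the six monomials of each $F_j$, reduce using $\alpha_1^2\ol{\alpha_1}=|\alpha_1|^2\alpha_1$ and $\ol{\alpha_1}\alpha_2=(\mathcal{R}+i\mathcal{I})/2$ together with their conjugates, and collect coefficients. Matching against $\ltrans{C}$ and against $B$ in \eqref{eq:B_Cform} completes the proof; the one place the trace of $C$ enters is through the terms $-(\tr C)\Re(\ol{\alpha_1}\alpha_2)\alpha_1$ and $+(\tr C)\Re(\ol{\alpha_1}\alpha_2)\alpha_2$, which supply the $c_{22}$-coefficient of $\mathcal{R}$ in $\mathcal{R}'$ and the diagonal entry $B_{22}=-2c_{22}$. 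The main obstacle is not conceptual but the bookkeeping of roughly a dozen cubic monomials spread over four quantities, and I would keep it in check with two consistency tests: the manifest symmetry of $B$, and the reflection $(\alpha_1,\alpha_2)(t)\mapsto(\ol{\alpha_1},\ol{\alpha_2})(-t)$, which again solves \eqref{E:gODE} and sends $\mathcal{I}\mapsto-\mathcal{I}$ while fixing $|\alpha_j|^2$ and $\mathcal{R}$, thereby confirming that $\mathcal{I}'$ carries no odd power of $\mathcal{I}$ while each of the other three derivatives is odd in $\mathcal{I}$.
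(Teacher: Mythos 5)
The paper does not actually prove this lemma --- it is imported verbatim from \cite{MSU2} --- so there is no internal proof to compare against; your direct-differentiation argument is the natural (and essentially only) route. Your setup is correct: the formulas $\frac{d}{dt}|\alpha_j|^2=2\Im(\ol{\alpha_j}F_j)$ and $\frac{d}{dt}(\ol{\alpha_1}\alpha_2)=i(\ol{F_1}\alpha_2-\ol{\alpha_1}F_2)$ are right, the real potential $\mathcal{V}$ does drop out of all four quantities, and the phase-invariance argument correctly explains why each of $\ol{\alpha_1}F_1$, $\ol{F_1}\alpha_2$, $\ol{\alpha_1}F_2$ is a quadratic polynomial in $|\alpha_1|^2,|\alpha_2|^2,\mathcal{R},\mathcal{I}$. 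Your sample entry checks out, and so do further spot checks (e.g.\ the full first row: $\ol{\alpha_1}F_1$ has imaginary part $\tfrac{\mathcal{I}}{2}(c_{11}|\alpha_1|^2+c_{21}\mathcal{R}+c_{31}|\alpha_2|^2)$, giving exactly the first component of the claimed identity; likewise the $|\alpha_1|^4$ coefficient of $\mathcal{I}'$ comes only from $-\ol{\alpha_1}\cdot(-c_{13}|\alpha_1|^2\alpha_1)$ and equals $2c_{13}=\tfrac12 B_{11}$). The only caveat is that you defer the remaining dozen-odd monomial verifications to ``bookkeeping''; a complete write-up must carry them out, but the method, the structural explanation of why the answer has the stated form, and your two consistency checks (symmetry of $B$ and the time-reversal/conjugation symmetry forcing oddness in $\mathcal{I}$) leave no conceptual gap.
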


\subsection{A standard strategy for the integration}
We introduce one roadmap to integrate \eqref{E:qqq}. 

Let us introduce a new variable $s=s(t)$ by $s:= \int_0^t 2 \mathcal{I}(\tau)d\tau$.
Then, the first line of \eqref{E:qqq} is simplified as
\begin{equation}\label{E:qqq2}
\frac{d}{ds}\begin{bmatrix}\mathcal{D}\\\mathcal{R}\end{bmatrix} = \Omega
\begin{bmatrix}\mathcal{D}\\\mathcal{R}\end{bmatrix} +  \rho  \begin{bmatrix} p_5 \\ - p_4 \end{bmatrix},
\end{equation}
where
\[
	\Omega := 
\begin{bmatrix}
p_1 & p_2-p_3 \\ - p_2 - p_3 & p_1 
\end{bmatrix}.
\]
Thus, by solving the above ODE, one obtain the following:
\begin{equation}\label{E:DRform}
		\begin{bmatrix}\mathcal{D}(t)\\\mathcal{R}(t)\end{bmatrix}
		= e^{s \Omega } \begin{bmatrix}\mathcal{D}(0)\\\mathcal{R}(0)\end{bmatrix}
		+  \rho  \int_0^s e^{(s-\sigma)\Omega} 
		 \begin{bmatrix} p_5 \\ - p_4 \end{bmatrix} d\sigma,
\end{equation}
Note that
\[
	e^{s \Omega } = 
e^{p_1s}\begin{bmatrix}
	\cosh (s\sqrt{p_3^2-p_2^2}) & \frac{p_2-p_3}{\sqrt{p_3^2-p_2^2}}\sinh (s\sqrt{p_3^2-p_2^2}) \\
	\frac{-p_2-p_3}{\sqrt{p_3^2-p_2^2}}\sinh (s\sqrt{p_3^2-p_2^2}) & \cosh (s\sqrt{p_3^2-p_2^2})
	\end{bmatrix}
\]
if $p_3^2-p_2^2>0$,
\[
e^{s \Omega } = 
e^{p_1s}\begin{bmatrix}
	1 & (p_2-p_3)s \\
	-(p_2+p_3)s & 1
	\end{bmatrix}
\]
if $p_3^2-p_2^2=0$, and
\[
e^{s \Omega } = 
e^{p_1s}\begin{bmatrix}
	\cos (s\sqrt{p_2^2-p_3^2}) & \frac{p_2-p_3}{\sqrt{p_2^2-p_3^2}}\sin (s\sqrt{p_2^2-p_3^2}) \\
	\frac{-p_2-p_3}{\sqrt{p_2^2-p_3^2}}\sin (s\sqrt{p_2^2-p_3^2}) & \cos (s\sqrt{p_2^2-p_3^2})
	\end{bmatrix}
\]
if $p_3^2-p_2^2<0$.

Now we are in the position to obtain the explicit form of the quadratic quantities.
In light of \eqref{E:qqs}, we see that
$\mathcal{I}^2 = \rho^2 - \mathcal{D}^2 - \mathcal{R}^2$.
Hence, on each connected component of $\{\mathcal{I} \neq0\}$, one has
$\mathcal{I} = \sigma \mathcal{Q}$
with a suitable $\sigma \in \{\pm1\}$ and
\begin{equation}\label{D:calQ}
	\mathcal{Q}(s):= \sqrt{\rho^2 - \mathcal{D}(t)^2 - \mathcal{R}(t)^2}.
\end{equation}
In particular, if $\mathcal{I}(0) \neq0$ then, recalling that $\mathcal{I} = \frac12\frac{ds}{dt}$ and $s(0)=0$, we obtain 
\begin{equation}\label{E:finds}
	\int_{0}^{s(t)} \frac{d\tau}{\mathcal{Q}(\tau)} = 2 t \sign(\mathcal{I}(0))
\end{equation}
at least around $t=0$.
If the left hand side is explicitly integrable then
we obtain an explicit form of the function $t\mapsto s(t)=\int_0^t 2\mathcal{I}(\tau) d\tau$.

Thus, once we obtain an explicit formula of $s(t)$, the explicit formulas for $\mathcal{D}$, $\mathcal{R}$, and $\mathcal{I}$. are given by  \eqref{E:qqq2} and
the formula
\begin{equation}\label{E:qqq3}
\mathcal{I}(t) =\tfrac12 s'(t),
\end{equation}
respectively.

\subsection{Study of the specific systems}

Let us obtain explicit solutions to \eqref{E:qqq} in some cases

\subsubsection{Case 4, 12, and 13}
In these three cases, the system \eqref{E:qqq} is (reduced to) a linear system.
The ODE system \eqref{E:qqq_4} is a linear system.
The system \eqref{E:qqq_12} is reduced to a linear one.
Indeed, since $\mathcal{D}$ is constant, one sees that the the equations
for $\mathcal{R}$ and $\mathcal{I}$ is a linear ODE system.
The system \eqref{E:qqq_13} is handled similarly.

\subsubsection{Cases 1, 2, and 5}
Let us begin with the case $p_3=p_4=0$ and $(p_1,p_2) \neq (0,0)$.
This gives a unified treatment for the Cases 1,2, and 5.
Recall that the ODE is of the form \eqref{E:qqq_5}.

We follow the standard strategy presented in the previous section.
In this case, one sees from \eqref{E:DRform} that
\[
	\begin{bmatrix}\mathcal{D}(t)\\\mathcal{R}(t)\end{bmatrix}
	= e^{p_1s}\begin{bmatrix}
	\cos (p_2 s) & \sin (p_2 s) \\
	-\sin (p_2 s) & \cos (p_2 s)
	\end{bmatrix}
	\begin{bmatrix}\mathcal{D}(0)\\\mathcal{R}(0)\end{bmatrix}.
\]
Then, $\mathcal{Q}(s)$ given in \eqref{D:calQ} takes the form
\[
	\mathcal{Q}(s)= \sqrt{\rho^2- (\mathcal{D}(0)^2+\mathcal{R}(0)^2)e^{2p_1s}}.
\]
By solving \eqref{E:finds}, one obtains
\[
	s(t) = 
	\begin{cases}
	- \tfrac1{p_1} 
	\log 
	\tfrac{\cosh (2p_1 \rho t + \frac12 \log \frac{\rho-\mathcal{I}(0)}{\rho+\mathcal{I}(0)})   }  {\cosh ( \frac12 \log \frac{\rho-\mathcal{I}(0)}{\rho+\mathcal{I}(0)})} &
p_1 \neq0,\\
	2t \mathcal{I}(0) &
p_1=0.
\end{cases}
\]
Hence, we have the explicit formula of the solution by \eqref{E:qqq2} and \eqref{E:qqq3}.

\subsubsection{Cases 3, 7, and 8}
To handle  Cases 3 and 7, it suffices to prove Lemma \ref{L:ellipticODE1}.
One can check that the functions in the statement of the lemma actually solve \eqref{E:ellipticODE1},
in light of \eqref{E:Jacobi_d}. Hence, the lemma follows by the standard uniqueness property of the ODE system.
Alternatively, one can integrate directly by mimicking the argument in the standard strategy.

Let us next prove Lemma \ref{L:ellipticODE2} to give a proof of results in Case 8.
\begin{proof}[Proof of Lemma \ref{L:ellipticODE2}]
We only consider the case $(f_0,g_0),(f_0,h_0)\neq(0,0)$, in which case $P>0$ and $R_{fg}>0$.
The relation $f=-h'$ is nothing but the third equation of \eqref{E:ellipticODE2}.
Further, since 
\[
	(	2 g - h^2 )' = 0
\]
follows from the second and the third equation of \eqref{L:ellipticODE2}, we obtain the relation
\[
	g(t) = \tfrac{h(t)^2 -h_0^2 }{2} + g_0.
\]

Let us obtain the explicit form of the function $h(t)$.
We remark that if a triplet $(F, G, H)$ solves \eqref{L:ellipticODE1}, i.e., if they satisfy
\[
	F' = GH, \quad G' = -FH, \quad H'= - FG
\]
then $f(t)= 2 F(t) G(t)$, $g(t)=G(t)^2 - H(t)^2$, and $h(t)=2H(t)$ becomes a solution to \eqref{L:ellipticODE2}.
Set $H_0=h_0/2$ and $(F_0,G_0)=  (R_{fg} \sin \theta_0, R_{fg} \cos \theta_0)$, where $\theta_0 \in [0,\pi)$ is given by
the relation $(f_0,g_0)= (R_{fg}^2 \sin 2\theta_0,R_{fg}^2 \cos 2\theta_0)$.
Then, we have $(2F_0G_0, G_0^2 - F_0^2, 2H_0)=(f_0,g_0,h_0)$.

We apply Lemma \ref{L:ellipticODE1}.
Note that $ (F_0^2 + G_0^2)^{1/2}= R_{fg} $
and 
\[
	(F_0^2 + H_0^2)^{1/2}= (R_{fg}^2 \sin^2  \theta_0 + (h_0/2)^2 ) ^{1/2} = P.
\]
Hence, if $P > R_{fg}$ then we have
\[
	H(t) =(\sign H_0 ) P \dn \( (\sign H_0) P t + t_0 , \tfrac{R_{fg}^2}{P^2} \),
\]
where $t_0$ is given by $(\sn( t_0, R_{fg}^2/P^2), \cn (t_0, R_{fg}^2/P^2)) = (F_0/R_{fg},G_0/R_{fg})$.
If $P=R_{fg}$ then we have
\[
	H(t) = (\sign H_0)  P \sech ( P t + t_0),
\]
where $t_0= (\sign F_0G_0H_0) \cosh^{-1} (P/|H_0|)$.
If $P < R_{fg} $ then we have
\[
	H(t) = \rho \cn \( (\sign G_0) \omega t + \tilde{t}_0, \tfrac{P^2}{R_{fg}^2} \)
	= \rho \cn \( \omega t + (\sign G_0)\tilde{t}_0 , \tfrac{P^2}{R_{fg}^2} \).
\]
It is not hard to verify that this $t_0$ is also characterized as in the statement.
\end{proof}

\subsubsection{Case 6}
We follow the standard strategy.
First, \eqref{E:DRform} reads as
\[
	\mathcal{D}(t) = e^{p_1 s} \mathcal{D}(0),
	\quad \mathcal{R}(t) = e^{p_1 s} (\mathcal{R}(0) - \tfrac{p_4}{p_1}\rho) + \tfrac{p_4}{p_1}\rho.
\]
Then, $\mathcal{Q}(s)$ given in \eqref{D:calQ} takes the form
\[
	\mathcal{Q}(s)= \sqrt{-C_1 e^{2p_1s} +  C_2 e^{p_1s} + C_3},
\]
where $C_1 =   \mathcal{D}(0)^2 + (\mathcal{R}(0)-(p_4/p_1)\rho)^2 \ge 0$, $C_2=-2\rho(p_4/p_1) (\mathcal{R}(0)-(p_4/p_1)\rho)$, and
\[
	C_3 = (\mathcal{I}(0))^2 + C_1 - C_2 = (1-(p_4/p_1)^2) \rho^2.
\]
We remark that  $C_3 \ge 0$ (resp. $C_3 \le0$)  if and only if $p_1\ge p_4$ (resp, $p_1 \le p_4$).

The case $C_1=0$ corresponds to a stationary solution.
Note that it occurs only when $p_4 \le p_1$ since $\mathcal{R}(0) \le \rho$. 
We see that the stationary solution is $(0,\frac{p_4}{p_1}\rho , \pm \sqrt{1-(\frac{p_4}{p_1})^2} \rho)$.
Hence, we suppose $C_1>0$ in what follows.

We integrate \eqref{E:finds}.
By applying the change of variable $x= e^{p_1 \tau}$, the right hand becomes
\begin{align*}
	 \tfrac{1}{p_1} \int^{e^{p_1s}}_1 \tfrac{dx}{x\sqrt{- C_1 x^2 + C_2 x +C_3}}  
	=
	\begin{cases} 
	\left.\frac1{p_1C_3^{1/2}} \log \abs{\frac{ x}{C_2 x + 2C_3+2 \sqrt{C_3(-C_1 x^2 + C_2 x + C_3)}} } \right|^{x=e^{p_1s(t)}}_{x=1} & p_1 > p_4,\\
	\left. - \frac2{p_1C_2} \sqrt{\frac{C_2- C_1x}{x} } \right|^{x=e^{p_1s(t)}}_{x=1} & p_1=p_4,\\
	\left.\frac1{p_1|C_3|^{1/2}} \arcsin \frac{ C_2x+2C_3}{x  \sqrt{C_2^2+4C_1C_3} } \right|^{x=e^{p_1s(t)}}_{x=1} & p_1 < p_4.
	\end{cases}
\end{align*}
Note that $C_2^2+4C_1C_3 >0$ and that $C_2\ge C_1>0$ if $C_3\le 0$ ($\Leftrightarrow p_1 \ge p_4$).

Let us consider the case $p_1>p_4$. 
Hence, \eqref{E:finds} reads as
\[
	e^{p_1 s(t)} = \tfrac{2C_3}{ (C_2^2+4C_1C_3)^{1/2} \cosh (2p_1 C_3^{1/2} t - \tau_0) - C_2  },
\]
where 
\[
	\tau_0 =(\sign \mathcal{I}(0)) \log \tfrac{C_2+2C_3 + 2 \sqrt{C_3}|\mathcal{I}(0)|}{(C_2^2+4C_1C_3)^{1/2}}
	=(\sign \mathcal{I}(0)) \cosh^{-1} ((2C_3 + C_2) (C_2^2+4C_1C_3)^{-1/2}).
\]
Thus, by means of \eqref{E:qqq2} and \eqref{E:qqq3}, one has
\[
	\mathcal{D}(t) = \tfrac{2C_3\mathcal{D}(0)}{ (C_2^2+4C_1C_3)^{1/2} \cosh (2p_1 C_3^{1/2} t - \tau_0) - C_2 },
\]
\[
	\mathcal{R}(t) = \tfrac{2C_3 (\mathcal{R} (0) - (p_4/p_1)\rho) }{ (C_2^2+4C_1C_3)^{1/2}\cosh (2p_1 C_3^{1/2} t - \tau_0) - C_2  } + \tfrac{p_4}{p_1} \rho,
\]
and
\[
	\mathcal{I}(t) = \tfrac{ - C_3^{1/2}(C_2^2+4C_1C_3)^{1/2} \sinh (2p_1 C_3^{1/2} t - \tau_0) }{ (C_2^2+4C_1C_3)^{1/2} \cosh (2p_1 C_3^{1/2} t - \tau_0) - C_2 }.
\]

Next consider the case $p_1=p_4$. 
It follows that $C_3=0$. Hence,
we obtain the solution
\[
	e^{p_1 s(t)} = \tfrac{C_2}{ (C_2p_1t-\mathcal{I}(0))^2 + C_2 - (\mathcal{I}(0))^2} 
\]
from \eqref{E:finds},
where $C_2 = 2 \rho(\rho - \mathcal{R}(0)) \ge 0$.
Hence, we see from \eqref{E:qqq2} and \eqref{E:qqq3} that
and
\[
	\mathcal{D}(t) = \tfrac{C_2\mathcal{D}(0)}{(C_2p_1t-\mathcal{I}(0))^2 + C_2 - (\mathcal{I}(0))^2},
\]
\[
	\mathcal{R}(t)= \tfrac{C_2 (\mathcal{R} (0) - \rho)}{(C_2p_1t-\mathcal{I}(0))^2 + C_2 - (\mathcal{I}(0))^2}
	+ \rho,
\]
and
\[
	\mathcal{I}(t) = - \tfrac{C_2(p_1C_2t- \mathcal{I}(0))}{(C_2p_1t-\mathcal{I}(0))^2 + C_2 - (\mathcal{I}(0))^2}.
\]

When $p_1<p_4$, it follows from \eqref{E:finds} that
\[
	e^{p_1 s(t)} = \tfrac{2|C_3|}{ C_2 - \sqrt{C_2^2 + 4C_1C_3} \cos (2 p_1 |C_3|^{1/2} t +  \tau_0)} ,
\]
where
\[
	\tau_0 = (\sign \mathcal{I}(0))  \arccos \tfrac{C_2 + 2C_3}{ \sqrt{C_2^2 + 4C_1C_3} }.
\]
Hence, we use \eqref{E:qqq2} and \eqref{E:qqq3} to obtain
\[
	\mathcal{D}(t) = \tfrac{2|C_3|\mathcal{D}(0)}{ C_2 - \sqrt{C_2^2 + 4C_1C_3} \cos (2 p_1 |C_3|^{1/2} t +  \tau_0)},
\]
\[
	\mathcal{R}(t)= \tfrac{2|C_3| (\mathcal{R} (0) - (p_4/p_1)\rho) }{ C_2 - \sqrt{C_2^2 + 4C_1C_3} \cos (2 p_1 |C_3|^{1/2} t +  \tau_0)}	+ \tfrac{p_4}{p_1} \rho,
\]
and
\[
	\mathcal{I}(t) = 
	\tfrac{|C_3|^{1/2} \sqrt{C_2^2 + 4C_1C_3} \sin (2 t_1 |C_3|^{1/2} t +  \tau_0) }{ C_2 - \sqrt{C_2^2 + 4C_1C_3} \cos (2 t_1 |C_3|^{1/2} t +  \tau_0)}.
\]

\subsubsection{Cases 9 and 10}

To handle these two cases,
let us prove Lemma \ref{L:ellipticODE3}. 
We integrate \eqref{E:ellipticODE3} by using an argument similar to the standard strategy.
Although the orbit is deduced simply, it needs some computation to obtain its parametrization.

\begin{proof}[Proof of Lemma \ref{L:ellipticODE3}]
Note that if $(f,g,h)(t)$ is a solution to \eqref{E:ellipticODE3} then so is $(f,-g,-h)(t)$.
Hence, we may suppose that $h_0\ge0$ without loss of generality.
Note that $f(t)^2+g(t)^2$ and $h(t)^2 - (f(t) + \eta)^2$ are conserved.
We denote these values $R_0$ and $K_0$, respectively.
Then, the orbit of the solution  is a subset of the intersection of $\Upsilon:=\{(x,y,z) \in \R^3 \ |\ x^2 + y^2 = R_0^2 \}$
and $\Sigma:=\{(x,y,z) \in \R^3 \ |\ z^2 - (x+\eta)^2 = K_0 \}$.

We first note that any point in the $x$-axis
$\{(x,0,0) \ |\ f \in \R \}$, $z$-axis $\{(0,0,z) \ |\ h_0 \in \R \}$, or the line $\{ (-\eta,y,0) \ |\ y \in \R\}$ is a stationary solution.
Hence, we consider the other solutions in the sequel. In particular, we may suppose $R_0>0$.

Since $R_0=f^2 + g^2$ is a conserved quantity, we write
\[
	f = R_0 \cos \tau, \quad g = R_0 \sin \tau.
\]
Then, we have $\tau' = h$ from the first and the second equation of \eqref{E:ellipticODE3}.
Since $(f,g,h) \in \Sigma$, one sees that
\[
	(\tau')^2 =  K_0 + (R_0 \cos \tau + \eta)^2.
\]
Using the fact that $\tau'(0) = h_0 \ge0$, we have
\begin{equation}\label{E:fghepf1}
	\int^\tau	\tfrac{ dy }{ \sqrt{ K_0 + (R_0 \cos y + \eta)^2 } } = t + C
\end{equation}
with an integral constant $C$, at least for small time.

\subsection*{Step 1 (the subcase $K_0<0$)}

Let us begin with the case $K_0<0$. Notice that $\Sigma = \Sigma_+ \cup \Sigma_-$,
where
\[
	\Sigma_\pm := \Sigma \cap \{(x,y,z) \in \R^3 \ |\ \pm x > -\eta\} =
	 \{ ( -\eta \pm \sqrt{|K_0|} \cosh a ,y, \sqrt{|K_0|} \sinh a) \ |\ a,y \in \R  \}.
\]
Note that $\Upsilon \cap \tilde{\Sigma}_+$ and $\Upsilon \cap \tilde{\Sigma}_-$ are closed curves.
As mentioned above, we only consider the solution in $\Upsilon \cap \tilde{\Sigma}_+$ since the other is obtained by the symmetry around $x$-axis.

Since $\Sigma_+$ and $\Sigma_-$ are disjoint, if the solution belongs to one of them at some time then it does to the same one for all time.
By the change $t= \cos y$,
the left hand side of \eqref{E:fghepf1} becomes
\begin{align*}
	-\tfrac{(\sign g_0)}{R_0 }\int^{ \cos \tau } 
	\tfrac{  dt }{ \sqrt{ (1-t)(t-\gamma_+ )(t-\gamma_-)(t+1) } } ,
\end{align*}
where $$\gamma_\pm = \tfrac{-\eta \pm \sqrt{-K_0}}{R_0}$$ are the two roots of $R_0^2 x^2 + 2R_0 \eta x + \eta^2+ K_0= 0$.
This is the well-known elliptic integrals.
We remark that the geometric meaning of $\gamma_\pm$ is as follows:
The line $\{ (R_0 \gamma_+ , y, 0) \ |\ y \in \R \}$ (resp.\ $\{ (R_0 \gamma_- , y, 0) \ |\ y \in \R \}$) corresponds to
 the intersection of the $xy$-plane $\{ (x,y,0) \ |\ x,y \in \R  \}$ and  $\Sigma_+$ (resp.\ $\Sigma_-$).
From this, one sees that $R_0 \gamma_+ \le R_0$, i.e., $\gamma_+ \le 1$.
Further, if $\gamma_+ = 1$ then the solution is the fixed point $(R_0,0,0)$.
Hence, we may suppose that $\gamma_+<1$ in the sequel.

If $-1 < \gamma_- < \gamma_+ < 1$, i.e., if $R_0 > \eta+\sqrt{-K_0}$ then $\cos \tau \in [-1,\gamma_-]\cup[\gamma_+,1]$ holds.
We remark that $\cos \tau \in [-1,\gamma_-]$ (resp.\ $\cos \tau \in [\gamma_+,1]$) corresponds to the case  $(f,g,h) \in \Upsilon \cap \Sigma_-$
(resp.\ $(f,g,h) \in \Upsilon \cap \Sigma_+$).
In the both cases, the solution is periodic in time.
If $\cos \tau \in [-1,\gamma_-]$ then, thanks to \cite{BFBook}*{252.00}, \eqref{E:fghepf1} reads as
\[
 	t+C =  - \tfrac{(\sign g_0)}{R_0 } \tfrac{2}{ \sqrt{ (1-\gamma_-)(1+\gamma_+) }}
 	\sn^{-1} \( \sqrt{\tfrac{(1-\gamma_-)(\cos \tau+1)}{(\gamma_-+1)(1-\cos \tau)}}, \tfrac{(1-\gamma_+)(\gamma_-+1)}{(1-\gamma_-)(\gamma_++1)} \).
\]
Therefore,
\[
		\tfrac{(1-\gamma_-)(\cos \tau+1)}{(\gamma_-+1)(1-\cos \tau)}
		=  \sn^2 \( \tfrac{R_0 \sqrt{ (1-\gamma_-)(1+\gamma_+) }}{2} t+t_0 , \tfrac{(1-\gamma_+)(\gamma_-+1)}{(1-\gamma_-)(\gamma_++1)}\).
\]
Hence,
\[
	f 
	=R_0 \tfrac{ -(R_0 + \eta + \sqrt{-K_0}) +(R_0 - \eta - \sqrt{-K_0}) \sn^2\( \theta t+t_0 , m_0 \) }{(R_0 + \eta + \sqrt{-K_0}) +(R_0 - \eta - \sqrt{-K_0}) \sn^2\( \theta t+t_0 , m_0 \)},
\]
\[
	g =  R_0
	\tfrac{ 2 \sqrt{R_0^2 - (\eta + \sqrt{-K_0})^2} \sn\( \theta t+t_0 , m_0 \) }{{(R_0 + \eta + \sqrt{-K_0}) +(R_0 - \eta - \sqrt{-K_0}) \sn^2\( \theta t+t_0 , m_0 \)}},
\]
and
\begin{align*}
	h={}& 
	\tfrac{  (R_0 + \eta + \sqrt{-K_0})\sqrt{(R_0-\eta)^2 + K_0}  }
	{{(R_0 + \eta + \sqrt{-K_0}) +(R_0 - \eta - \sqrt{-K_0}) \sn^2\( \theta t+t_0 , m_0 \)}} 
	\cn\( \theta t+t_0 , m_0 \)
	 \dn\( \theta t+t_0 , m_0 \)
	,
\end{align*}
where
\[
	\theta = \tfrac{ \sqrt{ (R_0+\sqrt{-K_0})^2 - \eta^2 }}{2}  ,\quad m_0 = \tfrac{(R_0-\sqrt{-K_0})^2 - \eta^2}{(R_0+\sqrt{-K_0})^2 - \eta^2}.
\]
Similarly, if $\cos \tau \in [\gamma_+,1]$ then one sees from \cite{BFBook}*{256.00} that
\[
 	t+C =  - \tfrac{(\sign g_0)}{R_0 } \tfrac{2}{ \sqrt{ (1-\gamma_-)(1+\gamma_+) }}
 	\sn^{-1} \( \sqrt{\tfrac{(1-\gamma_-)(\cos \tau-\gamma_+)}{(1-\gamma_+)(\cos \tau-\gamma_-)}}, {\tfrac{(1-\gamma_+)(\gamma_-+1)}{(1-\gamma_-)(\gamma_++1)}} \).
\]
Therefore,
\[
		\tfrac{(1-\gamma_-)(\cos \tau-\gamma_+)}{(1-\gamma_+)(\cos \tau-\gamma_-)}
		=  \sn^2 \( \tfrac{R_0 \sqrt{ (1-\gamma_-)(1+\gamma_+) }}{2} t+t_0 ,{\tfrac{(1-\gamma_+)(\gamma_-+1)}{(1-\gamma_-)(\gamma_++1)}}\).
\]
One then sees that
\[
	f=-\eta  + \tfrac{ \sqrt{-K_0}( (R_0 + \eta + \sqrt{-K_0})+(R_0 + \eta -\sqrt{-K_0}) \sn^2\( \theta t+t_0 ,m_0 \) }{(R_0 + \eta + \sqrt{-K_0}) -(R_0 + \eta -\sqrt{-K_0}) \sn^2\( \theta t+t_0 , m_0 \)},
\]
and
\begin{align*}
	g ={}& - \tfrac{ R_0 (R_0 + \eta + \sqrt{-K_0}) \sqrt{R_0^2 -(\eta - \sqrt{-K_0})^2} }{(R_0 + \eta + \sqrt{-K_0}) -(R_0 + \eta -\sqrt{-K_0}) \sn^2\( \theta t+t_0 , m_0 \)}
	\cn\( \theta t+t_0 , m_0 \)
	\dn\( \theta t+t_0 , m_0 \)
\end{align*}
and
\[
	h= \tfrac{ 2 \sqrt{-K_0} \sqrt{(R_0 + \eta)^2 + K_0} \sn\( \theta t+t_0 , m_0 \)}
	{(R_0 + \eta + \sqrt{-K_0}) -(R_0 + \eta -\sqrt{-K_0}) \sn^2\( \theta t+t_0 , m_0 \)},
\]
where
\[
	\theta = \tfrac{ \sqrt{ (R_0+\sqrt{-K_0})^2 - \eta^2 }}{2} , \quad
	m_0 =  \tfrac{(R_0-\sqrt{-K_0})^2 - \eta^2}{(R_0+\sqrt{-K_0})^2 - \eta^2} .
\]

Let us proceed to the case $ \gamma_-<-1 < \gamma_+ < 1$, i.e., the case $| \eta - R_0 |< \sqrt{-K_0}$. 
In this case, $\Upsilon \cap \Sigma_- = \emptyset$.
The solution is a closed curve in $\Upsilon \cap \Sigma_+$.
We use \cite{BFBook}*{256.00} to obtain
\[
 	t+C =  - \tfrac{(\sign g_0)}{R_0 } \tfrac{2}{ \sqrt{ 2(\gamma_+ -\gamma_-) }}
 	\sn^{-1} \( \sqrt{\tfrac{2(\cos\tau-\gamma_+)}{(1-\gamma_+)(\cos \tau+1)}},  {\tfrac{(-1-\gamma_-)(1-\gamma_+)}{2(\gamma_+-\gamma_-)}} \).
\]
This gives us
\[
	\tfrac{2(\cos\tau-\gamma_+)}{(1-\gamma_+)(\cos \tau+1)}
	=  \sn^2 \( \tfrac{R_0 \sqrt{ 2(\gamma_+-\gamma_-) }}{2} t+t_0 , {\tfrac{(-1-\gamma_-)(1-\gamma_+)}{2(\gamma_+-\gamma_-)}}\).
\]
Therefore,
\[
	f =-R_0  + \tfrac{ 2 R_0 (R_0 - \eta + \sqrt{-K_0}) }{2R_0 -(R_0 + \eta - \sqrt{-K_0}) \sn^2\( \theta t+t_0 , m_0 \)},
\]
\[
g= \tfrac{ 2 R_0 \sqrt{R_0^2 - (\eta - \sqrt{-K_0})^2} \cn\( \theta t+t_0 ,  m_0 \)}{2R_0 -(R_0 + \eta - \sqrt{-K_0}) \sn^2\( \theta t+t_0 ,  m_0\)},
\]
and
\begin{align*}
	h ={}& -\tfrac{ 2 R_0^{1/2} (-K_0)^{1/4}\sqrt{R_0^2 - (\eta - \sqrt{-K_0})^2} }{2R_0 -(R_0 + \eta - \sqrt{-K_0}) \sn^2\( \theta t+t_0 , m_0 \)} \sn\( \theta t+t_0 , m_0 \) 
	 \dn\( \theta t+t_0 , m_0 \),
\end{align*}
where
\[
	\theta = R_0^{1/2} (-K_0)^{1/4}, \quad 
	m_0 =  \tfrac{{\eta^2 - (R_0-\sqrt{-K_0})^2}}{4 R_0(-K_0)^{1/2}} .
\]

If $ \gamma_- < \gamma_+ <-1<1$, i.e., if $R_0 < \eta - \sqrt{-K_0}$ then
we see that $\Upsilon \cap \Sigma_-= \emptyset$. Further, $\Upsilon \cap \Sigma_+$ consists of two disjoint closed curves. One of the curves lies in $\{z>0\}$ and the other in $\{z<0\}$.
Again, by means of \cite{BFBook}*{256.00}, we have
\[
 	t+C =  - \tfrac{(\sign g_0)}{R_0 } \tfrac{2}{ \sqrt{ (1-\gamma_+)(-1-\gamma_-) }}
 	\sn^{-1} \( \sqrt{\tfrac{(1-\gamma_+)(\cos \tau+1)}{2(\cos \tau-\gamma_+)}},  {\tfrac{2(\gamma_+-\gamma_-)}{(1-\gamma_+)(-\gamma_--1)}} \).
\]
Therefore,
\[
		\tfrac{(1-\gamma_+)(\cos \tau+1)}{2(\cos\tau-\gamma_+)}
		=  \sn^2 \( \tfrac{R_0 \sqrt{ (1-\gamma_+)(-1-\gamma_-) }}{2} t+t_0 ,{\tfrac{2(\gamma_+-\gamma_-)}{(-1-\gamma_-)(1-\gamma_+)}}\).
\]
From this, one obtains
\[
	f =-R_0 + \tfrac{  2R_0(-R_0+\eta- \sqrt{-K_0}) \sn^2\( \theta t+t_0 , m_0 \)}{(R_0+\eta - \sqrt{-K_0}) -2R_0 \sn^2\( \theta t+t_0 , m_0 \)},
\]
\begin{align*}
	g={}& \tfrac{ - 2R_0 \sqrt{(\eta- \sqrt{-K_0})^2 -R_0^2} }{(R_0+\eta - \sqrt{-K_0}) -2R_0 \sn^2\( \theta t+t_0 , m_0 \)}  \sn\( \theta t+t_0 , m_0 \)  \cn \( \theta t+t_0 , m_0 \) ,
\end{align*}
and
\[
	h = \tfrac{  (R_0 + \eta- \sqrt{-K_0})\sqrt{(R_0-\eta)^2 + K_0} \dn\( \theta t+t_0 , m_0 \)}{(R_0+\eta - \sqrt{-K_0}) -2R_0 \sn^2\( \theta t+t_0 ,m_0 \)},
\]
where
\[
	\theta = \tfrac{ \sqrt{ \eta^2 - (R_0 - \sqrt{-K_0})^2 }}{2}  , \quad
	m_0  =  \tfrac{4 R_0 \sqrt{-K_0}}{\eta^2 - (R_0 - \sqrt{-K_0})^2}  .
\]

Let us consider the threshold cases.
If $\gamma_- = -1 <\gamma_+<1$, i.e., if $R_0 = \eta + \sqrt{-K_0}$ then $\Upsilon \cap \Sigma_- = \{(-R_0,0,0)\} $ holds
and $\Upsilon \cap \Sigma_+  $ is a closed curve.
Let us consider the case $(f,g,h)\in \Upsilon \cap \Sigma_+$. This implies $\cos \tau \in [\gamma_+,1]$.
We have
\begin{align*}
	t+C&{}=-\tfrac{(\sign g_0)}{R_0 }\int^{ \cos \tau } 
	\tfrac{  dt }{(t+1) \sqrt{ (1-t)(t-\gamma_+ ) } } \\
	&{}= -\tfrac{(\sign g_0) }{R_0 \sqrt{2(1+\gamma_+)}} \arcsin \( \tfrac{3+\gamma_+}{1-\gamma_+} - \tfrac{4(1+\gamma_+) }{(1-\gamma_+)(\cos \tau +1)} \) .
\end{align*}
This implies that
\[
	\tfrac{3+\gamma_+}{1-\gamma_+} - \tfrac{4(1+\gamma_+) }{(1-\gamma_+)(\cos \tau +1)}
	=  \sin \( R_0 \sqrt{2(1+\gamma_+)} t + t_0\) 
\]
and so that
\[
	f= R_0 -  \tfrac{   2 R_0\eta \sin^2  ( t \sqrt{ R_0 (R_0-\eta)}+t_0) }{ R_0 -\eta  \cos^2 ( t \sqrt{ R_0 (R_0-\eta)}+t_0)  } .
\]
Similarly,
\[
	g= \tfrac{ 2R_0 \sqrt{\eta (R_0 - \eta )}  \sin  ( t \sqrt{ R_0 (R_0-\eta)}+t_0) }{ R_0 - \eta   \cos^2 ( t \sqrt{ R_0 (R_0-\eta)}+t_0)  }, \quad
	h =   \tfrac{ 2(R_0 - \eta )\sqrt{R_0\eta}  \cos  ( t \sqrt{ R_0 (R_0-\eta)}+t_0) }{ R_0  - \eta  \cos^2 (t  \sqrt{ R_0 (R_0-\eta)}+t_0) }.
\]

Let us consider the case $\gamma_- < \gamma_+=-1 <1$, i.e., $R_0 = \eta - \sqrt{-K_0}$.
In this case $\Upsilon$ and $\Sigma_+$ are tangent at $(-R_0,0,0)=(-\eta +\sqrt{-K_0},0,0)$.
One sees that the orbit of the solution is an open curve of which ends are the stationary point.
Let us obtain an explicit formula of the solution. 
We have
\begin{align*}
	t+C&{}=-\tfrac{1}{R_0 }\int^{ \cos \tau } 
	\tfrac{  dt }{(t+1) \sqrt{ (1-t)(t-\gamma_- ) } } \\
	&{}= -\tfrac{1 }{R_0 \sqrt{-2(1+\gamma_-)}} \log \abs{ 3+\gamma_- + \tfrac{-4(1+\gamma_-)- 2 \sqrt{-2(1+\gamma_-)(1-\cos \tau)(\cos \tau - \gamma_-)} }{\cos \tau +1} } 
\end{align*}
Hence,
\[
	h = -f'/g= \tfrac{ R_0\sqrt{2(\gamma_-^2-1)} \cosh (R_0 \sqrt{\frac{|\gamma_-|-1}2} t + t_0) }{(1+|\gamma_-|) \sinh^2 (R_0 \sqrt{\frac{|\gamma_-|-1}2} t + t_0) +|\gamma_-|-1}.
\]
This gives us
\[
	f= -R_0 + \tfrac{ 2 R_0 (\eta-R_0) }{\eta \cosh^2 ( t \sqrt{R_0(\eta-R_0)}  + t_0) -R_0}
\]
and
\[
	g=  \tfrac{2 R_0  \sqrt{\eta (\eta-R_0)} \sinh (t \sqrt{R_0 (\eta-R_0)}  + t_0) }{\eta \cosh^2 (t \sqrt{R_0 (\eta-R_0)}  + t_0) - R_0}, \quad
	h =  \tfrac{ R_0\sqrt{2\eta (\eta-R_0)} \cosh (t \sqrt{R_0 (\eta-R_0)}  + t_0) }{\eta \cosh^2 (t \sqrt{R_0 (\eta-R_0)}  + t_0) - R_0}.
\]
This completes the proof of the case $K_0<0$.

\subsection*{Step 2 (the subcase $K_0=0$)}
Next, consider the case $K_0=0$, i.e., $h_0^2= (f_0+\eta)^2$. 
In this case, one sees that $\Sigma$ is a union of two planes given by $x+\eta = z$ and $x+\eta = -z$.
$\Upsilon \cap \Sigma$ consists of two ellipses.
If $R_0 < \eta$ then the ellipses do not intersect each other. Hence, the orbit of the solution is either one of the ellipses, and the solution is periodic in time.
If $R_0= \eta$ then the ellipses intersect at one point $(-\eta,0,0)$. 
The orbit of the solution is one of the ellipses except for the point. The solution tends to the point as $t\to\pm \infty$.
If $R_0 > \eta$ then the ellipses intersect at two points $(-\eta,\pm \sqrt{R_0^2-\eta^2},0)$.
The solution tends to one of the points as $t\to-\I$ and the other as $t\to\I$.
The direction is easily obtained by the system.

Let us obtain the explicit formula of the solution.
If $f_0+\eta>0$ then
\[
	t +C = \int^\tau \tfrac{dy}{R_0 \cos y + \eta}  
	= 
	\begin{cases}
	\frac2{\sqrt{\eta^2-R_0^2}} \arctan \( \sqrt{\frac{\eta-R_0}{\eta+R_0}} \tan \frac{\tau}2\) &  R_0 < \eta ,\\
	\frac1R_0 \tan \frac{\tau}2 & R_0=\eta,\\
	\frac1{\sqrt{R_0^2-\eta^2}} \log \abs{ \frac{\sqrt{R_0^2-\eta^2} \sin \tau + \eta \cos \tau + R_0 }{ R_0 \cos \tau + \eta} } &  R_0 > \eta.
	\end{cases}
\]
Hence,
\[
	f+ \eta = h =  \tfrac{\eta^2 - R_0^2 }{ \eta - R_0 \cos \( \sqrt{\eta^2-R_0^2} t+ t_0 \) },\quad
	g= \tfrac{R_0 \sqrt{\eta^2 - R_0^2 } \sin \( \sqrt{\eta^2-R_0^2} t+ t_0 \) }{ \eta - R_0 \cos \( \sqrt{\eta^2-R_0^2} t+ t_0 \) }
\]
if $R_0 < \eta$,
\[
	f + \eta = h = \tfrac{ 2\eta }{ 1+ (\eta t + t_0)^2}, \quad
	g = \tfrac{2(\eta t + t_0) }{ 1+ (\eta t + t_0)^2}
\]
if $R_0 = \eta$, and
\[
	f+\eta = h = \tfrac{R_0^2-\eta^2}{ R_0 \cosh (t \sqrt{R_0^2-\eta^2} + t_0) - \eta }, \quad
	g = \tfrac{ R_0 \sqrt{R_0^2 -\eta^2} \sinh (t \sqrt{R_0^2-\eta^2} + t_0) }{ R_0 \cosh (t \sqrt{R_0^2-\eta^2} + t_0) - \eta }
\]
if $R_0 > \eta$.
On the other hand, if $f_0+ \eta < 0$ then we have $ R_0 \ge -f_0 > \eta$. Hence,
\[
	- t +C = \int^\tau \tfrac{dy}{R_0 \cos y + \eta}  
	= 
	\tfrac1{\sqrt{R_0^2-\eta^2}} \log \abs{ \tfrac{\sqrt{R_0^2-\eta^2} \sin \tau + \eta \cos \tau + R_0 }{ R_0 \cos \tau + \eta} }. 
\]
This gives us
\[
	f+ \eta = - h = - \tfrac{\eta}{R_0 \cosh (t \sqrt{R_0^2-\eta^2} + t_0)} ,\quad
	g = - \tfrac{R_0 \sqrt{R_0^2-\eta^2} \sinh (t \sqrt{R_0^2-\eta^2} + t_0)  }{R_0 \cosh (t \sqrt{R_0^2-\eta^2} + t_0)}
\]
as above.

\subsection*{Step 3 (the subcase $K_0>0$)}
Finally, we consider the case $K_0>0$. 
In this case, one has $\Sigma = \tilde{\Sigma}_+ \cup \tilde{\Sigma}_-$, where
\[
	\tilde{\Sigma}_\pm 
	:= \Sigma \cap \{(x,y,z) \in \R^3 \ |\ \pm z > 0 \} =
	 \{ ( -\eta + \sqrt{K_0} \sinh a , y, \pm \sqrt{K_0} \cosh a) \ |\ a,y \in \R  \}.
\]
Hence, $\Upsilon \cap \Sigma$ consists of two closed curves. One belongs to $\tilde{\Sigma}_+$
and the other to $\tilde{\Sigma}_-$. The solutions are periodic in time.

Let us obtain the explicit formula of the solutions.
As mentioned above, we only consider a solution in $\tilde{\Sigma}_+$.
That for the other solution is obtained by the symmetry with respect the $x$-axis.
By applying the change $t= \cos y$ to the integral in \eqref{E:fghepf1}, we obtain
\[
- \tfrac{\sign g_0}{R_0}
	\int^{\cos \tau}  \tfrac{ dt}{ \sqrt{(1-t^2)( t^2 + 2  (\eta/R_0) t + (K_0 + \eta^2)/R_0^2 )} } = t+C.
\]
We denote $b=\eta/R_0>0$ and $m_0=\sqrt{K_0+\eta^2}/R_0>0$.
Let $\xi \in (-1,0)$ be the larger root of $b\xi^2 + (m_0^2+1)\xi + b=0$. Namely,
\[
	\xi = \tfrac{ -(m_0^2+1)+ \sqrt{(m_0^2+1)^2-4b^2} }{2b} . 
\]
Note that the other root is $1/\xi \in (-\infty,-1)$.
Let us introduce a new variable $y$ by $t=(\xi^{-1} y + \xi)/(y+1)$. Then,
\[
	\int^{\cos \tau}  \tfrac{ dt}{ \sqrt{(1-t^2)( t^2 + 2 b t + m_0^2 )} }
	= A\int^{ \frac{\xi - \cos \tau }{\cos \tau -1/\xi} } \tfrac{ dy }{ \sqrt{(\xi^2-y^2)(y^2+\nu^2)} },
\]
where
\[
	A =  \tfrac{ \xi \sqrt{1-\xi^2} }{ \sqrt{ m_0^2 \xi^2 + 2  b  \xi +1 } },\quad
	\nu = 
	\tfrac{-\xi\sqrt{\xi^2 + 2b \xi + m_0^2}}{ \sqrt{m_0^2 \xi^2 + 2  b  \xi +1}}.
\]
It follows from \cite{BFBook}*{213.00}
that the integral in the right hand side is written in terms of the inverse of the elliptic functions.
We have
\[
	f = R_0 \tfrac{ \xi +  \cn \(\theta t + t_0 , m_0 \) }{ 1 + \xi \cn \(\theta t + t_0, m_0  \)}, \quad
	g 
	= R_0 \tfrac{  \sqrt{1-\xi^2} \sn \(\theta t + t_0 , m_0 \)}{ 1 + \xi  \cn \(\theta t + t_0 , m_0 \) },
\]
and
\[
	h =   \theta \tfrac{  \sqrt{1-\xi^2} \dn \(\theta t + t_0 , m_0 \)}{ 1 + \xi   \cn \(\theta t + t_0 , m_0 \) }
\]
for suitable $t_0 \in \R$, where
\[
	\theta =  ((R_0+\eta)^2 + K_0)^{\frac14} ((R_0-\eta)^2 + K_0)^{\frac14}
\]
and
\[
	m_0 =  \tfrac{ \theta^2+ R_0^2  - K_0 - \eta^2  }{ 2\theta^2 }.
\]
Note that $\xi$ is written as
\[
	\xi =- \tfrac{ 2 \eta R_0 } { K_0 + \eta^2+ R_0^2 + \sqrt{(K_0 + (\eta+ R_0)^2)(K_0 + (\eta- R_0)^2)  } }
	= - \tfrac{ 2 \eta R_0 } { K_0 + \eta^2+ R_0^2 + \theta^2 }.
\]
By regarding $-\xi$ as $\xi$, we obtain the formula in the statement.
\end{proof}



\subsubsection{Case 11}

Next we consider the case $p_2=p_4=p_5=0$.
We follow the standard argument.
In this case, one sees from \eqref{E:DRform} that
\begin{align*}
	\mathcal{D} (t) &= \tfrac{\mathcal{D}(0)+\mathcal{R}(0)}{2} e^{(p_1-p_3)s} + \tfrac{\mathcal{D}(0)-\mathcal{R}(0)}{2}
	e^{(p_1+p_3)s}, \\
	\mathcal{R} (t) &= \tfrac{\mathcal{D}(0)+\mathcal{R}(0)}{2} e^{(p_1-p_3)s}- \tfrac{\mathcal{D}(0)-\mathcal{R}(0)}{2}
	e^{(p_1+p_3)s}.
\end{align*}
Then, $\mathcal{Q}(s)$ given in \eqref{D:calQ} takes the form
\[
	\mathcal{Q}(s)= \sqrt{\rho^2- c_+ e^{2(p_1+p_3)s}  -c_- e^{2(p_1-p_3)s}}
\]
where $c_\pm =\tfrac12(\mathcal{D}(0)\mp \mathcal{R}(0))^2$. 
Note that $0 \le c_\pm $ and $c_++c_-= \rho^2-\mathcal{I}(0)^2 \le \rho^2$.
Then,
\begin{equation}\label{E:Jacobi_criteria}
	  \int_0^{s(t)}  \frac{d \tau }{ \sqrt{\rho^2- c_+ e^{2(p_1+p_3)\tau}  -c_- e^{2(p_1-p_3)\tau} }} =2 t\sign \mathcal{I}(0)
\end{equation}

When $p_1=p_3$, one introduces $w=e^{4p_1 \tau}$ to obtain
\[
	  \int_0^{s(t)}  \frac{d \tau }{ \sqrt{\rho^2- c_+ e^{2(p_1+p_3)\tau}  -c_- e^{2(p_1-p_3)\tau} }}
	  = \tfrac{1}{4p_1}  \int^{e^{4p_1 s(t)}}_1  \frac{d w }{ w\sqrt{(\rho^2  -c_-) - c_+ w }}.
\]
Then,
\[
	s(t) = \tfrac1{4p_1}\log \tfrac{\rho^2- c_-}{c_+} - \tfrac1{2p_1} \log \cosh (4 p_1 \sqrt{\rho^2 - c_-} t + \tfrac12 \log \tfrac{ \sqrt{\rho^2-c_-} - \mathcal{I}(0) }{ \sqrt{\rho^2-c_-} + \mathcal{I}(0) })
\]
and hence
\[
	\mathcal{I}(t)= - \sqrt{\rho^2 - c_-} \tanh (4 p_1 \sqrt{\rho^2 - c_-} t + \tfrac12 \log \tfrac{ \sqrt{\rho^2-c_-} - \mathcal{I}(0) }{ \sqrt{\rho^2-c_-} + \mathcal{I}(0) })
\]
and
\begin{align*}
	\mathcal{D} (t) &= \tfrac{\mathcal{D}(0)+\mathcal{R}(0)}{2}+ \tfrac{\mathcal{D}(0)-\mathcal{R}(0)}{2}\sqrt{\tfrac{\rho^2 - c_- }{c_+}} \sech (4 p_1 \sqrt{\rho^2 - c_-} t + \tfrac12 \log \tfrac{ \sqrt{\rho^2-c_-} - \mathcal{I}(0) }{ \sqrt{\rho^2-c_-} + \mathcal{I}(0) })),  \\
	\mathcal{R} (t) &= \tfrac{\mathcal{D}(0)+\mathcal{R}(0)}{2}- \tfrac{\mathcal{D}(0)-\mathcal{R}(0)}{2}\sqrt{\tfrac{\rho^2 - c_- }{c_+}} \sech (4 p_1 \sqrt{\rho^2 - c_-} t + \tfrac12 \log \tfrac{ \sqrt{\rho^2-c_-} - \mathcal{I}(0) }{ \sqrt{\rho^2-c_-} + \mathcal{I}(0) })) .
\end{align*}

When $p_1=3p_3$, we introduce $e^{4p_3 \tau} = w$ to get
\[
	  \int_0^{s(t)}  \frac{d \tau }{ \sqrt{\rho^2- c_+ e^{2(p_1+p_3)\tau}  -c_- e^{2(p_1-p_3)\tau} }}
	  = \tfrac{1}{4p_3}  \int^{e^{4p_3 s(t)}}_1  \frac{d w }{ w\sqrt{\rho^2  -c_-w - c_+ w^2 }}.
\]
One has
\[
	e^{4p_3 s(t)} = 
	\tfrac{
	4 \rho^2
	}{
	{\sqrt{ 8\rho^2(\mathcal{D}(0)-\mathcal{R}(0))^2+ (\mathcal{D}(0)+\mathcal{R}(0))^4 }}\cosh (8p_3 \rho \tau + \tau_0) + {(\mathcal{D}(0)+\mathcal{R}(0))^2}
	},
\]
which yields the result.

When $p_1=\tfrac13 p_3$, we introduce $ w = e^{-2(p_1-p_3)\tau} = e^{\frac43 p_3 \tau}$ to get
\[
	  \int_0^{s(t)}  \frac{d \tau }{ \sqrt{\rho^2- c_+ e^{2(p_1+p_3)\tau}  -c_- e^{2(p_1-p_3)\tau} }}
	  = \tfrac{3}{4p_3}  \int^{e^{\frac43 p_3 s(t)}}_1  \frac{d w }{ \sqrt{w(- c_+ w^3+\rho^2w  -c_- ) }}.
\]
We put $P(w)=- c_+ w^3+\rho^2w  -c_- $. 
If $c_+>0$ then the equation $P(w)=0$ has three real solutions.
Let $\alpha \le \beta \le \gamma$ be the three roots.

First note that $c_+=c_-=0$ implies that $(\mathcal{D}(0),\mathcal{R}(0),\mathcal{I}(0))=(0,0,\pm \rho)$.
Hence, we consider the other case.
If $c_+=0$ and $c_- > 0$ then 
one has
\[
	e^{-\frac23 p_3 s(t)} = \tfrac{\rho}{\sqrt{c_-} \cosh (\frac43 p_3 \rho \tau + \tau_0)},
\]
where $\tau_0 = (\sign \mathcal{I}(0) )\cosh^{-1} \frac{\rho}{ \sqrt{c_-}}$.
The explicit formula of the solution immediately follows.
If $c_+>0$ and $c_-=0$ then the primitive is given as follows:
\[
	\int^x  \tfrac{d w }{ w\sqrt{\rho^2 - c_+ w^2 }}
	= \tfrac1\rho \log|\tfrac{x}{\rho + \sqrt{\rho^2 - c_+ x^2}}|.
\]
Hence,
\[
	e^{\frac43 p_3 s(t)} = \tfrac{\rho}{\sqrt{c_+} \cosh (\frac83 p_3 \rho \tau + \tau_0)},
\]
where $\tau_0 = -(\sign \mathcal{I}(0) )\cosh^{-1} \frac{\rho}{ \sqrt{c_+}}$.
The explicit formula of the solution immediately follows.

Let us consider the case $c_+,c_->0$ and $\mathcal{I}(0)=0$.
The latter relation reads as $\rho^2 = c_+ + c_-$.
The subcase $2c_+ = c_-$ corresponds to the constant solution.
Let us consider the case $2c_+>c_-$. In this case, the three roots satisfies $\alpha<0<\beta<1=\gamma$.
Hence, as long as $\beta<x<1$, one has
\begin{align*}
	\int^x_1  \frac{d w }{ \sqrt{w(- c_+ w^3+\rho^2w  -c_- ) }}
	={}&\tfrac1{\sqrt{c_+}}	\int^x_1  \frac{d w }{ \sqrt{(w-\alpha)w(w-\beta)(1-w) }} \\
	={}&\tfrac1{\sqrt{c_+}} \tfrac2{\sqrt{\beta-\alpha}} \(\sn^{-1} \(  \sqrt{\tfrac{ x-\beta }{ (1-\beta)x }}, 
	{\tfrac{ (1-\beta)(-\alpha) }{ \beta-\alpha }}
	\) - K \(  
	{\tfrac{ (1-\beta)(-\alpha) }{ \beta-\alpha }}
	\)\),
\end{align*}
where $K(m)$ is the complete elliptic integral of the first kind (See \cite{BFBook}*{256.00}).
Hence one obtains
\[
	e^{\frac43 p_3 s(t)} =  \tfrac{ \beta }{ 1- (1-\beta) \cd^2(\frac43 p_3 \sqrt{c_+(\beta-\alpha)} t, \frac{ -\alpha(1-\beta)}{ \beta-\alpha }  ) }.
\]
If $2c_+<c_-$ then
 the three roots satisfies $\alpha<0<\beta=1<\gamma$.
Hence, as long as $1<x<\gamma$, one sees from \cite{BFBook}*{256.00} that
\begin{align*}
	\int^x_1  \frac{d w }{ \sqrt{w(- c_+ w^3+\rho^2w  -c_- ) }}
	={}&\tfrac1{\sqrt{c_+}}	\int^x_1  \frac{d w }{ \sqrt{(w-\alpha)w(w-1)(\gamma-w) }} \\
	={}&\tfrac1{\sqrt{c_+}} \tfrac2{\sqrt{\gamma(1-\alpha)}} \sn^{-1} \(  \sqrt{\tfrac{ \gamma (x-1) }{ (\gamma-1)x }}, 
	{\tfrac{ (\gamma-1)(-\alpha) }{ \gamma(1-\alpha) }}
	\),
\end{align*}
yielding
\[
	e^{\frac43 p_3 s(t)} =  \tfrac{ \gamma }{ \gamma -  (\gamma-1) \sn^2(\frac43 p_3 \sqrt{c_+\gamma(1-\alpha)} \tau, \frac{ (\gamma-1)(-\alpha) }{ \gamma(1-\alpha) } ) }.
\]

\begin{remark}\label{R:Jacobi_criteria}
One sees from the proof that if the left hand side of \eqref{E:Jacobi_criteria} is explicitly integrable for any possible choice of $(\rho,\mathcal{D}(0),\mathcal{R}(0))$, then we obtain
a formula for the solution. 
To obtain a sufficient condition, let us consider the primitive
\[
	\mathfrak{I}=\int \frac{d\tau}{ \sqrt{a - b e^{k_1 \tau} - c e^{k_2 \tau}  }},
\]
where $b,c >0 $, $a\ge b+ c$ and  $k_1 >\max(0, |k_2|)$.
Note that this is easily integrable if $bc=0$. 
Let us exclude this degenerate case and 
consider the general (nontrivial) combination of $a,b,c$. We claim that $\mathfrak{I}$ is described by the elementary functions and the Jacobi elliptic functions if
\[
	\tfrac{k_1+k_2}{k_1-k_2} \in \{ \tfrac13, 1, \tfrac53,  2, \tfrac73, 3,4, 5, 7, 9, 11 \}.
\]
We remark that $k_1 = 2(p_1+p_3)$ and $k_2=2(p_1-p_3)$ imply $p_1/p_3 =\frac{k_1+k_2}{k_1-k_2}$.
Let us prove the claim.
By the change of variable $y=e^{k_0 \tau}$ for some $k_0\neq0$, one has
\[
	\mathfrak{I} = \tfrac{1}{k_0}\int \frac{dy}{ \sqrt{a y^{2} - b y^{k_1/k_0} - c y^{{k_2}/{k_0} } }}.
\]
On the other hand, it is known that the following integral is written in terms of the elementary functions and the Jacobi elliptic functions:
\[
	\int \frac{y^m dy}{\sqrt{P(y)}} ,
\]
where $m \in \Z$ and $P$ is a quartic polynomial. Hence, we see that if there exists $\ell \in 2\Z$ such that
\[
	2, \tfrac{k_1}{k_0}, \tfrac{k_2}{k_0} \in \{ \ell, \ell +1 , \ell +2, \ell + 3, \ell + 4 \}
\]
then $\mathfrak{I}$ is written in terms of the elementary functions and the Jacobi elliptic functions.
Recalling that $k_1 >\max(0, |k_2|)$, the possible choices of $(\ell ,k_1/k_0,k_2/k_0)$ are as follows:
\begin{align*}
	& (-2,1, 0),\,  (-2,2, -1),\, (-2,2, 0),\, (-2,2, 1), \\
	&(0,1, 0),\, (0,2, 0),\, (0,2, 1),\, (0,3, 0),\, (0,3, 1),\, (0,3,2),\, (0,4, 0), \,(0,4, 1),\, (0,4, 2),\, (0,4, 3),\\
	& (2,3, 2),\, (2,4, 2),\, (2,4,3),\, (2,5, 2), \, (2,5 3),\, (2,5, 4),\, (2,6, 2),\, (2,6, 3),\, (2,6, 4),\, (2,6,5).
\end{align*}
Hence, $\frac{k_1+k_2}{k_1-k_2} \in \{ \frac13, 1, \frac53,  2, \frac73, 3,4, 5, 7, 9, 11 \}$.
The claim is proven.
Thus, one can obtain an explicit formula of solutions to \eqref{E:qqq_11} if $p_1/p_3 \in \{ \frac13, 1, \frac53,  2, \frac73, 3,4, 5, 7, 9, 11 \}$.
However, this involves roots of cubic or quartic equations and hence the description of the solution would be complicated.
\end{remark}

\begin{remark}
A phase portrait analysis for \eqref{E:NLS11} shows that the nonlinear synchronization occurs if $p_1>p_3$. It can be verified, for instance, from the fact that $\mathcal{I}$ is strictly monotone decreasing for all non-equilibrium solutions.
On the other hand, if $p_1<p_3$ then there exist six fixed points.
None of them are asymptotically stable.
\end{remark}

\subsubsection{Cases 14 and 15}

We consider the case $p_3^2=p_1^2+p_2^2 $, $p_1>0$ and $p_2\neq0$. Note that $p_3+p_2 > 0$.
We introduce $\Theta \in (0,\pi/2)$ by the relation
$\tan \Theta = \frac{p_1}{p_3+p_2}
= (\tfrac{p_3-p_2}{p_3+p_2})^{1/2}=\frac{p_3-p_2}{p_1}$.
Let $p_4 \ge 0$ and $p_5 = p_4 \tan \Theta $.
This notation gives us a unified treatment of Cases 14 and 15.
Indeed, the cases $p_4=0$ and $p_4>0$ correspond to Cases 14 an 15, respectively.

Let us introduce
\[
	X(t) = \tfrac{1}{2 \sin \Theta} \mathcal{D}(t) + \tfrac{1}{2 \cos \Theta} \mathcal{R}(t) + 
	\tfrac{p_2p_4}{2p_1p_3 \cos \Theta} \rho
\]
and
\[
	Y(t) = -\tfrac{1}{2 \sin \Theta} \mathcal{D}(t) + \tfrac{1}{2 \cos \Theta} \mathcal{R}(t) - 
	\tfrac{p_4}{2p_1 \cos \Theta} \rho.
\]
Then, one sees from \eqref{E:qqq2} that
$\tfrac{d}{ds} X = 0$ and $\tfrac{d}{ds} Y = 2p_1 Y$.
Hence,
\[
	X(t) = X(0) ,\quad Y(t)= Y(0)e^{2p_1 s}.
\]
These yield
\begin{align*}
	\mathcal{D}^2 + \mathcal{R}^2 ={}&
	\ltrans{\begin{bmatrix} X - \tfrac{p_2p_4}{2p_1p_3 \cos \Theta} \rho \\ Y + \tfrac{p_4}{2p_1 \cos \Theta} \rho \end{bmatrix}}
	\ltrans{\begin{bmatrix}
	\frac{1}{2\sin \Theta} & 	\frac{1}{2\cos \Theta} \\
	-\frac{1}{2\sin \Theta} & 	\frac{1}{2\cos \Theta} 
	\end{bmatrix}}^{-1}
	\begin{bmatrix}
	\frac{1}{2\sin \Theta} & 	\frac{1}{2\cos \Theta} \\
	-\frac{1}{2\sin \Theta} & 	\frac{1}{2\cos \Theta} 
	\end{bmatrix}^{-1}
	\begin{bmatrix} X - \tfrac{p_2p_4}{2p_1p_3 \cos \Theta} \rho \\ Y + \tfrac{p_4}{2p_1\cos \Theta} \rho \end{bmatrix} \\
	={}&
 (X - \tfrac{p_2p_4}{2p_1p_3 \cos \Theta} \rho )^2+2 \cos 2\Theta (X - \tfrac{p_2p_4}{2p_1p_3 \cos \Theta} \rho)(Y + \tfrac{p_4}{2p_1\cos \Theta} \rho) + (Y + \tfrac{p_4}{2p_1\cos \Theta} \rho)^2.
\end{align*}
We see that $\cos 2\Theta 
= \frac{1- \tan^2 \Theta}{1+\tan^2 \Theta}= \frac{p_2}{p_3}$. 
Hence,
\[
	\mathcal{Q}(s)= \sqrt{ - Y(0)^2e^{4p_1s} - 2\tfrac{p_2}{p_3}(X(0) + \tfrac{p_1p_4}{2p_2p_3\cos \Theta} \rho) Y(0)e^{2p_1s} + (1-\tfrac{p_4^2}{4p_3^2\cos^2 \Theta})\rho^2-X(0)^2 },
\]
and hence
\[
	2 t \sign \mathcal{I}(0) = \int_0^s \tfrac{d\tau}{ Q(\tau)}=\tfrac1{2p_1}\int_1^{e^{2p_1s}} \tfrac{dx}{ x \sqrt{ - Y(0)^2 x^2 - 2\frac{p_2}{p_3}(X(0) + \frac{p_1p_4}{2p_2p_3\cos \Theta} \rho) Y(0)x + (1- \frac{p_4^2}{4p_3^2\cos^2\Theta})\rho^2-X(0)^2  }} 
\]
for small time.
The right hand side is integrable.
The relation 
\[
	- Y(0)^2  - 2\tfrac{p_2}{p_3}(X(0) + \tfrac{p_1p_4}{2p_2p_3\cos \Theta} \rho) Y(0) + (1- \tfrac{p_4^2}{4p_3^2\cos^2\Theta})\rho^2-X(0)^2 = \mathcal{I}(0)^2 
\]
is useful.
We let
\[
	r= \sqrt{ \left\lvert(1-\tfrac{p_4^2}{4p_3^2 \cos^2 \Theta})\rho^2-X(0)^2\right\rvert}.
\]

If $(1-\frac{p_4^2}{4p_3^2\cos^2 \Theta})\rho^2-X(0)^2>0$ then one has
\[
	e^{2p_1s} = \tfrac{4r^2}{ ((r - \mathcal{I}(0))^2 +Y(0)^2) e^{4p_1r t }  + ((r + \mathcal{I}(0))^2 +Y(0)^2) e^{-4p_1r t} -2\mathcal{I}(0)^2 - 2Y(0)^2 + 2r^2}.
\]
This yields the desired formulas of the solution.

If $\tfrac{4p_3^2-p_4^2}{4p_3^2}\rho^2-X(0)^2=0$ then one obtains
\[
	e^{2p_1s} = \tfrac{\mathcal{I}(0)^2 + Y(0)^2 }{
	(2p_1 t (\mathcal{I}(0)^2 + Y(0)^2)- \mathcal{I}(0))^2 + Y(0)^2}.
\]
This yields the desired formulas.

If $\tfrac{4p_3^2-p_4^2}{4p_3^2}\rho^2-X(0)^2<0$ then one obtains
\[
	e^{2p_1s} = \tfrac{ 2r^2 }{
	 (\mathcal{I}(0)^2 + Y(0)^2+r^2)- 2r \mathcal{I}(0) \sin (4p_1 r t) - ( \mathcal{I}(0))^2 + Y(0)^2-r^2) \cos (4p_1 r t)}.
\]
Note that $\tfrac{4p_3^2-p_4^2}{4p_3^2}\rho^2-X(0)^2<0$ implies $Y(0)\neq0$ and hence the denominator is positive for all time.
This yields the formulas.

\section{Proof of Theorem \ref{P:NS}}\label{S:NS}

\begin{proof}
By \eqref{E:main_asymptotics}, one has
\[
	t^{\frac12} \| ( \gamma_1 u_1 + \gamma_2 u_2 )(t,2t \cdot) \|_{L^\infty (\Omega)} 
	= 2^{-\frac12} 
	\| ( \gamma_1 A_1^+ + \gamma_2 A_2^+ )(\tfrac12 \log t) \|_{L^\infty (\Omega)} + O(t^{-\frac14+\delta})
\]
as $t\to \infty$ for any $\Omega \subset \R$.
Hence, it suffices to establish
\[
	\| ( \gamma_1 A_1^+ + \gamma_2 A_2^+ )(\tau) \|_{L^\infty (\Omega)} \to 0
\]
as $\tau\to\infty$ for the target sets.

Introduce the polar coordinate 
\begin{equation}\label{E:nspf0}
	p_\infty = (\cos \varphi_1, \sin \varphi_1 \cos \varphi_2, \sin \varphi_1 \sin \varphi_2)
\end{equation}
with
$\varphi_1\in [0,\pi]$ and $\varphi_2 \in [0,2\pi)$.
Define
\[
	\gamma_1 = (1-\cos \varphi_1)^{\frac12}, \quad \gamma_2 =-(1+\cos \varphi_1)^{\frac12}e^{-i\varphi_2}.
\]

Pick $(\alpha_1,\alpha_2) \in \C^2$ so that
\[
	(|\alpha_1|^2-|\alpha_2|^2, 2\Re \overline{\alpha_1}\alpha_2,2\Im \overline{\alpha_1}\alpha_2) \in \{k (S^2\setminus\mathfrak{P})\in \R^3; k \ge 0\} 
\]
and let
$(A_1(\tau),A_2(\tau))$ be a solution to \eqref{E:ODE} with $(A_1(0),A_2(0))=(\alpha_1,\alpha_2).$
We claim that
\begin{equation}\label{E:nspf1}
	|\gamma_1 A_1 (\tau)+\gamma_2 A_2(\tau)| \to 0
\end{equation}
as $\tau \to \infty$.
Since it is trivial when $(\alpha_1,\alpha_2)=(0,0)$, we consider the other case.
Let
$\rho=|\alpha_1|^2 + |\alpha_2|^2>0$,
\[
		\mathcal{D}(\tau)=|A_1(\tau)|^2 - |A_2(\tau)|^2,\quad \mathcal{R}(\tau)= 2\Re( \overline{A_1(\tau)}A_2(\tau)),\quad \mathcal{I}(\tau)= 2\Im (\overline{A_1(\tau)}A_2(\tau)).
\]
Since the nonlinear synchronization occurs by assumption and since $(\mathcal{D}(0),\mathcal{R}(0),\mathcal{I}(0))\in S^2_\rho \setminus(\rho\mathfrak{P})$, one has
\[
 (\mathcal{D}(\tau),\mathcal{R}(\tau),\mathcal{I}(\tau)) \to \rho
	p_\infty \in S_\rho^2
\]
as $\tau\to\infty$. 
One verifies from \eqref{E:nspf0} that the convergence reads as
\[
	\rho - \cos \varphi_1 \mathcal{D}(\tau) - \sin \varphi_2 \cos \varphi_2 \mathcal{R}(\tau) - \sin \varphi_2 \sin \varphi_2 \mathcal{I}(\tau) \to 0
\]
as $\tau\to\infty$. 
Note that the following identity holds:
\begin{align*}
	&\rho - \cos \varphi_1 \mathcal{D}(\tau) - \sin \varphi_1 \cos \varphi_2 \mathcal{R}(\tau) - \sin \varphi_1 \sin \varphi_2 \mathcal{I}(\tau) \\
	&=(1-\cos \varphi_1)|A_1(\tau)|^2 +(1+\cos \varphi_1)|A_2(\tau)|^2 \\
	&\quad - 2 \sin \varphi_1( \cos \varphi_2 \Re ( \overline{A_1(\tau)}A_2(\tau))+ \sin \varphi_2 \Im ( \overline{A_1(\tau)}A_2(\tau)) ) \\
	&= |(1-\cos \varphi_1)^{\frac12} A_1(\tau)-(1+\cos \varphi_1)^{\frac12}e^{-i\varphi_2}A_2(\tau)|^2.
\end{align*}
Hence, \eqref{E:nspf1} holds true.

We next claim that
for any $\eps>0$ and
for any closed set 
$\mathfrak{E} \subset  S^2 \setminus \mathfrak{P}$, 
there exists $T=T(\mathfrak{E},\eps)\ge0$ such that if $t\ge T$ then
\begin{equation}\label{E:nspf3}
	\sup_{(\mathcal{D}(0),\mathcal{R}(0),\mathcal{I}(0))\in \mathfrak{E}}
	|(\mathcal{D}(t),\mathcal{R}(t),\mathcal{I}(t))-p_\infty| \le \eps,
\end{equation}
where $|\cdot|$ is the standard Euclidean norm on $\R^3$.
Let us prove the claim.
Suppose it fails.
Then, there exists $\eps_0>0$ and a closed set $\mathfrak{E}_0 \subset S^2 \setminus \mathfrak{P}$
such that there exist $\{t_n\}_n \subset [0,\infty) $, $t_n\to \infty$ as $n\to\infty$,
and $\{q_n\}_n\subset\mathfrak{E}_0$
such that 
the solution 
$(\mathcal{D}_n(t),\mathcal{R}_n(t),\mathcal{I}_n(t))$ to \eqref{E:qqq} given by the initial condition
$(\mathcal{D}_n(0),\mathcal{R}_n(0),\mathcal{I}_n(0))=q_n$
satisfies
\begin{equation}\label{E:nspf4}
	|(\mathcal{D}_n(t_n),\mathcal{R}_n(t_n),\mathcal{I}_n(t_n))-p_\infty| \ge \eps_0
\end{equation}
for all $n$.
Let $U \subset S^2$ be an open neighborhood of
$p_\infty \in S^2$ such that $p \in U$ implies
$|p-p_\infty|<\eps_0$.
Then, for this $U$, one can find an open neighborhood $V\subset S^2$ of $p_\infty \in S^2$
such that the second property of the asymptotic stability holds.
We choose $\delta_0>0$
so that $p\in S^2$ and $|p-p_\infty|<2\delta_0$ imply $p\in V$.
Since $\mathfrak{E}_0$ is  compact, one can find a subsequence of $n$, which is again denoted by $n$, and a point $\tilde{q} \in \mathfrak{E}_0$ such that $q_n \to \tilde{q}$ as $n\to\infty$.
Since $\tilde{q} \in \mathfrak{E}_0 \subset S^2 \setminus \mathfrak{P}$, one has $\omega(\tilde{q}) = \{p_\infty\}$ by the first property of the asymptotic stability.
Hence, there exists $T\ge0$ such that
\[
	|(\mathcal{D}_\infty(t),\mathcal{R}_\infty(t),\mathcal{I}_\infty(t))-p_\infty| < \delta_0
\]
for all $t\ge T$,
where
$(\mathcal{D}_\infty(t),\mathcal{R}_\infty(t),\mathcal{I}_\infty(t))$ is the solution 
 to \eqref{E:qqq} given by the initial condition
$(\mathcal{D}_\infty(0),\mathcal{R}_\infty(0),\mathcal{I}_\infty(0))=\tilde{q}$.
On the other hand, since the nonlinearity of \eqref{E:qqq} is locally Lipschitz continuous,  the solutions \eqref{E:qqq} depend continuously on the data. Hence, one can find $\tilde{\delta}>0$ such that
if $p\in S^2$ satisfies
$|p-\tilde{q}|\le \tilde{\delta}$
then
\[
	|(\mathcal{D}(T),\mathcal{R}(T),\mathcal{I}(T))-(\mathcal{D}_\infty(T),\mathcal{R}_\infty(T),\mathcal{I}_\infty(T))| < \delta_0.
\]
Hence,
there exists $N$ such that if $n\ge N$ then
$|q_n-\tilde{q}| \le \delta$ and consequently
\begin{align*}
	|(\mathcal{D}_n(T),\mathcal{R}_n(T),\mathcal{I}_n(T))-p_\infty| 
	&<
	|(\mathcal{D}_n(T),\mathcal{R}_n(T),\mathcal{I}_n(T))-(\mathcal{D}_\infty(T),\mathcal{R}_\infty(T),\mathcal{I}_\infty(T))| \\
	&\quad+|(\mathcal{D}_\infty(T),\mathcal{R}_\infty(T),\mathcal{I}_\infty(T))-p_\infty| \\
	&< 2\delta_0.
\end{align*}
By the choice of $\delta_0$, this implies that
	$(\mathcal{D}_n(T),\mathcal{R}_n(T),\mathcal{I}_n(T))\in V$
for all $n\ge N$.
Thus, by definitions of $V$ and $U$, one has
\[
	\sup_{t\ge T} |(\mathcal{D}_n(t),\mathcal{R}_n(t),\mathcal{I}_n(t)) - p_\infty| < \eps_0
\]
for all $n\ge N$. This contradicts with \eqref{E:nspf4} since $t_n\to \infty$ as $n\to \infty$.
We have proved the claim \eqref{E:nspf3}.

Let us complete the proof. Pick a closed set $\mathfrak{E} \subset  S^2 \setminus \mathfrak{P}$
and $\eps>0$.
Recall that $|A_1^+(\tau;\xi)|^2 + |A_2^+(\tau;\xi)|^2$ is independent of $\tau$ and hence equals to $\rho(\xi):=|\alpha_1^+(\xi)|^2 + |\alpha_2^+(\xi)|^2$.
We see that
\[
	\sup_{\tau \ge 0}\| ( \gamma_1 A_1^+ + \gamma_2 A_2^+ )(\tau) \|_{L^\infty (\Omega(\mathfrak{E})\cap\{\rho(\xi)\le \eps^2\})} \lesssim \eps.
\]
Pick an arbitrary point $\xi \in \Omega(\mathfrak{E})\cap \{\rho(\xi) \ge \eps^2\}$.
Define $T=T(\mathfrak{E},\eps^2/\norm{\rho}_{L^\infty})\ge0$ so that \eqref{E:nspf3} holds. Since
\[
	\rho(\xi)^{-1} (|\alpha_1^+(\xi)|^2-|\alpha_2^+(\xi)|^2, 2\Re \overline{\alpha_1^+(\xi)}\alpha_2^+(\xi),2\Im \overline{\alpha_1^+(\xi)}\alpha_2^+(\xi)) \in \mathfrak{E},
\]
one sees from the scaling \eqref{E:ODEscale} that
\[
	\sup_{t\ge \rho(\xi)^{-1}T}  |(\mathcal{D}(t),\mathcal{R}(t),\mathcal{I}(t))-\rho(\xi)p_\infty| \le \rho(\xi) \tfrac{\eps^2}{\norm{\rho}_{L^\infty}} \le \eps^2.
\]
Arguing as in the proof of \eqref{E:nspf1}, one sees that this gives us
\[
	\sup_{t\ge \eps^{-2}T} |( \gamma_1 A_1^+ + \gamma_2 A_2^+ )(\tau,\xi)|^2
	\lesssim \eps^2.
\]
Taking supremium with respect to $\xi\in \Omega(\mathfrak{E})\cap \{\rho(\xi) \ge \eps^2\}$,
one obtains
\[
	\sup_{\tau \ge \eps^{-2} T}\| ( \gamma_1 A_1^+ + \gamma_2 A_2^+ )(\tau) \|_{L^\infty (\Omega(\mathfrak{E})\cap\{\rho(\xi)\ge \eps^2\})} \lesssim \eps.
\]
Thus, we reach to the estimate
\[
	\sup_{\tau \ge \eps^{-2} T}\| ( \gamma_1 A_1^+ + \gamma_2 A_2^+ )(\tau) \|_{L^\infty (\Omega(\mathfrak{E}))} \lesssim \eps.
\]
This completes the proof.
\end{proof}

\appendix

\section{Quick review on Jacobi elliptic functions}\label{S:elliptic}

We collect the definition and the basic facts about the Jacobi elliptic functions.
See e.g.\ \cite{BFBook} for more detail.

\subsection{definition and basic relations}
Let us collect definition and basic properties of the Jacobi elliptic functions.
Let $0 \le m \le 1$ be a parameter. 
The Jacobi elliptic functions $\sn u=\sn (u,m)$, $\cn u =\cn (u,m)$, and $\dn u = \dn (u,m)$, which are smooth bounded functions defined on $\R$, and the amplitude function $\am(u,m)$
are given as follows:
When $m<1$, for given $u \in \R $, define $\phi\in \R$ by the relation
\begin{equation}\label{E:Jacdef}
	u = \int_0^\phi \tfrac{d\theta}{\sqrt{1-m \sin^2 \theta}}.
\end{equation}
Note that the integrand is positive and continuous and hence $\phi$ is uniquely determined.
Then, $\sn (u,m)$, $\cn (u,m)$, and $\dn (u,m)$ are defined as
\[
	\sn (u,m) = \sin \phi, \quad
	\cn (u,m) = \cos \phi, \quad
	\dn (u,m) = \sqrt{1-m\sin^2 \phi}, 
\]
respectively. We also let
\[
	\am (u,m)  = \phi.
\]
When $m=1$, we define
\[
	\sn (u,1) = \tanh u, \quad
	\cn (u,1) = \sech u, \quad
	\dn (u,1) = \sech u
\]
for $u\in \R$, respectively.
We let 
\[
	\am(u,1) = \arctan (\sinh u),
\]
which is known as the Gudermannian function.
As is well known, the Jacobi elliptic functions are generalization of trigonometric functions. Indeed, one has
\[
	\sn (u,0) = \sin u, \quad
	\cn (u,0) = \cos u, \quad
	\dn (u,0) = 1.
\]
We further define 
\[
\cd (u,m)=\tfrac{\cn (u,m)}{\dn (u,m)}, \quad \sd (u,m)=\tfrac{\sn (u,m)}{\dn (u,m)},\quad \nd (u,m)=\tfrac1{\dn (u,m)}.
\]

The relations
\[
	\sn^2 (u,m) + \cn^2 (u,m) =1, \quad
	\dn^2 (u,m) + m\sn^2 (u,m) =1
\]
immediately follow by definition.
For any $0 \le m \le 1$, $\sn (u,m)$ is an odd function in $u$ and $\cn (u,m)$ and $\dn (u,m)$ are even functions in $u$.

When $m<1$, $\sn (u,m)$, $\cn (u,m)$, and $\dn (u,m)$ are periodic in $u$. The period of $\sn (u,m)$ and
$\cn (u,m)$ is $4K(m)$ and the period of $\dn (u,m)$ is $2K(m)$, where $K(m)$ is the complete elliptic integral of the first kind, i.e.,
\[
	K(m) := \int_0^{\pi/2} \tfrac{d\theta}{\sqrt{1-m \sin^2 \theta}}.
\]
As for the translation by the quarter or the half of the period,
we have 
\begin{align*}
	\sn (x+K(m),m) ={}& \cd(x,m), &
	\cn (x+K(m),m) ={}& \sqrt{1-m}\sd(x,m), \\
	\sn (x+2K(m),m) ={}& -\sn(x,m), &
	\cn (x+2K(m),m) ={}& -\cn(x,m),
\end{align*}
and
\[
	\dn (x+K(m),m) = \sqrt{1-m}\nd(x,m).
\]
See, e.g., \cite{BFBook}*{122.03}.
We remark that $K(m)$ is increasing in $m$ and that $K(0)=\pi/2$ and $\lim_{m \uparrow 1} K(m) = \I$.



\subsection{On derivatives}
The following identities are known:
\begin{equation}\label{E:Jacobi_d}
\begin{aligned}
	\frac{d}{du} \sn(u,m) {}&= \cn(u,m) \dn(u,m),\\
	\frac{d}{du} \cn(u,m) {}&= -\sn(u,m) \dn(u,m),\\
	\frac{d}{du} \dn(u,m) {}&= -m \sn(u,m) \cn(u,m).
\end{aligned}
\end{equation}
See, e.g., \cite{BFBook}*{731.01,731.02,731.03}.
Further, $\am(u,m)$ is the definite integral of $\dn(u,m)$ from $0$, i.e.,
\[
	\am (u,m) = \int_0^u \dn (v,m) dv.
\]
In particular,
\[
	\frac{d}{du} \am (u,m) = \dn (u,m).
\]



\subsection*{Acknowledgements} 
The author expresses sincere thanks to Jason Murphy for his valuable comments on the preliminary version of the manuscript.
The author was supported by JSPS KAKENHI Grant Numbers JP21H00991 and JP21H00993.

\begin{bibdiv}
\begin{biblist}

\bib{BFBook}{book}{
   author={Byrd, Paul F.},
   author={Friedman, Morris D.},
   title={Handbook of elliptic integrals for engineers and scientists},
   series={Die Grundlehren der mathematischen Wissenschaften, Band 67},
   note={Second edition, revised},
   publisher={Springer-Verlag, New York-Heidelberg},
   date={1971},
   pages={xvi+358},
   review={\MR{0277773}},
}

\bib{CazBook}{book}{
   author={Cazenave, Thierry},
   title={Semilinear Schr\"{o}dinger equations},
   series={Courant Lecture Notes in Mathematics},
   volume={10},
   publisher={New York University, Courant Institute of Mathematical
   Sciences, New York; American Mathematical Society, Providence, RI},
   date={2003},
   pages={xiv+323},
   isbn={0-8218-3399-5},
   review={\MR{2002047}},
   doi={10.1090/cln/010},
}

\bib{GO}{article}{
   author={Ginibre, J.},
   author={Ozawa, T.},
   title={Long range scattering for nonlinear Schr\"{o}dinger and Hartree
   equations in space dimension $n\geq 2$},
   journal={Comm. Math. Phys.},
   volume={151},
   date={1993},
   number={3},
   pages={619--645},
   issn={0010-3616},
   review={\MR{1207269}},
}

\bib{HN}{article}{
   author={Hayashi, Nakao},
   author={Naumkin, Pavel I.},
   title={Asymptotics for large time of solutions to the nonlinear
   Schr\"{o}dinger and Hartree equations},
   journal={Amer. J. Math.},
   volume={120},
   date={1998},
   number={2},
   pages={369--389},
   issn={0002-9327},
   review={\MR{1613646}},
}

\bib{HNS}{article}{
   author={Hayashi, Nakao},
   author={Naumkin, Pavel I.},
   author={Sunagawa, Hideaki},
   title={On the Schr\"{o}dinger equation with dissipative nonlinearities of
   derivative type},
   journal={SIAM J. Math. Anal.},
   volume={40},
   date={2008},
   number={1},
   pages={278--291},
   issn={0036-1410},
   review={\MR{2403321}},
   doi={10.1137/070689103},
}

\bib{HW}{article}{
   author={Hisakado, Masato},
   author={Wadati, Miki},
   title={Integrable multi-component hybrid nonlinear Schr\"{o}dinger
   equations},
   journal={J. Phys. Soc. Japan},
   volume={64},
   date={1995},
   number={2},
   pages={408--413},
   issn={0031-9015},
   review={\MR{1333802}},
   doi={10.1143/JPSJ.64.408},
}


\bib{IT}{article}{
   author={Ifrim, Mihaela},
   author={Tataru, Daniel},
   title={Global bounds for the cubic nonlinear Schr\"{o}dinger equation
   (NLS) in one space dimension},
   journal={Nonlinearity},
   volume={28},
   date={2015},
   number={8},
   pages={2661--2675},
   issn={0951-7715},
   review={\MR{3382579}},
   doi={10.1088/0951-7715/28/8/2661},
}

\bib{KMS}{article}{
   author={Katayama, Soichiro},
   author={Matoba, Toshiaki},
   author={Sunagawa, Hideaki},
   title={Semilinear hyperbolic systems violating the null condition},
   journal={Math. Ann.},
   volume={361},
   date={2015},
   number={1-2},
   pages={275--312},
   issn={0025-5831},
   review={\MR{3302621}},
   doi={10.1007/s00208-014-1071-1},
}

\bib{KS}{article}{
   author={Katayama, Soichiro},
   author={Sakoda, Daisuke},
   title={Asymptotic behavior for a class of derivative nonlinear
   Schr\"{o}dinger systems},
   journal={Partial Differ. Equ. Appl.},
   volume={1},
   date={2020},
   number={3},
   pages={Paper No. 12, 41},
   issn={2662-2963},
   review={\MR{4336288}},
   doi={10.1007/s42985-020-00012-4},
}

\bib{KP}{article}{
   author={Kato, Jun},
   author={Pusateri, Fabio},
   title={A new proof of long-range scattering for critical nonlinear
   Schr\"{o}dinger equations},
   journal={Differential Integral Equations},
   volume={24},
   date={2011},
   number={9-10},
   pages={923--940},
   issn={0893-4983},
   review={\MR{2850346}},
}

\bib{KN}{article}{
   author={Kita, Naoyasu},
   author={Nakamura, Yoshihisa},
   title={Large time behavior of small solutions to multi-component
   nonlinear Schr\"{o}dinger equations related with spinor Bose-Einstein
   condensate},
   journal={Linear Nonlinear Anal.},
   volume={5},
   date={2019},
   number={1},
   pages={73--85},
   issn={2188-8159},
   review={\MR{4054437}},
}

%

\bib{LNSS1}{article}{
   author={Li, Chunhua},
   author={Nishii, Yoshinori},
   author={Sagawa, Yuji},
   author={Sunagawa, Hideaki},
   title={Large time asymptotics for a cubic nonlinear Schr\"{o}dinger
   system in one space dimension},
   journal={Funkcial. Ekvac.},
   volume={64},
   date={2021},
   number={3},
   pages={361--377},
   issn={0532-8721},
   review={\MR{4360614}},
}

\bib{LNSS2}{article}{
   author={Li, Chunhua},
   author={Nishii, Yoshinori},
   author={Sagawa, Yuji},
   author={Sunagawa, Hideaki},
   title={Large time asymptotics for a cubic nonlinear Schr\"{o}dinger
   system in one space dimension, II},
   journal={Tokyo J. Math.},
   volume={44},
   date={2021},
   number={2},
   pages={411--416},
   issn={0387-3870},
   review={\MR{4379734}},
   doi={10.3836/tjm/1502179340},
}

\bib{LNSS5}{misc}{
	author={Chunhua Li},
	author={Yoshinori Nishii},
	author={Yuji Sagawa},
	author={Hideaki Sunagawa},
	title={Recent advances on Schr\"{o}dinger equations with
dissipative nonlinearities},
	status={preprint}
}

\bib{LS}{article}{
   author={Li, Chunhua},
   author={Sunagawa, Hideaki},
   title={On Schr\"{o}dinger systems with cubic dissipative nonlinearities
   of derivative type},
   journal={Nonlinearity},
   volume={29},
   date={2016},
   number={5},
   pages={1537--1563},
   issn={0951-7715},
   review={\MR{3481342}},
   doi={10.1088/0951-7715/29/5/1537},
}

\bib{LS2}{article}{
   author={Li, Chunhua},
   author={Sunagawa, Hideaki},
   title={Corrigendum: On Schr\"{o}dinger systems with cubic dissipative
   nonlinearities of derivative type (2016 {\it Nonlinearity} 29 1537--63)
   [MR3481342]},
   journal={Nonlinearity},
   volume={29},
   date={2016},
   number={12},
   pages={C1--C2},
   issn={0951-7715},
   review={\MR{3580322}},
   doi={10.1088/0951-7715/29/12/C1},
}

\bib{M}{article}{
   author={Masaki, Satoshi},
   title={Classification of a class of systems of cubic ordinary
   differential equations},
   journal={J. Differential Equations},
   volume={344},
   date={2023},
   pages={471--508},
   issn={0022-0396},
   review={\MR{4510789}},
   doi={10.1016/j.jde.2022.11.001},
}

\bib{M2}{misc}{
	author={Masaki, Satoshi},
   title={On scalar-type standing-wave solutions to systems of nonlinear Schrodinger equations},
   	status={available as arXiv:2212.00754},
}

\bib{MM}{article}{
   author={Masaki, Satoshi},
   author={Miyazaki, Hayato},
   title={Long range scattering for nonlinear Schr\"{o}dinger equations with
   critical homogeneous nonlinearity},
   journal={SIAM J. Math. Anal.},
   volume={50},
   date={2018},
   number={3},
   pages={3251--3270},
   issn={0036-1410},
   review={\MR{3815545}},
   doi={10.1137/17M1144829},
}

\bib{MMU}{article}{
   author={Masaki, Satoshi},
   author={Miyazaki, Hayato},
   author={Uriya, Kota},
   title={Long-range scattering for nonlinear Schr\"{o}dinger equations with
   critical homogeneous nonlinearity in three space dimensions},
   journal={Trans. Amer. Math. Soc.},
   volume={371},
   date={2019},
   number={11},
   pages={7925--7947},
   issn={0002-9947},
   review={\MR{3955539}},
   doi={10.1090/tran/7636},
}

\bib{MS}{article}{
   author={Masaki, Satoshi},
   author={Segata, Jun-ichi},
   title={Modified scattering for the quadratic nonlinear Klein-Gordon
   equation in two dimensions},
   journal={Trans. Amer. Math. Soc.},
   volume={370},
   date={2018},
   number={11},
   pages={8155--8170},
   issn={0002-9947},
   review={\MR{3852461}},
   doi={10.1090/tran/7262},
}

\bib{MSU1}{article}{
   author={Masaki, Satoshi},
   author={Segata, Jun-ichi},
   author={Uriya, Kota},
   title={On asymptotic behavior of solutions to cubic nonlinear
   Klein-Gordon systems in one space dimension},
   journal={Trans. Amer. Math. Soc. Ser. B},
   volume={9},
   date={2022},
   pages={517--563},
   review={\MR{4439505}},
   doi={10.1090/btran/116},
}

\bib{MSU2}{misc}{
	author={Masaki, Satoshi},
	author={Segata, Jun-ichi},
	author={Uriya, Kota},
	title={Asymptotic behavior in time of solution to system of cubic nonlinear Schr\"odinger equations in one space dimension},
	status={to appear in Springer Proc. Math. Stat., available as arXiv:2112.06427},
}

\bib{Mu}{article}{
   author={Murphy, Jason},
   title={A review of modified scattering for the $1d$ cubic NLS},
   conference={
      title={Harmonic analysis and nonlinear partial differential equations},
   },
   book={
      series={RIMS K\^{o}ky\^{u}roku Bessatsu},
      volume={B88},
      publisher={Res. Inst. Math. Sci. (RIMS), Kyoto},
   },
   date={2021},
   pages={119--146},
   review={\MR{4443392}},
}


\bib{NST}{article}{
   author={Nakamura, Yoshihisa},
   author={Shimomura, Akihiro},
   author={Tonegawa, Satoshi},
   title={Global existence and asymptotic behavior of solutions to some
   nonlinear systems of Schr\"{o}dinger equations},
   journal={J. Math. Sci. Univ. Tokyo},
   volume={22},
   date={2015},
   number={3},
   pages={771--792},
   issn={1340-5705},
   review={\MR{3408074}},
}

\bib{Oz}{article}{
   author={Ozawa, Tohru},
   title={Long range scattering for nonlinear Schr\"{o}dinger equations in one
   space dimension},
   journal={Comm. Math. Phys.},
   volume={139},
   date={1991},
   number={3},
   pages={479--493},
   issn={0010-3616},
   review={\MR{1121130}},
}

\bib{S}{article}{
   author={Segata, Jun-ichi},
   title={Asymptotic behavior in time of solutions to complex-valued
   nonlinear Klein-Gordon equation in one space dimension},
   journal={Hokkaido Math. J.},
   volume={50},
   date={2021},
   number={2},
   pages={187--205},
   issn={0385-4035},
   review={\MR{4305232}},
   doi={10.14492/hokmj/2018-938},
}

\bib{U}{article}{
   author={Uriya, Kota},
   title={Final state problem for systems of cubic nonlinear Schr\"{o}dinger
   equations in one dimension},
   journal={Ann. Henri Poincar\'{e}},
   volume={18},
   date={2017},
   number={7},
   pages={2523--2542},
   issn={1424-0637},
   review={\MR{3665222}},
   doi={10.1007/s00023-017-0581-2},
}

%

\end{biblist}
\end{bibdiv}

\end{document}